\documentclass{elsarticle}

\makeatletter
\def\ps@pprintTitle{%
 \let\@oddhead\@empty
 \let\@evenhead\@empty
 \def\@oddfoot{\centerline{\thepage}}%
 \let\@evenfoot\@oddfoot}
\makeatother

\usepackage{amsmath}
\usepackage{amssymb}  
\usepackage{listings}   
\usepackage{mathtools}
\usepackage{enumitem}
\usepackage{amsthm}
\usepackage[a4paper,top=4cm,bottom=4cm,left=4.5cm,right=4.5cm]{geometry}
\usepackage{multirow}
\usepackage{booktabs}
\usepackage{bigstrut}
\usepackage[hidelinks]{hyperref}
\usepackage{tikz-cd}
\usepackage[all]{xy}
\usepackage{mathrsfs}
\usepackage{mathtools}
\usepackage{color}  
\usepackage{ulem}   
\DeclareRobustCommand{\erase}{\bgroup\markoverwith{\textcolor{red}{\rule[.5ex]{2pt}{0.4pt}}}\ULon}  
\setlength{\parindent}{2em}
\newcommand{\ran}[1]{X\times A_K^n(#1)}

\newcommand{\wt}[1]{\widetilde{#1}}

\newcommand{\calg}[1]{\widehat{\overline{#1}}}
\newcommand{\Robb}[1]{\mathrm{Rob}_{{#1}\times A_K^n(I)/{#1}}}
\newcommand{\Rob}[1]{\mathrm{Rob}_{{#1}\times A_K^n(I)}}

\def\spa{\partial_{s}}

\def\exp{\mathfrak{Exp}}
\def\weq{\overset{w}{\sim }}

\def\mupi{\mu_{p^{\infty}}}
\def\mupk{\mu_{p^k}}
\def\1{\textbf{1}}
\def\cS{\mathcal{S}}

\def\la{\langle}
\def\ra{\rangle}
\def\wh{\widehat}
\def\an{\mathrm{an}}

\def\Mat{\mathrm{Mat}}

\def\tpa{t_i\partial_{t_i}}
\def\NID{\mathrm{NID}}
\def\NLD{\mathrm{NLD}}

\def\lra{\longrightarrow}
\def\res{\mathrm{res}}
\def\dlog{\mathrm{dlog}}
\def\diag{\mathrm{diag}}

\def\Gal{{\mathrm{Gal}}} 
\def\sup{{\mathrm{sup}}} 
\def\sp{{\mathrm{sp}}}
\def\End{{\mathrm{End}}} 
\def\Im{{\mathrm{im}}} 
\def\coker{{\mathrm{coker}}} 
\def\Der{{\mathrm{Der}}}
\def\OO{{\mathcal{O}}}
\def\AA{\mathscr{A}}
\def\BB{\mathscr{B}}
\def\BBB{\mathbb{B}}
\def\HH{\mathscr{H}}
\def\ZZ{{\mathbb Z}} 
\def\RR{{\mathbb R}} 

\def\QQ{{\mathbb Q}} 

\def\OO{{\mathcal{O}}}
\def\Sp{{\mathscr{M}}}
\def\Spec{{\mathrm{Spec}}} 

\def \ZP{{\mathbb{Z}_p}}
\def \GL{{\mathrm{GL}}} 
\def \upd{{\overline{d}}}
\def \lowd{{\underline{d}}}
\def \Aut{{\mathrm{Aut}}} 

\def \cotimes{{\widehat{\otimes}}}
\def \dagotimes{{\otimes^\dagger}}

\theoremstyle{plain} 

\newtheorem{thm}{\indent\sc Theorem}[section]
\newtheorem{lem}[thm]{\indent\sc Lemma}
\newtheorem{cor}[thm]{\indent\sc Corollary}
\newtheorem{prop}[thm]{\indent\sc Proposition}
\newtheorem*{claim}{\indent\sc Claim}
\newtheorem{conj}[thm]{\indent\sc Conjecture}

\theoremstyle{definition} 
\newtheorem{convention}[thm]{\indent\sc Convention}
\newtheorem{defn}[thm]{\indent\sc Definition}
\newtheorem{rmk}[thm]{\indent\sc Remark}

\allowdisplaybreaks[4]

\begin{document}

\begin{frontmatter}
\pagenumbering{arabic}
\title{{\large\textbf{On generalized Fuchs theorem over relative $p$-adic polyannuli}}}
\author{Peiduo Wang}
\cortext[corresponding author]{Address: 153-8914 Komaba 3-8-1, Meguro, Tokyo, Japan}
\ead{wang-peiduo617@g.ecc.u-tokyo.ac.jp}
\address{Graduate School of Mathematical Sciences, the University of Tokyo}
\date{}
\begin{abstract}
    In this paper, we study coherent locally free (logarithmic-)$\nabla$-modules on relative $p$-adic polyannuli satisfying the Robba condition and prove several criteria for decomposition of such (logarithmic-)$\nabla$-modules. Firstly we prove the $p$-adic Fuchs theorem for absolute logarithmic $\nabla$-modules where the exponents have non-Liouville differences, which generalizes a result of Shiho. Secondly, we prove a generalized $p$-adic Fuchs theorem for relative $\nabla$-modules which are semi-constant on fibers. We also prove a generalized $p$-adic Fuchs theorem for absolute $\nabla$-modules, when the derivation on the base has some specific form. In the appendix, we prove the coincidence of two definitions of exponents due to Christol-Mebkhout and Dwork and prove that the set of exponents forms exactly one weak equivalence class.
\end{abstract}
\begin{keyword}
relative polyannuli, generalized $p$-adic Fuchs theorem
\end{keyword}

\end{frontmatter}
\section*{Introduction}
Let $K$ be a complete nonarchimedean valuation field of mixed characteristic $(0,p)$. In \cite{CM2},
Christol and Mebkhout gave an intrinsic definition of the exponent of a finite free $\nabla$-module satisfying the Robba condition on one dimensional open annuli over $K$. They also showed that if the exponent has $p$-adic non-Liouville differences, then there exists a canonical decomposition of this $\nabla$-module with respect to the partition of its exponent into $\ZZ$-cosets. This is called the $p$-adic Fuchs theorem. Dwork gave another definition of exponents and another proof of $p$-adic Fuchs theorem on one dimensional open annuli in \cite{dwork1997exponents}. This method, after some simplification, is written in \cite{Ked1} Chapter 13. Note that the equivalence between the strategy of Christol and Mebkhout and the strategy of Dwork has not yet been verified, while that of Dwork is the one commonly adopted and applied throughout the main body of this paper.
 
In \cite{Ked2} and \cite{kedlaya2017corrigendum}, Kedlaya and Shiho proved a generalized version of one dimensional $p$-adic Fuchs theorem, by loosing the condition on exponents from having $p$-adic non-Liouville differences to a weaker one, namely, having Liouville partition, and yet still gives a decomposition of such a $\nabla$-module with respect to a Liouville partition of exponents. The author of this paper further generalized this result to high dimensional polyannuli in \cite{w1}.

There are two prior studies on relative and absolute $\nabla$-modules over relative (poly)annuli.
Let $X$ be a $K$-rigid space.
Firstly, in \cite{Kedlaya2022monodromy}, Kedlaya generalized some theories of $p$-adic differential equations on open annuli relative to $X$. He defined regular relative connections and $p$-adic exponents for coherent locally free modules with such a connection. The terminology ``regular relative connections'' will be referred to as Robba condition in this paper. However, some of his proofs are not clear and he did not prove the $p$-adic Fuchs theorem. In this paper, we give a reasonable definiton of $p$-adic exponents for modules with relative connections and prove the (generalized) $p$-adic Fuchs  theorem under certain circumstances.

Now we further assume that $X$ is smooth and let $\Sigma$ be a subset of $\overline{K}^n$. In \cite{shiho2010logarithmic} Section 1 and Section 2, Shiho studied $\Sigma$-unipotent logarithmic-$\nabla$-modules on quasi-open polyannuli relative to $X$. In particular, when $X$ has one-point Shilov boundary, he proved a proposition called ``generization'' which roughly speaking, asserts that for $\Sigma$ which has both non-Liouville differences and non-integer differences, the property of $\Sigma$-unipotence is ``generic on the base'' (see \cite{shiho2010logarithmic} Proposition 2.4).
We interpret his result as the $p$-adic Fuchs theorem for absolute logarithmic-$\nabla$-modules on relative polyannuli whose base is an affinoid space with one-point Shilov boundary.
One of our main results is a generalization of this result to the case where the Shilov boundary of the base $X$ is not necessary the singleton. Let $P$ be an absolute logarithmic-$\nabla$-module on open polyannuli relative to $X$. Then, the strategy of the proof is as follows: if $K$ is algebraically closed, then every smooth rigid space is locally a finite \'etale cover of the unit polydisc $\BBB$, which has one-point Shilov boundary. We then use Shiho's result to decompose the pushforward of $P$, which is an absolute $\nabla$-module over open polyannuli relative to $\BBB$. We can show that this decomposition is not only a decomposition of the pushforward of $P$ but also a decomposition of $P$ itself. Finally we apply Galois descent to obatin the result for arbitrary $K$.

We also consider generalized $p$-adic Fuchs theorem for coherent locally free modules with relative connections. Let $X$ be a $K$-smooth dagger curve and $\Sigma$ be a set having non-integer differences. We study $\Sigma$-semi-constant relative $\nabla$-modules on polyannuli relative to $X$ and show that $\Sigma$-semi-constancy is also ``generic on the base'' when $X$ is a disc. Moreover, we prove the generalized $p$-adic Fuchs theorem for relative $\nabla$-modules which are $\Sigma$-semi-constant on fibers.

Let us explain the content of this paper. In Section 1, we introduce notions and basic properties of rigid spaces, dagger spaces, $\nabla$-modules and generalized $p$-adic Fuchs theorem for absolute polyannuli. 
In Section 2, we firstly prove two Quillen-Suslin type conclusion to help us to reduce claims for general (logarithmic-)$\nabla$-modules to those for free ones. Then we introduce the definition of Robba condtion and $p$-adic exponent for coherent locally free $\nabla$-modules over relative polyannuli with connection relative to $X$ and prove some properties of them. In particular, we show that for any connected rigid or dagger space, $p$-adic exponent is well defined up to weak equivalence.
In Section 3, we study the invariance of $p$-adic exponents and decompositions along pushforward by finite \'etale morphisms.
In section 4, we firstly introduce terminologies and properties of logarithmic-$\nabla$-modules and $\Sigma$-unipotence. Next we prove that any smooth rigid or dagger space over an algebraically closed field is locally a finite \'etale cover of the unit polydisc. After that, we prove a Galois descent theorem. Finally, we use these arguments along with the theory developed in previous sections to prove a stonger version of the ``generization'' proposition by Shiho, which we call the $p$-adic Fuchs theorem over relative polyannuli. 
In Section 5, we introduce terminologies and properties of $\Sigma$-semi-constancy for modules with relative connections. We prove the generalized $p$-adic Fuchs theorem for relative $\nabla$-modules which are $\Sigma$-semi-constant on fibers, using push-forward technique developed in Section 3. We also introduce terminologies of $\xi$-constancy on the base for absolute $\nabla$-modules over relative polyannuli and prove the generalized $p$-adic Fuchs theorem for such modules.
In the appendix, firstly we show that in the case of polyannuli (which directly yields the case of relative polyannuli), the set of exponents forms exactly one weak equivalence class. Then we verify the equivalence of definition of $p$-adic exponent in \cite{CM2} and that in \cite{dwork1997exponents}. It is mentioned in the Notes of \cite{Ked1}, Chapter 13 and \cite{dwork1997exponents}, Remark 4.5 that this problem is not entirely trivial.

\section*{Acknowledgments}

This article is the doctor thesis of the author at the University of Tokyo. He would like to express his sincere gratitude to his supervisor Atsushi Shiho, for his indispensable guidance, insightful discussions, unwavering support, and heartfelt encouragement throughout the course of this research. 

This work was partially supported by FoPM (Forefront Physics and Mathematics Program to Drive Transformation) Program of the Graduate School of Science, the University of Tokyo. This work was supported by the Grant-in-Aid for Scientific Research (KAKENHI No. 24KJ0682) and the Grant-in-Aid for JSPS DC2 fellowship.
\subsection*{Notations}

\begin{enumerate}
    \item Throughout this paper, $K$ always denotes a complete nonarchimedean valuation field of mixed characteristic $(0,p)$,  $\mathcal{O}_K$ and $\kappa_K$ denotes its valuation ring and its residue field, respectively. Also we fix an algebraic closure $\overline{K}$ of $K$, and denote the completion of $\overline{K}$ by $\calg{K}$.
    \item For a $K$-Banach algebra $R$, use $\Sp(R)$ to denote its Berkovich spectrum. For a point $x\in \Sp(R)$, we denote the complete residue field at $x$ by $\HH(x)$.
    \item For a positive integer $k$, $\mupk$ denotes the group of $p^k$-th roots of unity in $\overline{K}$, and $\mupi=\bigcup_{k\geq 0}\mupk$ denote the group of $p$-power roots of unity.
    \item For a $K$-affinoid or $K$-dagger algebra $A$, $A^\circ$ denotes the ring of topologically bounded elements in $A$, $A^{\circ\circ}$ denotes the set of topologically nilpotent elements of $A$, which is an ideal of $A^\circ$, and $\widetilde{A}:=A^\circ/A^{\circ\circ}$ denotes the reduction of $A$. 
    \item We say a polysegment $I\subset \RR_{>0}^n$ is aligned if each endpoint at which it is closed belongs to $\sqrt{|K^\times|}$.
\end{enumerate}

\section{Preliminaries}
In this section, first we recall the notion of rigid spaces and dagger spaces. Then we recall properties of $p$-adic exponents and generalized $p$-adic Fuchs theorem over polyannuli, following \cite{w1}.
\begin{convention}
For two elements $\alpha=(\alpha_1,\dots,\alpha_m)$, $\beta=(\beta_1,\dots,\beta_m)$ in $\RR^m $, we write $\alpha<\beta$ (resp. $\alpha>\beta$, $\alpha\leq \beta$, $\alpha\geq\beta$) if $\alpha_i<\beta_i$ (resp. $\alpha_i>\beta_i$, $\alpha_i\leq \beta_i$, $\alpha_i\geq\beta_i$) for all $1\leq i\leq m$.
For $i=(i_1,\dots,i_m)\in \ZZ^m$ and a $m$-tuple of variables $t=(t_1,\dots,t_m)$, we denote $t_1^{i_1}\cdots t_m^{i_m}$ by $t^i$, denote  
$|i_1|+\dots+|i_m|$ by $|i|$ and $i_1!\cdots i_m!$ by $i!$. For another $m$-tuple $j=(j_1,\dots,j_m)$, we denote $\binom{i_1}{j_1}\cdots\binom{i_m}{j_m}$ by $\binom{i}{j}$.
\end{convention}

\subsection{Affinoid and dagger algebras}
Firstly in this subsection, we give a brief introduction of affinoid algebras, dagger algebras and corresponding spaces. For readers wishing for more details, a better introduction is given in \cite{KedF} and \cite{Gro}.
\begin{defn}
    For each $\rho=(\rho_1,\dots,\rho_m)\in\RR_{>0}^m$ and an $m$-tuple of variables $s=(s_1,\dots,s_m)$, denote the ring
    $$\left \{ f=\sum_{i\in{\ZZ_{\geq 0}^m}}f_is^i\in K[[s]]: \lim_{|i|\to \infty}|f_i|\rho^i=0\right \},$$ 
 by $K\langle\rho_1^{-1}s_1,\dots,\rho^{-1}_ms_m \rangle$ or simply by $K\langle \rho^{-1}s \rangle$ if $m$ is clear from the context. The corresponding affinoid space is denoted by $\BBB^m_{\rho,K}$ ($\BBB^m_K$ if $\rho=1$ and $\BBB_\rho^m$ if $K$ is clear from the context) and called the $m$-dimensional closed polydisc of radius $\rho$ over $K$. A commutative $K$-Banach algebra $R$ is said to be $K$-affinoid if there exists a surjection $K\la \rho^{-1}s\ra\to R$ for some $\rho$. If furthermore we can choose $\rho=(1,\dots,1)$, then $R$ is said to be strictly $K$-affinoid.
The ring
 $$K\langle s \rangle^\dagger = \bigcup_{\rho>1} K\langle \rho^{-1}s \rangle $$
is called the Monsky-Washnitzer algebra (over $K$). It is a topological ring with respect to $1$-Gauss norm, called the affinoid topology. A dagger algebra $R$ is a topological $K$-algebra isomorphic to a quotient of $K\langle s \rangle^\dagger$. The completion of $R$ with respect to this topology is $K$-affinoid and we denote it by $R'$.
\end{defn}

\begin{defn}
For a dagger algebra $R$ with surjection $f:K\langle s \rangle^\dagger\to R$, a fringe algebra is a strict $K$-affinoid algebra of the form $f(K\langle \rho^{-1}s \rangle)$ for some $\rho\in\sqrt{|K^\times|}$. The corresponding affinoid space is called a fringe space of $\Sp(R)$.
\end{defn}

Now recall the notion of polyannuli. We denote the Berkovich affine $n$-space $(\Spec K[t_1,\dots,t_n])^{\mathrm{an}}$ over $K$ by $\mathbb{A}_K^{n,\mathrm{an}}$.
\begin{defn}
For a polysegment $I=\prod_{i=1}^n I_i\subset\mathbb{R}_{> 0}^n$, the polyannulus with radius $I$ over $K$ is the subspace of $\mathbb{A}_K^{n,\mathrm{an}}$ defined by
$$A^n_K(I):=\left\{ x\in \mathbb{A}_K^{n,\mathrm{an}}: t_i(x)\in I_i ,\ 1\leq i\leq n \right\},$$
and we call such a subspace an open (resp. closed) polyannulus if $I$ is open (resp. closed).
The ring of global sections of the structure sheaf on $A_K^n(I)$ is the ring
$$\left \{ f=\sum_{i\in{\ZZ^n}}f_it^i\in K[[t,t^{-1}]]: \lim_{|i|\to \infty}|f_i|\rho^i=0\quad \forall \rho\in I\right \},$$ 
and we denote it by $K_{I,n}$ or simply by $K_I$ if $n$ is clear from the context.
The ring of global sections of the overconvergent structure sheaf on $A_K^n(I)$ is given by
$\varinjlim_{J\supset I} K_J$, where $J$ runs over all polysegments containing $I$ in its interior. We denote this direct limit ring by $K^\dagger_{n,I}$ or simply by $K^\dagger_I$ if $n$ is clear from the context.
For $\rho\in I$, we define the $\rho$-Gauss norm of $f=\sum_{i\in \ZZ^n}f_i t^i\in K_{I,n}$ or $K^\dagger_{I,n}$ by $|f|_\rho:=\max_{i\in\ZZ^n} \{|f_i|\rho^i\}$.
\end{defn}
When $I$ is closed, $K_{I}$ (resp. $K^\dagger_I$) is a $K$-affinoid algebra (resp. $K$-dagger algebra) in the sense of Berkovich, and the supremum norm (which is power multiplicative but not necessarily multiplicative) on $K_{I}$ is defined by $|f|_{I}:=\max_{\rho\in I}\{|f|_\rho\}$.

For any dagger algebra $R$, its Berkovich spectrum coincides with that of its completion $R'$ as topological spaces. Thus to distinguish dagger spaces from affinoid spaces, we use the symbol $X^\dagger$ to denote the topological space $X$ equipped with the overconvergent structure sheaf.

For any affinoid algebra $R$ and $x\in \Sp(R)$, the natural homomorphism $\chi_x: R\to \HH(x)$ induces a homomorphism on their reductions $\widetilde{\chi_x}:\wt{R}\to \wt{\HH(x)}$. Then we obtain the map
 \begin{align*}
    \mathrm{red}:\Sp(R)& \longrightarrow\Spec(\widetilde{R})\\
    x & \longmapsto \ker \wt{\chi_x}
 \end{align*}
and call it the reduction map of $\Sp(R)$. We call the unique minimum closed subset of $\Sp(R)$ on which every element of $R$ takes its maximum (which exists by \cite{Ber} Corollary 2.4.5) the Shilov boundary of $\Sp(R)$.

\begin{rmk}[\cite{Ber} Section 2.4]\label{reduction}
Let $R$ be an affinoid algebra, then the reduction map has following properties.
\begin{enumerate}
    \item For any morphism of affinoid spaces 
    $f:\Sp(R)\to\Sp(S)$, there is a natural morphisms of their reductions $\wt{f}:\Spec(\wt{R})\to\Spec(\wt{S})$ and we have the following commutative diagram 
    \begin{equation*}
        \begin{tikzcd}
            \Sp(R) \arrow{r}{f} \arrow[swap]{d}{\mathrm{red}} & \Sp(S) \arrow{d}{\mathrm{red}} \\%
            \Spec(\wt{R}) \arrow{r}{\wt{f}}& \Spec(\wt{S}).
        \end{tikzcd} 
    \end{equation*}
    \item The reduction map of a strict affinoid algebra is surjective and anti-continuous, that is, the preimage of an open (resp. closed) subset of $\Spec (\wt{R})$ in $\Sp(R)$ is closed (resp. open).
    \item Let $R$ be a strict $K$-affinoid algebra. Then the map $\mathrm{red}$ induces a bijection between the Shilov boundary of $\Sp(R)$ and the set of generic points of $\Spec(\wt{R})$. Moreover, if $|R|_\sup=|K|$ (e.g. $K$ is algebraically closed), for such a point there is an isomorphism $k(\wt{x})\cong \wt{\HH(x)}$. 
\end{enumerate}
\end{rmk}

Let $X$ be an affinoid space, let $I=[\alpha,\beta]\subset \RR^n_{>0}$ be a closed polysegment, let 
$$\mathrm{Vert}(I):=\{\rho\in I: \rho_i\text{ equals to either }\alpha_i \text{ or } \beta_i,\text{ for } 1\leq i\leq n \}$$
be the set of vertices of $I$ and let $\mathcal{S}(X)$ be the  Shilov boundary of $X$.
Then we have the following description of the Shilov boundary of $X\times A_K^n(I)$.
\begin{lem}\label{shilov boundary of relative polyannuli}
    Let $X=\Sp(R)$ be an affinoid space and $I\subset \RR^n_{>0}$ be a closed polysegment. Then the set
    $$\mathcal{S}:=\{|\cdot|_{(x,\rho)}\in X\times A_K^n(I):x\in\mathcal{S}(X) \text{ and } \rho\in \mathrm{Vert}(I)\}$$
    is the Shilov boundary of $X\times A_K^n(I)$.
\end{lem}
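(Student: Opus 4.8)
The plan is to compute the supremum (spectral) seminorm on the affinoid algebra $\mathcal{A}:=R\cotimes_K K_{I,n}$ of $X\times A_K^n(I)$ directly. Recall every element of $\mathcal{A}$ is uniquely $f=\sum_{i\in\ZZ^n}f_it^i$ with $f_i\in R$ and $\|f_i\|_R\,\rho^i\to 0$ for all $\rho\in I$, where $\|\cdot\|_R$ is a Banach norm on $R$; since $|\cdot|_{\sup,R}\le\|\cdot\|_R$ this forces $|f_i|_{\sup,R}\,\rho^i\to 0$ as well, where $|\cdot|_{\sup,R}$ is the supremum seminorm on $R$ (which is attained on $\cS(X)$). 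I will use two facts about the points occurring in $\cS$. First, for $x\in X$ and $\rho\in I$ the rule $\sum_if_it^i\mapsto\max_i|f_i(x)|\,\rho^i$ is multiplicative — it is the $\rho$-Gauss norm over the complete valued field $\HH(x)$ — hence defines the point $|\cdot|_{(x,\rho)}$, and $|f|_{(x,\rho)}=\max_i|f_i(x)|\,\rho^i$ exactly. Second, any $y\in X\times A_K^n(I)$ has a polyradius $\rho(y):=(|t_1(y)|,\dots,|t_n(y)|)\in I$, and the ultrametric inequality together with $|f_i(y)|\le|f_i|_{\sup,R}$ (valid since $f_i$ comes from $R$) gives $|f(y)|\le\max_i|f_i|_{\sup,R}\,\rho(y)^i$.

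I would first show that $\cS$ is a boundary, i.e. $\max_{s\in\cS}|f(s)|=|f|_{\sup}$ for every $f\in\mathcal{A}$ (the case $|f|_{\sup}=0$ is trivial, so assume $|f|_{\sup}>0$). The function $\rho\mapsto\max_i|f_i|_{\sup,R}\,\rho^i$ on the box $I$ is a supremum of functions affine in $\log\rho$, hence convex, hence by Jensen's inequality for convex combinations of the vertices — no continuity being needed — it attains its maximum over $I$ at a vertex $\rho_0\in\vert(I)$. With the bound of the previous paragraph this yields $|f|_{\sup}\le M:=\max_i|f_i|_{\sup,R}\,\rho_0^i$; in particular $M\ge|f|_{\sup}>0$. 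For the reverse inequality, pick $i^{*}\in\ZZ^n$ with $|f_{i^{*}}|_{\sup,R}\,\rho_0^{i^{*}}=M$ (possible since the terms tend to $0$ and $M>0$), then pick $x_0\in\cS(X)$ with $|f_{i^{*}}(x_0)|=|f_{i^{*}}|_{\sup,R}$. Then $|\cdot|_{(x_0,\rho_0)}\in\cS$ and $|f|_{(x_0,\rho_0)}=\max_i|f_i(x_0)|\,\rho_0^i\ge|f_{i^{*}}(x_0)|\,\rho_0^{i^{*}}=M\ge|f|_{\sup}$, so equality holds throughout.

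For minimality I would show that for each $s_0=|\cdot|_{(x_0,\rho_0)}\in\cS$ there is $f\in\mathcal{A}$ with $|f(s_0)|>|f(s)|$ for all $s\in\cS\setminus\{s_0\}$. By minimality of the Shilov boundary $\cS(X)$ of $X$ there is $g\in R$ with $|g(x_0)|=|g|_{\sup,R}>|g(x)|$ for all $x\in\cS(X)\setminus\{x_0\}$ (take $g=1$ if $\cS(X)=\{x_0\}$). Since $\rho_0\in\vert(I)$, set $h:=\prod_{j:\alpha_j<\beta_j}t_j^{\epsilon_j}\in K_{I,n}$ with $\epsilon_j=+1$ if $\rho_{0,j}=\beta_j$ and $\epsilon_j=-1$ if $\rho_{0,j}=\alpha_j$; then $|h|_\rho=\prod_{j}\rho_j^{\epsilon_j}$ is strictly maximised over $\vert(I)$ exactly at $\rho=\rho_0$, since any other vertex differs from $\rho_0$ in some coordinate $j$ with $\alpha_j<\beta_j$, where the corresponding factor strictly decreases while the others do not increase. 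For $f:=gh$ and $s=|\cdot|_{(x,\rho)}\in\cS$ multiplicativity gives $|f(s)|=|g(x)|\,|h|_\rho$, which is strictly smaller than $|g(x_0)|\,|h|_{\rho_0}=|f(s_0)|$ whenever $s\neq s_0$: if $x\neq x_0$ the first factor drops (and $|h|_\rho\le|h|_{\rho_0}$), and if $x=x_0$, $\rho\neq\rho_0$ the second factor drops (and $|g(x_0)|>0$). Hence no proper subset of the finite set $\cS$ is a boundary; being finite, $\cS$ is closed, so it is exactly the Shilov boundary of $X\times A_K^n(I)$.

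The only genuine difficulties I anticipate are bookkeeping: keeping the Banach norm of $R$ (which governs convergence of the defining series) apart from the supremum seminorm (which governs the Shilov boundary), and justifying the passage from the supremum over $i\in\ZZ^n$ to a maximum attained at a single index $i^{*}$ — this is precisely where the convergence $\|f_i\|_R\rho^i\to0$ is used — together with the fact that a convex function on the polytope $I$ attains its maximum at a vertex. An alternative would be to invoke that the Shilov boundary of a completed tensor product $\Sp(R)\times_K\Sp(S)$ is the set of product points of the Shilov boundaries of the factors, applied to the one-dimensional annuli $A_K^1([\alpha_j,\beta_j])$ (with Shilov boundary $\{|\cdot|_{\alpha_j},|\cdot|_{\beta_j}\}$); but that product statement is itself proved by the same computation, or by a reduction argument valid only over $\calg K$ in the strictly affinoid case, so the direct argument above is preferable.
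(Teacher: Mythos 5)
Your proof is correct and follows essentially the same route as the paper: the same Gauss-norm estimate $|f(y)|\le\max_i|f_i|_{\sup,R}\,\rho(y)^i$ combined with the observation that log-affine monomial bounds are maximized at vertices of $I$ and that each $|f_i|$ attains its supremum on $\cS(X)$. The only difference is that the paper dismisses minimality with ``we easily see,'' whereas you carry it out explicitly via the peak function $f=gh$ (using finiteness of the Shilov boundary of an affinoid space); that argument is sound and is a welcome amplification rather than a departure.
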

\begin{proof}
    Let $f\in \Gamma(\ran{I},\OO_{\ran{I}})$ and write $f=\sum_{i\in\ZZ^n}f_i t^i$ with $f_i\in R$. For any seminorm $|\cdot|\in\ran{I}$, 
    $$ |f|\leq \max_{i\in\ZZ^n}|f_i t^i|=|f_{i_0}| |t^{i_0}|\leq |f_{i_0}|_x \rho^{i_0}\leq\max_{j\in\ZZ^n}|f_j|_x|\rho^j=|f|_{x,\rho}, $$
    for some $x\in \mathcal{S}(X), \rho\in\mathrm{Vert}(I)$, where $i_0$ is an index such that $|f_{i_0}t^{i_0}|$ attains the maximum. Thus, for any element, it attains the maximum at some point in $\mathcal{S}$. Conversely, we easily see that $\mathcal{S}$ is the minimal set satisfying this property. Thus $\mathcal{S}$ is the Shilov boundary of $\ran{I}$.
\end{proof}

For an $l$-tuple $r=(r_1,\dots,r_l)\in\RR^l_{>0}$, we say $r$ is $K$-free if the image of $\log r_1,\dots,\log r_l$ in $\RR/\log\sqrt{|K^\times|}$ generates an $l$-dimensional $\QQ$-subspace. In this case, the ring $K_{[r,r]}$ is a complete nonarchimedean field and we denote it simply by $K_r$.

\begin{lem}
    Let $f:X\to Y$ be a finite surjective morphism of equidimensional affinoid spaces. Then $f^{-1}(\cS(Y))=\cS(X)$.
\end{lem}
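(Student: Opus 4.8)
\emph{Proof proposal.} The plan is to reduce to the case where $X$ and $Y$ are strictly $K$-affinoid, and there argue via the reduction map and Remark~\ref{reduction}. For the reduction step I would base change along $K\hookrightarrow K_r$ for a well-chosen $K$-free tuple $r=(r_1,\dots,r_l)\in\RR^l_{>0}$: taking $r$ so as to absorb the finitely many radii appearing in presentations of $\OO(X)$ and $\OO(Y)$, the spaces $X\times A_K^l([r,r])$ and $Y\times A_K^l([r,r])$ become strictly $K_r$-affinoid. Since $[r,r]$ is a (degenerate) closed polysegment with $\vert([r,r])=\{r\}$, Lemma~\ref{shilov boundary of relative polyannuli} identifies $\cS\big(X\times A_K^l([r,r])\big)$ with $\cS(X)$ via $x\mapsto|\cdot|_{(x,r)}$, and similarly for $Y$; moreover the base change $f_{K_r}$ is again finite, surjective, a morphism of equidimensional spaces, and satisfies $f_{K_r}\big(|\cdot|_{(x,r)}\big)=|\cdot|_{(f(x),r)}$. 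Hence $x\in\cS(X)\iff|\cdot|_{(x,r)}\in\cS(X_{K_r})\iff|\cdot|_{(f(x),r)}\in\cS(Y_{K_r})\iff f(x)\in\cS(Y)$ once the statement is known for $f_{K_r}$, so I may assume $X=\Sp(R)$, $Y=\Sp(S)$ with $R,S$ strictly $K$-affinoid.

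In the strict case I would pass to the reduction $\widetilde f\colon\Spec(\widetilde R)\to\Spec(\widetilde S)$ (Remark~\ref{reduction}(1)). By Remark~\ref{reduction}(2),(3) the reduction maps $\mathrm{red}_X,\mathrm{red}_Y$ are surjective and restrict to bijections of the Shilov boundaries onto the sets of generic points of $\Spec(\widetilde R)$, resp.\ $\Spec(\widetilde S)$; I would also note that $\cS(X)$ is \emph{exactly} the $\mathrm{red}_X$-preimage of the set of generic points (a point lying over a generic point has residue field of maximal transcendence degree over $\widetilde K$, hence is a Shilov point), and likewise for $Y$. Using the commutative square of Remark~\ref{reduction}(1), the lemma then reduces to the scheme-theoretic assertion that $\xi\in\Spec(\widetilde R)$ is a generic point of an irreducible component if and only if $\widetilde f(\xi)$ is. For this I would invoke: $\widetilde f$ is finite (being the reduction of the finite morphism $f$); $\widetilde f$ is surjective (since $\widetilde f\circ\mathrm{red}_X=\mathrm{red}_Y\circ f$ and $\mathrm{red}_X$ are surjective); $\dim\widetilde R=\dim R=\dim X=\dim Y=\dim S=\dim\widetilde S=:d$ (finiteness and surjectivity of $f$, plus equality of dimension under reduction); and $\Spec(\widetilde R)$, $\Spec(\widetilde S)$ are equidimensional because $X$, $Y$ are. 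Granting these, if $\xi$ is generic then $\overline{\{\xi\}}$ is a $d$-dimensional component, which $\widetilde f$ carries onto the irreducible closed set $\overline{\{\widetilde f(\xi)\}}$, again of dimension $d$ by finiteness; a $d$-dimensional irreducible closed subset of the $d$-dimensional $\Spec(\widetilde S)$ is a component, so $\widetilde f(\xi)$ is generic, and conversely $\dim\overline{\{\xi\}}=\dim\overline{\{\widetilde f(\xi)\}}=d$ forces $\xi$ generic.

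The delicate part is the package of facts that "reduction is compatible with the relevant invariants": that the reduction of a finite morphism of strictly $K$-affinoid spaces is finite, that $\dim\widetilde R=\dim R$, and that equidimensionality of $X$ passes to $\Spec(\widetilde R)$. These are standard in the reduction theory of affinoid spaces (cf.\ \cite{Ber}) but genuinely used, and the last one is precisely where the equidimensionality hypothesis enters — without it, a Shilov point of $X$ lying over a lower-dimensional component of $\Spec(\widetilde R)$ need not map into $\cS(Y)$. An alternative route I would consider, bypassing reductions altogether, is to invoke the characterization of the Shilov boundary of a $K$-affinoid space $Z$ as the set of $z\in Z$ with $\mathrm{trdeg}_{\widetilde K}\widetilde{\HH(z)}+\dim_{\QQ}\!\big((|\HH(z)^{\times}|/|K^{\times}|)\otimes_{\ZZ}\QQ\big)=\dim_z Z$: a finite morphism induces finite extensions of complete residue fields, hence preserves the left-hand side, while finiteness, surjectivity and equidimensionality give $\dim_xX=\dim_{f(x)}Y$, so that $x\in\cS(X)\iff f(x)\in\cS(Y)$ is immediate.
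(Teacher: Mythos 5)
Your proposal is correct and follows essentially the same route as the paper: base change along $K\hookrightarrow K_r$ for a $K$-free tuple $r$ to reduce to the strictly affinoid case (using Lemma~\ref{shilov boundary of relative polyannuli} to identify Shilov boundaries before and after), then pass to the reductions, use finiteness and surjectivity of $\widetilde f$ together with equidimensionality and equality of dimensions to show that the preimage of the generic points of $\Spec(\widetilde S)$ is exactly the set of generic points of $\Spec(\widetilde R)$, and conclude via Remark~\ref{reduction}. Your write-up merely makes explicit the dimension-counting step and the fact that $\cS$ is the full $\mathrm{red}$-preimage of the generic points, both of which the paper uses implicitly.
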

\begin{proof}
    We start with the case where $X,Y$ are both strict affinoid and set $X=\Sp(R)$ and $Y=\Sp(S)$. They have the same dimension. Then $\wt{f}$ is finite by \cite{BGR} Theorem 6.3.5/1 and is surjective by the commutative diagram in Remark \ref{reduction}/1 and the surjectivity of the reduction map in Remark \ref{reduction}/2. Also, $\Spec(\wt{R})$ and $\Spec (\wt{S})$ are equidimensional and of the same dimension by \cite{poi} Lemme 4.9. In this case, the preimage of the set of generic points of $\Spec(\wt{S})$ is exactly the set of generic points of $\Spec(\wt{R})$. So we obtain the desired conclusion by Remark \ref{reduction}/3. When $X,Y$ are not necessarily strict affinoid, we can find a $K$-free tuple $r$ such that $X\otimes K_r$ and $Y\otimes K_r$ are both strict $K_r$-affinoid. Then we have the following commutative diagram: 
    \begin{equation*}
        \begin{tikzcd}
            X\otimes K_r \arrow{r}{f\otimes K_r} \arrow[swap]{d}{\pi_1} & Y\otimes K_r \arrow{d}{\pi_2} \\%
            X \arrow{r}{f}& Y
        \end{tikzcd} 
    \end{equation*}
    where $\pi_1,\pi_2$ denote the canonical projections.
    Then the conclusion follows from Lemma \ref{shilov boundary of relative polyannuli} and the strict affinoid case.
\end{proof}

\subsection{$p$-adic exponents and generalized $p$-adic Fuchs theorem}

Now we recall the definition of abstract exponents, $p$-adic Louville numbers and their properties. We will not include proofs of propositions in this subsection: for further details, one can check \cite{Ked1} Chapter 13 and \cite{Ked2} Section 3 for $1$-dimensional situations, \cite{Gac} and \cite{w1} Section 1 for high dimensional situations.

\begin{defn}
    For $a\in\ZP$, set 
    $\la a \ra_m:=\min\{n\in\ZZ_{>0}:a+n\text{ or } a-n\in p^m\ZP\}$. We say that $a\in\ZP$ is a $p$-adic Liouville number if $a\notin \ZZ$ and 
    $$ \liminf_{m\to \infty}\frac{\la a \ra_m}{m}<\infty. $$
    If $a$ is not $p$-adic Liouville, we say that it is a $p$-adic non-Liouville number.
\end{defn}
\begin{defn}[D\'efinitions in p.194 of \cite{Gac}] 
    Let $A=\{A_1,\dots,A_m\}$ be a multisubset of $\ZZ_p^n$. With $A_k=(A_k^1,\dots,A_k^n)$, we say that $A$ has $p$-adic non-Liouville differences (resp. non-integer differences) in the $r$-th direction if for any $1\leq l<k\leq m$, $A^r_l-A^r_k$ is a $p$-adic non-Liouville number (resp. not an integer), and we say that $A$ has $p$-adic non-Liouville differences or $A$ is $(\NLD)$ (resp. non-integer differences or $A$ is $(\NID)$) if it has $p$-adic non-Liouville  differences (resp. non-integer differences) in every direction.
    \end{defn}

    \begin{defn}
        [cf.  \cite{Ked2}, Definition 3.4.4, \cite{w1}, Definition 1.28] Let $A,\AA_1,\dots,\AA_k$ be multisubsets of $\ZZ_p^n$ such that $A=\bigcup_{i=1}^k \AA_i$ as multisets. We say that $\AA_1,\dots,\AA_k$ form a Liouville partition of $A$ in the $r$-th direction if $\AA_1^r,\dots,\AA_k^r$ is a Liouville partition of $A^r$, namely, for any $1\leq l<m\leq k$ and $a_l\in \AA_l^r, a_m\in\AA_m^r$, $a_l-a_m$ is a $p$-adic non-Liouville number which is not an integer. We say that $\AA_1,\dots,\AA_k$ form a Liouville partition of $A$ if it can be obtained   inductively on $k$ as follows:
    \begin{enumerate}
        
        \item When $k=1$, $\AA_1$ is a Liouville partition of $A$ if $\AA_1=A$ as multisets.
        
        \item For general $k$, $\AA_1,\dots,\AA_k$ is a Liouville partition of $A$ if there exists a partition 
        $$ \{1,\dots,k\}=\bigcup_{i=1}^l I_i $$
        as sets for some $l\geq 2$ with each $I_i$ nonempty such that $\bigcup_{j\in I_1}\AA_j,\dots,\bigcup_{j\in I_l}\AA_j$ is a Liouville partition in the $r$-th direction of $A$ for some $1\leq r\leq n$ and that $\AA_j\ (j\in I_i)$ is a Liouville partition of $\bigcup_{j\in I_i} \AA_j$, which is defined by the induction hypothesis.
        \end{enumerate}
        
    \end{defn}
    
    \begin{defn}[D\'efinitions in p.189 of \cite{Gac}]
        \label{equivalence}
    For two multisubsets $A=(A_1,\dots,A_m)$ and $B=(B_1,\dots,B_m)$ of $\mathbb{Z}_p^n$, we say that $A$ is weakly equivalent to $B$ and denote it by $A\weq B$, if there exists a constant $c>0$ and a sequence of permutations $\sigma_h$ $(h\in \ZZ_{>0})$ of $\{1,2,\dots,m \}$ such that, for all $1\leq i \leq n$ and $1\leq j\leq m$, 
    $$ \la A^i_{\sigma_h(j)}-B^i_j \ra_h\leq ch. $$
    We say that $A$ is (strongly) equivalent to $B$ if there exists a permutation $\sigma$ of $\{1,2,\dots,m\}$ such that for all $1\leq i\leq n$ and $1\leq j\leq m$,
    $$A_{\sigma(j)}^i-B^i_j\in \ZZ.$$
    \end{defn}

    \begin{prop}[\cite{w1} Proposition 1.25, cf. \cite{Ked2}, Proposition 3.4.5]
        Let $A$ be a finite multisubset of $\ZZ_p^n$ and let $\AA_1,\dots,\AA_k$ be a Liouville partition of $A$ in the $r$-th direction.
        \begin{enumerate}
        
        \item Let $\BB_1,\dots,\BB_k$ be multisubsets of $\ZZ_p^n$ such that $\BB_i^r$ is weakly equivalent to $\AA_j^r$ for $1\leq j\leq k$. Then $\BB_1,\dots,\BB_k$ form a Liouville partition in the $r$-th direction of $B=\bigcup_{j=1}^k \BB_k$.
        
        \item Suppose that $B$ is a multisubset of $\ZZ_p^n$ weakly equivalent to $A$. Then $B$ admits a Liouville partition $\BB_1,\dots,\BB_k$ in the $r$-th direction such that $\BB_j$ is weakly equivalent to $\AA_j$ for $1\leq j\leq k$.
        
        \end{enumerate}
        \label{weak_equivalence}
        \end{prop}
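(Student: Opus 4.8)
The plan is to run, in the single direction $r$, the argument that proves the one-dimensional statement (\cite{Ked2}, Proposition 3.4.5). Since both assertions and all hypotheses involve only the $r$-th coordinates, I write $a,a',b,b',\dots$ for the $r$-th coordinates of the corresponding elements of $\ZZ_p^n$ and recall that a Liouville partition ``in the $r$-th direction'' is exactly the pairwise condition that for all $l\neq m$, every $a\in\AA_l^r$, $a'\in\AA_m^r$ has $a-a'\notin\ZZ$ and $\la a-a'\ra_h/h\to\infty$ (this is what ``non-integer and non-Liouville'' unwinds to for non-integers). The only tools about $\la\cdot\ra_h$ that enter are the (sub/super)additivity estimates $\la u+v\ra_h\le\la u\ra_h+\la v\ra_h$ and $\la u+v\ra_h\ge\la u\ra_h-\la v\ra_h$, valid for every $h$ large enough that the finitely many relevant differences avoid $p^h\ZZ_p$ (the sole obstruction being $\la 0\ra_h=p^h$), together with the monotonicity $\la u\ra_h\le\la u\ra_{h+1}$.

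For (1) I would fix $l\neq m$, $b\in\BB_l^r$, $b'\in\BB_m^r$ and show $b-b'\notin\ZZ$ with $\la b-b'\ra_h/h\to\infty$. Using $\AA_l^r\weq\BB_l^r$ and $\AA_m^r\weq\BB_m^r$ with a common constant $c$, at each scale $h$ the element $b$ (resp.\ $b'$) is matched to some $a_h\in\AA_l^r$ (resp.\ $a_h'\in\AA_m^r$) with $\la a_h-b\ra_h\le ch$ and $\la a_h'-b'\ra_h\le ch$. Put $D(h):=\min\{\la a-a'\ra_h:a\in\AA_l^r,\ a'\in\AA_m^r\}$; as a minimum of finitely many sequences each growing superlinearly (by the separation hypothesis on the $\AA_j$), $D(h)/h\to\infty$. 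From $b-b'=(a_h-a_h')+(b-a_h)+(a_h'-b')$ the estimates give $\la b-b'\ra_h\ge D(h)-2ch$ for $h$ large, so $\la b-b'\ra_h/h\to\infty$; this already forces $b-b'\notin\ZZ\setminus\{0\}$ and shows $b-b'$ is non-Liouville. To exclude $b=b'$: a pigeonhole over the finitely many pairs $(a_h,a_h')$ produces a fixed $(a,a')\in\AA_l^r\times\AA_m^r$ with $\la a-b\ra_h,\la a'-b\ra_h\le ch$ for infinitely many $h$, whence $\la a-a'\ra_h\le 2ch$ along that sequence (legitimate since the fixed nonzero $a-a'$ avoids $p^h\ZZ_p$ eventually), contradicting that $a-a'$ is non-Liouville. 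Thus $b-b'\neq 0$, hence $b-b'\notin\ZZ$, and the $\BB_j^r$ are pairwise separated, i.e.\ form a Liouville partition in the $r$-th direction of $B=\bigcup_j\BB_j$.

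For (2) I cannot literally descend to dimension one, since $\BB_j$ must be a sub-multiset of $B\subset\ZZ_p^n$, but the partition will be dictated by the $r$-th coordinates. Fix permutations $\sigma_h$ and a constant $c$ realizing $B\weq A$, let $\phi_h\colon B\to A$ be the induced bijections (so $\la\phi_h(b)^i-b^i\ra_h\le ch$ in every coordinate $i$), and let $j(b,h)\in\{1,\dots,k\}$ index the part of the $r$-th-direction partition containing $\phi_h(b)$. The key claim is that $j(b,h)$ is eventually constant in $h$: if not, then $j(b,h)\neq j(b,h+1)$ for infinitely many $h$, and for such $h$ the points $\phi_h(b)$ and $\phi_{h+1}(b)$ lie in distinct parts while monotonicity gives $\la\phi_h(b)^r-b^r\ra_h\le ch$ and $\la\phi_{h+1}(b)^r-b^r\ra_h\le\la\phi_{h+1}(b)^r-b^r\ra_{h+1}\le c(h+1)$, so $\la\phi_h(b)^r-\phi_{h+1}(b)^r\ra_h\le 3ch$, contradicting the superlinear growth of the minimal separation between distinct parts. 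Writing $j(b)$ for the eventual value and $\BB_j:=\{b\in B:j(b)=j\}$, for every $h$ past the uniform stabilization threshold $\phi_h$ carries $\BB_j$ bijectively onto $\AA_j$ (it sends $\BB_j$ into $\AA_j$ for each $j$, and bijectivity of $\phi_h$ on $B=\bigcup_j\BB_j$, $A=\bigcup_j\AA_j$ forces equality of cardinalities), whence $\BB_j\weq\AA_j$ in all coordinates (the finitely many small $h$ only enlarge the constant). Finally $\AA_j\weq\BB_j$ and the $\AA_j$ form a Liouville partition in the $r$-th direction, so by (1) so do the $\BB_j$, and $\bigcup_j\BB_j=B$ by construction.

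I expect the stabilization step in (2) to be the main obstacle: it is precisely the assertion that the scale-dependent matchings packaged into $B\weq A$ can be replaced by one $h$-independent matching compatible with the partition, and its proof is where the monotonicity of $\la\cdot\ra_h$, the finiteness of $A$, and the superlinear separation of the parts have to be combined. By comparison the estimates in (1) are routine, once the behaviour of $\la\cdot\ra_h$ at multiples of $p^h$ is tracked with care.
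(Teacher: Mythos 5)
The paper states this proposition without proof (Section 1.2 explicitly defers to \cite{w1} Proposition 1.25 and \cite{Ked2} Proposition 3.4.5), so there is no in-text argument to compare against; your reconstruction is correct and is essentially the standard argument of those references: triangle inequalities for $\la\cdot\ra_h$ combined with a pigeonhole/stabilization step. The two delicate points — the failure of subadditivity of $\la\cdot\ra_h$ when an intermediate sum lies in $p^h\ZZ_p$, and the eventual constancy of the part index $j(b,h)$ in part (2) — are both identified and handled correctly.
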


\begin{defn}
    Let $I\subset\RR^n_{>0}$ be a polysegment.  A $\nabla$-module over $A^n_K(I)$ is a coherent locally free module $P$ over $A_K^n(I)$ with an integrable connection
$$\nabla:P\to P\otimes\Omega^1_{A^n_K(I)/K}.$$
\end{defn}
\begin{defn}\label{absoluteRobba}
    A complete nonarchimedean differential field of order $m$ is a complete nonarchimedean field $F$ of characteristic $0$ equipped with $m$ commuting bounded nonzero derivations $\partial_1,\dots,\partial_m$. A differential module over a complete nonarchimedean differential field $F$ is a finite dimensional normed $F$-vector space $V$ equipped with commuting bounded actions of $\partial_1,\dots,\partial_m$ satisfying the Leibniz rule. 
    Let $|D_i|_V$ denote the operator norm of $D_i$ on $V$. We use $|D_i|_{\sp,V}$ to denote the spectral radius of $D_i$ on $V$, i.e. $\lim_{k \to \infty}|D_i^k|_V^{\frac{1}{k}}$. If $V\neq 0$, define the intrinsic radius of convergence of $V$ on the $i$-th direction for $1\leq i\leq m$ by
$$  IR_{\partial{i}}(V)
=\frac{|\partial_{i}|_{\mathrm{sp},F}}{|D_i|_{\mathrm{sp},V}},  $$
which is a number in $(0,1]$. Define the intrinsic radius of convergence $IR(V)$ of $V$ as the minimal value of intrinsic radii of convergence among all directions.
\end{defn}
\begin{defn}[cf. \cite{KX}, Definition 1.5.2] Let $I\subset \RR_{>0}^n$ be a polysegment and let $P$ be a $\nabla$-module over $A^n_K(I)$. Take $\rho\in I$, let $F_\rho$ be the completion of $K(t_1,\dots,t_n)$ with respect to the $\rho$-Gauss norm, and put $V_\rho=P\otimes_{K_I}F_\rho$ which is a differential module over $F_\rho$. The intrinsic radius of $P$ at $\rho$ is defined as $IR(V_\rho)$.
    We say that $P$ satisfies the Robba condition if $IR(V_\rho)=1$ for all $\rho\in I$.
    \label{Robba}
    \end{defn}
    \begin{convention}
        For $\zeta=(\zeta_1,\dots,\zeta_n)\in \mupi^n$, an $n$-tuple of variables $t=(t_1,\dots,t_n)$ and an $n$-tuple of $m\times m$ diagonal matrices $$A=(A^1,\dots,A^n)=(\mathrm{diag}(a_{11},\dots,a_{1m}),\dots,\mathrm{diag}(a_{n1},\dots,a_{nm})),$$ we use the following conventions:

        \begin{enumerate}
         \item $\zeta t:=(\zeta_1 t_1,\dots,\zeta_n t_n)$.
        
        \item $\zeta^A:=\zeta_1^{A^1}\cdots\zeta_n^{A^n}$, with $\zeta_i^{A^i}:=\mathrm{diag}(\zeta_i^{ a_{i1}},\dots,\zeta_i^{ a_{im}})$.
        \end{enumerate}
    \end{convention}

    For a $\nabla$-module $P$ over $A^n_K(I)$ satisfying the Robba condition, we define the action of $\zeta\in \mupi^n$ on $P\otimes_KK(\mupi)$ by $$\zeta^*(x)=\sum_{\alpha\in \ZZ^n_{\geq 0}}(\zeta-1)^\alpha\binom{tD}{\alpha} (x),$$
    where $(\zeta-1)^\alpha=(\zeta_1-1)^{\alpha_1}\cdots(\zeta_n-1)^{\alpha_n}$ and $\binom{tD}{\alpha}=\binom{t_1D_1}{\alpha_1}\dots\binom{t_nD_n}{\alpha_n}  $ with $D_i=\nabla(\partial_{t_i})$.  This series converges because of the Robba condition. Moreover, the action of $\mupi^n$ is a group action (check \cite{w1}, Section 2 for details).

The following argument is essential for the definition of $p$-adic exponent.
\begin{prop}[\cite{w1}, Theorem 1.18]
    Let $I\subset \RR_{>0}^n$ be a polysegment, and let $P$ be a coherent locally free module over $A_K^n(I)$. Then for any $\rho$ in the interior of $I$, there exist a subpolysegment $J\subset I$ containing $\rho$ in its interior such that $P|_{A_K^n(J)}$ is free.
    \label{QS}
\end{prop}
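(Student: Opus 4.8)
The plan is to prove the statement by a Quillen--Suslin type argument, reducing to the known algebraic fact that finitely generated projective modules over a polynomial ring (here, over a PID) are free, and then bootstrapping from the fiber to a neighborhood. Fix $\rho$ in the interior of $I$. The first observation is that $K_{I,n}$ is a coherent ring, so $P$ is a finitely presented $K_{I,n}$-module; since $A_K^n(I)$ has a well-behaved structure sheaf, being coherent locally free is equivalent to being projective of constant rank over $K_{I,n}$ after possibly shrinking $I$. The ultimate goal is to find a subpolysegment $J \ni \rho$ with $P|_{A_K^n(J)}$ free, so it suffices to produce a finite set of generators that become a basis over $K_{J,n}$ for small enough $J$.

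First I would treat the one-variable case $n=1$ carefully, as it is the engine of the argument. Over a closed annulus $A_K^1([\alpha,\beta])$ with $[\alpha,\beta]$ aligned, the ring $K_{[\alpha,\beta],1}$ is a principal ideal domain (this is the standard fact that rings of analytic functions on closed one-dimensional annuli are PIDs, once the annulus is ``good'', i.e. has endpoints in $\sqrt{|K^\times|}$; for general $\alpha,\beta$ one first shrinks slightly to an aligned subsegment containing $\rho$). Over a PID every finitely generated projective module is free, so $P|_{A_K^1(J)}$ is free for a suitable aligned $J \ni \rho$, which handles $n=1$. For the inductive step on $n$, write $t = (t', t_n)$ and view $A_K^n(I) = A_K^{n-1}(I') \times A_K^1(I_n)$. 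The idea is to spread out: near the point $\rho$, restrict to the fiber over $\rho'$ in the first $n-1$ coordinates — that is, base change along $K_{I',n-1} \to \mathcal{H}(\rho')$ or more usefully along $K_{I',n-1} \to F_{\rho'}$ — obtaining a $\nabla$-module (or just a coherent locally free module) over a one-variable annulus over a larger field, which is free by the $n=1$ case; then lift a basis from the fiber to a neighborhood using Nakayama/the openness of the locus where a given set of sections generates.

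The key technical step — and the main obstacle — is this lifting, or ``spreading out'', from the fiber over $\rho'$ to an actual subpolyannulus. Concretely: choose elements $e_1,\dots,e_r \in P$ whose images form a basis of $P \otimes F_{\rho'}$ (say after inverting a nonzero $g \in K_{I',n-1}$, so really one works over $K_{I',n-1}[1/g]$ or over a rational subdomain where $g$ is invertible); the determinant of the transition matrix is a nonzero element of $K_{I_n}$-coefficients, and one must argue that it becomes a unit after restricting to a small enough $J' \ni \rho'$ and a small enough $J_n$. This requires control over how the annulus ring $K_{J,n}$ relates to its ``completed fibers'', and in particular that a section of $\mathcal{O}$ which is invertible at every point of the Shilov boundary of $A_K^n(J)$ is invertible — which follows from the maximum modulus principle (Lemma \ref{shilov boundary of relative polyannuli}) and the fact that units are detected by the Gauss norms at the vertices of $J$. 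So the proof structure is: (i) reduce to $I$ closed aligned and $P$ finitely presented projective of constant rank; (ii) base-change to the fiber over $\rho'$, apply the PID argument in the last variable to get a basis there; (iii) clear denominators to obtain candidate generators $e_1,\dots,e_r$ of $P$ over a neighborhood; (iv) show the transition determinant is a unit on $A_K^n(J)$ for $J$ a small enough aligned subpolysegment containing $\rho$, by checking invertibility at the finitely many Shilov points and invoking the maximum principle. Step (iv) is where the genuine analytic (as opposed to purely algebraic) content lies and is the part I expect to demand the most care.
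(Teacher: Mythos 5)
Your base case $n=1$ is fine: for a closed aligned $J\ni\rho$ the ring $K_{J,1}$ is a PID, finite projective modules over it are free, and no spreading out is needed. The gap is in the inductive step, and it is twofold. First, the principle you invoke in step (iv) --- ``a section of $\mathcal{O}$ which is invertible at every point of the Shilov boundary of $A_K^n(J)$ is invertible'' --- is false. The Shilov boundary controls the maximum of $|h|$, not its zero locus: $h=t_1-c$ with $|c|=\rho_1$ satisfies $|h|=\rho_1\neq 0$ at every Shilov point of every $A_K^n(J)$ with $\rho\in J$, yet vanishes at a classical point of each such $A_K^n(J)$. The correct tool (the one used in Lemma \ref{xtonbd} of the paper) is that an element which is a small sup-norm perturbation of a known unit is a unit, the sup norm being checked at the finitely many Shilov points; it requires you to already be close to a unit.

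Second, and more fundamentally, that is exactly what fails when you spread out from the fiber over the Gauss point $\eta_{\rho'}$. To approximate a basis of $P\otimes F_{\rho'}\langle I_n\rangle$ by elements of $P$ you must allow denominators $g\in K_{I',n-1}$: Laurent polynomials are dense in $K_{[\rho',\rho']}$, but only their fraction field is dense in $F_{\rho'}$ (e.g. $1/(t_1-c)$ with $|c|=\rho_1$ lies at positive distance from $K_{[\rho',\rho']}$ in the $\rho'$-Gauss norm). Such a $g$ --- again $t_1-c$ --- is nonzero at $\eta_{\rho'}$ but fails to be invertible on \emph{any} $A_K^{n-1}(J')$ with $\rho'$ in the interior of $J'$; affinoid neighbourhoods of a Gauss point do not contain sub-polyannuli. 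So freeness over the fiber (a one-variable annulus over the field $F_{\rho'}$, hence over a PID) does not propagate to any tube, and it does not descend to $K_{[\rho',\rho']}\langle I_n\rangle$ either, since freeness is not preserved under descent along (the completion of) a fraction-field extension. This is precisely why the paper, following \cite{w1}, first proves freeness over the whole circle $A_K^n([\rho,\rho])$, i.e. over $K\langle\rho/t,t/\rho\rangle$ --- which for $n\geq 2$ is not a PID and genuinely requires the Quillen--Suslin machinery of Section 2.1 (bidistinguished elements and Weierstrass factorization, triviality of unimodular rows, finite free resolutions, and Lang's criterion) --- and only then spreads out from $[\rho,\rho]$ to $J$, which works without denominators because $K[t,t^{-1}]$, hence $K_J$, is dense in $K_{[\rho,\rho]}$.
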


The definition of $p$-adic exponent and its properties are as follows:

\begin{defn}[\cite{w1}, Definitions 3.1, 3.13, cf. \cite{Ked1} Definition 13.5.1]
        \label{exp}
            Let $P$ be a coherent free $\nabla$-module over $A^n_K(I)$ of rank $r$ with basis $e_1,\dots,e_r$. An exponent of $P$ is an $n$-tuple of $r\times r$ diagonal matrices of $A=(A^1,\dots,A^n)$ with entries in $\ZP$ for which there exists a sequence $\{S_{k,A}\}_{k=1}^\infty$ in $\mathrm{Mat}_{r\times r}(K_I)$ satisfying the following conditions.
        \begin{enumerate}
        \item  If we put 
        $(v_{k,A,1},\dots,v_{k,A,r})=(e_1,\dots,e_r)S_{k,A},$
        then for all $\zeta\in \mupk^n$
        $$\zeta^*(v_{k,A,1},\dots,v_{k,A,r})=(v_{k,A,1},\dots,v_{k,A,r})\zeta^A.$$
        
        \item There exists $l>0$ such that $|S_{k,A}|_{I}\leq p^{lk}$ for all $k$.
        
        \item We have $|\det(S_{k,A})|_{\rho}\geq 1$ for all $k$ and all $\rho\in I$.
        \end{enumerate}
        For an open polysegment $I\subset \RR_{>0}^n$ and a coherent locally free $\nabla$-module $P$ over $A^n_K(I)$, an exponent of $P$ is an exponent of $P|_{A_K^n(J)}$ for some $J\subset I$ such that $P|_{A_K^n(J)}$ is free (such $J$ always exists by Proposition \ref{QS}).
\end{defn}
It is an easy observation that if $A$ is an exponent of $P$, then any multisubset $B=(B_1,\dots,B_r)$ of $\ZZ_p^n$ that are equivalent to $A$ is also an exponent of $P$.
\begin{prop}[cf. \cite{w1} Theorem 3.2, Theorem 3.3]\label{weak equivalence}
Let $P$ be a $\nabla$-module over $A^n_K(I)$ satisfying the Robba condition. Then an exponent of $P$ always exists and any two exponents of $P$ are weakly equivalent.
\end{prop}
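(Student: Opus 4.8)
My approach follows the Dwork--Christol--Mebkhout method (cf.\ \cite{Ked1} Chapter~13 and \cite{w1}): I will produce the exponent $A$ as the $p$-adic limit of the ``eigenvalue data'' of the finite group actions of $\mu_{p^k}^n$, and deduce weak equivalence of two exponents by comparing the corresponding change-of-basis matrices. First I reduce to the case where $I=[\alpha,\beta]$ is closed with non-empty interior and $P$ is free with a basis $e_1,\dots,e_r$ orthonormal for every Gauss norm (for open $I$ this is the definition of exponent, and in general one reduces to it via Proposition~\ref{QS} and the freeness results of \cite{w1}). The Robba condition makes the Dwork operators $\binom{t_iD_i}{\alpha_i}$ contractive in every $|\cdot|_\rho$; hence the series defining $\zeta^*$ converges, $|\zeta^*|_\rho\le 1$ for all $\zeta\in\mu_{p^k}^n$ and $\rho\in I$, and $\zeta^*$ is $\sigma_\zeta$-semilinear over $K_I$, where $\sigma_\zeta(f)(t)=f(\zeta t)$.

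\emph{Existence.} Fix $k$. Since $|\mu_{p^k}^n|=p^{kn}$ is invertible in $K$, the $\mu_{p^k}^n$-action on $P\otimes_K K(\mu_{p^k})$ is semisimple and splits into the character eigenspaces $M_{k,a}$ ($a\in(\ZZ/p^k\ZZ)^n$) cut out by the idempotents $\pi_a=p^{-kn}\sum_{\zeta\in\mu_{p^k}^n}\zeta^{-a}\zeta^*$, which satisfy $|\pi_a|_\rho\le p^{kn}$. One checks that the $\pi_a$ commute with $\mathrm{Gal}(K(\mu_{p^k})/K)$, so the decomposition descends to $P=\bigoplus_a N_{k,a}$ over $K_I$ with each $N_{k,a}$ free. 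Choosing a $K_I$-basis of $P$ adapted to this decomposition, its change-of-basis matrix $S_k$ obeys $|S_k|_I\le p^{nk}$ (from $|\pi_a|_\rho\le p^{kn}$); rescaling its columns by monomials $t^m$ --- which only shifts eigenvalues by integer vectors, an allowed modification --- and then by a scalar in $K^\times$ makes $|\det S_k|_\rho\ge 1$ at the cost of enlarging the constant in condition~(2). This yields a multiset $E_k=\{a_{k,1},\dots,a_{k,r}\}\subset(\ZZ/p^k\ZZ)^n$ of level-$k$ eigenvalues. Restricting the $\mu_{p^{k+1}}^n$-action to $\mu_{p^k}^n$ shows reduction mod $p^k$ maps $E_{k+1}$ onto $E_k$; since these are multisets of the same cardinality $r$, the maps are bijections, and threading through them produces $r$ coherent sequences converging in $\ZZ_p^n$ to $a^{(1)},\dots,a^{(r)}$. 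Taking $A$ to be the $n$-tuple of diagonal matrices with $j$-th diagonal entry $a^{(j)}$ and $S_{k,A}:=S_k$ reindexed to match the threads, condition~(1) holds because $\zeta^{a_{k,j}}=\zeta^{a^{(j)}}$ for $\zeta\in\mu_{p^k}^n$, while (2) and (3) are the norm bounds just arranged. Hence an exponent exists.

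\emph{Uniqueness up to weak equivalence.} Let $A,B$ be exponents with witnessing sequences $(S_{k,A}),(S_{k,B})$ and diagonal entries $a_1,\dots,a_r$, resp.\ $b_1,\dots,b_r$, in $\ZZ_p^n$. Set $T_k:=S_{k,A}^{-1}S_{k,B}\in\mathrm{GL}_r(K_I)$, so $(v_{k,B})=(v_{k,A})T_k$. Applying $\zeta^*$, using $\sigma_\zeta$-semilinearity and condition~(1) for both $A$ and $B$, gives $T_k(\zeta t)=\zeta^{-A}\,T_k(t)\,\zeta^B$; comparing Laurent coefficients, the $(i,j)$-entry of $T_k$ is supported on $m\in\ZZ^n$ with $m\equiv b_j-a_i\pmod{p^k}$. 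Now fix $\rho$ in the interior of $I$. Conditions~(2)--(3) for $A$ and $B$ force $|T_k|_{\rho'}\le p^{Lk}$ for all $\rho'\in I$ and $|\det T_k|_\rho\ge p^{-L'k}$ with $L,L'$ independent of $k$; expanding $\det T_k$ ultrametrically, there is a permutation $\sigma_k$ with $|(T_k)_{i\sigma_k(i)}|_\rho\ge p^{-Mk}$ for every $i$, hence a nonzero Laurent coefficient at some $m_i^{(k)}\equiv b_{\sigma_k(i)}-a_i\pmod{p^k}$ whose monomial has $\rho$-norm $\ge p^{-Mk}$. Comparing this with the bound $\le p^{Lk}$ at the two endpoints $\alpha,\beta$ (and using $\alpha<\rho<\beta$) forces $|m_i^{(k)}|\le Ck$ with $C$ independent of $k$. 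Thus $b_{\sigma_k(i)}-a_i$ is congruent mod $p^k$ to an integer vector of size $\le Ck$, which (after replacing $\sigma_k$ by $\sigma_k^{-1}$ and using the symmetry of $\langle\cdot\rangle$) says $\langle A^s_{\sigma_k^{-1}(j)}-B^s_j\rangle_k\le C'k$ for all $s,j$; letting $k$ range over all positive integers gives $A\weq B$.

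\emph{Main obstacle.} The crux is the quantitative part of the existence argument: turning the qualitative eigenspace picture into one witnessing sequence $(S_{k,A})$ that satisfies condition~(2) with a \emph{single} constant $l$ valid for all $k$ and that can be normalized to satisfy~(3). This needs the change-of-basis matrices of the a priori only abstractly free eigenspaces $N_{k,a}$ to be controlled uniformly in $k$, which one gets from the projector bound $|\pi_a|_\rho\le p^{kn}$ together with a matrix-normalization argument over $K_I$ (and, for $n>1$, the Quillen--Suslin-type input of \cite{w1}). Granting these uniform bounds, the level-compatibility/threading for existence and the Gauss-norm bookkeeping for uniqueness are routine.
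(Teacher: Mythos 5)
Your uniqueness argument (forming $T_k=S_{k,A}^{-1}S_{k,B}$, reading off the congruence $m\equiv B_j-A_i\pmod{p^k}$ on the monomial support of its entries, and playing the determinant lower bound against the Gauss-norm upper bounds at an interior radius and at the endpoints to force $\langle A_{\sigma_k(j)}-B_j\rangle_k=O(k)$) is essentially the proof in \cite{w1} Theorem 3.3 and is fine. The existence half, however, has a genuine gap, and it sits exactly where you place your ``main obstacle.'' First, the decomposition $P=\bigoplus_a N_{k,a}$ ``over $K_I$'' does not exist: as you yourself note, $\zeta^*$ is only $\sigma_\zeta$-semilinear over $K_I$, so the idempotents $\pi_a=p^{-kn}\sum_\zeta\zeta^{-a}\zeta^*$ are not $K_I$-linear and their images are not $K_I$-submodules of $P$; the character decomposition lives over the invariant subring (generated by the $t_i^{p^k}$), where the number of characters is $p^{nk}$, not $r$. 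Consequently the multiset $E_k\subset(\ZZ/p^k\ZZ)^n$ of cardinality $r$ and the reduction bijections $E_{k+1}\to E_k$ that you thread through are not defined the way you claim. Second, and more seriously, your normalization does not deliver condition (3) of Definition \ref{exp} compatibly with condition (2): multiplying $S_k$ by monomials and a scalar $c_k$ to force $|\det S_k|_\rho\geq 1$ inflates $|S_k|_I$ by $|c_k|$, and without an a priori lower bound $|\det S_k|_\rho\geq p^{-l'k}$ (which you never establish --- the projector bound $|\pi_a|_\rho\leq p^{kn}$ is an upper bound and says nothing about how degenerate an ``adapted basis'' must be) there is no single constant $l$ making $|S_{k,A}|_I\leq p^{lk}$ for all $k$.

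The proof the paper relies on (visible in the proof of Lemma \ref{contraction}, following \cite{w1} Theorem 3.2 and Dwork's method) avoids both problems by never choosing adapted bases at all. One fixes a basis $e_1,\dots,e_r$, lets $E(\zeta)$ be the matrix of $\zeta^*$, and for each candidate $A$ sets $S_{k,A}=p^{-nk}\sum_{\zeta\in\mu_{p^k}^n}E(\zeta)\zeta^{-A}$. Conditions (1) and (2) then hold automatically for every $A$ (with $l=n$, since $|E(\zeta)|_\rho\leq 1$ by the Robba condition), and these matrices need not be invertible for most $A$. The determinant lower bound (3) is obtained from $S_{0,A}=I$ together with the telescoping identity $\det S_{k,A}=\sum_B\det S_{k+1,B}$, the sum running over the lifts $B\equiv A\pmod{p^k}$ with digits in $\{0,\dots,p-1\}$: by the ultrametric inequality at least one lift satisfies $|\det S_{k+1,B}|\geq|\det S_{k,A}|$, so the $p$-adic digits of $A$ can be chosen inductively so that $|\det S_{k,A}|\geq 1$ for all $k$. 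This adaptive, digit-by-digit selection of the exponent is the actual content of the existence statement and is absent from your proposal; granting it would be assuming precisely what needs to be proved.
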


\begin{lem}[cf. \cite{w1}, Lemma 3.4 ]\label{exact_sequence}
    Let $0\to P_1\to P\to P_2\to 0$ be an exact sequence of $\nabla$-modules over $A^n_K(I)$. Then $P$ satisfies the Robba condition if and only if both $P_1$ and $P_2$ satisfy the Robba condition. Moreover, if $A_i$ is an exponent of $P_i$ for $(i=1,2)$, then the multiset union $A_1\cup A_2$ is an exponent of $P$.
\end{lem}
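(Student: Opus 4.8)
The plan is to handle the two assertions separately: the statement about the Robba condition by a spectral‑radius computation on the fibres, and the statement about exponents by an explicit averaging construction producing the matrices $S_{k,A}$ for $P$ out of those for $P_1$ and $P_2$.

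For the first assertion, fix $\rho\in I$ and base change the sequence along $K_I\to F_\rho$, obtaining an exact sequence of differential modules $0\to V_{1,\rho}\to V_\rho\to V_{2,\rho}\to 0$. Restricting $D_i$ to a submodule, or pushing it forward to a quotient, does not increase its operator norm, hence not its spectral norm, so $|D_i|_{\mathrm{sp},V_{j,\rho}}\le|D_i|_{\mathrm{sp},V_\rho}$ and $IR_{\partial_i}(V_{j,\rho})\ge IR_{\partial_i}(V_\rho)$; since intrinsic radii are always $\le 1$, the Robba condition on $P$ descends to $P_1$ and $P_2$. Conversely, choosing an $F_\rho$‑basis of $V_\rho$ adapted to $V_{1,\rho}$ makes each $D_i$ block upper triangular with diagonal blocks the derivations induced on $V_{1,\rho}$ and $V_{2,\rho}$; expanding $D_i^N$ in block form, the off‑diagonal contribution (a sum of $N-1$ terms $D'^a\Gamma D''^b$, $a+b=N-1$) washes out on taking $N$‑th roots, so $|D_i|_{\mathrm{sp},V_\rho}=\max_j|D_i|_{\mathrm{sp},V_{j,\rho}}$ and hence $IR(V_\rho)=\min\bigl(IR(V_{1,\rho}),IR(V_{2,\rho})\bigr)=1$ whenever both $P_1,P_2$ are Robba. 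This settles the equivalence.

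For the exponent statement I would first reduce to the case where $P$, $P_1$, $P_2$ are all free. Applying Proposition \ref{QS} successively to $P$, to $P_1$, and to $P_2$, and intersecting with the subpolysegments carrying the given witnesses for $A_1$ and $A_2$, I may shrink $I$ to a closed polysegment $J$ over which all three modules are free and over which I still have witnesses $S^{(1)}_{k,A_1}$, $S^{(2)}_{k,A_2}$ (restricting a witness to a smaller polysegment preserves conditions (1)–(3) of Definition \ref{exp}). Since $P_2|_J$ is free, hence projective, the sequence splits as modules, so I fix a basis $e_1,\dots,e_s$ of $P_1$ and elements $e_{s+1},\dots,e_r\in P$ whose images $\bar e_{s+1},\dots,\bar e_r$ form a basis of $P_2$; then $e_1,\dots,e_r$ is a basis of $P$ in which $\nabla$ — and therefore the $\mupi^n$‑action built from it — is block upper triangular, preserving $P_1\otimes K(\mupi)$ and inducing on $P_2\otimes K(\mupi)$ the corresponding action. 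Let $w_{k,1,\bullet}=(e_1,\dots,e_s)S^{(1)}_{k,A_1}$ be the eigenvector frame in $P_1$ and $w_{k,2,\bullet}=(\bar e_{s+1},\dots,\bar e_r)S^{(2)}_{k,A_2}$ the one in $P_2$, the latter with characters $\chi_m(\zeta)=(\zeta^{A_2})_{mm}$. Lifting $w_{k,2,m}$ naively to $\tilde w_{k,2,m}:=(e_{s+1},\dots,e_r)(S^{(2)}_{k,A_2})_{\bullet m}\in P$ and averaging,
$$v_m:=\frac{1}{p^{kn}}\sum_{\tau\in\mupk^n}\chi_m(\tau)^{-1}\,\tau^*(\tilde w_{k,2,m}),$$
the reindexing $\tau\mapsto\zeta\tau$ together with the group‑action property of $\mupk^n$ gives $\zeta^*v_m=\chi_m(\zeta)v_m$ for all $\zeta\in\mupk^n$; a Galois‑descent argument (the operators $\binom{tD}{\alpha}$ have $K$‑rational coefficients, so applying $\gamma\in\Gal(K(\mupk)/K)$ merely reindexes the sum through the cyclotomic character) shows $v_m\in P$; and $v_m$ still projects to $w_{k,2,m}$ in $P_2$. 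Setting $(v_{k,A,1},\dots,v_{k,A,r}):=(w_{k,1,1},\dots,w_{k,1,s},v_1,\dots,v_{r-s})=(e_1,\dots,e_r)S_{k,A}$ and using the semilinearity $\tau^*(fx)=\sigma_\tau(f)\,\tau^*(x)$ with $\sigma_\tau$ the automorphism $t_i\mapsto\tau_it_i$, a direct computation yields
$$S_{k,A}=\begin{pmatrix}S^{(1)}_{k,A_1}&T_k\\ 0&S^{(2)}_{k,A_2}\end{pmatrix},\qquad T_k=\frac{1}{p^{kn}}\sum_{\tau\in\mupk^n}H(\tau)\,\sigma_\tau\bigl(S^{(2)}_{k,A_2}\bigr)\,\tau^{-A_2},$$
where $H(\tau)$ is the top block of the matrix of $\tau^*$ acting on $(e_{s+1},\dots,e_r)$ (the diagonal block of the average collapses to $S^{(2)}_{k,A_2}$ thanks to the defining relation $\zeta^*(w_{k,2,\bullet})=(w_{k,2,\bullet})\zeta^{A_2}$). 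With $A:=A_1\cup A_2$ regarded as the block‑diagonal tuple, condition (1) of Definition \ref{exp} then holds by construction, and condition (3) holds because $\det S_{k,A}=\det S^{(1)}_{k,A_1}\cdot\det S^{(2)}_{k,A_2}$, each factor of $\rho$‑norm $\ge 1$ at every $\rho\in J$.

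The remaining point — which I expect to be the main obstacle — is condition (2), the estimate $|S_{k,A}|_J\le p^{lk}$. Only the block $T_k$ is new, and since $\sigma_\tau$ preserves every Gauss norm and $|\tau^{-A_2}|=1$, we get
$$|T_k|_J\le p^{kn}\cdot\Bigl(\max_{\tau\in\mupk^n}|H(\tau)|_J\Bigr)\cdot|S^{(2)}_{k,A_2}|_J\le p^{kn}\cdot\Bigl(\max_{\tau}|H(\tau)|_J\Bigr)\cdot p^{l_2k}.$$
So everything comes down to bounding $\max_{\tau\in\mupk^n}|H(\tau)|_J$ — the size, in the fixed basis, of the operators $\tau^*$ — by $p^{O(k)}$, uniformly in $k$; this is precisely where the Robba condition is needed. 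Because $|\tau_i-1|\le|p|^{1/(p^{k-1}(p-1))}$ for $\tau\in\mupk^n$ and, under the Robba condition, $|\binom{tD}{\alpha}|_{V_\rho}$ grows at most subexponentially in $|\alpha|$, the series $\tau^*=\sum_\alpha(\tau-1)^\alpha\binom{tD}{\alpha}$ converges with norm $\le p^{O(k)}$, and passing to the maximum over the compact $J$ is harmless. Granting this, $|T_k|_J\le p^{(n+l_2+c)k}$ for a suitable constant $c$, so condition (2) holds with $l=n+l_2+c$ and $A_1\cup A_2$ is an exponent of $P$. Everything besides this uniform‑in‑$k$ operator bound — the reduction to free modules, the reindexing, the Galois descent, and the block‑matrix bookkeeping — is routine.
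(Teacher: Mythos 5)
Your proposal is correct and amounts to unfolding the two citations that constitute the paper's entire proof: the intrinsic-radius equivalence for exact sequences is exactly \cite{Ked1} Lemma 6.2.8, and the block-triangular averaging construction of $S_{k,A_1\cup A_2}$ from $S^{(1)}_{k,A_1}$ and $S^{(2)}_{k,A_2}$ is the content of \cite{w1} Lemma 3.4, which the paper simply invokes. The one step you flag as the main obstacle --- bounding $\max_{\tau\in\mupk^n}|H(\tau)|_J$ by $p^{O(k)}$ --- is in fact a $k$-independent constant: $|(\tau-1)^\alpha|\le 1$ for all $\tau$, and the Robba condition already forces $\sup_\alpha\bigl|\binom{tD}{\alpha}\bigr|_J<\infty$ (this is precisely the convergence of the $\mupi^n$-action that the paper records after Definition \ref{Robba}), so there is no gap.
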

\begin{proof}
    For each $\rho\in I$, the intrinsic radius of convergence of $P$ at $\rho$ is equal to $1$ by \cite{Ked1} Lemma 6.2.8. Then $P$ admits $A_1\cup A_2$ as an exponent by \cite{w1} Lemma 3.4.
\end{proof}

Generalized $p$-adic Fuchs theorem is the following:

\begin{thm}[\cite{w1} Corollary 3.15]\label{decom_all}
        Let $P$ be a coherent locally free $\nabla$-module over $A^n_K(I)$ for some open polysegment $I$ satisfying the Robba condition, admitting an exponent $A$ with Liouville partition $\AA_1,\dots,\AA_k$. Then there exists a unique decomposition $P=P_{1}\oplus\dots\oplus P_{k}$, with $\AA_i$ being an exponent of $P_i$ for $1\leq i\leq k$.
\end{thm}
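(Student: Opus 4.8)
The plan is to induct on the number $k$ of parts of the Liouville partition, the real content sitting in a ``one-direction'' Fuchs theorem, while everything around it (descent from the open polyannulus, uniqueness, the bookkeeping of exponents) is handled by the formal tools already recorded, namely Proposition~\ref{QS}, Lemma~\ref{exact_sequence}, and Propositions~\ref{weak equivalence} and \ref{weak_equivalence}.

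\emph{Reductions.} Since the assertion produces a canonical decomposition, it suffices to construct it, and prove its uniqueness, after restricting to an arbitrary closed sub-polysegment $J\subset I$ on which $P$ is free; such $J$ cover the interior of $I$ by Proposition~\ref{QS}, and on overlaps the uniqueness clause forces the local decompositions to agree, so they glue to a decomposition of $P$ over $A_K^n(I)$ into coherent locally free $\nabla$-submodules, each satisfying the Robba condition by Lemma~\ref{exact_sequence} and carrying the prescribed exponent (the choice of $J$ is immaterial because exponents are well defined up to weak equivalence and Liouville partitions are stable under weak equivalence by Proposition~\ref{weak_equivalence}). So assume $P$ free over a closed polyannulus and induct on $k$. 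The case $k=1$ is vacuous. For $k\ge 2$, unwind the definition of a Liouville partition: there is a set-partition $\{1,\dots,k\}=\bigcup_{i=1}^{l}I_i$ with $l\ge 2$ and an index $1\le r\le n$ such that the coarsenings $B_i:=\bigcup_{j\in I_i}\AA_j$ form a Liouville partition of $A$ in the $r$-th direction, while $\{\AA_j\}_{j\in I_i}$ is a Liouville partition of $B_i$, of length $|I_i|<k$.

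\emph{The main step: decomposition in one direction.} The heart of the proof is: if an exponent $A=B_1\cup\dots\cup B_l$ is a Liouville partition in the $r$-th direction, then $P=Q_1\oplus\dots\oplus Q_l$ with $B_i$ an exponent of $Q_i$. Iterating, one may assume $l=2$, so $A^r$ decomposes into two diagonal blocks whose cross-differences are $p$-adic non-Liouville and non-integer. Here one runs Dwork's argument (as in \cite{Ked1}, Chapter~13, and \cite{Ked2}, Section~3) with the single derivation $D_r=\nabla(\partial_{t_r})$: using the action of $\mupk$ in the $r$-th coordinate on $P\otimes_K K(\mupk)$, form, for each $k$, the projector onto the span of those basis vectors $v_{k,A,i}$ whose $\zeta^{A^r}$-eigencharacter belongs to $B_1^r$, and show these approximate projectors converge as $k\to\infty$ to an idempotent $\nabla$-endomorphism of $P$ — the non-Liouville hypothesis is exactly what makes the error terms decay, and the non-integer hypothesis excludes ``$t_r^{\text{integer}}$''-type obstructions, so the limit has coefficients in $K_I$ rather than in a ramified extension. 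The one novelty relative to the classical one-variable statement is that the ``constants'' in the $r$-th direction form the Banach algebra $K_{I'}$ with $I'=\prod_{i\ne r}I_i$ rather than a field; this is handled by performing the construction after base change to the completed residue field $\HH(\rho)$ at each Gauss point $\rho\in I$ (where it is genuinely one-variable over a complete field), checking the resulting idempotents are compatible as $\rho$ varies, and descending along $K_I\hookrightarrow\prod_\rho\HH(\rho)$. That $Q_1,Q_2$ then satisfy the Robba condition and have $B_1,B_2$ as exponents follows from Lemma~\ref{exact_sequence}.

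\emph{Conclusion of the induction and uniqueness.} Granting the one-direction step, write $P=Q_1\oplus\dots\oplus Q_l$; each $Q_i$ has exponent $B_i$ with Liouville partition $\{\AA_j\}_{j\in I_i}$ of length $<k$, so the inductive hypothesis gives $Q_i=\bigoplus_{j\in I_i}P_j$ with $\AA_j$ an exponent of $P_j$, and concatenating yields $P=\bigoplus_{j=1}^{k}P_j$. For uniqueness, let $P=\bigoplus_jP_j=\bigoplus_jP_j'$ be two such decompositions and fix $j\ne j'$. The composite $P_j\hookrightarrow P=\bigoplus_iP_i'\twoheadrightarrow P_{j'}'$ is horizontal, with image a $\nabla$-submodule of $P_{j'}'$ that is also a $\nabla$-quotient of $P_j$; applying Lemma~\ref{exact_sequence} and Proposition~\ref{weak equivalence} to both, its exponent is weakly equivalent simultaneously to a sub-multiset of $\AA_j$ and to one of $\AA_{j'}$, and Proposition~\ref{weak_equivalence} applied to the Liouville partition $\AA_1,\dots,\AA_k$ shows that no nonempty common sub-multiset of this kind exists, so the image is $0$. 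Hence $P_j\subseteq P_j'$ for all $j$, and by symmetry the two decompositions coincide. The step I expect to be the main obstacle is the convergence analysis in the one-direction decomposition — proving $p$-adic convergence of the $\mupk$-eigenprojectors with the decay controlled precisely by the non-Liouville differences, and that the limit descends to $K_I$ — especially making this work in the relative setting uniformly over all Gauss points; everything else is formal manipulation of exponents via the results quoted above.
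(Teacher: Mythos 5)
Your proposal is correct and follows essentially the same route as the paper's source for this statement: the paper quotes Theorem \ref{decom_all} from \cite{w1} Corollary 3.15 without reproving it, and the proof there proceeds exactly as you describe --- reduction to the free case on closed subpolysegments via Proposition \ref{QS} and gluing by uniqueness, induction on the recursive structure of the Liouville partition down to a one-direction two-block decomposition obtained from Dwork's approximate $\mupk$-eigenprojectors (whose convergence is precisely where the non-Liouville and non-integer hypotheses enter), and the uniqueness argument via Lemma \ref{exact_sequence} and Propositions \ref{weak equivalence} and \ref{weak_equivalence}, which the paper itself reproduces in relative form as Lemma \ref{uniqueness}. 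The only cosmetic difference is that in \cite{w1}/\cite{Ked2} the projectors are built directly over $K_I$ with convergence checked in every $\rho$-Gauss norm, rather than fiber-by-fiber over Gauss points followed by descent, but the two formulations amount to the same estimates.
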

\begin{rmk}
    Keep notations as in Theorem \ref{decom_all}. The statement of Theorem \ref{decom_all} is slightly different from \cite{w1} Corollary 3.15. This is because by Proposition \ref{weak equivalence} and Proposition \ref{wkexp} in the appendix of this paper, a multisubset $B$ of $\ZZ_p^n$ is weakly equivalent to an exponent $A$ of $P$ if and only if $B$ is also an exponent of $P$, but the author was unaware of the ``only if'' part at the time the paper \cite{w1} was written. Due to this reason, if we assume $P$ is of rank $r$, we refer an exponent $A$ of $P$ in following three ways interchangeably:
    \begin{enumerate}
        \item As a multisubset $(A_1,\dots, A_r)$ of $\ZZ_p^n$ to keep the consistency of notations used in \cite{Ked2} and \cite{w1}.
        \item As an $n$-tuple of $r\times r$ diagonal matrices $(A^1,\dots,A^n)$ for simplicity of computation.
        \item As an element of $\ZZ_p^{r\times n}$ or an element of $\ZZ_p^{r\times n}/\weq$ for simplicity of notations.
    \end{enumerate}
\end{rmk}

\section{$p$-adic exponents for relative $\nabla$-modules over relative polyannuli}
In this section, we firstly prove two key claims of this paper.  The first one is a Quillen-Suslin type theorem, saying that any coherent locally free module over polyannuli of small positive width relative to polydiscs are free. It generalizes \cite{w1} Theorem 1.18. The second one asserts that any coherent locally free module over relative polyannuli becomes free after shrinking the polyannuli and the base. Then we introduce the notion of Robba condition and $p$-adic exponents for relative $\nabla$-module over relative polyannuli. We also prove basic properties of them.

\begin{convention}
    From now on, we fix a positive integer $m$ to denote the dimension of an equidimensional rigid space or dagger space (which will serve as the base of our relative polyannuli) and use $s_1,\dots,s_m$ to denote the natural coordinates of the $m$-dimensional closed polydisc $\BBB_{\gamma}^m$ of radius $\gamma\in\RR^m_{>0}$.
\end{convention}

\subsection{A Quillen-Suslin type theorem over $\mathbb{B}^m_{\gamma}\times A^n_K(I)$ }
This subsection is a generalization of Section 1.2 and 1.3 of \cite{w1}, and the strategy of the proof is basically the same. Here we will provide only a brief outline of the proof process to avoid unnecessary repetition. Readers are encouraged to refer to \cite{w1} for details.

\begin{defn}[\cite{Ked1}, Definition 8.1.5]
    Let $R$ be a $K$-Banach algebra with norm $|\cdot|$, and let $\alpha,\beta\in\RR_{>0}$ with $\alpha<\beta$. We define the ring $R[[_0 \alpha/t,t/\beta\rangle$ (the ring of Laurent series which converge and take bounded values on the one dimensional semi-open annulus $\alpha<|t|\leq \beta$) as follows:
    $$ R[[_0 \alpha/t,t/\beta\rangle := \left\{ \sum_{i\in\ZZ}c_it^i: c_i\in R,\ \sup_i \{|c_i|\alpha^i\}<\infty,\ \lim_{i\to\infty}|c_i|\beta^i=0  \right\}. $$ 
    For $f=\sum_{i\in\ZZ}f_it^i\in R[[_0 \alpha/t,t/\beta\rangle$ and $\rho\in[\alpha,\beta]$, the $\rho$-Gauss norm of $f$ is defined as $\max_{i\in\ZZ}\{|f_i|\rho^i\}$.
    \end{defn}

    \begin{defn}[\cite{w1}, Definition 1.7]
        An element $g=\sum_{i\in\ZZ} g_i t^i$ in $ R[[_0 \rho/t,t/\rho\rangle$ is called $t$-bidistinguished of (upper and lower) degree $(\lowd,\upd)$ if the following conditions are satisfied:
        \begin{enumerate}
            \item  Both $g_\lowd$ and $g_\upd$ are units in $R$.
            \item $|g_\lowd|\rho^\lowd=|g_\upd|\rho^\upd=|g|$, $|g_\lowd|\rho^\lowd>|g_\nu|\rho^\nu$ for all $\nu<\lowd$  and $|g_\upd|\rho^\upd>|g_\mu|\rho^\mu$ for all $\mu>\upd$.
        \end{enumerate}
    \end{defn}

    \begin{prop}[cf. \cite{w1}, Proposition 1.8]
        \label{Factorization1}
        Assume the norm on $R$ is multiplicative. Let $f=\sum_{i\in\ZZ}f_it^i\in  R[[_0 \rho/t,t/\rho\rangle$ be a $t$-bidistinguished element of degree $(\lowd,\upd)$. Then there exists a unique factorization $f=f_\upd t^\upd gh$ with 
        $$ g\in  R[[_0 \rho/t,t/\rho\rangle\cap R[[t]],\ \ h\in R[[_0 \rho/t,t/\rho\rangle \cap R[[t^{-1}]] $$
        and $|h|=|h_0|=1$, $|g-1|<1$, where $h_0\in R$ is the constant term of $h$.
    \end{prop}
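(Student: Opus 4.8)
The plan is to reduce the two-sided factorization to two successive one-sided factorizations, each of which is a Weierstrass-type division/preparation argument. First I would divide through by the dominant monomial $f_{\upd} t^{\upd}$, so that after replacing $f$ by $f_{\upd}^{-1} t^{-\upd} f$ we may assume $f = \sum_i f_i t^i$ with $f_{\upd} = 1$, $|f|=1$, $|f_i|\rho^i < 1$ for all $i > \upd$, and $f_{\lowd}$ a unit with $|f_{\lowd}|\rho^{\lowd}=1$ strictly dominating all lower terms. The target is then $f = gh$ with $g \in R[[_0\rho/t,t/\rho\rangle \cap R[[t]]$ satisfying $|g-1|<1$ and $h \in R[[_0\rho/t,t/\rho\rangle \cap R[[t^{-1}]]$ with $|h|=|h_0|=1$ (here $h$ will be, up to a unit, the ``principal part plus constant'', carrying the lower degree $\lowd$, while $g$ is a unit of the positive-power ring congruent to $1$).

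The key steps, in order:
\begin{enumerate}
  \item \emph{Set up a contraction.} Write the desired $h = h_0 + h_{-1}t^{-1}+\cdots \in R[[t^{-1}]]$ and $g = 1 + (g\text{-correction})$ and rearrange $f = gh$ into a fixed-point equation for the pair $(g-1, h)$ using that the part of $f$ in nonnegative powers is ``close to a unit'' and the part in negative powers is ``small'' in the $\rho$-Gauss norm. Concretely, I would split every element of $R[[_0\rho/t,t/\rho\rangle$ as $(\text{nonneg.\ part}) + (\text{neg.\ part})$ and define an operator $T$ on the relevant Banach space of correction terms whose fixed point gives the factorization; the bidistinguishedness inequalities $|f_i|\rho^i < |f|$ for $i>\upd$ and $i<\lowd$ give the strict contraction estimate.
  \item \emph{Invoke completeness.} Since $R$ is a $K$-Banach algebra and $R[[_0\rho/t,t/\rho\rangle$ with the $\rho$-Gauss norm is complete, Banach's fixed point theorem (in the ultrametric / Newton-iteration form) produces a unique fixed point, hence existence and uniqueness of $(g,h)$ in the stated rings.
  \item \emph{Verify the normalizations.} Check $|g-1|<1$ and $|h| = |h_0| = 1$ directly from the fixed-point construction: the correction to $g$ lands in the maximal ``$<1$'' ideal by the contraction estimate, and $h_0$ inherits the unit/norm properties of $f_{\lowd}$-type leading coefficient after the initial normalization. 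Multiplicativity of the norm on $R$ is exactly what is needed to keep products of units units and to control $|gh| = |g||h|$.
  \item \emph{Restore the dropped factor.} Multiply back by $f_{\upd} t^{\upd}$ to obtain $f = f_{\upd} t^{\upd} g h$ in the original normalization, and note uniqueness is preserved since the factor $f_{\upd} t^{\upd}$ is forced (it is the dominant monomial of $f$).
\end{enumerate}

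The main obstacle I expect is setting up the contraction operator so that \emph{both} the convergence condition ($\lim_{i\to\infty}|c_i|\beta^i = 0$, here with $\beta=\rho$) and the boundedness condition ($\sup_i |c_i|\rho^i < \infty$) are preserved under iteration — i.e.\ checking that $T$ actually maps the Banach space $R[[_0\rho/t,t/\rho\rangle$ (and its two one-sided closed subspaces $R[[t]]$, $R[[t^{-1}]]$) into itself, not merely that it contracts. This is the step where one must be careful splitting the Laurent series and bounding the ``cross terms'' from the product $gh$; it is routine but it is where the hypotheses are genuinely used. Everything else — uniqueness, the norm normalizations, and reattaching $f_{\upd}t^{\upd}$ — then follows formally. (This mirrors the proof of the cited \cite{w1}, Proposition~1.8, so I would simply indicate the necessary modifications rather than repeat the argument in full.)
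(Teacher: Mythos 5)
Your proposal is correct and follows essentially the same route as the paper, which itself gives no details but simply cites the proof of \cite{w1}, Proposition 1.8: normalize by the dominant monomial $f_{\upd}t^{\upd}$, split Laurent series into nonnegative and negative parts, and run a successive-approximation/contraction argument in the complete ring $R[[_0\rho/t,t/\rho\rangle$ (whose $\rho$-Gauss norm is multiplicative because the norm on $R$ is), with uniqueness coming from uniqueness of the fixed point. The points you flag as delicate --- stability of the two one-sided subrings under the iteration and the bounding of cross terms in $gh$ --- are exactly the places where the bidistinguishedness inequalities are used in the cited proof.
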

\begin{proof}
    The proof is the same as proof of \cite{w1}, Proposition 1.8.
\end{proof}
From now on in this subsection, unless otherwise specified, let \break$\gamma=(\gamma_1,\dots,\gamma_m)\in\sqrt{|K^\times|}^m$ and $\rho=(\rho_1,\dots,\rho_n)\in \sqrt{|K^\times|}^n$. Let $R=K\langle\gamma^{-1}s\rangle\langle\hat{\rho}/\hat{t},\hat{t}/\hat{\rho}\rangle$, where $\hat{\rho}=(\rho_1,\dots,\rho_{n-1})$ and $\hat{t}=(t_1,\dots,t_{n-1})$. Then $R$ is complete with respect to $(\gamma,\hat{\rho})$-Gauss norm which is multiplicative (denoted by $|\cdot|$). Also notice that $K\langle\gamma^{-1}s\rangle\langle\rho/t,t/\rho\rangle\subset R[[_0 \rho_n/t_n,t_n/\rho_n\rangle$.
\begin{lem}[cf. \cite{w1} Lemma 1.9]
    \label{multiply_small_x}
    Let $f\in K\langle\gamma^{-1}s\rangle\langle\rho/t,t/\rho\rangle$ be $t_n$-bidistintuished of degree $(\lowd,\upd)$, and $x\in K\langle\gamma^{-1}s\rangle\langle\rho/t,t/\rho\rangle$ be an element with $|x|<1$. Then $(1+x)f$ is also $t_n$-bidistinguished of degree $(\lowd,\upd)$.
\end{lem}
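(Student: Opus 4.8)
The plan is to unwind the definition of $t_n$-bidistinguishedness for $(1+x)f$ directly, using the multiplicativity of the $(\gamma,\hat\rho)$-Gauss norm on $R$ to control the coefficients. Write $f = \sum_{i\in\ZZ} f_i t_n^i$ and $x = \sum_{i\in\ZZ} x_i t_n^i$ as elements of $R[[_0\, \rho_n/t_n, t_n/\rho_n\rangle$, where each $f_i, x_i \in R = K\langle\gamma^{-1}s\rangle\langle\hat\rho/\hat t,\hat t/\hat\rho\rangle$. Since $f$ is $t_n$-bidistinguished of degree $(\lowd,\upd)$, by definition $f_{\lowd}$ and $f_{\upd}$ are units in $R$, and $|f_{\lowd}|\rho_n^{\lowd} = |f_{\upd}|\rho_n^{\upd} = |f|$ strictly dominates $|f_\nu|\rho_n^\nu$ for $\nu < \lowd$ and $|f_\mu|\rho_n^\mu$ for $\mu > \upd$ (all norms here being the $(\gamma,\hat\rho)$-Gauss norm on $R$, extended coefficientwise to the $\rho_n$-Gauss norm on the bigger ring). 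The hypothesis $|x| < 1$ in the $\rho$-Gauss norm means $|x_i|\rho_n^i < 1$ for all $i$, hence (by multiplicativity of $|\cdot|$ on $R$ and $|f| \le 1$ — which follows since $f$ has its Gauss norm realized at the unit-normalized coordinates, or more simply since we may after scaling assume $|f|=1$, or just carry $|f|$ along) the product $xf = \sum_k (\sum_{i+j=k} x_i f_j) t_n^k$ satisfies $|(xf)_k|\rho_n^k \le \max_{i+j=k} |x_i|\rho_n^i \cdot |f_j|\rho_n^j < |f|$ for every $k$, with strict inequality because each factor $|x_i|\rho_n^i < 1$.

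The key steps, in order, are: first, record that $(1+x)f = f + xf$ and that the degree-$k$ coefficient of $xf$ has $\rho_n$-Gauss norm strictly less than $|f|$ for all $k$, as above; second, deduce that the degree-$\lowd$ coefficient of $(1+x)f$ is $f_{\lowd} + (xf)_{\lowd}$, which is a unit in $R$ because $f_{\lowd}$ is a unit and $|(xf)_{\lowd}| < |f_{\lowd}|$ (a standard fact: in a Banach algebra with multiplicative norm, adding something of strictly smaller norm to a unit of the ``extremal'' norm keeps it a unit — concretely $f_{\lowd}+(xf)_{\lowd} = f_{\lowd}(1 + f_{\lowd}^{-1}(xf)_{\lowd})$ with $|f_{\lowd}^{-1}(xf)_{\lowd}| < 1$), and similarly for degree $\upd$; third, verify the norm-extremality conditions: $|(1+x)f|$ equals $|f|$ (again since $xf$ contributes strictly less in every degree), so $|f_{\lowd}+(xf)_{\lowd}|\rho_n^{\lowd} = |f|$, and for $\nu < \lowd$ one has $|f_\nu + (xf)_\nu|\rho_n^\nu \le \max(|f_\nu|\rho_n^\nu, |(xf)_\nu|\rho_n^\nu) < |f|$, with the $\upd$ side symmetric; fourth, conclude $(1+x)f$ is $t_n$-bidistinguished of degree $(\lowd,\upd)$.

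I do not expect a serious obstacle here — this is essentially a bookkeeping exercise, and indeed the proof is ``the same'' as the corresponding one-variable statement \cite{w1} Lemma 1.9 with $R$ in place of $K$. The only point deserving a sentence of care is that the relevant coefficient ring is $R = K\langle\gamma^{-1}s\rangle\langle\hat\rho/\hat t,\hat t/\hat\rho\rangle$ rather than a field: one needs that $f_{\lowd}, f_{\upd}$ are \emph{units} (which is part of the bidistinguished hypothesis, not automatic) and that the $(\gamma,\hat\rho)$-Gauss norm on $R$ is multiplicative (which was noted when $R$ was introduced, as $\gamma,\hat\rho$ lie in $\sqrt{|K^\times|}$), so that the ultrametric estimates on products go through and ``unit plus strictly-smaller-norm element is a unit'' remains valid. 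With those two facts in hand the argument is routine, so I would simply state ``the proof is identical to that of \cite{w1} Lemma 1.9, replacing the base field by $R$ and using the multiplicativity of the $(\gamma,\hat\rho)$-Gauss norm.''
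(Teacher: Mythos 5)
Your proposal is correct and follows essentially the same route as the paper, which simply invokes the proof of \cite{w1} Lemma 1.9 verbatim with the field replaced by the multiplicatively-normed Banach ring $R$. The only cosmetic point is that the strict bound $|(xf)_k|\rho_n^k<|f|$ should be justified by the uniform estimate $|(xf)_k|\rho_n^k\leq |x|\cdot|f|$ with $|x|<1$ (rather than by pointwise strictness of each term, since the supremum of infinitely many terms each $<|f|$ need not be $<|f|$); your parenthetical about $|f|\leq 1$ is unnecessary once this is phrased that way.
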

\begin{proof}
    The proof is the same as proof of \cite{w1}, Proposition 1.9.
\end{proof}
Now we can prove the following factorization result in $K_{[\rho,\rho],n}$.

\begin{prop}\label{weierstrass}
For a $t_n$-bidistinguished element $f\in K\langle\gamma^{-1}s\rangle\langle\rho/t,t/\rho\rangle$ of degree $(\lowd,\upd)$, there exists a monic polynomial $P\in R[t_n]$ and a unit $u\in K\langle\gamma^{-1}s\rangle\langle\rho/t,t/\rho\rangle^\times$ such that $f=Pu$.
\end{prop}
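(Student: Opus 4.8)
The statement is a Weierstrass-preparation-type factorization: a $t_n$-bidistinguished element $f$ of $K\langle\gamma^{-1}s\rangle\langle\rho/t,t/\rho\rangle$ factors as a monic polynomial in $R[t_n]$ times a unit. The idea is to combine the two-sided factorization already obtained in Proposition~\ref{Factorization1} with a classical Weierstrass preparation over the coefficient ring, carefully tracking where the factors live so that the ``monic polynomial in $R[t_n]$'' part materializes.

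\medskip

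\textbf{Step 1: Reduce to a one-sided situation.} View $f\in R[[_0\,\rho_n/t_n,t_n/\rho_n\rangle$ via the inclusion noted before Lemma~\ref{multiply_small_x}. Since the norm on $R$ is multiplicative and $f$ is $t_n$-bidistinguished of degree $(\lowd,\upd)$, Proposition~\ref{Factorization1} gives a unique factorization $f=f_\upd t_n^\upd\, g\, h$ with $g\in R[[_0\,\rho_n/t_n,t_n/\rho_n\rangle\cap R[[t_n]]$, $h\in R[[_0\,\rho_n/t_n,t_n/\rho_n\rangle\cap R[[t_n^{-1}]]$, $|h|=|h_0|=1$, and $|g-1|<1$. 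Here $h$ is a unit in $R[[_0\,\rho_n/t_n,t_n/\rho_n\rangle$ (its leading term $h_0$ is a unit and $|h-h_0|<|h_0|$), and $f_\upd\in R^\times$, $t_n^\upd$ is obviously a unit; the element $u_0:=f_\upd t_n^\upd h$ lies in $K\langle\gamma^{-1}s\rangle\langle\rho/t,t/\rho\rangle^\times$ once we check it actually lies in that subring — which it does, since $h$ is a finite-tailed Laurent series with bounded coefficients over $K\langle\gamma^{-1}s\rangle\langle\hat\rho/\hat t,\hat t/\hat\rho\rangle$ and is invertible there. Thus it suffices to produce, for $g\in R[[t_n]]$ with $|g-1|<1$, a factorization $g=P u_1$ with $P\in R[t_n]$ monic and $u_1$ a unit in $K\langle\gamma^{-1}s\rangle\langle\rho/t,t/\rho\rangle^\times$.

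\medskip

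\textbf{Step 2: Weierstrass preparation for $g$ over $R$.} The element $g=\sum_{i\ge 0}g_i t_n^i\in R[[t_n]]$ satisfies $|g-1|<1$; in particular $g_0\in R^\times$ and the ``Weierstrass degree'' of $g$ (smallest $i$ with $|g_i|=|g|$) is $0$ — $g$ is $t_n$-distinguished of degree $0$ in the one-sided sense. I would invoke the standard Weierstrass preparation/division theorem over the Banach ring $R$ (as in \cite{Ked1}, or derived directly by the usual successive-approximation argument, using multiplicativity of $|\cdot|$ on $R$): since $g$ has Weierstrass degree $0$, it is itself a unit in $R\langle t_n/\rho_n\rangle$, so $g=1\cdot g$ with the monic polynomial part being the constant $1$. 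More honestly, the content of the proposition is not here but in recombining: after Step 1 the genuinely polynomial part of $f$ comes from the $t_n^\upd$ factor and from clearing denominators in $h$. So instead I would proceed as follows: write $f_\upd t_n^\upd g h = t_n^{\lowd}\cdot\big(f_\upd t_n^{\upd-\lowd} g\, (t_n^{-\lowd}\!\cdot\! \text{nothing})\big)$ — rather, multiply $h$ (a series in $t_n^{-1}$, with $h_0$ a unit) by $t_n^{N}$ for $N=-\lowd\ge 0$ large enough to clear its negative powers up to the truncation forced by membership in $K\langle\gamma^{-1}s\rangle\langle\rho/t,t/\rho\rangle$ (only finitely many negative powers of $t_n$ occur with the relevant growth), obtaining $t_n^{N}h\in R[[t_n]]$ with unit constant/leading behavior, then apply Weierstrass preparation to $g\cdot(t_n^N h)\in R[[t_n]]\cap K\langle\gamma^{-1}s\rangle\langle\rho/t,t/\rho\rangle$ to split off a monic $P\in R[t_n]$ times a unit in $R\langle t_n/\rho_n\rangle\subset K\langle\gamma^{-1}s\rangle\langle\rho/t,t/\rho\rangle$. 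Then $f=f_\upd\, t_n^{\upd-N}\, P\, u$; absorb $f_\upd$ and the remaining power $t_n^{\upd-N}=t_n^{\upd+\lowd}$ — if this exponent is nonnegative we fold $t_n^{\upd+\lowd}$ into $P$ (still monic after rescaling), and in general we arrange $N$ so the leftover power is absorbable, giving finally $f=P'u'$ with $P'\in R[t_n]$ monic and $u'$ a unit.

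\medskip

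\textbf{Main obstacle.} The delicate point is bookkeeping the precise ring memberships: $f$ lives in $K\langle\gamma^{-1}s\rangle\langle\rho/t,t/\rho\rangle$, which is strictly smaller than $R[[_0\,\rho_n/t_n,t_n/\rho_n\rangle$ and than $R\langle t_n/\rho_n\rangle$, and one must verify that the factors $g$, $h$, and the Weierstrass factors land back in the correct subring so that the asserted unit $u$ really is a unit of $K\langle\gamma^{-1}s\rangle\langle\rho/t,t/\rho\rangle$ (not merely of some completion). The key enabling facts are: multiplicativity of the $(\gamma,\hat\rho)$-Gauss norm on $R$, which makes Weierstrass preparation over $R$ and Proposition~\ref{Factorization1} available; Lemma~\ref{multiply_small_x}, which guarantees bidistinguishedness is stable under multiplication by $(1+x)$ with $|x|<1$, so that the unit factors extracted along the way do not destroy the structure; and the finiteness of negative-degree tails, which lets $t_n^N h$ become a genuine power series. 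Once these memberships are tracked, the factorization is forced and unique up to a unit, matching the statement.
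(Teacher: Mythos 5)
There is a genuine gap, and it sits exactly where the content of the proposition lives. In Step 1 you assert that $h$ is a unit with $|h-h_0|<|h_0|$ and that $h$ is a ``finite-tailed Laurent series'', so that $u_0=f_\upd t_n^\upd h$ is a unit of $K\langle\gamma^{-1}s\rangle\langle\rho/t,t/\rho\rangle$. Proposition~\ref{Factorization1} gives only $|h|=|h_0|=1$, not $|h-h_0|<|h_0|$, and $h$ is a genuinely infinite series in $t_n^{-1}$. Already for $f=t_n^{\lowd}+t_n^{\upd}$ (with $\rho_n=1$) one has $g=1$ and $h=1+t_n^{\lowd-\upd}$, whose inverse $\sum_k(-1)^k t_n^{k(\lowd-\upd)}$ has infinitely many coefficients of norm $1$; it lies in $R[[_0\,\rho_n/t_n,t_n/\rho_n\rangle$ but \emph{not} in $K\langle\gamma^{-1}s\rangle\langle\rho/t,t/\rho\rangle$, where coefficients must tend to $0$ in both directions. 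So $h$ is generally a unit only of the big bounded ring, not of the subring in which the statement is made. Consequently Step 2 collapses: no finite power $t_n^N$ turns $h\in R[[t_n^{-1}]]$ into an element of $R[[t_n]]$, and applying Weierstrass preparation to $g$ (which is distinguished of degree $0$, hence already a unit) extracts nothing; your construction never produces a nonconstant monic polynomial, whereas the factorization must yield one of degree $\upd-\lowd$.

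The missing idea is that all of the ``polynomial content'' is carried by $h$, not by $g$. Since $g$ is a unit with $|g-1|<1$, Lemma~\ref{multiply_small_x} shows $fg^{-1}=f_\upd t_n^{\upd}h$ is still $t_n$-bidistinguished of degree $(\lowd,\upd)$, so $h$, rewritten as a power series in $w=t_n^{-1}$, is \emph{distinguished of degree $\upd-\lowd$} (its coefficients at $w^0$ and $w^{\upd-\lowd}$ are units attaining the norm, and all higher ones are strictly dominated). Classical one-variable Weierstrass preparation over the Banach ring $R$ with its multiplicative norm then gives $h=Q(w)v(w)$ with $Q\in R[w]$ of degree $\upd-\lowd$ and $v$ a unit power series in $w$; reversing, $t_n^{\upd-\lowd}Q(t_n^{-1})$ is, after dividing by its unit leading coefficient, the desired monic $P\in R[t_n]$, and $u=f_\upd t_n^{\lowd}g\,v(t_n^{-1})\cdot(\text{that leading coefficient})$ is the unit, once one checks (this is the bookkeeping you correctly flagged, but it must be done for $g$ and $v$, not for $h$) that these factors lie in $K\langle\gamma^{-1}s\rangle\langle\rho/t,t/\rho\rangle$. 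Without the degree-$(\upd-\lowd)$ preparation applied to $h$ in the variable $t_n^{-1}$, the proof does not go through.
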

\begin{proof}
    The proof is the same as proof of \cite{w1}, Proposition 1.10.
\end{proof} 

\begin{lem}[cf. \cite{w1} Lemma 1.11]
    \label{change_to_bid}
For any non-zero element $f\in K\langle\gamma^{-1}s\rangle\langle\rho/t,t/\rho\rangle$, there exists an isometric isomorphism $\sigma\in \Aut(K\langle\gamma^{-1}s\rangle\langle\rho/t,t/\rho\rangle)$ such that $\sigma(f)$ is $t_n$-bidistinguished.
\end{lem}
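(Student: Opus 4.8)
The plan is to adapt the argument of \cite{w1} Lemma 1.11. The only genuinely new feature is that, because of the polydisc coordinates $s_1,\dots,s_m$, the multiplicative substitutions in the $t$-variables used there must be combined with unipotent ``shears'' in the $s$-variables in order to remove $\bar s$-dependence from the extreme $t_n$-coefficients.

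First I would normalize and extract the combinatorial data. Write $f=\sum_{i\in\ZZ^n}f_it^i$ with $f_i\in K\langle\gamma^{-1}s\rangle$; the convergence condition defining $K\langle\gamma^{-1}s\rangle\langle\rho/t,t/\rho\rangle$ gives $|f_i|_\gamma\rho^i\to 0$ as $|i|\to\infty$, so $\Lambda:=\{i\in\ZZ^n:|f_i|_\gamma\rho^i=|f|\}$ is finite and nonempty, and for each $i\in\Lambda$ the set of $s$-monomials of $f_i$ realizing $|f_i|_\gamma$ is finite and nonempty. After an isometric rescaling of the variables (passing to a finite extension of $K$ if some $\gamma_j$ or $\rho_l$ fails to lie in $|K^\times|$, which is harmless for the downstream application) we may assume $\gamma=\rho=(1,\dots,1)$; then the $1$-Gauss norm on $\mathcal R:=K\langle s\rangle\langle 1/t,t\rangle$ is multiplicative and its reduction is the domain $\widetilde K[\bar s_1,\dots,\bar s_m][\bar t_1^{\pm1},\dots,\bar t_n^{\pm1}]$. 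Let $\bar f=\sum_{i\in\Lambda}\bar f_i(\bar s)\,\bar t^i$ be the reduction of $f$, and write $\hat i=(i_1,\dots,i_{n-1})$, $\langle e,\hat i\rangle=\sum_{l=1}^{n-1}e_li_l$.

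Next I would construct the automorphism on reductions. The candidate is the composite $\bar\sigma$ of the monomial substitution $\bar t_i\mapsto\bar t_i\bar t_n^{e_i}$ ($1\le i<n$), $\bar t_n\mapsto\bar t_n$, with the shears $\bar s_j\mapsto\bar s_j+\bar t_n^{B_j}+\bar t_n^{-C_j}$ ($1\le j\le m$), which is visibly an automorphism with inverse of the same shape. Under $\bar\sigma$ a monomial $c\,\bar s^a\bar t^i$ of $\bar f$ (with $a$ in the support of $\bar f_i$) produces a sum whose terms of maximal $\bar t_n$-degree sit in degree $i_n+\langle e,\hat i\rangle+\sum_ja_jB_j$ and whose terms of minimal $\bar t_n$-degree sit in degree $i_n+\langle e,\hat i\rangle-\sum_ja_jC_j$, both extreme coefficients being a nonzero scalar times a monomial in $\bar{\hat t}$ alone, i.e. a unit of the reduction of $R$. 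I would first choose $e=(e_1,\dots,e_{n-1})$ generically, so that for all $i\ne i'$ in $\Lambda$ with $\hat i\ne\hat i'$ one has $\langle e,\hat i-\hat i'\rangle\ne i'_n-i_n$ (finitely many linear conditions, each excluding a hyperplane slice of $\ZZ^{n-1}$), and then choose $B_j$ and $C_j$ to be rapidly increasing, e.g. large distinct powers of one big integer, so that the maps $(i,a)\mapsto i_n+\langle e,\hat i\rangle+\sum_ja_jB_j$ and $(i,a)\mapsto i_n+\langle e,\hat i\rangle-\sum_ja_jC_j$ become injective on the finite set $\bigsqcup_{i\in\Lambda}\{i\}\times\mathrm{supp}(\bar f_i)$ (pairs with $\hat i=\hat i'$ are separated because then $i_n\ne i'_n$; pairs with $\hat i\ne\hat i'$ are handled by the choice of $e$; the $a$-direction is dominated once $B_j,C_j$ are large enough). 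With such a choice, $\bar\sigma(\bar f)$, regarded as a Laurent polynomial in $\bar t_n$, attains its top degree $\upd$ and bottom degree $\lowd$ uniquely, each with coefficient a scalar times a $\bar{\hat t}$-monomial; the degenerate case $\lowd=\upd$ forces $\bar f$, hence $f$, to be a unit times a monomial, which is already $t_n$-bidistinguished, so there we take $\sigma=\mathrm{id}$.

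Finally I would lift $\bar\sigma$ to an isometric automorphism $\sigma$ of $\mathcal R$ by the evident formulas $t_i\mapsto t_it_n^{e_i}$ and $s_j\mapsto s_j+t_n^{B_j}+t_n^{-C_j}$ (this is where normalized radii are used: each generator is sent to something of the same Gauss norm, and since the norm is multiplicative $\sigma$ and $\sigma^{-1}$ are norm-nonincreasing, hence isometric). Because $\sigma$ is isometric, the reduction of $\sigma(f)$ equals $\bar\sigma(\bar f)$, and bidistinguishedness can be read off: writing $\sigma(f)=\sum_jg_jt_n^j$ with $g_j\in R$, the coefficients $g_{\upd}$ and $g_{\lowd}$ have unit reductions and hence lie in $R^\times$, they satisfy $|g_{\upd}|\rho_n^{\upd}=|g_{\lowd}|\rho_n^{\lowd}=|f|$, and $|g_\mu|\rho_n^\mu<|f|$ for $\mu>\upd$ and for $\mu<\lowd$ since those coefficients reduce to $0$. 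This is exactly the assertion that $\sigma(f)$ is $t_n$-bidistinguished of degree $(\lowd,\upd)$. I expect the main obstacle to be precisely the combinatorial book-keeping of the third paragraph --- simultaneously arranging uniqueness of the two extreme $\bar t_n$-degrees and unit extreme coefficients with a single choice of $e$, $B_j$, $C_j$ --- since this is the point at which the argument genuinely extends, rather than merely transcribes, \cite{w1} Lemma 1.11.
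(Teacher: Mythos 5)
Your route is genuinely different from the paper's. The paper's proof does not use additive shears at all: it relabels $s_1,\dots,s_m$ as extra variables $t_1,\dots,t_m$ and applies a single family of \emph{multiplicative} substitutions $t_k\mapsto t_k(wt_n^l)^{j^{m+n-k}}$ (with $w\in K$, $|w|\rho_n^l=1$) to all $m+n-1$ variables, reading off $\lowd$ and $\upd$ from the lexicographically smallest and largest dominant indices. Your diagnosis of the genuinely new difficulty is correct and is exactly what such a purely multiplicative substitution does not address: multiplying $s_k$ by a unit of norm $1$ leaves the $s$-monomial content of the extreme $t_n$-coefficients intact, so when the dominant monomials of $f$ involve the disc variables nontrivially those coefficients have the form $c\,s^a\hat{t}^b+(\text{smaller})$ with $a\neq 0$, which is \emph{not} a unit of $R=K\langle\gamma^{-1}s\rangle\langle\hat{\rho}/\hat{t},\hat{t}/\hat{\rho}\rangle$. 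Your shears $s_j\mapsto s_j+t_n^{B_j}+t_n^{-C_j}$ are the right device for restoring condition 1 of bidistinguishedness, and the combinatorics of your third paragraph does close: intermediate terms of the multinomial expansion of $\prod_j(s_j+t_n^{B_j}+t_n^{-C_j})^{a_j}$ land in $t_n$-degrees strictly between the two extremes, so injectivity of the two extreme-degree maps on the finite set of dominant monomials (achievable with your generic $e$ and rapidly growing $B_j,C_j$) is all that is needed.

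The gap is the normalization step, which you dismiss too quickly. The lemma demands $\sigma\in\Aut(K\langle\gamma^{-1}s\rangle\langle\rho/t,t/\rho\rangle)$; rescaling to $\gamma=\rho=1$ after a finite extension $L/K$ only produces an automorphism of the $L$-algebra, and conjugating back turns your shear into $s_j\mapsto s_j+\lambda_jt_n^{B_j}+\mu_jt_n^{-C_j}$ with $|\lambda_j|=\gamma_j\rho_n^{-B_j}$ and $|\mu_j|=\gamma_j\rho_n^{C_j}$; such constants exist in $K$ only when $\gamma_j\in|K^\times|\cdot\rho_n^{\ZZ}$, which can fail even for $\gamma_j,\rho_n\in\sqrt{|K^\times|}$ (e.g. $K=\QQ_p$, $\gamma_j=p^{-1/3}$, $\rho_n=p^{-1/2}$). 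Moreover the obstruction is intrinsic rather than an artifact of your method: every unit of $R$ has norm in $|K^\times|\hat{\rho}^{\ZZ^{n-1}}$, every automorphism preserves the supremum (Gauss) norm, and bidistinguishedness forces $|f|=|g_{\upd}|\rho_n^{\upd}\in|K^\times|\hat{\rho}^{\ZZ^{n-1}}\rho_n^{\ZZ}$; hence for $f=s_jt_n$ with $\gamma_j\rho_n\notin|K^\times|\rho^{\ZZ^n}$ no automorphism whatsoever makes $f$ bidistinguished. So your argument as written proves the statement only under a commensurability hypothesis on the radii (or over the extension $L$, after which the descent of the downstream freeness results to $K$ would still have to be supplied); this hypothesis needs to be made explicit, not waved away as harmless.
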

\begin{proof}
For simplicity of the proof, we may change the notation to replace entries of $s$ by $t_1,\dots, t_m$ and entries of $t$ by $t_{m+1},\dots,t_{m+n}$ and denote the $(m+n)$-tuple of indeterminants $(s,t)=(t_1,\dots,t_m,t_{m+1},\dots,t_{m+n})$ by $\underline{t}$. By adding an underline to a letter we just want to emphasize that it denotes an $(m+n)$-tuple. In the same way we denote $(\gamma_1,\dots,\gamma_m,\rho_1,\dots,\rho_n)$ by $\underline{\rho}$. For an index $\underline{i}=(i_1,\dots,i_m,i_{m+1},\dots,i_{m+n})$ in the proof, the symbol $\underline{i}\in\ZZ_{\geq 0}^m\times\ZZ^n$ means that $i_1,\dots,i_m\in\ZZ_{\geq 0}$ and $i_{m+1},\dots,i_{m+n}\in\ZZ$.
Let $f=\sum_{\underline{i}\in \ZZ_{\geq 0}^m\times\ZZ^n} f_{\underline{i}}\underline{t}^{\underline{i}}$ and $\underline{u}=(u_1,\dots,u_{m+n}), \underline{v}=(v_1,\dots,v_{m+n})$ be the largest and smallest index $\underline{\mu}$ with respect to the lexicographic order in $\ZZ_{\geq 0}^m\times\ZZ^n$ such that $|f_{\underline{\mu}}|\underline{\rho}^{\underline{\mu}}=|f|$. Let $L$ be a positive integer with $L\geq \max_{1\leq k\leq m+n}|\mu_k|$ for any $\underline{\mu}=(\mu_1,\dots,\mu_{m+n})$ satisfying $|f_{\underline{\mu}}|\underline{\rho}^{\underline{\mu}}=|f|$. Take $w\in K$ such that $|w|\rho_n^l=1$ for some $l\in\ZZ_{>0}$. Let $\sigma_j$ be the homomorphism defined by
\begin{align*}
\sigma_j:  K\langle\gamma^{-1}s\rangle\langle\rho/t,t/\rho\rangle & \longrightarrow  K\langle\gamma^{-1}s\rangle\langle\rho/t,t/\rho\rangle\ \ \text{ }\\
t_i & \longmapsto t_i(wt_n^l)^{j^{m+n-k}}\ \ \ \ \ \text{ for } 1\leq k\leq m+n-1,\\
t_n & \longmapsto t_n.
\end{align*}
When $j>3L$, one can argue as in proof of \cite{w1} Lemma 1.11 to prove that $\sigma_j(f)$ is $t_n$-bidistinguished of degree $(\lowd,\upd)$ where $\lowd=\sum_{i=1}^{m+n-1}lj^{m+n-i}v_i+v_{m+n}$ and $\upd=\sum_{i=1}^{m+n-1}lj^{m+n-i}u_i+u_{m+n}$. 
\end{proof}
\begin{rmk}\label{uniform_of_j}
    The proof of Lemma \ref{change_to_bid} implies that, for finitely many elements $f_1,\dots,f_m\in K\langle\gamma^{-1}s\rangle\langle\rho/t,t/\rho\rangle$, we can choose a uniform $j$ such that $\sigma_j(f_i)$ are all $t_n$-bidistinguished. 
    \end{rmk}
    \begin{defn}
    An $r$-tuple $f=(f_1,\dots,f_r)$ of elements of a ring $R$ is called unimodular if these elements generate the unit ideal. Two $r$-tuples $f$ and $g$ are called equivalent and denoted by $f\sim g$ if there is a matrix $M\in\GL_r(R)$ such that $Mf^T=g^T$.
    \end{defn}
    
    \begin{prop}[cf. \cite{w1}, Proposition 1.14]\label{unimodular}
    Let $f=(f_1,\dots,f_r)$ be a unimodular tuple in $K\langle\gamma^{-1}s\rangle\langle\rho/t,t/\rho\rangle$. Then $f\sim e_1=(1,0,\dots,0)$.
    \end{prop}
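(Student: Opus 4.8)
The plan is to follow the Quillen--Suslin strategy of \cite{w1}, Proposition 1.14, adapting it to the presence of the extra polydisc variables $s_1,\dots,s_m$. Recall that $R := K\langle\gamma^{-1}s\rangle\langle\hat\rho/\hat t,\hat t/\hat\rho\rangle$ is a complete normed $K$-algebra with multiplicative norm, and $K\langle\gamma^{-1}s\rangle\langle\rho/t,t/\rho\rangle \subset R[[_0\,\rho_n/t_n, t_n/\rho_n\rangle$. I will induct on $n$, the number of annulus variables, the base case $n=0$ being the classical Quillen--Suslin theorem over $K\langle\gamma^{-1}s\rangle$ (a ring of polynomials over a PID completed suitably; in \cite{w1} the corresponding statement over $K\langle\gamma^{-1}s\rangle$ itself is available, so I may invoke it directly). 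For the inductive step, given a unimodular tuple $f=(f_1,\dots,f_r)$ in $K\langle\gamma^{-1}s\rangle\langle\rho/t,t/\rho\rangle$, I first dispose of the trivial case $r=1$ (then $f_1$ is a unit and we are done) and assume $r\geq 2$.

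The key preparatory move is to arrange that one of the entries is monic as a polynomial in $t_n$ over $R$. By Lemma \ref{change_to_bid} together with Remark \ref{uniform_of_j}, I can choose a single isometric automorphism $\sigma=\sigma_j$ of $K\langle\gamma^{-1}s\rangle\langle\rho/t,t/\rho\rangle$ making $\sigma(f_1),\dots,\sigma(f_r)$ all $t_n$-bidistinguished; since $\sigma\in\Aut$, the tuple $\sigma(f)$ is still unimodular, and it suffices to prove $\sigma(f)\sim e_1$. Then Proposition \ref{weierstrass} lets me write $\sigma(f_1) = P u$ with $P\in R[t_n]$ monic and $u$ a unit of $K\langle\gamma^{-1}s\rangle\langle\rho/t,t/\rho\rangle$; replacing $\sigma(f_1)$ by $P$ via the elementary operation of scaling the first coordinate by $u^{-1}$ (which does not change the equivalence class), I reduce to the situation where $f_1 = P$ is monic in $t_n$ over $R$. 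At this point the Horrocks-type argument applies: over the localization obtained by inverting the leading-coefficient data, or more precisely by working with the polynomial ring $R[t_n]$ and using that $P$ is monic, together with the fact that $R$ itself satisfies the Quillen--Suslin conclusion by the inductive hypothesis (here one needs that $R$, being $K\langle\gamma^{-1}s\rangle\langle\hat\rho/\hat t,\hat t/\hat\rho\rangle$, has one fewer annulus variable), one concludes that the unimodular tuple $(P, f_2,\dots,f_r)$ over $K\langle\gamma^{-1}s\rangle\langle\rho/t,t/\rho\rangle$ is equivalent to $e_1$. This is exactly the mechanism of \cite{w1} Proposition 1.14, and the proof there transcribes with $K\langle\gamma^{-1}s\rangle$ playing its new role throughout; I would state this and refer to \cite{w1} for the routine parts, emphasizing only the points where the coefficient ring $R$ has changed.

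The main obstacle I anticipate is purely bookkeeping rather than conceptual: ensuring that every tool used in \cite{w1} Proposition 1.14 is genuinely available over the enlarged base ring $K\langle\gamma^{-1}s\rangle$ rather than over the field $K$. Concretely, the Horrocks patching / Quillen localization step requires that projective modules over $R[t_n]$ that are free after inverting a monic polynomial, and free after completing, are free; the inductive hypothesis must be strong enough to feed this, which is why the induction is set up on $n$ with the polydisc variables carried along unchanged at every stage. One must also check that the elementary operations and the automorphism $\sigma_j$ interact correctly with the $\sqrt{|K^\times|}$-rationality hypotheses on $\gamma$ and $\rho$ (needed so that $R$ stays a nicely normed affinoid-type algebra and the Gauss norm stays multiplicative), but these are exactly the same constraints already imposed in \cite{w1}. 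So the proof is: reduce to $r\geq 2$; apply Lemma \ref{change_to_bid} and Remark \ref{uniform_of_j} to bidistinguish all entries simultaneously; apply Proposition \ref{weierstrass} to make $f_1$ monic in $t_n$; then run the Horrocks/Quillen argument of \cite{w1} Proposition 1.14 verbatim with $K$ replaced by $K\langle\gamma^{-1}s\rangle$, using the inductive hypothesis on the number of annulus variables.
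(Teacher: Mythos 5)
Your proposal matches the paper's proof, which is exactly an induction on the number $n$ of annulus variables with the inductive step deferred verbatim to \cite{w1} Proposition 1.14 (using Lemma \ref{change_to_bid}, Remark \ref{uniform_of_j} and Proposition \ref{weierstrass} as you describe). The only cosmetic difference is that for the now-nontrivial base case $n=0$ the paper invokes \cite{kedlaya2004full} Proposition 6.4 for unimodular tuples over the Tate algebra $K\langle\gamma^{-1}s\rangle$, rather than a statement from \cite{w1}.
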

    \begin{proof}
    We prove it by induction on $n$. The case $n=0$ follows from \cite{kedlaya2004full} Proposition 6.4. Rest of the proof is the same as proof of \cite{w1} Proposition 1.14.
    \end{proof}

    \begin{prop}[cf. \cite{w1}, Proposition 1.15]\label{f.f_resolution}
        Every finite module over $K\langle\gamma^{-1}s\rangle\langle\rho/t,t/\rho\rangle$ has a finite free resolution.
    \end{prop}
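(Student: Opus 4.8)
The plan is to mimic the classical argument that a polynomial-type Weierstrass setup plus Quillen–Suslin over the base yields finite free resolutions, carried out by induction on $n$ (the number of annulus variables $t_1,\dots,t_n$), with the case $n=0$ being the known fact that $K\langle\gamma^{-1}s\rangle$ is (by Quillen–Suslin for Tate algebras, e.g. via \cite{kedlaya2004full} as already invoked in Proposition \ref{unimodular}) a ring over which finite modules have finite free resolutions — indeed every finite projective module is free, and coherence gives the resolution. So fix $n\geq 1$ and write $R = K\langle\gamma^{-1}s\rangle\langle\hat\rho/\hat t,\hat t/\hat\rho\rangle$ as in the running notation, so that $K\langle\gamma^{-1}s\rangle\langle\rho/t,t/\rho\rangle$ sits inside $R[[_0\rho_n/t_n,t_n/\rho_n\rangle$ and, by the inductive hypothesis, finite $R$-modules have finite free resolutions.

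First I would reduce to the case of a cyclic module $S/I$ where $S = K\langle\gamma^{-1}s\rangle\langle\rho/t,t/\rho\rangle$: since $S$ is Noetherian, a finite module $M$ has a presentation $S^a\to S^b\to M\to 0$, and by a standard syzygy/horseshoe argument it suffices to produce finite free resolutions of the syzygy modules, hence of arbitrary finitely generated ideals; and by induction on the number of generators of an ideal, using short exact sequences $0\to (f_1,\dots,f_{k-1})\cap (f_k)\to (f_1,\dots,f_{k-1})\oplus (f_k) \to (f_1,\dots,f_k)\to 0$, it is enough to handle $S/(f)$ for a single nonzero $f$, together with finitely generated ideals of $R[t_n]$-type coming from the Weierstrass step. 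Next, given a nonzero $f$, apply Lemma \ref{change_to_bid} to find an isometric automorphism $\sigma$ making $\sigma(f)$ $t_n$-bidistinguished; since $\sigma$ is an automorphism this does not change the existence of a finite free resolution of $S/(f)$. Now invoke Proposition \ref{weierstrass}: $\sigma(f) = P u$ with $u\in S^\times$ and $P\in R[t_n]$ monic, so $S/(f) = S/(P)$. The point is that $S/(P)$ is a finite free $R$-module of rank $\deg_{t_n} P$ (division by the monic polynomial $P$ gives $R$-basis $1,t_n,\dots,t_n^{\deg P-1}$), hence in particular a finite $R$-module, and therefore has a finite free resolution as an $R$-module by the inductive hypothesis; pushing this resolution up is not quite what we want, so instead one argues that any finite $S$-module $M$ which is finite and free (or merely finitely generated with finite free resolution) over $R$ has a finite free resolution over $S$, because $S$ is a "polynomial-like" extension of $R$ in the sense that $S/(P)$-modules admit the Weierstrass description — more carefully, one shows $S$ has finite global dimension by combining the inductive bound $\mathrm{gldim}\,R<\infty$ with the fact that passing from $R$ to $S$ increases global dimension by at most $1$ (a $t_n$-analogue of $\mathrm{gldim}\,R[t] = \mathrm{gldim}\,R + 1$), which follows from the Weierstrass factorization applied to annihilators.

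Concretely, the cleanest route is: (1) show every finitely generated ideal $J\subset S$ has a finite free resolution, by the generator-count induction above reducing to $J=(f)$, then to $J=(P)$ with $P\in R[t_n]$ monic via Lemma \ref{change_to_bid} and Proposition \ref{weierstrass}; for $J = (P)$, the module $S/(P)$ is finite free over $R$, and one produces a two-term resolution $0\to S\xrightarrow{\cdot P} S\to S/(P)\to 0$ showing $(P)\cong S$ is free, so $J$ itself has the trivial resolution — wait, that already shows $S/(P)$ has projective dimension $\leq 1$ over $S$, hence finite free resolution since $S$ is coherent. Then (2): for a general finite module $M$, take $S^b\twoheadrightarrow M$ with kernel $N$ finitely generated; $N$ is an $S$-submodule of $S^b$, and by a further induction on $b$ together with the ideal case we get $N$ has a finite free resolution, whence so does $M$. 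The main obstacle I anticipate is step (2)'s passage from "finitely generated ideals have finite free resolutions" to "all finite modules do": over a general coherent ring this requires controlling syzygies of submodules of free modules, and the honest way to close it is to establish a finite bound on global dimension of $S$ — the inductive statement should really be "$\mathrm{gldim}\, K\langle\gamma^{-1}s\rangle\langle\rho/t,t/\rho\rangle \leq m+n$" (or some such finite bound), proved by the Weierstrass-plus-induction mechanism, from which finite free resolutions of all finite modules follow formally from Noetherianity. Getting the global-dimension bookkeeping right — in particular verifying that the Weierstrass factorization genuinely realizes $S$ as a degree-one extension of $R$ homologically, uniformly enough to add only $1$ to the global dimension — is the technical heart, and is exactly where one leans on \cite{w1} Proposition 1.15 and its proof rather than reproving everything.
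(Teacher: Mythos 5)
Your top-level skeleton coincides with the paper's: the proof given there is exactly ``induct on $n$, take the case $n=0$ from \cite{kedlaya2004full} Proposition 6.5, and run the rest as in the proof of \cite{w1} Proposition 1.15'', i.e.\ the Weierstrass machinery of Lemma \ref{change_to_bid} and Proposition \ref{weierstrass} applied verbatim with the polydisc variables $s$ carried along inside the coefficient ring $R$. So identifying those ingredients, the induction on $n$, and the correct base case is the right call, and your closing remark that one ultimately ``leans on \cite{w1} Proposition 1.15'' is in fact all the paper itself does.

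That said, the reductions you insert in the middle would be genuine gaps if the argument were meant to stand alone. First, your induction on the number of generators of an ideal via $0\to (f_1,\dots,f_{k-1})\cap(f_k)\to(f_1,\dots,f_{k-1})\oplus(f_k)\to(f_1,\dots,f_k)\to 0$ has no decreasing invariant: the intersection is an ideal whose generator count you do not control, so the induction does not terminate. (A reduction to cyclic modules is in principle available via Auslander's theorem expressing global dimension as the supremum of $\mathrm{pd}(S/I)$ over ideals $I$, but that requires a uniform bound, not just finiteness for each $I$, and is not what you argue.) Second, you deploy the Weierstrass step in the wrong place: since $S=K\langle\gamma^{-1}s\rangle\langle\rho/t,t/\rho\rangle$ is a domain, $0\to S\overset{\cdot f}{\lra} S\to S/(f)\to 0$ is already a finite free resolution for any nonzero $f$, with no preparation needed --- as you yourself notice mid-sentence. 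The actual role of Lemma \ref{change_to_bid} and Proposition \ref{weierstrass} in the inductive step is different: they make $S/(f)$ a finite free module over the ring $R$ with one fewer annulus variable, so that any finite $S$-module killed by $f$ becomes a finite $R$-module, the inductive hypothesis applies to it, and a change-of-rings estimate bounds its projective dimension over $S$ by one more than its projective dimension over $R$. That change-of-rings bookkeeping, together with the passage from finite projective to finite free resolutions (where Proposition \ref{unimodular} enters), is precisely the content of \cite{w1} Proposition 1.15 following \cite{kedlaya2004full} Proposition 6.5, and it is the part your proposal gestures at but does not supply.
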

    \begin{proof}
        We proceed by induction on $n$. The case $n=0$ follows from \cite{kedlaya2004full} Proposition 6.5. Rest of the proof is the same as proof of \cite{w1} Proposition 1.15.
    \end{proof}

    \begin{thm}[cf. \cite{w1} Corollary 1.17]
        For $\gamma\in\RR^m_{>0}$, every coherent locally free module over $\mathbb{B}^m_{\gamma}\times A^n_K([\rho,\rho])$ is free. 
    \end{thm}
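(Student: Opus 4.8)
The plan is to reduce the statement for $\BBB^m_\gamma \times A^n_K([\rho,\rho])$, i.e.\ for the ring $K\langle\gamma^{-1}s\rangle\langle\rho/t,t/\rho\rangle$, to the ring-theoretic facts already accumulated in this subsection --- most notably Proposition~\ref{f.f_resolution} (every finite module has a finite free resolution) and Proposition~\ref{unimodular} (unimodular rows are trivializable). The key point is that ``coherent locally free over $\BBB^m_\gamma\times A^n_K([\rho,\rho])$'' translates into ``finitely generated projective over $K\langle\gamma^{-1}s\rangle\langle\rho/t,t/\rho\rangle$'': the space in question is affinoid (a closed polyannulus over a closed polydisc), its coherent sheaves correspond to finite modules over its ring of global sections, and local freeness of the sheaf is equivalent to the module being projective. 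So the theorem is exactly the assertion that finite projective modules over $A:=K\langle\gamma^{-1}s\rangle\langle\rho/t,t/\rho\rangle$ are free, i.e.\ a Quillen--Suslin statement for $A$.

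First I would record the translation from sheaves to modules explicitly, citing the affinoid GAGA/coherence dictionary, so that it suffices to prove: every finitely generated projective $A$-module $M$ is free. The standard induction on rank then takes over. For $\rk M = 0$ there is nothing to prove; for the inductive step one uses that a finite projective module of positive rank over any ring has a unimodular element precisely when it is ``stably free of the right rank,'' so the real engine is: (a) $M$ is stably free, and (b) unimodular rows over $A$ split off a free rank-one summand. Part (b) is Proposition~\ref{unimodular} directly. For part (a), I would invoke Proposition~\ref{f.f_resolution}: a finite projective module with a finite free resolution is stably free (Euler-characteristic / Schanuel argument --- from $0\to F_k\to\cdots\to F_0\to M\to 0$ with $F_i$ finite free, projectivity of $M$ lets one split the resolution and conclude $M\oplus F^{\mathrm{odd}}\cong F^{\mathrm{even}}$ for suitable finite free $F^{\mathrm{odd}},F^{\mathrm{even}}$). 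Combining: $M$ stably free of rank $r$; if $r\ge 1$, $M\oplus A^s\cong A^{r+s}$ gives a unimodular row, Proposition~\ref{unimodular} peels off an $A$, and one descends to $M'$ with $M\cong A\oplus M'$, $\rk M' = r-1$, and $M'$ still finite projective (hence, by the same resolution argument, stably free); induct. Hence $M\cong A^r$.

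The step I expect to be the main obstacle is the bookkeeping in the stably-free reduction --- specifically making sure that at each stage the complementary module $M'$ obtained from $A\oplus M'\cong M$ is again finite projective \emph{and} inherits a finite free resolution so that Proposition~\ref{f.f_resolution} can be reapplied; this is routine (direct summands of finite projectives are finite projective, and finite projectives over a ring where every finite module has finite free resolution are themselves stably free) but needs to be stated carefully rather than waved through. A secondary subtlety is confirming that $\rho,\gamma\in\sqrt{|K^\times|}$ is not actually needed for this final statement beyond what the cited propositions already assume --- the propositions in this subsection are stated under that hypothesis, and the theorem should simply be stated for $\gamma\in\RR^m_{>0}$ by the usual base-change to a $K$-free enlargement $K_r$ reducing to the case $\gamma,\rho\in\sqrt{|(K_r)^\times|}$, with freeness descending along the faithfully flat map $A\to A\widehat\otimes_K K_r$ via the standard argument that a finite $A$-module becoming free after such a base change, and being projective, is free (using that projectives of constant finite rank over a ring with connected spectrum are detected by their base change). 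I would close by remarking that this theorem is the relative-polydisc analogue of \cite{w1} Corollary~1.17 and will be used together with Proposition~\ref{QS}-type shrinking arguments in the next subsection.
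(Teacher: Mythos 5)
Your main argument is essentially the paper's. The paper also passes to the ring $R=K\langle\gamma^{-1}s\rangle\langle\rho/t,t/\rho\rangle$, notes that a coherent locally free sheaf is a finite projective $R$-module, uses Proposition~\ref{f.f_resolution} to conclude that $P$ has a finite free resolution (hence is ``stable'' in the sense of \cite{lang2002algebra}, XXI, Theorem 2.1) and Proposition~\ref{unimodular} to get the unimodular column extension property, and then quotes \cite{lang2002algebra}, XXI, Theorem 3.6 for freeness. Your explicit unwinding of those two theorems into ``stably free plus cancellation of free summands'' is a correct rendering of the same mechanism.

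The genuine problem is your reduction of general radii to radii in $\sqrt{|K^\times|}$. Freeness of a finite projective module does \emph{not} descend along a faithfully flat base change $A\to A\widehat{\otimes}_K K_r$: projectivity and the rank function descend, but ``projective of constant rank over a ring with connected spectrum'' does not imply free (nonprincipal ideals of Dedekind domains already become free after suitable faithfully flat extensions without being free), so the parenthetical justification you give is not a proof and that step, as written, fails. The paper avoids descent altogether. Reordering so that $\log\rho_1,\dots,\log\rho_l$ form a basis of the $\QQ$-span of $\log\rho_1,\dots,\log\rho_n$ in $\RR/\log\sqrt{|K^\times|}$ and setting $\rho'=(\rho_1,\dots,\rho_l)$, $\rho''=(\rho_{l+1},\dots,\rho_n)$, the ring $K'=K_{[\rho',\rho'],l}$ is itself a complete nonarchimedean field extending $K$, one has $\rho''\in\sqrt{|K'^\times|}^{\,n-l}$, and there is an \emph{equality of rings} $K\langle\gamma^{-1}s\rangle\langle\rho/t,t/\rho\rangle=K'\langle\gamma^{-1}s\rangle\langle\rho''/t'',t''/\rho''\rangle$. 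The general case is therefore literally an instance of the already-treated case over the larger ground field $K'$, with no descent of freeness required. If you keep your structure, you should replace the faithfully flat descent step by this re-identification of the ring.
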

    \begin{proof}
        Let $\rho=(\rho_1,\dots,\rho_n)\in \RR^n_{>0}$. We first reduce to the case $\rho\in\sqrt{|K|^\times}^n$. Let $V$ be the $\QQ$-linear subspace of $\RR/\log\sqrt{|K^\times|}$ generated by $\log\rho_1,\dots,\log\rho_n$. By properly changing the indices, we may assume that $\log\rho_1,\dots,\log\rho_l$ is a basis of $V$. Put $\rho'=(\rho_1,\dots,\rho_l)$ and $\rho''=(\rho_{l+1},\dots,\rho_n)$. Then $K'=K_{[\rho',\rho'],l}$ is a field and the norm on $K'$ extends that on $K$. Thus $\rho''\in \sqrt{|K'^\times|}^{n-l}$ and $K\langle\gamma^{-1}s\rangle\langle\rho/t,t/\rho\rangle=K'\langle\gamma^{-1}s\rangle\langle\rho''/t'',t''/\rho''\rangle$, where $t''=(t_{l+1},\dots,t_{n})$. 

        Now let $P$ be a finite locally free module over $R$. By Proposition \ref{f.f_resolution}, $P$ admits a finite free resolution and hence is stable by \cite{lang2002algebra}, XXI, Theorem 2.1. On the other hand, Proposition \ref{unimodular} implies that $P$ has the unimodular column extension property (see p.849, p.846 of \cite{lang2002algebra} for definition). Then by \cite{lang2002algebra}, XXI, Theorem 3.6, we conclude that $P$ is free.
    \end{proof}

    \begin{thm}[cf. \cite{w1} Theorem 1.18]\label{generalized QS}
        Suppose $I\subset \RR_{>0}^n$ is a closed polysegment and $\gamma\in \sqrt{|K^\times|}^m$. Let $P$ be a coherent locally free module over $\BBB_{\gamma}^m\times A^n_K(I)$, then for any $\rho$ in the interior of $I$, there exists $J\subset I$ containing $\rho$ in its interior such that $P|_{\BBB^m_{\gamma}\times A^n_K(J)}$ is free. 
    \end{thm}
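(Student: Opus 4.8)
The plan is to reduce the statement to the preceding theorem --- that every coherent locally free module over $\BBB^m_\gamma\times A^n_K([\rho,\rho])$ is free --- by the spreading-out argument that proves Proposition \ref{QS} (\cite{w1} Theorem 1.18). Write $R_J$ for the affinoid algebra of $X_J:=\BBB^m_\gamma\times A^n_K(J)$; for a closed polysegment $J$ with $\rho\in\mathrm{int}(J)\subseteq J\subseteq I$, $X_J$ is an affinoid subdomain of $X_I$, and $X_{[\rho,\rho]}$ is the affinoid subdomain with algebra $R_{[\rho,\rho]}=K\langle\gamma^{-1}s\rangle\langle\rho/t,t/\rho\rangle$. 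The key geometric fact I would use is $X_{[\rho,\rho]}=\bigcap_J X_J$, the intersection over all such $J$. By the preceding theorem, the restriction $P\otimes_{R_I}R_{[\rho,\rho]}$ is free; call its rank $r$ (it coincides with the rank of $P$ near $\rho$) and fix a basis $\bar e_1,\dots,\bar e_r$.

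First I would lift this basis to global sections of $P$. Choosing generators $g_1,\dots,g_b$ of the $R_I$-module $P$, I express each $\bar e_j$ as an $R_{[\rho,\rho]}$-linear combination of the images of the $g_k$, and approximate the coefficients by elements of $R_I$ (which is dense in $R_{[\rho,\rho]}$). This produces $e_1,\dots,e_r\in P$ whose images in $P\otimes_{R_I}R_{[\rho,\rho]}$ are as close as desired to $\bar e_1,\dots,\bar e_r$; since $R_{[\rho,\rho]}$ is complete for the multiplicative $(\gamma,\rho)$-Gauss norm, a sufficiently good approximation of a basis is again a basis (the change-of-basis matrix lies in $1+\Mat_r(R_{[\rho,\rho]}^{\circ\circ})$ and is invertible via a geometric series), so $e_1,\dots,e_r$ is a basis of $P\otimes_{R_I}R_{[\rho,\rho]}$ as well. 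Setting $\psi\colon R_I^r\to P$, $(x_i)_i\mapsto\sum_i x_ie_i$, I obtain a coherent $R_I$-module $\coker(\psi)$ with $\coker(\psi)\otimes_{R_I}R_{[\rho,\rho]}=0$.

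Next I would propagate this vanishing to a neighbourhood of $\rho$ by compactness. The support of $\coker(\psi)$ is Zariski-closed in $X_I$, hence a closed --- and therefore compact --- subset of $X_I$, and the vanishing just obtained says it is disjoint from $X_{[\rho,\rho]}=\bigcap_J X_J$; thus the open sets $X_I\setminus X_J$ cover $\mathrm{Supp}(\coker(\psi))$, and since they are totally ordered by inclusion one of them already covers it. Hence there is a $J$ with $\coker(\psi)\otimes_{R_I}R_J=0$, i.e. $\psi\otimes_{R_I}R_J\colon R_J^r\to P\otimes_{R_I}R_J$ is surjective; being a surjection between finite projective $R_J$-modules of the same rank $r$, it is an isomorphism (its kernel is finite projective of rank $0$, hence zero), so $P|_{X_J}$ is free, which is the assertion.

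I do not anticipate a serious obstacle, since the argument runs parallel to \cite{w1}; the two points requiring a little care are the approximation-and-completeness step in the second paragraph (that a small perturbation of a basis over the Banach ring $R_{[\rho,\rho]}$ remains a basis) and the compactness reduction in the third, both of which are routine.
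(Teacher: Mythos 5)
Your proposal is correct and follows essentially the same route as the paper, which defers to \cite{w1} Theorem 1.18 and spells out the identical pattern in the proof of Lemma \ref{localQS}: restrict to the circle $[\rho,\rho]$, invoke the preceding freeness theorem there, approximate a basis by global sections, kill the cokernel on a neighbourhood of $\rho$, and conclude by comparing ranks. The only cosmetic differences are that you propagate the vanishing of the cokernel via compactness of its support rather than via the annihilator ideal and continuity of Gauss norms, and that the family $\{X_I\setminus X_J\}_J$ is merely directed rather than totally ordered, which is all the compactness argument requires.
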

\begin{proof}
    The proof is the same as \cite{w1} Theorem 1.18.
\end{proof}

\subsection{Relative $\nabla$-modules and their $p$-adic exponents}
In this subsection we introduce $p$-adic exponents for relative $\nabla$-modules over relative polyannuli satisfying the Robba condition and prove some properties for it.
\begin{defn}
    For a polysegment $I\subset \RR_{>0}^n$ and a $K$-rigid space or dagger space $X$, the relative polyannulus with radius $I$ over $X$ is the product $X\times A^n_K(I)$. If $X=\Sp(R)$ is an affinoid space (resp. affinoid dagger space), the ring of global sections of $X\times A_K^n(I)$ is
    $R_I=R\cotimes_K K_I$ (resp. $R^\dagger_I= R \dagotimes_K K^\dagger_I$).
    For $(x,\rho)\in X\times I$ and every $f=\sum_{i\in\ZZ^n}f_it^i\in R_I$, $(x,\rho)$ defines a seminorm over $R_I$ by setting $|f|_{(x,\rho)}=\max_{i\in\ZZ^n}
    \{|f_i|_x\rho^i\}$. The function $(x,\rho)\mapsto |f|_{(x,\rho)}$ is continuous on $X\times I$. When $I$ is closed, the supremum norm associated to $x$ on $R_I$ is defined by 
    $|f|_{(x,I)}:=\max_{\rho\in I}\{|f|_{(x,\rho)}\}$. The function $x\mapsto |f|_{(x,I)}$ is also continuous on $X$.
\end{defn}
\begin{convention}
    From now on, we fix a positive integer $n$ to denote the relative demension of a polyannulus $X\times A_K^n(I)$ relative to $X$.
\end{convention}

Firstly we prove the following two lemmas which help us to reduce most situations to the case of free modules.

        \begin{lem}\label{xtonbd}
            Suppose that $X=\Sp(R)$ is an affinoid space (resp. dagger space) and $I\subset\RR_{>0}^n$ be a closed polysegment. Let $\mathfrak{a}$ be an ideal of $R_I$ and $x\in X$. If $\mathfrak{a}\HH(x)_I$ is the unit ideal of $\HH(x)_I$, then there exists an affinoid (resp. affinoid dagger) neighborhood $U=\Sp(A)$ of $x$ such that $\mathfrak{a}\otimes_{R_I}A_I$ is the unit ideal of $A_I$.
            \end{lem}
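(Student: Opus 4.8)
The plan is to prove this by a standard approximation/spreading-out argument using the continuity of norms over the affinoid base together with compactness of the closed polyannulus fiber. First I would reduce to the affinoid case: the dagger case follows by passing to a fringe algebra and noting that unimodularity of a finite tuple of generators is an open condition that descends along the dense inclusion $A\hookrightarrow A'$ into a fringe algebra, so it suffices to treat $X=\Sp(R)$ affinoid over $K$. Next, since $\mathfrak{a}\HH(x)_I$ is the unit ideal, I can pick finitely many elements $f_1,\dots,f_N\in\mathfrak{a}$ together with $g_1,\dots,g_N\in\HH(x)_I$ with $\sum_j \bar{f_j} g_j = 1$ in $\HH(x)_I$, where $\bar{f_j}$ denotes the image of $f_j$. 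The key point is then to approximate: since $R_I$ maps to $\HH(x)_I$ with dense image (the image of $R$ is dense in $\HH(x)$, hence $R_I$ is dense in $\HH(x)_I$ for the $I$-supremum norm, using that $I$ is closed so $K_I$ is affinoid), I can choose $\tilde g_j\in R_I$ with $|\,\sum_j f_j \tilde g_j - 1\,|_{(x,I)} < 1$. Writing $h := \sum_j f_j \tilde g_j$, this says $|h-1|_{(x,I)}<1$, so $h$ is invertible in $\HH(x)_I$ (its reduction is $1$), hence $1\in\mathfrak{a}\HH(x)_I$ is witnessed by the explicit bounded expression $h^{-1}\sum_j \tilde g_j f_j$; more importantly $h\in\mathfrak{a}\otimes_{R_I}(\text{anything})$ automatically.

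Now I would use continuity to spread out over the base. The element $h-1$ lies in $R_I$; the function $x'\mapsto |h-1|_{(x',I)}$ is continuous on $X$ by the last sentence of the definition preceding the lemma, and it takes a value $<1$ at $x$. Hence there is an affinoid (resp. affinoid dagger) neighborhood $U=\Sp(A)$ of $x$ on which $|h-1|_{(x',I)}<1$ for all $x'\in U$; concretely one can take a rational (Laurent) subdomain cut out by $|h-1|\le c$ for a suitable $c<1$ in $\sqrt{|K^\times|}$, which is an affinoid neighborhood of $x$. On such $U$, the image of $h$ in $A_I$ satisfies $|h-1|_{(x',I)}<1$ for every $x'\in U$, equivalently the $I$-supremum norm of the image of $h-1$ in $A_I$ is $<1$ (since $|f|_{(U,I)} = \sup_{x'\in U}|f|_{(x',I)}$ and this supremum is attained and $<1$ by compactness of $U$ and continuity). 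Therefore $h$ is a unit in $A_I$: its $I$-Gauss norms are all $1$ and $h = 1-(1-h)$ is inverted by the convergent geometric series $\sum_{k\ge0}(1-h)^k$, which converges in $A_I$ because $A_I$ is complete (resp. the fringe-algebra completion is complete, and overconvergence is preserved) and $|1-h|_{(x',\rho)}<1$ uniformly. Since $h=\sum_j f_j\tilde g_j$ with $f_j\in\mathfrak{a}$, this exhibits $1\in \mathfrak{a}\otimes_{R_I}A_I$, as desired.

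The main obstacle I anticipate is the bookkeeping around the two norms — the fibral supremum norm $|\cdot|_{(x,I)}$ versus letting $x$ vary — and in particular justifying cleanly that ``$|h-1|_{(x',I)}<1$ for all $x'$ in a neighborhood'' can be arranged as an \emph{affinoid} (rather than merely open) neighborhood. This is where I would be careful: one should first shrink $|h-1|_{(x,I)}$ strictly below some $c\in\sqrt{|K^\times|}$ with $c<1$ (possible since the value group is dense after base change, or directly by choosing the approximation $\tilde g_j$ fine enough), and then the locus $\{x'\in X : |h-1|_{(x',\rho)}\le c \ \forall \rho\in I\}$ is a finite intersection of Laurent domains (one per coefficient of the finite ``essential part'' of $h-1$, after truncating the tail which is already small by the defining convergence condition of $R_I$), hence affinoid, and contains $x$ in its interior. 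The dagger case requires the extra remark that this whole construction can be performed already at the level of a fringe algebra and then reinterpreted overconvergently, which is routine given the setup in the paper. Everything else — density, completeness, the geometric series — is standard.
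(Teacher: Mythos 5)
Your overall strategy is the same as the paper's (write $1$ as an $\HH(x)_I$-combination of elements of $\mathfrak{a}$, approximate the coefficients, and spread out over the base using the continuity of $y\mapsto|\cdot|_{(y,I)}$), but there is one genuine gap: the claim that ``the image of $R$ is dense in $\HH(x)$, hence $R_I$ is dense in $\HH(x)_I$'' is false in general. The field $\HH(x)$ is the completion of $\mathrm{Frac}(R/\ker|\cdot|_x)$, while the image of $R$ only gives $R/\ker|\cdot|_x$; already for $R=K\langle s\rangle$ and $x$ the Gauss point, $\HH(x)$ is the completion of $K(s)$ for the Gauss norm and contains $1/s$, which is not in the closure of $K\langle s\rangle$. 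Consequently you cannot in general choose the approximants $\tilde g_j$ in $R_I$ itself, and the element $h=\sum_j f_j\tilde g_j$ with $|h-1|_{(x,I)}<1$ need not exist in $R_I$. This is precisely why the paper's proof approximates the coefficients by elements of $\mathrm{Frac}(R/\ker|\cdot|_x)[t,t^{-1}]$, i.e.\ allows a denominator: it produces $h'_j\in R[t,t^{-1}]$ and $f'\in R$ with $|f'|_x\neq 0$ such that $\sum_j (h'_j/f')g_j=1+\phi$ with $|\phi|_{(x,I)}<1$, and then shrinks $U$ so that \emph{both} $|\phi|_{(y,I)}<1$ \emph{and} $|f'|_y\neq 0$ hold on $U$, making $f'$ invertible in $A$ and $1+\phi$ invertible in $A_I$.

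Everything else in your plan goes through: the choice of finitely many generators, the continuity argument for finding the neighborhood $U$ (continuity of $y\mapsto|\phi|_{(y,I)}$ suffices, since Berkovich points admit a basis of affinoid neighborhoods, so your extra care about cutting out Laurent domains is not needed), the geometric-series inversion, and the reduction of the dagger case. The fix is local and routine — insert the denominator $f'$ and add the condition $|f'|_y\neq 0$ to the definition of $U$ — but as written the key approximation step would fail.
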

            
            \begin{proof}
                We give a proof for affinoid spaces here, and the proof for dagger spaces is the same.
                Let $g_1,\dots,g_s\in R_I$ be a set of generators of $\mathfrak{a}$, then there exists $h_1,\dots,h_s\in \HH(x)_{I}$ such that 
            $$h_1g_1+\cdots+h_s g_s=1.$$
            Since $\mathrm{Frac}(R/\ker|\cdot|_x)[t,t^{-1}]$ is dense in $\HH(x)_{I}$, one can choose $h'_1,\dots,h'_s\in R[t,t^{-1}]$ and $f'\in R$ with $|f'|_x\neq 0$ such that 
            $$\frac{h'_1}{f'}g_1+\cdots+\frac{h'_s}{f'}g_s=1+\phi,$$
            where $\phi\in R_I[f'^{-1}]$ and $|\phi|_{(x,I)}<1$. Since $|\cdot|_{(x,I)}$ is a continuous function of $x$, one can choose an affinoid neighborhood $U=\Sp(A)$ of $x$ such that $|\phi|_{(y,I)}<1$ and $|f'|_y\neq 0$ for all $y\in U$, so that $1+\phi$ is invertible in $A_I$. Then $g_1,\dots,g_s$ generates the unit ideal over $A_{I}$.
            \end{proof}

            \begin{lem}\label{localQS}
                Suppose $X=\Sp(R)$ is an affinoid space (resp. affinoid dagger space), and $I\subset \RR^n_{>0}$ is a closed polysegment. Let $P$ be a coherent locally free module over $\ran{I}$. Then for every $x\in X$ and $\rho$ in the interior of $I$, there exists an affinoid (resp. affinoid dagger) neighborhood $U$ of $x$ and closed polysegment $J\subset I$ containing $\rho$ in its interior such that $P|_{U\times A_K^n(J)}$ is free.
            \end{lem}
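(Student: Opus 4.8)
The plan is to combine the two preceding results: Theorem~\ref{generalized QS}, which handles the case where the base is a polydisc, and Lemma~\ref{xtonbd}, which lets us transport the unit-ideal (hence local-freeness) condition from a fiber to a neighborhood of the base point. The key observation is that the local structure of coherent modules over $X\times A_K^n(J)$ near a point $(x,\rho)$ can be controlled by first passing to a polydisc neighborhood of $x$ in $X$, then applying the polydisc case.

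First I would reduce to the case where $X$ is affinoid (the dagger case follows by the same argument, replacing affinoid neighborhoods with affinoid dagger neighborhoods and using a fringe algebra when necessary). Since $P$ is coherent and locally free over $\ran{I}$, it is in particular a finitely presented module, so near $(x,\rho)$ it has a presentation $R_I^a \xrightarrow{M} R_I^b \to P \to 0$ with $M$ a matrix over $R_I$. After possibly shrinking $X$ to a smaller affinoid neighborhood of $x$ (and shrinking $I$ to a closed subpolysegment $I'$ still containing $\rho$ in its interior, aligned so that the relevant rings remain affinoid), I claim we may assume $X$ admits a finite morphism, or better, that we can cover $x$ by a polydisc. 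Concretely: choose generators of the maximal ideal defining a Weierstrass/Laurent subdomain of $X$ around $x$; by the theory of affinoid spaces, $x$ has an affinoid neighborhood $U_0$ together with a finite morphism $U_0 \to \BBB^m_{\gamma,K}$ for suitable $\gamma$ (Noether normalization for affinoids). Actually, the cleanest route is: it suffices to prove the statement after base change along a finite surjective morphism, and locally on $X$ there is a finite surjective morphism from an affinoid subdomain of $X$ onto a polydisc $\BBB^m_{\gamma,K}$. Pulling $P$ back to $\BBB^m_{\gamma,K}\times A_K^n(I')$ and invoking Theorem~\ref{generalized QS}, we find $J\subset I'$ containing $\rho$ in its interior such that the pullback of $P$ to $\BBB^m_{\gamma,K}\times A_K^n(J)$ is free.

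However, freeness of the pullback to a polydisc does not immediately give freeness of $P$ itself near $(x,\rho)$, since the finite map $U_0 \to \BBB^m_{\gamma,K}$ is not an isomorphism. So instead I would apply Theorem~\ref{generalized QS} more carefully: the point is that \emph{local} freeness on the fiber is what we transport. Fix the point $x$ and consider the fiber $A_{\HH(x)}^n(I) = \Spec$ of $\HH(x)_I$. By Proposition~\ref{QS} applied over the complete field $\HH(x)$, there is $J_0 \subset I$ containing $\rho$ in its interior with $P|_{A_{\HH(x)}^n(J_0)} = P\otimes_{R_I}\HH(x)_{J_0}$ free, say of rank $r$. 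Lift a basis: choose $v_1,\dots,v_r \in P\otimes_{R_{J_0}} A_{J_0}$ for a suitable affinoid neighborhood $U=\Sp(A)$ of $x$ mapping to free generators on the fiber; the Fitting-ideal/minor conditions asserting that $v_1,\dots,v_r$ form a basis of a coherent locally free module are open conditions cut out by the non-vanishing of appropriate ideals, and these become the unit ideal on the fiber $\HH(x)_{J_0}$. Now Lemma~\ref{xtonbd}, applied to each relevant Fitting ideal, produces an affinoid neighborhood $U'$ of $x$ and (after shrinking $J_0$ to $J$ if needed to keep everything over a closed subpolysegment) such that these ideals become the unit ideal over $A'_{J}$, whence $P|_{U'\times A_K^n(J)}$ is free of rank $r$.

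The main obstacle I anticipate is the bookkeeping in the reduction step: ensuring that after shrinking both the base $X$ (to an affinoid neighborhood of $x$) and the polysegment $I$ (to an aligned closed subpolysegment $J\ni\rho$ in its interior), all the ambient rings $A_J$, $\HH(x)_J$ remain of the expected type and the coherence/presentation data is preserved under base change. One must be a little careful that the Quillen--Suslin input (Theorem~\ref{generalized QS} / Proposition~\ref{QS}) applies on the fiber over the \emph{complete residue field} $\HH(x)$, which is a genuine complete nonarchimedean field, so Proposition~\ref{QS} applies verbatim there; the real content is then just propagating the resulting freeness to a base neighborhood, which is exactly what Lemma~\ref{xtonbd} is designed to do. The matching of polysegments — that a single $J$ works simultaneously for all the finitely many Fitting-ideal conditions — is handled by taking the intersection of finitely many subpolysegments, which again contains $\rho$ in its interior.
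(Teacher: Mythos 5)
Your final argument (third paragraph) is essentially the paper's proof: apply Proposition~\ref{QS} on the fiber over $\HH(x)$ to shrink $I$ to $J$, lift an approximate basis from $P\otimes_{R_I}\HH(x)_J$ using density of $\mathrm{Frac}(R/\ker|\cdot|_x)[t,t^{-1}]$, and then use Lemma~\ref{xtonbd} applied to the annihilator of the cokernel of $R_J^{\oplus r}\to P$ (your ``Fitting-ideal'' condition) to propagate surjectivity to a neighborhood $U$ of $x$, with injectivity following because the kernel is then projective of rank $0$. The detour through Theorem~\ref{generalized QS} and a finite map to a polydisc in your first two paragraphs is, as you yourself note, a dead end, and the paper does not use it either.
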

            \begin{proof}
                We give a proof for affinoid spaces here, and the proof for affinoid dagger spaces is the same.
            By Proposition \ref{QS}, we can choose a closed polysegment $J\subset I$ containing $\rho$ in its interior such that $P|_{x\times A_K^n(J)}$ is free. So by restricting to $\ran{J}$, it is harmless to assume that $J=I$. Now choose a basis $e_1,\dots,e_r$ of $P|_{x\times A_K^n(I)}$ and write $m_1,\dots,m_k$ to be the set of generators of $P$ ($k\geq r$). Then we can find matrices $G=(g_{ij})\in \mathrm{Mat}_{k\times r}(\HH(x)_{I})$ and $H=(h_{ij})\in \mathrm{Mat}_{r\times k}(\HH(x)_{I})$ such that 
            \begin{align*}
                (e_1,\dots,e_r)&=(\overline{m}_1,\dots,\overline{m}_k)G\ \  \text{ and}\\
                (\overline{m}_1,\dots,\overline{m}_k)&= (e_1,\dots,e_r)H,
            \end{align*}
            where $\overline{m}_i$ is the image of $m_i$ in $P|_{x\times A_K^n(I)}$ for $1\leq i\leq k$.

            Then we have $HG=I_r$. Since $\mathrm{Frac}(R/\ker|\cdot|_x)[t,t^{-1}]$ is dense in $\HH(x)_{I}$, one can choose $G'=(\frac{g'_{ij}}{f})$ with $g'_{ij}\in R[t,t^{-1}]$, $f\in R$, $|f|_x\neq 0$ such that
            \begin{align*}
                |G-\overline{G}'|_{(x,I)}&<\min\left\{\frac{1}{|H|_{(x,I)}},1 \right\},
            \end{align*}
            where $\overline{G}'$ is the image of $G'\in \Mat_{k\times r}(R[f^{-1}][t,t^{-1}])$ in $\Mat_{k\times r}(\HH(x)_I)$.
            Write
            \begin{align*}
                I_r=HG &= H(G-\overline{G}')+H\overline{G}'.
            \end{align*}
            Since $|H(G-\overline{G}')|_{(x,I)}<1$, $H\overline{G}'$ is an invertible matrix with entries in $\HH(x)_I $. Set
            $$ (e'_1,\dots,e'_r)=(m_1,\dots,m_k) G'. $$
            Then we have $e'_1,\dots,e'_r\in \frac{1}{f}P$. If we denote the image of $e_i'$ in $P|_{x\times A_K^n(I)}$ by $\overline{e}'_i$ for $1\leq i\leq r$, then 
            $$(\overline{e}'_1,\dots,\overline{e}'_r)=(\overline{m}_1,\dots,\overline{m}_k)\overline{G}'=(e_1,\dots,e_r)H\overline{G}'$$ and so $\overline{e}'_1,\dots,\overline{e}'_r $ form a basis of $P|_{x\times A_K^n(I)}$. By choosing an affinoid neighborhood $\Sp(B)\subset X$ of $x$ such that $f$ is invertible in $B$, and then substituting $X$ by $\Sp(B)$, it is harmless to assume that $f$ is invertible in $R$. Let $\varphi:R_I^{\oplus r}\to P$ be the morphism mapping $l_i$ to $e'_i$ for $1\leq i\leq r$, where $l_1,\dots, l_r$ denote the natural basis of $R_I^{\oplus r}$. Now we have the exact sequence
            \begin{equation}\tag{*}
                0\to\ker\varphi\to R_I^{\oplus r}\overset{\varphi}{\to}P\to\coker\varphi\to 0. 
            \end{equation}
            Tensoring with $\HH(x)_I$, we obtain the following exact sequence:
            $$ \HH(x)_I^{\oplus r}\overset{\varphi\otimes 1}{\lra}P\otimes_{R_I}\HH(x)_I\to\coker(\varphi\otimes 1)\to 0. $$
            Since $P|_{x\times A_K^n(I)}=P\otimes_{R_I}\HH(x)_I$ is free and $e'_1,\dots,e'_r$ form a basis, we have $\coker(\varphi\otimes 1)=0$.
            Put $\mathfrak{a}:=\{ g\in R_I:g\cdot\coker\varphi=0 \}$.
            \begin{claim}
                let $\mathfrak{p}$ be a prime ideal of $R_I$ containing $\mathfrak{a}$, then $\coker\varphi\otimes_{R_I}\mathrm{Frac}(R_I/\mathfrak{p})\neq 0$.
            \end{claim}
                 We prove the claim. Firstly, the condition implies that $\coker\varphi\otimes_{R_I}(R_I)_\mathfrak{p}\neq 0$, because if is not the case, then there exists $g\notin\mathfrak{p}$ such that $g\cdot\coker\varphi=0$, which contradicts the assumption that $\mathfrak{a}\subset\mathfrak{p}$. Hence by Nakayama's lemma 
            $$\coker\varphi\otimes_{R_I}\mathrm{Frac}(R_I/\mathfrak{p})=\coker\varphi\otimes_{R_I}[(R_I)_\mathfrak{p}/\mathfrak{p}(R_I)_\mathfrak{p}]\neq 0.$$
            The claim is proved.
            Let $\pi:\ran{I}\to X$ be the projection.
            \begin{claim}
                $\mathfrak{a}\nsubseteq \ker|\cdot|_z$ for all $z\in\pi^{-1}(x)$.
            \end{claim}
            We prove the claim. Note that $\ker|\cdot|_z$ is a prime ideal of $R_I$, so if $\mathfrak{a}\subset \ker|\cdot|_z$ for some $z\in\pi^{-1}(x)$, then $\coker\varphi\otimes_{R_I}\mathrm{Frac}(R_I/\ker|\cdot|_z)\neq 0$ by the claim above. Thus $\coker\varphi\otimes_{R_I}\HH(z)\neq 0$. However, since 
            $$\coker\varphi\otimes_{R_I}\HH(z)=(\coker\varphi\otimes_{R_I}\HH(x)_I)\otimes_{\HH(x)_I}\HH(z),$$
            it implies that 
            $$\coker(\varphi\otimes 1)=\coker\varphi\otimes_{R_I}\HH(x)_I\neq 0,$$
            which is a contradiction. So the claim is proved.

            The claim implies that $\mathfrak{a}\HH(x)_I\nsubseteq\ker|\cdot|_z$ for all $z\in \Sp(\HH(x)_I)$, that is, $\mathfrak{a}\HH(x)_I=\HH(x)_I$. So by Lemma \ref{xtonbd}, there exists an affinoid neighborhood $U=\Sp(A)$ of $x$ such that $\mathfrak{a}\otimes_{R_I}A_I=A_I$ and so $\coker\varphi\otimes_{R_I}A_I=0$.
            
            Then, tensoring the exact sequence (*) with $A_I$, we have the following:
            $$ 0\to\ker\varphi\otimes_{R_I} A_I\to A_I^{\oplus r}\to P\otimes_{R_I}A_I\to 0. $$
            Since $P$ is finite projective of rank $r$, this exact sequence splits and so $\ker\varphi\otimes_{R_I}A_I$ is also projective but of rank $0$, implying that it is a zero module. Thus $P|_{U\times A_K^n(I)}$ is free.
            \end{proof}

\begin{defn}
    Suppose $X$ is a $K$-rigid or dagger space, and $I\subset\RR_{>0}^n$ is a polysegment. A $\nabla$-module over $X\times A_K^n(I)$ relative to $X$ is a coherent locally free module $P$ over $X\times A_K^n(I)$ with an integrable connection
    $$\nabla:P\to P\otimes_{\OO_{X\times A_K^n(I)}}\Omega^1_{X\times A_K^n(I)/X}.$$
\end{defn}
Let $X$ be an affinoid dagger space and $I\subset \RR_{>0}^n$ be a polysegment. For a $\nabla$-module $P$ over $\ran{I}$ relative to $X$, there exists a fringe space $X'$ of $X$, a closed polysegment $J$ containing $I$ in its interior and a $\nabla$-module $P'$ over $X'\times A_K^n(J)$ relative to $X'$ such that $P$ is the restriction of $P'$ to $X\times A_K^n(I)$. In this case, we say that $(P',X',J)$ is a presentation of $P$.

\begin{defn}[cf. \cite{Kedlaya2022monodromy}, Definition 2.3.1]
    Suppose $X$ is a rigid space and $I\subset \RR_{>0}^n$ is a polysegment. Let $P$ be a  $\nabla$-module over $X\times A_K^n(I)$ relative to $X$. We say that $P$ satisfies the Robba condition if $P|_{x\times A_K^n(I)}$ satisfies the Robba condition for all $x \in X$ in the sense of Definition \ref{Robba}. 
    For $X$ a dagger space, we say a $\nabla$-module $P$ over $X\times A_K^n(I)$ relative to $X$ satisfies the Robba condition if for every $x\in X$, there exists an affinoid dagger neighborhood $U$ of $x$ and a presentation $(P',U',J)$ of $P|_{U\times A_K^n(I)}$ such that $P'$ satisfies the Robba condition as a $\nabla$-module over $U'\times A_K^n(J)$ relative to $U'$.
\end{defn}
    Note that the intrinsic radius of convergence does not change after extension of the constant field, so neither does the Robba condition. Let $R$ be a $K$-affinoid algebra or a $K$-dagger algebra.
    There is a natural group action of $\mupi^n$ on $R_I\widehat{\otimes}_K K(\mupi)$: For $\zeta\in\mupi^n$ and $f\in R_I\widehat{\otimes}_K K(\mupi)$, the action is defined by
$$ \zeta^*(f(t))=f(\zeta t). $$
    Let $X=\Sp(R)$. For a $\nabla$-module $P$ over $\ran{I}$ relative to $X$ satisfying the Robba condition, we define the action of $\zeta\in \mupi^n$ on $P\otimes_KK(\mupi)$ by the following convergent series:
    $$\zeta^*(x)=\sum_{\alpha\in \ZZ^n_{\geq 0}}(\zeta-1)^\alpha\binom{tD}{\alpha} (x).$$

    Now we introduce the notion on exponents of relative $\nabla$-modules over relative polyannuli satisfying the Robba condition.

    \begin{defn}
        Suppose that $X$ is a rigid space or a dagger space and $I\subset \RR_{>0}^n$ is a closed polysegment. Let $P$ be a $\nabla$-module over $X\times A_K^n(I)$ relative to $X$ of rank $r$ satisfying the Robba condition. We denote the image of an exponent of $P|_{x\times A_K^n(I)}$ in the set of weak equivalence class $\ZZ_p^{r\times n}/\weq$ by $\exp_P(x)$ or simply by $\exp(x)$ if $P$ is clear from the context. An element $A$ in $\ZZ_p^{r\times n}/\weq$ is called an exponent of $P$ if it coincides with $\exp(x)$ for some $x\in X$. We say the exponent of $P$ is well-defined, if its exponent is uniquely determined as an element of $\ZZ_p^{r\times n}/\weq$, independently of the choice of $x$.
    \end{defn}

    In the rest of this subsection, we prove that the definition of exponent of a $\nabla$-module over $X\times A_K^n(I)$ relative to $X$ is well-defined when $X$ is connected. The idea of the proof is partly inspired by that of Kedlaya (cf. \cite{Kedlaya2022monodromy} Section 2.3).

    \begin{lem}[cf. \cite{Kedlaya2022monodromy} Lemma 2.3.5]\label{contraction}
        Let $X=\Sp(R)$ be an affinoid space, $I\subset\RR_{>0}^n$ be a closed polysegment and $P$ be a free $\nabla$-module over $X\times A_K^n(I)$ satisfying the Robba condition. For $T\subset X$, suppose there exists a linear map $\lambda:R\to V$ of $K$-Banach spaces with $1\notin \ker\lambda$ such that $\lambda$ is a contraction for every $x\in T$. Then there exists $A\in \ZZ_p^{r\times n}$ and a sequence of matrices $\{S_{k,A}\}_{k=0}^\infty\subset \Mat_{r\times r}(R_I)$ which defines $A$ as an exponent of $P|_{x\times A_K^n(I)}$ for all $x\in T$.
        \end{lem}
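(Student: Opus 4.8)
The plan is to carry out the Dwork--Kedlaya construction of a sequence defining an exponent (as in \cite{w1}, cf. \cite{Ked1} Chapter 13) entirely at the level of the ring $R_I$ of global sections of $\ran{I}$, rather than over a single residue field, and to use the map $\lambda$ only to force the two numerical estimates of Definition~\ref{exp} to hold uniformly across $T$. First I would fix a point $x_0\in T$ (the assertion being vacuous if $T=\emptyset$); by Proposition~\ref{weak equivalence} the fibre $P|_{x_0\times A_K^n(I)}$ has an exponent, and its class $A\in\ZZ_p^{r\times n}$ will be the one in the statement. The objective is then to manufacture a single sequence $\{S_{k,A}\}_{k\ge0}$ in $\Mat_{r\times r}(R_I)$, attached to this $A$, whose restriction along the continuous $K$-algebra homomorphism $R_I\to\HH(x)_I$ satisfies conditions (1)--(3) of Definition~\ref{exp} for every $x\in T$. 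Since the restriction maps are compatible with the $\mupi^n$-action, condition (1) and the format of (2), (3) are inherited fibrewise automatically, so the whole content is to obtain the bound in (2) and the lower bound in (3) simultaneously for all $x\in T$.

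For the construction I would work globally. Since $P$ satisfies the Robba condition, the $\mupk^n$-action on $P\otimes_KK(\mupk)$ is defined and each operator $\zeta^{*}$ ($\zeta\in\mupk^n$) has operator norm $\le1$ with respect to every seminorm $|\cdot|_{(x,\rho)}$ with $x\in X$, $\rho\in I$ (this is the convergence statement recalled before Definition~\ref{exp}, see \cite{w1}, Section~2). Hence the eigenprojectors $\pi_{\chi}:=p^{-nk}\sum_{\zeta\in\mupk^n}\chi(\zeta)^{-1}\zeta^{*}$, indexed by the characters $\chi$ of $\mupk^n$, are defined on $P\otimes_KK(\mupk)$ and have operator norm $\le p^{nk}$. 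Applying them to the basis $e_1,\dots,e_r$ (after, if necessary, replacing it by a generic $K(\mupk)$-linear combination of itself), and keeping, for each character occurring in $A\bmod p^k$ with multiplicity $m_\chi$, exactly $m_\chi$ of the resulting vectors, I obtain $v_{k,A,1},\dots,v_{k,A,r}\in P\otimes_KK(\mupk)$ with $\zeta^{*}(v_{k,A,1},\dots,v_{k,A,r})=(v_{k,A,1},\dots,v_{k,A,r})\zeta^{A}$ for all $\zeta\in\mupk^n$. A Galois symmetrisation over $\Gal(K(\mupk)/K)$ then brings the change-of-basis matrix $S_{k,A}$ into $\Mat_{r\times r}(R_I)$, with $|S_{k,A}|_{(x,I)}\le p^{l_0k}$ for a single constant $l_0$ valid for all $k$ and all $x\in X$ (the factor $p^{nk}$ from the projectors plus a bounded contribution from the descent). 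This secures condition (1) by construction.

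It remains to arrange condition (3), $|\det S_{k,A}|_{(x,\rho)}\ge1$ for all $\rho\in I$ and $x\in T$, and this is where $\lambda$ enters and where I expect the real work to be. Extending $\lambda$ coefficientwise gives a $K$-linear contraction $\lambda_I\colon R_I\to V\cotimes_KK_I$, and for each $\rho\in I$ the assignment $f\mapsto\|\lambda_I(f)\|_{\rho}$ is a seminorm on $R_I$ dominated by $|\cdot|_{(x,\rho)}$ for every $x\in T$ and not identically zero on the constants (because $1\notin\ker\lambda$). The plan is to rescale $S_{k,A}$ by a scalar $b_k\in K$ with $|b_k|\le p^{ck}$, for a uniform $c$, so that $\|\lambda_I(\det S_{k,A})\|_{\rho}\ge1$ for every $\rho\in I$; this is possible — exactly as in the field case of \cite{w1} — once one knows that $\lambda_I(\det S_{k,A})\neq0$ and that $\rho\mapsto\|\lambda_I(\det S_{k,A})\|_{\rho}$, a positive continuous function on the compact set $I$, is bounded below by $p^{-c'k}$ for a uniform $c'$. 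After this rescaling condition (2) still holds with $l:=l_0+c$, and the domination inequality gives $|\det S_{k,A}|_{(x,\rho)}\ge\|\lambda_I(\det S_{k,A})\|_{\rho}\ge1$ for all $x\in T$ and $\rho\in I$, which is condition (3) and finishes the proof. The hard part is precisely realising the normalisation $|\det|\ge1$ and its uniform lower bound $p^{-c'k}$ at the level of the (a priori non-multiplicative) seminorm attached to $\lambda$ — $\lambda$ here playing the role of a generic point of $T$ through which the field-level determinant estimate of \cite{w1} is pushed out to all of $T$ — rather than at any single point of $X$; this is exactly why the abstract hypotheses ``$1\notin\ker\lambda$'' and ``$\lambda$ a contraction on all of $T$'' are imposed, following \cite{Kedlaya2022monodromy}, Lemma~2.3.5.
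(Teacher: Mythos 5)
There is a genuine gap, and it sits exactly where you flag ``the real work'' to be. You fix $A$ in advance as an exponent of the single fibre $P|_{x_0\times A_K^n(I)}$ and then hope to force condition (3) by rescaling $S_{k,A}$ by scalars $b_k$ with $|b_k|\le p^{ck}$; but for that you need $\lambda_I(\det S_{k,A})\neq 0$ together with a lower bound $p^{-c'k}$ on its norm, uniformly in $k$, and nothing in your argument supplies this. Knowing that $A$ is an exponent at $x_0$ only tells you that \emph{some} sequence of matrices over $\HH(x_0)_I$ witnesses it; it does not tell you that the canonical averaged sequence $S_{k,A}=p^{-nk}\sum_{\zeta\in\mupk^n}E(\zeta)\zeta^{-A}$ over $R_I$ has nondegenerate determinant, and it certainly says nothing about $\lambda$ applied to it. Moreover an exponent at $x_0$ is only determined up to weak equivalence, and the nonvanishing of $\det S_{k,A}$ is sensitive to the actual representative chosen. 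So the statement you defer is not a technical normalisation: it is the whole theorem.

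The mechanism that closes this gap is to choose $A$ \emph{adaptively}, digit by digit, using $\lambda$ itself. The averaged matrices satisfy $S_{0,A}=I$ and the recursion
$$\det S_{k,A}=\sum_{b}\det\bigl(S_{k+1,\,A+p^k(\diag(b_{11},\dots),\dots,\diag(b_{n1},\dots))}\bigr),$$
the sum running over the residues $b_{ij}\in\{0,\dots,p-1\}$. Applying $\lambda$ to the constant term $(\det S_{k,A})_0\in R$ and using the ultrametric inequality, at least one summand $B\equiv A\pmod{p^k}$ satisfies $|\lambda((\det S_{k+1,B})_0)|_V\ge|\lambda((\det S_{k,A})_0)|_V$; starting from $|\lambda(1)|_V>0$ (this is where $1\notin\ker\lambda$ enters) one inductively builds $A$ with $|\lambda((\det S_{k,A})_0)|_V\ge|\lambda(1)|_V$ for all $k$. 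Then, because the $\rho$-Gauss norm dominates the constant coefficient and $\lambda$ is a contraction at every $x\in T$,
$$|\det S_{k,A}|_{(x,\rho)}\ge|(\det S_{k,A})_0|_x\ge|\lambda((\det S_{k,A})_0)|_V\ge|\lambda(1)|_V$$
uniformly over $T\times I$, which is condition (3) up to a fixed nonzero constant; no $k$-dependent rescaling and no seminorm $\|\lambda_I(\cdot)\|_\rho$ on all of $R_I$ is needed. Your global construction of the $S_{k,A}$ and the observation that conditions (1)--(2) restrict fibrewise are fine and agree with the intended argument; what is missing is precisely this pigeonhole selection of $A$, without which the determinant estimate cannot be obtained.
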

        \begin{proof}
            Choose a basis $e_1,\dots,e_r$ of $P$ and use $E(\zeta)\in\mathrm{Mat}_{r\times r}(R_I)$ to denote the representation matrix of the action of $\zeta^*$ on this basis. For each $n$-tuple of $r\times r$ diagonal matrices of $A=(A^1,\dots,A^n)$ with entries in $\ZP$, define a sequence of matrices $\{S_{k,A}\}_{k=0}^\infty$ in $\mathrm{Mat}_{r\times r}(R_I)$ as
            $$S_{k,A}=p^{-nk}\sum_{\zeta\in \mupk^n}E(\zeta)\zeta^{-A}.$$
            As was shown in the proof of \cite{w1} Theorem 3.2, it has the following properties:
            \begin{enumerate}
                \item It always satisfies condition 1 and 2 of Definition \ref{exp}, regarded as matrices in $\mathrm{Mat}_{r\times r}(\HH(x)_I)$ for all $x\in X$,
                \item $S_{0,A}=I$,
                \item $\det S_{k,A}= \sum\limits_{{b_{ij}=0,}{1\leq i,j\leq m}}^{p-1}\det(S_{k+1,A+p^k(\diag(b_{11},\dots,b_{1m}),\dots,\diag(b_{n1},\dots,b_{nm}))})$.
            \end{enumerate}
             It remains to show that we can choose $A$ such that for every $x\in T$, the sequence $\{S_{k,A}\}_{k=0}^\infty$ satisfies the condition 3 of Definition \ref{exp} up to a nonzero constant. Denote the constant term of $\det S_{k,A}$ by $(\det S_{k,A})_0$ which is an element in $R$. By the third property above, for any multisubset $A$ of $\ZZ_p^n$, there exists $B\equiv A\ (\mathrm{mod}\  p^k)$ such that $|\lambda((\det S_{k+1,B})_0)|_V\geq|\lambda((\det S_{k,A})_0)|_V
             $. Since $S_{0,A}=I$, we can choose suitable $A$ by induction on $k$ such that $|\lambda((\det S_{k,A})_0)|_V\geq |\lambda(1)|_V$ for any $k$. Since $\lambda$ is a contraction, we have
             $$|\det S_{k,A}|_{(x,\rho)}\geq |(\det S_{k,A})_0|_x\geq|\lambda((\det S_{k,A})_0)|_V\geq|\lambda(1)|_V$$ 
             for all $x\in T$ and $\rho\in I$.
        \end{proof}

        A non-empty connected $K$-analytic space $X$ is called a virtual disc (resp. a virtual annulus) if $X_{\calg{K}}$ is isomorphic to a disc (resp. an annulus whose orientations are preserved by $\Gal(\overline{K}/K)$) over $\calg{K}$ (\cite{duc} 3.6.32 and 3.6.35). In particular, it implies that any virtual disc (resp. virtual annulus) over an algebraically closed complete nonarchimedean field is a disc (resp. an annulus).
\begin{defn}[\cite{poineau2015convergence} Definition 2.2]
    Let $X$ be a $K$-analytic curve. A subset $S$ of $X$ is called a weak triangulation of $X$ if 
    \begin{enumerate}
        \item $S$ is locally finite and only contains points of type $2$ or $3$;
        \item every connected component of $X-S$ is a virtual open disc or a virtual open annulus.
    \end{enumerate}
\end{defn}
By \cite{duc} Th\'eor\`eme 5.1.14, every quasi-smooth $K$-analytic curve admits a weak triangulation. The skeleton of a virtual annulus $C$ is the set of points that have no neighborhood isomorphic to a virtual open disc and we denote it by $\Gamma_C$. The union of $S$ and the skeleton of the connected components of $X-S$ that are virtual annuli forms a locally finite graph, and we call it the skeleton of the weak triangulation $S$ and denote it by $\Gamma_S$. 

\begin{prop}\label{exp of curve}
    Suppose $X$ is a connected quasi-smooth $K$-analytic curve and $I\subset \RR^n_{>0}$ is a polysegment. Let $P$ be a $\nabla$-module over $\ran{I}$ relative to $X$ satisfying the Robba condition. Then exponent of $P$ is well-defined.
\end{prop}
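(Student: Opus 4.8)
The plan is to reduce the problem from an arbitrary connected quasi-smooth curve to the two local models appearing in a weak triangulation — virtual open discs and virtual open annuli — and then to glue. First I would fix a weak triangulation $S$ of $X$, which exists by \cite{duc} Th\'eor\`eme 5.1.14, together with its skeleton $\Gamma_S$. Since $\ZZ_p^{r\times n}/\weq$ is a discrete set (in the sense that there is no topology forcing continuity), the key is to show that the function $x\mapsto\exp_P(x)$ is \emph{locally constant} on $X$; connectedness of $X$ then finishes the proof. It suffices to check local constancy on each connected component $U$ of $X-S$ and, separately, on a neighborhood of each point of $S$. Because $P$ is coherent locally free, by Lemma \ref{localQS} (applied with the base an affinoid neighborhood of a given point) we may work \'etale-locally with a \emph{free} $\nabla$-module over a relative polyannulus, so Lemma \ref{contraction} becomes available.

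The central step is to produce, near any point $x_0\in X$, a single sequence $\{S_{k,A}\}_{k=0}^{\infty}$ of matrices over $R_I$ (for a suitable affinoid or affinoid dagger neighborhood $X'=\Sp(R)$ of $x_0$, after shrinking $I$ by Lemma \ref{localQS}) that simultaneously witnesses $A$ as an exponent of $P|_{x\times A_K^n(I)}$ for \emph{all} $x$ in $X'$. For this I would invoke Lemma \ref{contraction} with the subset $T=X'$: the hypothesis needed is a $K$-linear contraction $\lambda\colon R\to V$ to a $K$-Banach space with $1\notin\ker\lambda$ that is a contraction at every $x\in X'$. On a virtual disc one takes $V=K$ and $\lambda$ to be evaluation at the ``Gauss-type'' point (equivalently, $\lambda(f)=$ the constant term with respect to a coordinate realizing the disc structure over $\calg K$, which is Galois-equivariant, hence descends); the supremum norm on the disc dominates the absolute value of this coefficient, so $\lambda$ is a contraction at every point, and $\lambda(1)=1\neq 0$. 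On a virtual annulus one instead uses the restriction-to-the-skeleton map: for $f$ a section over the annulus, $\lambda(f)$ records its value along $\Gamma_C$, and again $|\lambda(f)|\le |f|_x$ for all $x$ because the skeleton point is the Shilov boundary of each concentric subannulus. In both cases Lemma \ref{contraction} yields a fixed $A\in\ZZ_p^{r\times n}$ that is an exponent of the fiber at every point of the component, so $\exp_P$ is constant on that component. The case of a point $s\in S$ is handled by taking a small virtual-disc or virtual-annulus neighborhood of $s$ inside a single connected component of $X-(S\setminus\{s\})$ — or more simply, by observing that $s$ lies in the closure of finitely many components $U_i$ of $X-S$, and running Lemma \ref{contraction} on an affinoid neighborhood of $s$ meeting those $U_i$, whence $\exp_P(s)=\exp_P(x)$ for $x\in U_i$ near $s$; since the $\Gamma_S$-graph connects everything, $\exp_P$ is globally constant.

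Finally, to pass from local constancy to global well-definedness: given any two points $x,y\in X$, connect them by a path in $X$; cover the path by finitely many of the above open sets on each of which $\exp_P$ is constant, and conclude $\exp_P(x)=\exp_P(y)$. Equivalently, the level sets of $\exp_P$ are open by the above, they partition $X$, and $X$ is connected, so there is only one. This shows the exponent of $P$ is uniquely determined as an element of $\ZZ_p^{r\times n}/\weq$ independently of $x$, which is the assertion.

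The step I expect to be the main obstacle is the construction of the contraction $\lambda$ in the virtual-annulus case and the verification that it is a contraction uniformly in $x$ \emph{together with} the bookkeeping needed to descend from $\calg K$ to $K$ — i.e. checking that the coefficient-extraction / skeleton-restriction map is Galois-equivariant so that it is defined over $R$ rather than only over $R\,\widehat\otimes\,\calg K$, and that passing to $\calg K$ (where the Robba condition and exponents are unchanged, as noted after Definition \ref{Robba}) does not lose information. A secondary technical point is making the shrinking of $I$ (from Lemma \ref{localQS}) compatible along a chain of overlapping neighborhoods, which is harmless since exponents of $P|_{x\times A_K^n(I)}$ do not change when $I$ is replaced by a subpolysegment containing the relevant $\rho$ in its interior.
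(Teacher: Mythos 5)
Your overall architecture (weak triangulation, Lemma \ref{contraction}, connectedness/path argument, base change to $\calg{K}$ for general $K$) matches the paper's, but the central step has a genuine gap: the linear functionals you propose are \emph{not} contractions at every point of the disc or annulus. In Lemma \ref{contraction}, ``$\lambda$ is a contraction for $x$'' means $|\lambda(f)|_V\leq |f|_x$ for all $f\in R$. Your justification in both cases is that the supremum (Gauss/Shilov) norm dominates $|\lambda(f)|$, i.e.\ $|\lambda(f)|\leq \sup_x|f|_x$ --- but that is the wrong direction and does not give $|\lambda(f)|\leq |f|_x$ for each individual $x$. Concretely, on the unit disc take $f=u-a$ with $0<|a|<1$: the constant term is $-a$, so $|\lambda(f)|=|a|$, while at the rigid point $x=a$ one has $|f|_x=|f(a)|=0$. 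So the constant-term (or Gauss-point evaluation) functional is a contraction only at the Gauss point, and the constant-coefficient functional on an annulus is a contraction only at points of the skeleton $\Gamma_C$, where $|f|_\sigma=\max_i|a_i|\tau^i\geq |a_0|$. Consequently you cannot take $T$ to be an entire disc or annulus, and the ``single sequence $\{S_{k,A}\}$ valid on a whole neighborhood'' that drives your local-constancy argument is not produced.

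The paper's proof repairs exactly this by applying Lemma \ref{contraction} to much smaller sets $T$ and then propagating. For a disc $C$ with unique boundary $s$ and a point $x\in C$, it uses $\lambda\colon R\to\HH(x)$ (restriction to $x$), which is trivially a contraction at $x$ and a contraction at $s$ by the maximum principle ($|f|_x\leq|f|_s$); so $T=\{s,x\}$ gives $\exp(x)=\exp(s)$, one $x$ at a time. For an annulus it uses the constant-coefficient functional with $T=\Gamma_C$ only, reaches the boundary point $s\notin C$ by taking a sequence $x_l\in\Gamma_C$ converging to $s$ and passing to the limit in $|\det S_{k,A}|_{(x_l,\rho)}\geq 1$, and handles points of $C$ off the skeleton by the disc case (each lies in an open disc whose boundary is a skeleton point). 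If you reorganize your argument along these lines --- pairwise comparisons $\exp(x)=\exp(s)$ with the boundary point of each component, rather than uniform constancy on the component from one $\lambda$ --- the rest of your proposal (ordering the triangulation points along a path and chaining the equalities) goes through as written.
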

\begin{proof}
    We can work locally to assume that $X=\Sp(R)$ is affinoid and $I$ is closed. Applying Lemma \ref{localQS}, we can also reduce to the case where $P$ is free. Firstly, we assume that $K$ is algebraically closed. Let $S$ be a weak triangulation of $X$. For $s\in S$, let $E_s$ be the connected component of $(X-S)\cup\{s\}$ containing $s$. Then every connected component of $E_s-\{s\}$ is an open annulus or an open disc. Moreover, if we denote one of these connected components by $C$, then $s$ is the unique boundary of $C$ if it is a disc and one of the boundaries of $C$ if it is an annulus.
    \begin{claim}
        For every point $x\in C$, $\exp(x)=\exp(s)$.
    \end{claim}
    We prove the claim as follows: if $C$ is an open disc, then $s$ is the unique boudary of $C$ and so for every $x\in C$, the restriction map $\lambda: R\to \HH(x)$ is a contraction for both $s$ and $x$. Thus by Lemma \ref{contraction}, $\exp(x)=\exp(s)$.
    
    If $C$ is an open annulus, then there exists an open interval $J\subset\RR_{>0}$ such that $C\simeq A_K^1(J)$. We define the following map 
    $$ \lambda: R\to\Gamma(C,\OO_X)\simeq K_J \to K, $$
    where the first arrow is the restriction map and the second arrow is the map $\sum_{i\in\ZZ}a_i t^i\mapsto a_0$. This is a contraction for every point in $\Gamma_C$. Thus by Lemma \ref{contraction}, there exists $A\in\ZZ_p^{r\times n}$ and a sequence of matrices $\{S_{k,A}\}_{k\geq 0}$ in $ \Mat_{r\times r}(R_I)$ such that $A=\exp(x)$ for every $x\in\Gamma_C$. Then we can find a sequence $\{x_l\}_{l\geq 1}\subset \Gamma_C$ which converges to $s$. We prove that $\{S_{k,A}\}_{k\geq 0}$ also defines $A$ as an exponent of $P|_{s\times A_K^n(I)}$. By the same argument as in proof of \cite{w1} Theorem 3.2, we see that 1 and 2 of Definition \ref{exp} are satisfied. Moreover, for every $k\geq 0$ and $\rho\in I$,
    $$ |\det S_{k,A}|_{(s,\rho)}=\lim_{l\to\infty} |\det S_{k,A}|_{(x_l,\rho)}\geq 1$$
    and so $A=\exp(s)$. For $y\in C-\Gamma_C$, then there exists $x\in\Gamma_C$ such that $y$ is contained in an open disc with $x$ its unique boundary. By previous argument for the case $C$ is a disc, $\exp(y)=\exp(x)=\exp(s)$. The claim is proved.

    Since $X$ is connected, it is arcwise-connected by \cite{Ber} Theorem 3.2.1. Thus for $s\neq s'\in S$, there is a homeomorphic embedding $\theta_{s,s'}:[0,1]\to X$. Let $s_1,\dots,s_k\in S$ be points contained in $\theta_{s,s'}(]0,1[)$ and order them in the way that $s_i=\theta_{s,s'}(t_i)$ for $1\leq i\leq k$ with $0<t_1<\cdots<t_k<1$. Then $\theta_{s,s'}((t_i,t_{i+1}))$ must be contained in some connected component $C$ of $X-S$. Then $s_i,s_{i+1}$ are two boundaries of $C$. By the claim above, $\exp(s_i)=\exp(s_{i+1})$ and so $\exp(s)=\exp(s')$, from which we conclude the desired argument.

    For general $K$, let $\pi:X_{\calg{K}}\to X$ be the canonical projection. For $x,y\in X$, choose $x'\in\pi^{-1}(x)$ and $y'\in\pi^{-1}(y)$ both of which belong to the same connected component. Since exponent does not change after extension of constant field, an exponent of $P$ defined at $x$ (resp. $y$) is an exponent of $P\cotimes_K \overline{K}$ defined at $x'$ (resp. $y'$). Then use the argument above for algebraically closed fields to conclude.
\end{proof}

\begin{lem}\label{exp pullback}
    Suppose $f:X\to Y$ is a morphism of rigid spaces, $I\subset \RR_{>0}^n$ is a polysegment and denote the induced morphism between relative polyannuli by $f_I$. Let $P$ be a $\nabla$-module over $Y\times A_K^n(I)$ satisfying the Robba condition. Then the pull-back $f^*_IP$ of $P$ to $\ran{I}$ also satisfies the Robba condition. Moreover, if the exponent of $P$ is well defined and equal to $A\in\ZZ_p^{r\times n}/\weq$, then the exponent of $f^*_IP$ is also well-defined and equal to $A$.
\end{lem}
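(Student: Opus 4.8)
The plan is to reduce both assertions to the insensitivity of the intrinsic radius of convergence and of the $p$-adic exponent under extension of the constant field, once the fibres of $f^*_IP$ have been identified. Since the Robba condition and the exponent are local on $X$ and $Y$, I would first take $X=\Sp(B)$ and $Y=\Sp(A)$ affinoid with $f$ induced by a bounded homomorphism $A\to B$, fix $x\in X$ with image $y=f(x)$, and note that $f$ yields an isometric embedding of complete nonarchimedean fields $\HH(y)\hookrightarrow\HH(x)$. The first step is then a base-change computation: the projection $A_I\to\HH(x)_I$ factors through $\HH(y)_I$ and $\HH(x)_I\cong\HH(y)_I\cotimes_{\HH(y)}\HH(x)$, so that
$$(f^*_IP)|_{x\times A_K^n(I)}=f^*_IP\otimes_{B_I}\HH(x)_I\cong\big(P|_{y\times A_K^n(I)}\big)\cotimes_{\HH(y)}\HH(x);$$
in other words the fibre of $f^*_IP$ at $x$ is the $\nabla$-module obtained from the fibre $P|_{y\times A_K^n(I)}$ by extending the constant field from $\HH(y)$ to $\HH(x)$, compatibly with the connection and with the $\mupi^n$-action.

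Granting this, the Robba part is immediate: since the intrinsic radius of convergence does not change under extension of the constant field (as recalled just after the definition of the Robba condition for relative $\nabla$-modules), $(f^*_IP)|_{x\times A_K^n(I)}$ satisfies the Robba condition because $P|_{y\times A_K^n(I)}$ does; letting $x$ range over $X$ gives that $f^*_IP$ satisfies the Robba condition.

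For the exponent, I may assume $I$ closed (otherwise restrict to a closed subpolysegment, which is compatible with $f$), and it suffices by definition of $\exp_{f^*_IP}$ to prove $\exp_{f^*_IP}(x)=\exp_P(f(x))$ in $\ZZ_p^{r\times n}/\weq$ for each $x$. I would take an exponent $A\in\ZZ_p^{r\times n}$ of $P|_{y\times A_K^n(I)}$ with a witnessing sequence $\{S_{k,A}\}$ in $\Mat_{r\times r}(\HH(y)_I)$ as in Definition \ref{exp}, push it forward along $\Mat_{r\times r}(\HH(y)_I)\to\Mat_{r\times r}(\HH(x)_I)$, and check that conditions (1)--(3) of Definition \ref{exp} persist over $\HH(x)_I$: condition (1) holds because the $\mupi^n$-action on $(f^*_IP)|_{x\times A_K^n(I)}$ is the base change of the one on $P|_{y\times A_K^n(I)}$, and conditions (2) and (3) hold because, for every $\rho\in I$, the $\rho$-Gauss norm on $\HH(x)_I$ restricts to the $\rho$-Gauss norm on $\HH(y)_I$, so $|\cdot|_I$ and $|\det(\cdot)|_\rho$ are unchanged. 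Hence $A$ is an exponent of $(f^*_IP)|_{x\times A_K^n(I)}$, and by Proposition \ref{weak equivalence} every exponent of that fibre is weakly equivalent to $A$, so $\exp_{f^*_IP}(x)=A=\exp_P(y)$. Finally, if the exponent of $P$ is well-defined and equal to $A$, then $\exp_P(f(x))=A$ for every $x\in X$, so $\exp_{f^*_IP}$ is constantly $A$; thus the exponent of $f^*_IP$ is well-defined and equal to $A$.

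The main obstacle, such as it is, lies only in the first step: one has to be somewhat careful in identifying $(f^*_IP)|_{x\times A_K^n(I)}$ with the constant-field extension of $P|_{f(x)\times A_K^n(I)}$ --- including the compatibility of the integrable connection and of the $\mupi^n$-action with this base change --- and in verifying that the $\rho$-Gauss norms behave as claimed along $\HH(f(x))\hookrightarrow\HH(x)$. Once these are in place, the rest is formal, resting on facts (stability of the intrinsic radius and of the exponent under constant-field extension) that are already invoked earlier in the paper.
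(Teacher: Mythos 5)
Your proposal is correct and follows essentially the same route as the paper: the Robba condition is checked fibrewise and is preserved under extension of the constant field, and the witnessing sequence $\{S_{k,A}\}$ over $\HH(f(x))_I$ is pushed forward to $\HH(x)_I$, where conditions (1)--(3) of Definition \ref{exp} persist because the Gauss norms are compatible along the isometric embedding $\HH(f(x))\hookrightarrow\HH(x)$. Your write-up simply makes explicit the base-change identification of fibres that the paper leaves implicit.
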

\begin{proof}
    Since the Robba condition is verified point-wise, the first claim holds because the Robba condition is kept after field extensions. The second claim follows from the fact that, for $y\in Y$, if $\{S_{k,A}\}_{k\geq 0}\subset \Mat_{r\times r}(\HH(y)_I)$ is the sequence of matrices defining $A$ as an exponent of $P|_{y\times A_K^n(I)}$, then it also defines $A$ as an exponent of $f^*_IP|_{x\times A_K^n(I)}$ for all $x\in f^{-1}(y)$. 
\end{proof}

\begin{thm}
    Suppose $X$ is a connected $K$-rigid space and $I\subset \RR^n_{>0}$ is a polysegment. Let $P$ be a $\nabla$-module over $\ran{I}$ relative to $X$ satisfying the Robba condition. Then exponent of $P$ is well-defined.
\end{thm}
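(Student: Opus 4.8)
The plan is to reduce the statement to the case of curves, which is already settled by Proposition~\ref{exp of curve}. Concretely, it suffices to prove that the function $x\mapsto\exp_P(x)$ is locally constant on $X$: since $X$ is connected, a locally constant function to the discrete set $\ZZ_p^{r\times n}/\weq$ is constant, which is exactly the assertion. Local constancy being a local question, I would fix $x_0\in X$, replace $X$ by an affinoid neighbourhood of $x_0$, replace $I$ by a closed sub-polysegment of its interior (this does not alter any $\exp_P(x)$, by Definition~\ref{exp} and uniqueness of exponents up to weak equivalence), and then invoke Lemma~\ref{localQS} to shrink further to a connected affinoid neighbourhood $U=\Sp(R)$ of $x_0$ over which $P$ becomes free. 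Thus everything comes down to the following: if $X=\Sp(R)$ is a connected affinoid space, $I$ is closed, and $P$ is a free $\nabla$-module over $X\times A_K^n(I)$ satisfying the Robba condition, then $\exp_P$ is constant on $X$.

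The engine for propagating the exponent is the curve case combined with functoriality. Given any connected quasi-smooth $K$-analytic curve $C$ and any morphism $g\colon C\to X$, the pullback $g^*P$ is a $\nabla$-module over $C\times A_K^n(I)$ relative to $C$; it again satisfies the Robba condition, since this is checked fibre by fibre and the intrinsic radii are unchanged by extension of the constant field. By the computation carried out in the proof of Lemma~\ref{exp pullback}, a sequence of matrices realizing $\exp_P(g(z))$ as an exponent of $P|_{g(z)\times A_K^n(I)}$ also realizes it over $\HH(z)_I$, so $\exp_{g^*P}(z)=\exp_P(g(z))$ for every $z\in C$. Applying Proposition~\ref{exp of curve} to $g^*P$, the function $z\mapsto\exp_{g^*P}(z)$ is constant; hence $\exp_P$ is constant on the image $g(C)\subseteq X$.

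It therefore remains to show that any two points of the connected affinoid $X$ can be joined by a finite chain of images $g(C)$ of connected quasi-smooth curves. This curve-connectedness is the one genuinely geometric input, and it is where I expect the main difficulty. I would obtain it by Noether normalization: after rescaling coordinates so that $X$ may be assumed strictly affinoid (or, equivalently, after the harmless base change to $\calg K$, under which exponents are preserved), choose a finite surjective morphism $\pi\colon X\to\BBB^d_\rho$ onto a closed polydisc; join the images of the two given points in $\BBB^d_\rho$ by a chain of one-dimensional affine subspaces; and note that for a chain in sufficiently general position the preimage under $\pi$ of each such line is a purely one-dimensional closed analytic subspace of $X$ whose connected components map onto the line. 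Replacing each such component by its normalization yields the required connected quasi-smooth curves, and one propagates the chain upstairs using the previous paragraph. An induction on $\dim X$, slicing $X$ by a connected hypersurface through the two given points, is an alternative route to the same end. Once curve-connectedness is established the theorem follows, the remaining ingredients — the reduction to a free module over a connected affinoid and the transfer of the exponent along curves — being formal.
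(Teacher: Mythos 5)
Your overall strategy is the same as the paper's: transfer the exponent along morphisms from curves via the pullback compatibility (Lemma \ref{exp pullback}) and then invoke the curve case (Proposition \ref{exp of curve}), the whole argument resting on the fact that any two points of a connected rigid space can be joined by a finite chain of images of connected smooth affinoid $K$-analytic curves. The preliminary reductions you make (to a connected affinoid base, a closed polysegment, and a free module) are harmless but also unnecessary for this propagation argument, and the paper does not make them.

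The one place where you diverge is the one place where your argument has a genuine gap: the curve-connectedness itself. The paper does not prove it; it quotes it as an existing result (\cite{achinger2023geometric}, Proposition A.2), after enlarging $K$ so that the two given points become Tate points. Your Noether-normalization sketch does not yet establish it. If $\pi\colon X\to\BBB^d_\rho$ is finite surjective and $L$ is a line (or a chain of lines) joining $\pi(x)$ to $\pi(x')$, then $\pi^{-1}(L)$ is indeed a purely one-dimensional closed analytic subset each of whose components surjects onto $L$, but $\pi^{-1}(L)$ need not be connected, and there is no reason for $x$ and $x'$ to lie in the \emph{same} component. Choosing the chain ``in sufficiently general position'' does not resolve this: the decomposition of $\pi^{-1}(L)$ into components is governed by the monodromy of $\pi$ over $L$, not by genericity of $L$, and connecting distinct components of the fibres of $\pi$ is exactly the nontrivial content of the cited proposition (one has to exploit the connectedness of $X$ itself, e.g.\ by showing that the ``joined-by-curves'' equivalence classes are open and closed, which again requires a local version of the same statement). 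So either supply this argument in full or, as the paper does, cite the known result; as written, the step ``one propagates the chain upstairs'' is not justified. The remaining ingredients of your proof (Robba condition and exponents preserved under pullback, constancy on curves, constancy on images of curves) are correct and coincide with the paper's.
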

\begin{proof}
    For $x,x'$ in $X$, we may enlarge $K$ to assume that $x,x'$ are both Tate points. By \cite{achinger2023geometric} Proposition A.2, there exists a sequence of morphisms $f_i:C_i\to X\ (i=0,\dots,l)$ over $K$ where $C_i$ is a smooth connected affinoid $K$-analytic curve and such that $x\in \Im(f_0)$, $x'\in \Im(f_l)$ and $\Im(f_{i-1})\cap\Im(f_{i})\neq \emptyset$ for $1\leq i\leq n$. 
    To prove the claim by induction, we may assume that $l=1$. Choose $x''\in \Im(f_{0})\cap\Im(f_{1})$, let $y\in f^{-1}_0(x)$ and $y''\in f^{-1}_0(x'')$. Then, by Lemma \ref{exp pullback}, $\exp_P(x)=\exp_{f_{0I}^*P}(y)$ and $\exp_P(x'')=\exp_{f_{0I}^*P}(y'')$. Since by Lemma \ref{exp of curve}, $\exp_{f_{0I}^*P}(y)=\exp_{f_{0I}^*P}(y'')$, we have $\exp_P(x)=\exp_P(x'')$. By the same reason, we have $\exp_P(x')=\exp_P(x'')$ which implies $\exp_P(x)=\exp_P(x')$, as desired.
\end{proof}

Denote the subcategory of  $\nabla$-modules over $\ran{I}$ relative to $X$ satisfying the Robba condition by $\Robb{X}$.

For $(P_1,\nabla_1),(P_2,\nabla_2)\in\Robb{X}$, we can define the  connection $\nabla$ on the tensor product $P_1\otimes P_2$ by
$$\nabla(\partial)(x_1\otimes x_2)=\nabla_1(\partial)(x_1)\otimes x_2+x_1\otimes\nabla_2(\partial)(x_2)\quad (\partial\in\Der(\ran{I}/X)), $$
and we can check that $(P_1\otimes P_2,\nabla)$ is an object in $\Robb{X}$ by applying \cite{Ked1} Lemma 6.2.8(c) to each $\nabla(\partial_{t_i})$ for every $(x,\rho)\in X\times I$. 

For $(P,\nabla)\in\Robb{X}$, we can define the connection $\nabla^\vee $ on the dual $P^\vee$ by
$$ \nabla^\vee(\partial)(\varphi)(x)=\partial(\varphi(x))-\varphi(\nabla(\partial)(x))\quad (\partial\in\Der(\ran{I}/X)), $$
and we can check that $(P^\vee,\nabla^\vee)$ is an object in $\Robb{X}$ by applying \cite{Ked1} Lemma 6.2.8(c) to each $\nabla(\partial_{t_i})$ for every $(x,\rho)\in X\times I$. 

The following lemma is a generalization of \cite{w1} Lemma 3.4.
\begin{lem}\label{relative_exact_sequence}
    Assume $X$ is a connected rigid space or a dagger space and $I\subset \RR_{>0}^n$ is a polysegment. Let $0\to P_1\to P\to P_2\to 0$ be an exact sequence of $\nabla$-modules over $X\times A_K^n(I)$ relative to $X$, then $P$ is an object in $\Robb{X}$ if and only if both $P_1$ and $P_2$ are objects in $\Robb{X}$. Moreover, if $P_1,P_2$ are objects in $\Robb{X}$ with exponent $A_1,A_2$, respectively, then the multiset union $A_1\cup A_2$ is an exponent of $P$. We also have the followings:
    \begin{enumerate}
        \item $P_1\otimes P_2$ is an object in $\Robb{X}$ and admits the multiset $A_1+A_2$ as an exponent.
        \item $P_1^\vee$ is an object in $\Robb{X}$ and admits the multiset $-A_1$ as an exponent.
    \end{enumerate}
\end{lem}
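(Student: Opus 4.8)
The plan is to reduce everything to the absolute (over $K$) statement, namely Lemma~\ref{exact_sequence} together with the tensor and dual formulas for exponents, by working fiberwise. For the Robba-condition part, recall that $P$ (resp. $P_1$, $P_2$) lies in $\Robb{X}$ precisely when the fiber $P|_{x\times A_K^n(I)}$ (resp. the fibers of $P_1$, $P_2$) satisfies the Robba condition for every $x\in X$ (for dagger $X$, after passing to a suitable affinoid dagger neighborhood and a presentation). So I would fix $x\in X$, tensor the exact sequence $0\to P_1\to P\to P_2\to 0$ with $\HH(x)_I$ over $R_I$, and observe that since $P_1,P,P_2$ are coherent locally free, this sequence of fibers is still exact (after shrinking $X$ so that all three are free, using Lemma~\ref{localQS}, the sequence even splits). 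Then Lemma~\ref{exact_sequence} applied over the base field $\HH(x)$ gives the ``Robba $\Leftrightarrow$ Robba'' equivalence fiber by fiber, hence globally; similarly \cite{Ked1} Lemma 6.2.8(c) handles the tensor product and dual, exactly as was already done in the excerpt for $P_1\otimes P_2$ and $P_1^\vee$ individually — so that part of the statement is essentially a restatement of what precedes.

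For the exponent statements, first note that by the hypothesis that $X$ is connected, the exponents $A_1,A_2$ of $P_1,P_2$ are well-defined elements of $\ZZ_p^{r_i\times n}/\weq$ by the theorem preceding this lemma, so it suffices to verify the claimed identities at a single point $x\in X$. Shrink $X$ around $x$ (Lemma~\ref{localQS}) so that $P_1,P,P_2$ are all free over $R_I$ and the short exact sequence of fibers at $x$ splits; then apply the absolute Lemma~\ref{exact_sequence} over $\HH(x)$ to the fiber sequence to conclude that $A_1\cup A_2$ is an exponent of $P|_{x\times A_K^n(I)}$, i.e. an exponent of $P$. For (1) and (2), the fiber of $P_1\otimes P_2$ at $x$ is $(P_1|_{x})\otimes_{\HH(x)_I}(P_2|_{x})$ with the tensor connection, and the fiber of $P_1^\vee$ is the dual differential module; the formulas $A_1+A_2$ and $-A_1$ for the exponents of a tensor product and a dual are the standard facts about $p$-adic exponents over $A^n_{\HH(x)}(I)$ (these are the tensor/dual parts of \cite{w1} Lemma 3.4, or can be seen directly: if $\{S_{k,A_i}\}$ witness the $A_i$ as exponents of $P_i|_x$, then the Kronecker products $S_{k,A_1}\otimes S_{k,A_2}$ and the transpose-inverses $(S_{k,A_1}^{-1})^{T}$ — suitably normalized — witness $A_1+A_2$ and $-A_1$; conditions 1--3 of Definition~\ref{exp} are checked directly using $\zeta^{A_1}\otimes\zeta^{A_2}=\zeta^{A_1+A_2}$ and $\det$ multiplicativity). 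Since an exponent of a fiber is by definition an exponent of the relative module, this gives (1) and (2).

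The main obstacle I anticipate is bookkeeping rather than a genuine difficulty: making sure the reduction to a single fiber is legitimate. Concretely, one must check that tensoring a short exact sequence of coherent locally free $R_I$-modules with $\HH(x)_I$ keeps it exact — this needs local freeness, so a preliminary shrinking of $X$ via Lemma~\ref{localQS} is essential (choose a common affinoid neighborhood $U$ of $x$ on which all three of $P_1,P,P_2$ become free; possible since each is locally free and we may intersect three such neighborhoods). For the dagger case one additionally has to thread the presentation machinery: pick a presentation $(P',U',J)$ of $P|_{U\times A_K^n(I)}$ compatible with presentations of $P_1,P_2$ (shrinking $J$ and the fringe space $U'$ as needed so that all three extend simultaneously), and then argue with the affinoid $P'$ as above. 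None of this is deep, but it is where the proof must be careful; once the fiberwise picture is set up, the conclusions are immediate from Lemma~\ref{exact_sequence}, \cite{Ked1} Lemma 6.2.8(c), and the well-definedness of exponents over connected $X$ proved just above.
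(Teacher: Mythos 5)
Your proposal is correct and follows essentially the same route as the paper: restrict the exact sequence to a fiber $x\times A_K^n(I)$ (exactness being preserved because $P_2$ is locally free, hence flat), apply the absolute Lemma~\ref{exact_sequence} there, and deduce the tensor/dual exponent formulas by restricting to the fiber and invoking \cite{w1} Lemma 3.4. The extra bookkeeping you supply (shrinking via Lemma~\ref{localQS}, single-point verification via connectedness, the dagger presentation) is consistent with, and slightly more detailed than, the paper's argument.
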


\begin{proof}
Choose a point $x\in X$. $P_2$ is locally free and hence flat, so the following sequence we obtain by tensoring with $\HH(x)_I$ over $R_I$ is exact:
$$0\longrightarrow P_1|_{x\times A^n_K(I)}\longrightarrow P|_{x\times A^n_K(I)}\longrightarrow P_2|_{x\times A^n_K(I)}\longrightarrow 0.$$
Then the argument follows from Lemma \ref{exact_sequence}. By restricting $P_1\otimes P_2$ and $P_1^\vee$ to $x\times A_K^n(I)$, the claims 1 and 2 follow from \cite{w1} Lemma 3.4.
\end{proof}

\section{Pushforward of (relative) $\nabla$-modules}
In this section, we prove the invariance of Robba condition, $p$-adic exponents and properties of decomposition of (relative) $\nabla$-modules after pushforward by finite \'etale morphisms. This technique is useful because in Proposition \ref{finite etale}, we will prove that any smooth affinoid space or a dagger space over algebraically closed field is locally a finite \'etale cover of closed unit polydisc, which can help us to reduce the base to a rather simpler one.

\subsection{Pushforward of relative $\nabla$-modules by finite \'etale morphisms}
To study the invariance of $p$-adic exponent after pushforward by a finite \'etale morphism, firstly we prove following lemmas.
\begin{lem}\label{invariance of spectral radius}
    Let $\pi:F\to F'$ be a finite extension of complete nonarchimedean fields, and $V$ be a finite dimensional vector space over $F'$. For an additive map $T\in\End(V)=\End(\pi_* V)$, we have 
    $$|T|_{\sp,V}=|T|_{\sp,\pi_* V}.$$
\end{lem}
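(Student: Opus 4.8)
The plan is to prove that the spectral radius of a single additive endomorphism is insensitive to base-field restriction along a finite extension, by reducing everything to a statement about the operator norm of iterates and then squeezing the restricted norm between two constant multiples of the original one.

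First I would recall that, by definition, $|T|_{\sp,V} = \lim_{k\to\infty} |T^k|_V^{1/k}$ and likewise $|T|_{\sp,\pi_*V} = \lim_{k\to\infty} |T^k|_{\pi_*V}^{1/k}$, where $|\cdot|_V$ denotes the operator norm computed with respect to a fixed norm on $V$ as an $F'$-vector space, and $|\cdot|_{\pi_*V}$ the operator norm with respect to a fixed norm on $V$ as an $F$-vector space. Since any two norms on a finite-dimensional vector space over a complete nonarchimedean field are equivalent (up to a multiplicative constant), and since $T$ is the \emph{same} additive map in both cases, it suffices to exhibit one $F'$-norm $\|\cdot\|'$ on $V$ and one $F$-norm $\|\cdot\|$ on $V$ together with constants $0 < c \le C$ such that $c\|v\|' \le \|v\| \le C\|v\|'$ for all $v \in V$. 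Granting this, for every $k$ we get $c\,|T^k|' \le |T^k| \le (C/c)\,|T^k|'$ (the operator norms of the same map with respect to equivalent vector-space norms), hence taking $k$-th roots and letting $k\to\infty$ forces the two limits to coincide.

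The key step is therefore the comparison of norms. I would pick an $F$-basis $f_1,\dots,f_d$ of $F'$ and an $F'$-basis $e_1,\dots,e_n$ of $V$, so that $\{f_i e_j\}$ is an $F$-basis of $\pi_*V$; then I take $\|\cdot\|$ to be the max-norm on $V$ with respect to $\{f_i e_j\}$ (an $F$-vector-space norm) and $\|\cdot\|'$ to be the max-norm on $V$ with respect to $\{e_j\}$ using a chosen field norm on $F'$ (an $F'$-vector-space norm). Writing $v = \sum_j a_j e_j$ with $a_j = \sum_i a_{ij} f_i$, $a_{ij} \in F$, the quantity $\|v\|' = \max_j |a_j|_{F'}$ and $\|v\| = \max_{i,j} |a_{ij}|_F$ differ only by the comparison between $|a_j|_{F'}$ and $\max_i |a_{ij}|_F$ for a single element $a_j \in F'$ expressed in the basis $\{f_i\}$ — and that is exactly the statement that the (unique up to equivalence) norm on the finite-dimensional $F$-vector space $F'$ is equivalent to the max-norm in the basis $\{f_i\}$, which holds because $F$ is complete. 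This gives the uniform constants $c, C$.

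The main obstacle is really just bookkeeping: one must be careful that the norm on $F'$ used to define $\|\cdot\|'$ is genuinely a field norm (so that $\|\cdot\|'$ is $F'$-homogeneous and the operator norm $|T|'$ is the right thing for computing $|T|_{\sp,V}$), while the norm on $V$ as $\pi_*V$ need only be $F$-homogeneous; and one must invoke the correct form of ``all norms on a finite-dimensional space over a complete nonarchimedean field are equivalent'' — applied once to $F'$ over $F$, and implicitly to $V$ over each of $F$ and $F'$ to reduce from whatever ambient norms $|\cdot|_V$, $|\cdot|_{\pi_*V}$ are given in the paper to the convenient max-norms. Once these equivalences are in place, the limit-of-$k$-th-roots argument is immediate and the proof closes. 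I would end by noting that no use is made of $T$ being $F'$-linear — additivity is all that enters — which is why the statement is phrased for $T \in \End(V) = \End(\pi_*V)$.
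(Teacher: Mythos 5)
Your proposal is correct, but it takes a longer route than the paper. The paper's proof is a one-line observation: the operator norm of an additive map $T^k$ depends only on the norm on the underlying set $V$, and $\pi_*V$ carries the very same norm as $V$ (restriction of scalars does not change the normed abelian group), so $|T^k|_V=|T^k|_{\pi_*V}$ holds \emph{exactly} for every $k$ and the spectral radii coincide with no limit argument needed. You instead allow the $F$-norm on $\pi_*V$ and the $F'$-norm on $V$ to be a priori different, compare them via max-norms in a basis $\{f_ie_j\}$ and the equivalence of norms on the finite-dimensional $F$-space $F'$ (valid since $F$ is complete), and then let the $k$-th-root limit absorb the resulting constants. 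This is sound and in fact slightly more general — it shows the conclusion is insensitive to which equivalent norm one puts on $\pi_*V$, which is reassuring since the statement does not specify one — at the cost of invoking equivalence of norms where the paper needs nothing. One small slip: from $c\|v\|'\le\|v\|\le C\|v\|'$ the correct two-sided bound on operator norms is $(c/C)\,|T^k|'\le |T^k|\le (C/c)\,|T^k|'$, not $c\,|T^k|'\le|T^k|$; this does not affect the argument since all constants disappear after taking $k$-th roots.
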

\begin{proof}
    It suffices to show that $|T^n|_V=|T^n|_{\pi_*V}$ for every $n\geq 0$. This is true because the norm of a linear operator only depends on the norm on $V$.
\end{proof}

\begin{lem}\label{invariance of exp}
Suppose $L/K$ is a finite extension of complete nonarchimedean fields of degree $d$ and $I\subset\RR^n_{>0}$ is a polysegment. Let $\pi:A_L^n(I)\to A_K^n(I)$ be the canonical map. If $P$ is a $\nabla$-module over $A_L^n(I)$ of rank $r$ satisfying the Robba condition, then $\pi_*P$ is a $\nabla$-module over $A_K^n(I)$ of rank $rd$ satisfying the Robba condition. Moreover, if $A$ is an exponent of $P$, then $\pi_*A:=\underbrace{A\cup\cdots\cup A}_{d\text{ times}}$ is an exponent of $\pi_*P$.
\end{lem}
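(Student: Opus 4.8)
The plan is to reduce the statement about $\pi_*P$ over $A_K^n(I)$ to the corresponding facts about $P$ over $A_L^n(I)$, using that $\pi$ is finite flat of degree $d$ and, more importantly, that it induces a finite extension of the Gauss-norm completions in each direction.

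First I would address the Robba condition. Fix $\rho\in I$ and recall that the intrinsic radius of convergence at $\rho$ is computed after base change to the completed residue field $F_\rho$ of the $\rho$-Gauss norm. Over $L$ the $\rho$-Gauss completion $F^L_\rho$ of $L(t_1,\dots,t_n)$ is a finite extension of the $\rho$-Gauss completion $F^K_\rho$ of $K(t_1,\dots,t_n)$ — indeed the residue field and value group grow only by the finite amounts coming from $L/K$. The differential module $(\pi_*P)\otimes_{K_I}F^K_\rho$ is then identified with $P\otimes_{L_I}F^L_\rho$ viewed as an $F^K_\rho$-vector space via restriction of scalars. The key point is that for each derivation $\partial_{t_i}$, the spectral radius of $D_i=\nabla(\partial_{t_i})$ is unchanged under restriction of scalars along a finite field extension: this is exactly Lemma \ref{invariance of spectral radius}. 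Since $|\partial_{t_i}|_{\sp}$ is the same whether computed over $F^K_\rho$ or $F^L_\rho$ (it equals $\rho_i^{-1}$ in both, or more abstractly one applies Lemma \ref{invariance of spectral radius} to the operator $\partial_{t_i}$ itself on $F^L_\rho$ over $F^K_\rho$), we get $IR_{\partial_i}((\pi_*P)_\rho)=IR_{\partial_i}(P_\rho)=1$ for all $i$ and all $\rho$, so $\pi_*P$ satisfies the Robba condition. The rank statement is immediate: $\pi_*$ of a locally free module of rank $r$ over a finite locally free extension of degree $d$ is locally free of rank $rd$.

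Next I would handle the exponent. Let $A=(A^1,\dots,A^n)$ be an exponent of $P$, witnessed over $A_L^n(I)$ by a sequence $\{S_{k,A}\}\subset \Mat_{r\times r}(L_I)$ satisfying conditions (1)--(3) of Definition \ref{exp}. Choosing an $\OO_K$-basis (or $K$-basis) $\omega_1,\dots,\omega_d$ of $L$, the $L_I$-basis $e_1,\dots,e_r$ of $P$ gives the $K_I$-basis $\{\omega_j e_i\}$ of $\pi_*P$; I would take as the candidate witnessing sequence for $\pi_*P$ the block-diagonal matrices $S'_{k,\pi_*A}:=\bigoplus_{j=1}^d S_{k,A}$ (acting block-wise on the $d$ copies of the $e_i$'s indexed by $\omega_j$), which represent the $\mu_{p^k}^n$-action on $\pi_*P$ relative to this basis because the $\mu_{p^\infty}^n$-action on $\pi_*P$ is just the $L$-linear action on $P$ viewed $K$-linearly, and it commutes with multiplication by the $\omega_j\in L$. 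Then $S'_{k,\pi_*A}$ conjugates the basis into $\pi_*A$-form, i.e. satisfies condition (1) with the diagonal matrix $\zeta^{\pi_*A}=\bigoplus_{j=1}^d \zeta^{A}$. Condition (2) is clear since $|S'_{k,\pi_*A}|_I=|S_{k,A}|_I\le p^{lk}$. For condition (3) one computes $\det S'_{k,\pi_*A}=(\det S_{k,A})^d$; I would need that $|\det S_{k,A}|^d_\rho\ge 1$, which follows from $|\det S_{k,A}|_\rho\ge 1$ — here one must be slightly careful that the $\rho$-Gauss norm over $K_I$ agrees with the one over $L_I$ when restricted, but this holds since the Gauss norm on $L_I$ restricts to the Gauss norm on $K_I$. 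Hence $\pi_*A$ is an exponent of $\pi_*P$.

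The main obstacle I anticipate is the bookkeeping in the exponent step: precisely matching the $\mu_{p^\infty}^n$-action on the restricted module $\pi_*P$ with a block-diagonal action on the chosen $K_I$-basis, and checking that the resulting determinant identity $\det S'_{k,\pi_*A}=(\det S_{k,A})^d$ together with norm-compatibility along $K_I\hookrightarrow L_I$ really delivers condition (3). A subtlety worth flagging is that an exponent is only well-defined up to weak equivalence, so strictly speaking I should phrase the conclusion as: $\pi_*A$ represents the weak-equivalence class which is the exponent of $\pi_*P$; but since any representative of the class of $A$ works as input (by the observation after Definition \ref{exp}), there is no loss in taking the concrete block-diagonal witness above. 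No hard analysis is needed beyond Lemma \ref{invariance of spectral radius}; the content is organizing the restriction-of-scalars correctly.
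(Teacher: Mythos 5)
Your treatment of the Robba condition and of the rank is correct and coincides with the paper's: both reduce the computation at each $\rho$-Gauss point to Lemma \ref{invariance of spectral radius} applied to the finite extension of completed residue fields $F_{L,\rho}/F_{K,\rho}$. The exponent step, however, contains a genuine error at exactly the point you flagged as the main obstacle. The matrices $\bigoplus_{j=1}^d S_{k,A}$ you propose have entries in $L_I$, not in $K_I$, so they are not admissible witnesses for Definition \ref{exp} over $A_K^n(I)$; and the change-of-basis matrix from $\{\omega_j e_i\}$ to $\{\omega_j v_{k,A,i}\}$ over $K_I$ is not block-diagonal. Indeed $\omega_j v_{k,A,i}=\sum_l \omega_j (S_{k,A})_{li}\, e_l$, and re-expanding each $\omega_j (S_{k,A})_{li}\in L_I$ in the basis $\omega_1,\dots,\omega_d$ produces the restriction-of-scalars matrix of $S_{k,A}$ along $L_I/K_I$. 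Its determinant is $N_{L_I/K_I}(\det S_{k,A})$, not $(\det S_{k,A})^d$. The argument is salvageable: since $K$ has characteristic $0$ the extension is separable, each embedding $\sigma:L\to\overline{K}$ is an isometry, and the $\rho$-Gauss norm is multiplicative, so $|N_{L_I/K_I}(\det S_{k,A})|_\rho=|\det S_{k,A}|_\rho^d\geq 1$, recovering condition 3; and the entries of the restriction-of-scalars matrix are bounded by a constant (depending on the chosen basis $\omega_1,\dots,\omega_d$) times $|S_{k,A}|_\rho$, which preserves condition 2 after adjusting $l$. But these are precisely the points your write-up asserts incorrectly or omits, and they are the entire content of the exponent step.

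The paper avoids this bookkeeping by a different route: it first reduces to free $P$ by Proposition \ref{QS}, then uses the invariance of exponents under constant field extension to base change to a finite Galois extension $F/K$ splitting $L$. Over $F$ one has $A_L^n(I)\times_K F\cong\coprod_{i=1}^d A_{F_{\sigma_i}}^n(I)$, the pushforward becomes a genuine direct sum, and the witnessing matrices are honestly block-diagonal with blocks $\sigma_j^{-1}(S_{k,A})$; since each $\sigma_j$ is an isometry on $F_I$, conditions 2 and 3 are immediate. If you prefer to stay over $K$, you must replace your block-diagonal matrices by the restriction-of-scalars matrices and carry out the norm computation sketched above.
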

\begin{proof}
    Take $\rho=(\rho_1,\dots,\rho_n)\in I$. Then the preimage of $|\cdot|_\rho\in A_K^n(I)$ is exactly $|\cdot|_\rho\in A_L^n(I)$.  Denote the complete residue field of $A_L^n(I)$ (resp. $A_K^n(I)$) at $|\cdot|_\rho$ by $F_{L,\rho}$ (resp. $F_{K,\rho}$). Then we have $\pi_*(P\otimes_{L_I}F_{L,\rho})=\pi_*P\otimes_{K_I}F_{K,\rho}$. By Lemma \ref{invariance of spectral radius}, we have
    $$|\nabla(\partial_{t_i})|_{\sp,\pi_*P\otimes_{K_I}F_{K,\rho}}=|\nabla(\partial_{t_i})|_{\sp,P\otimes_{L_I}F_{L,\rho}}.$$
    Since $|\partial_{t_i}|_{\sp,F_{K,\rho}}=|\partial_{t_i}|_{\sp,F_{L,\rho}}=p^{-1/(p-1)}\rho_i^{-1}$ for $1\leq i\leq n$ by \cite{Ked1} Definition 9.4.1, $\pi_*P$ satisfies the Robba condition.

    By Proposition \ref{QS}, we can shrink $I$ to reduce to the case where $P$ is free to prove the latter claim. Since $A$ is an exponent of $P$, we can find a basis $e_1,\dots,e_r$ of $P$ and a sequence of matrices $\{S_{k,A}\}_{k=0}^\infty$ such that conditions 1, 2 and 3 of Definition \ref{exp} are satisfied. Since the exponent does not change after field extensions, to prove the assertion for exponent of $\pi_*P$, we may enlarge $K$. Choose a finite Galois extension $F/K$. Then we have $\pi_*P\cotimes_K F=\pi_{F*}(P\cotimes_K F)$, where $\pi_F$ is the canonical map $\coprod_{i=1}^d A^n_{F_{\sigma_i}}(I)\to A^n_{F}(I)$ with $\sigma_1,\dots,\sigma_d\in\mathrm{Hom}_K(L,F)$ which is given by the following ring homomorphism: for $f=\sum_{i\in\ZZ^n}f_it^i\in F_I$, 
    $$ (\pi_F^\#(f))_j=\sigma_j(f)=\sum_{i\in\ZZ^n}\sigma_j(f_i)t^i, \text{ for } 1\leq j\leq d. $$
    It is obtained from $\pi\cotimes 1$ via isomorphisms $A_L^n(I)\times_K F\cong \coprod_{i=1}^d A_{F_{\sigma_i}}(I)$ and $A_K^n(I)\times _K F=A_F^n(I)$.
    Let $\{\1_j\}_{j=1}^d$ be the canonical basis of $\prod_{i=1}^d F_{\sigma_i,I}$ as free $F_I$-module. Here we notice that the action $\circ_{\sigma_i}$ of $f\in F_I$ on $f'\in F_{\sigma_i,I}$ is by $f\circ_{\sigma_i}f'=\sigma_i(f)f'$.
    Then $\{\1_j\cdot e_i\cotimes 1\}_{\substack{1\leq i\leq r\\1\leq j\leq d}}$ form a basis of $\pi_*P\cotimes_K F$ as $\nabla$-module over $A^n_F(I)$.
    Now we prove that $\{S_{k,\pi_*A}\}_{k=0}^\infty$ where 
    $$S_{k,\pi_*A}=
\begin{pmatrix*}
    \sigma_1^{-1}(S_{k,A})&\cdots& O\\
    \vdots &\ddots &\vdots \\
    O&\cdots&\sigma_d^{-1}(S_{k,A})
\end{pmatrix*}$$
is the desired sequence of matrices defining exponent of $\pi_*P$. Conditions 2 and 3 of Definition \ref{exp} are easy to veriry since each $\sigma_i$ is an isometry on $F_I$. Now we verify the condition 1. For any $k\geq 0$, choose $\zeta=(\zeta_1,\dots,\zeta_d)\in\prod_{j=1}^d\mupk^n$, then
\begin{align*}
    &\ \ \ \ \zeta^*_j[(\1_j\cdot e_1\cotimes 1,\dots,\1_j\cdot e_r\cotimes 1)\circ_{\sigma_j}\sigma^{-1}_j(S_{k,A})]\\
    &=  \zeta^*_j[( e_1\cotimes 1,\dots, e_r\cotimes 1)(\1_j\circ_{\sigma_j}\sigma^{-1}_j(S_{k,A}))]=\zeta^*_j[(e_1\cotimes 1,\dots, e_r\cotimes 1)S_{k,A}]\\
    &=(e_1\cotimes 1,\dots, e_r\cotimes 1)S_{k,A}\zeta_j^A=[(\1_j\cdot e_1\cotimes 1,\dots,\1_j\cdot e_r\cotimes 1)\circ_{\sigma_j}\sigma^{-1}_j(S_{k,A})]\zeta_j^A
\end{align*}
and so the condition 1 is verified.
\end{proof}
\begin{rmk}\label{property of finite etale}
    For a finite flat morphism $f: X\to Y$ of rigid spaces, we list some properties of it:
    \begin{enumerate}
        \item The function $y\mapsto \deg_y f$ is a locally constant function.
        \item If $X$ is nonempty and $Y$ is connected, then $f$ is surjective. This is because $f$ being finite implies that $f$ is closed and $f$ being flat implies that $f$ is open.
    \end{enumerate}
\end{rmk}

Now we verify the invariance of $p$-adic exponent via pushforward by finite \'etale morphisms.
\begin{prop}\label{exponent_of_pushforward}
    Suppose $I\subset \RR_{>0}^n$ is a polysegment. Let $f:X\to Y$ be a finite \'etale morphism of degree $d$ between connected rigid spaces and let $P$ be an object in $\Robb{X}$ of rank $r$ with exponent $A$. Use $f_I:X\times A_K^n(I)\to Y\times A_K^n(I)$ to denote the induced finite \'etale morphism between relative polyannuli. Then $f_{I*}P$ is an object in $\Robb{Y}$ of rank $rd$ admitting $f_{I*}A:=\underbrace{A\cup\cdots\cup A}_{d\text{ times}}$ as an exponent.
\end{prop}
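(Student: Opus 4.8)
The plan is to reduce the statement to the case already handled in Lemma \ref{invariance of exp} (finite extensions of the constant field) by working pointwise on $Y$ and locally on $X$. First I would verify that $f_I$ is indeed finite \'etale: since $f:X\to Y$ is finite \'etale, so is its base change $f\times\mathrm{id}:X\times A_K^n(I)\to Y\times A_K^n(I)$, and $f_{I*}P$ is coherent locally free of rank $rd$ because pushforward along a finite flat morphism preserves local freeness and multiplies rank by the degree. The relative connection $\nabla$ on $P$ induces one on $f_{I*}P$ because $f_I$ is \'etale, so $\Omega^1_{X\times A_K^n(I)/X}\cong f_I^*\Omega^1_{Y\times A_K^n(I)/Y}$, and the pushforward of a connection relative to $X$ is canonically a connection relative to $Y$.

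Next I would check the Robba condition and the exponent computation fiberwise. Fix $y\in Y$ and let $x_1,\dots,x_s\in X$ be the points in $f^{-1}(y)$. The fiber $f_{I*}P|_{y\times A_K^n(I)}$ decomposes, via the decomposition $\HH(y)\otimes_{\OO_Y,y}\OO_{X}\cong \prod_j \HH(x_j)^{[\HH(x_j):\HH(y)]\text{-related pieces}}$ more precisely via the behavior of $f$ over $y$, into a direct sum of the pushforwards of $P|_{x_j\times A_K^n(I)}$ along the finite extensions $\HH(x_j)/\HH(y)$ (as an absolute $\nabla$-module over $A^n_{\HH(y)}(I)$). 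Since $\sum_j [\HH(x_j):\HH(y)]\cdot(\text{multiplicity}) = d$ by the degree formula for finite \'etale morphisms, and since the exponent of $P$ at each $x_j$ equals $A$ (the hypothesis that the exponent of $P$ is well-defined, i.e.\ $X$ is connected), Lemma \ref{invariance of exp} applied to each extension $\HH(x_j)/\HH(y)$ gives that $\pi_*P|_{x_j\times A_K^n(I)}$ satisfies the Robba condition with exponent $\underbrace{A\cup\cdots\cup A}_{[\HH(x_j):\HH(y)]}$. Then Lemma \ref{relative_exact_sequence} (applied over the point $y$, or directly Lemma \ref{exact_sequence}) shows that a direct sum of $\nabla$-modules satisfying the Robba condition satisfies the Robba condition with exponent the multiset union. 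Summing over $j$ yields that $f_{I*}P|_{y\times A_K^n(I)}$ satisfies the Robba condition and admits $\underbrace{A\cup\cdots\cup A}_{d}= f_{I*}A$ as an exponent.

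Finally, since this holds for every $y\in Y$, the module $f_{I*}P$ satisfies the Robba condition by definition, and its exponent at every point of $Y$ equals $f_{I*}A$; as $Y$ is connected the exponent is well-defined and equals $f_{I*}A$. The main obstacle I anticipate is the bookkeeping in the fiberwise decomposition of $f_{I*}P|_{y\times A_K^n(I)}$: one must correctly identify it with the direct sum of the field-extension pushforwards $\pi_{j*}(P|_{x_j\times A_K^n(I)})$ together with the right multiplicities summing to $d$, which requires a careful use of the structure of finite \'etale algebras over the complete residue field $\HH(y)$ (and possibly enlarging $K$ so that all points become Tate points and all residue field extensions are separable, which is automatic here). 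Once that identification is in place, the rest is a direct appeal to Lemma \ref{invariance of exp} and Lemma \ref{exact_sequence}/\ref{relative_exact_sequence}.
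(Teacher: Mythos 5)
Your proposal is correct and follows essentially the same route as the paper: both arguments work fiberwise over each $y\in Y$, use the degree formula $\sum_{x\in f^{-1}(y)}[\HH(x):\HH(y)]=d$, and reduce to Lemma \ref{invariance of exp} for the finite extensions $\HH(x)/\HH(y)$, with the direct-sum bookkeeping handled by Lemma \ref{exact_sequence}. The extra care you take in identifying $f_{I*}P|_{y\times A_K^n(I)}$ with the direct sum of the field-extension pushforwards is exactly the implicit step in the paper's shorter write-up.
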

\begin{proof}
Since $f$ is finite \'etale of degree $d$, for every $y\in Y$, $f^{-1}(y)$ is a nonempty finite subset of $X$ and for each $x\in f^{-1}(y)$, $\HH(x)/\HH(y)$ is a finite extension. Moreover, $\sum_{x\in f^{-1}(y)}[\HH(x):\HH(y)]=d$. Since $P|_{x\times A_K^n(I)}$ satisfies the Robba condition, $f_{I*}P|_{y\times A_K^n(I)}$ also satisfies the Robba condition by Lemma \ref{invariance of exp}. $A$ is an exponent of $P$ and hence an exponent of $P|_{x\times A_K^n(I)}$, so $f_{I*}A$ is an exponent of $f_{I*}P|_{y\times A_K^n(I)}$ by Lemma \ref{invariance of exp}. Rank of $f_{I*}P$ can be verified by Lemma \ref{invariance of exp} and Remark \ref{property of finite etale}.
\end{proof}

Keep the notations as in Proposition \ref{exponent_of_pushforward}. It is an easy observation that if $\AA_1\cup\cdots\cup\AA_k$ is a Liouville partition of $A$, then $f_{I*}\AA_1\cup\cdots\cup f_{I*}\AA_k$ is a Liouville partition of $f_{I*}A$.

\begin{lem}\label{zero module}
    Suppose $I\subset\RR_{>0}^n$ is a polysegment. Let $X$ be a reduced rigid or dagger space and $P$ be a coherent locally free module over $\ran{I}$. Then for every $v\in P$, $v=0$ if and only if $v|_{x\times A_K^n(I)}=0$ for every $x\in X$.
\end{lem}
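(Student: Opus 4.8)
The ``only if'' direction is immediate, so the content is the converse: if $v$ restricts to zero on every fiber $x\times A_K^n(I)$, then $v=0$. The plan is to reduce this to a statement about a single Laurent coefficient of a function over a reduced affinoid or dagger algebra. First I would observe that the claim is local on $X\times A_K^n(I)$ and is stable under the relevant restrictions: vanishing of a section of a coherent sheaf may be checked on an admissible cover; open subdomains (affinoid or affinoid dagger) of a reduced rigid (resp. dagger) space are again reduced; and for $U\subseteq X$ an open affinoid (resp. affinoid dagger) subdomain and $J\subseteq I$ a closed sub-polysegment, the restriction $v|_{U\times A_K^n(J)}$ satisfies the same hypothesis as $v$, since $v|_{x\times A_K^n(J)}$ is the restriction of $v|_{x\times A_K^n(I)}$. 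As the boxes $U\times A_K^n(J)$ of this form cover $X\times A_K^n(I)$ (exhausting $A_K^n(I)$ by its closed sub-polyannuli even when $I$ is not closed), I may assume that $X=\Sp(R)$ is affinoid (resp. affinoid dagger) and that $I$ is closed.

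Then $R_I$ (resp. $R^\dagger_I$) is a Noetherian $K$-affinoid (resp. $K$-dagger) algebra, and $P$, being coherent and locally free over $\Sp(R_I)$, corresponds to a finitely generated projective $R_I$-module; choosing a complement I may write $R_I^{\oplus N}=P\oplus Q$ for some $N$ and some $R_I$-module $Q$. Writing $v=(v_1,\dots,v_N)$ with $v_j\in R_I$ and using that $P\otimes_{R_I}\HH(x)_I$ is a direct summand of $\HH(x)_I^{\oplus N}$ compatibly with the inclusion $P\hookrightarrow R_I^{\oplus N}$, the condition $v|_{x\times A_K^n(I)}=0$ is equivalent to the vanishing of the image of each $v_j$ in $\HH(x)_I$. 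Hence it suffices to prove: if $f\in R_I$ has zero image in $\HH(x)_I$ for every $x\in X$, then $f=0$. For this I would expand $f=\sum_{i\in\ZZ^n}f_i t^i$ with $f_i\in R$; the image of $f$ in $\HH(x)_I$ is the series obtained by applying $\chi_x$ to each coefficient, which by uniqueness of Laurent expansions vanishes precisely when $\chi_x(f_i)=0$ for all $i$. Thus each $f_i$ vanishes at every point of $X$, and since $R$ is reduced this forces $f_i=0$ for all $i$, i.e. $f=0$.

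I expect the argument to be essentially routine, with no genuine obstacle. The two points requiring a little care are the first reduction -- verifying that the hypothesis really is inherited by restrictions to boxes $U\times A_K^n(J)$ and that such boxes form an admissible cover -- and the final step, which rests on the standard fact that on a reduced affinoid (resp. dagger) space a global function vanishing at every point is zero (equivalently, that a reduced affinoid or dagger algebra is Jacobson with zero nilradical, so the intersection of its maximal ideals is trivial); in the dagger case this amounts to reducedness being detected at the Tate points. Passing to the direct summand $R_I^{\oplus N}=P\oplus Q$ is precisely what lets one avoid invoking a Quillen--Suslin statement over closed polyannuli, which is not available in the form needed here.
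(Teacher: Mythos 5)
Your proof is correct and follows essentially the same route as the paper's: both realize $P$ as a direct summand of a free module over $R_I$, reduce to showing that an element of $R_I$ vanishing in every $\HH(x)_I$ is zero, and conclude by expanding the Laurent coefficients and invoking reducedness of $R$. The extra care you take with the splitting and the covering reduction only makes explicit what the paper leaves implicit.
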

\begin{proof}
    We only have to show that the condition is sufficient. We can easily reduce to the case where $X=\Sp(R)$ is affinoid.
    Since $P$ is locally free, we can find a coherent free module $Q$ on $\ran{I}$ containing $P$. Let $e_1,\dots,e_s$ be a basis of $Q$. Then regard $v$ as an element in $Q$, we can write $v=\sum_{i=1}^s f_ie_i$ with $f_i\in R_I$. We further write $f_i=\sum_{j\in\ZZ^n}f_{ij}t^j$. Because $v|_{x\times A_K^n(I)}=0$, we have $f_i|_{x\times A_K^n(I)}=0$, i.e. $f_{ij}\in\ker|\cdot|_x$ for every $x\in X$. Since $X$ is reduced, this implies that $f_i=0$ for $1\leq i\leq s$ and so $v=0$.
\end{proof}

\begin{lem}[cf. \cite{kedlaya2017corrigendum} and \cite{w1} Lemma 3.9]\label{uniqueness}
     Suppose $X$ is a connected reduced affinoid or affinoid dagger space and $I\subset \RR_{>0}^n$ is a polysegment. Let $P$ be an object in $\Robb{X}$ with exponent $A$ having Liouville partition $A=\AA_1\cup\cdots\cup\AA_k$. The decomposition of $P=P_1\oplus\cdots\oplus P_k$ such that $P_i$ admits $A_i$ as an exponent, if exists, is unique.
\end{lem}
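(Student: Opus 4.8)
The plan is to reduce to the case of free $\nabla$-modules and then argue that each summand $P_i$ must coincide with a canonically defined subobject of $P$, which pins it down uniquely. First I would note that the question is local on $X$: the decomposition, being a direct sum decomposition of a coherent sheaf, is determined by its restrictions to an admissible cover, so by Lemma \ref{localQS} we may shrink $X$ to an affinoid (or affinoid dagger) neighborhood over which $P$ and all the $P_i$ become free, and we may take $I$ closed. Thus assume $X=\Sp(R)$ with $R_I$ the relevant ring of sections, and assume all modules in sight are free.

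Next I would transport the problem to the fibers. Suppose $P=P_1\oplus\cdots\oplus P_k = Q_1\oplus\cdots\oplus Q_k$ are two decompositions, each summand carrying the exponent $\AA_i$. For each $x\in X$, restricting to $x\times A_K^n(I)$ gives two decompositions of $P|_{x\times A_K^n(I)}$ into $\nabla$-submodules; by Lemma \ref{relative_exact_sequence} each $P_i|_{x\times A_K^n(I)}$ (resp. $Q_i|_{x\times A_K^n(I)}$) satisfies the Robba condition with exponent $\AA_i$, and $\AA_1\cup\cdots\cup\AA_k$ is still a Liouville partition of $A$. By the one-dimensional-base (in fact absolute over the fiber) uniqueness statement — this is the content of \cite{w1} Lemma 3.9 / \cite{kedlaya2017corrigendum}, which applies to $\nabla$-modules over $A^n_{\HH(x)}(I)$ — we get $P_i|_{x\times A_K^n(I)} = Q_i|_{x\times A_K^n(I)}$ as submodules, for every $x\in X$ and every $i$. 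Now let $\iota_i\colon P_i\hookrightarrow P$ and $p_i\colon P\twoheadrightarrow P_i$ be the inclusion and projection of the first decomposition, and let $e_i=\iota_i p_i\in\End(P)$ be the associated idempotent (a horizontal endomorphism); similarly $e_i'$ for the second. The fiberwise agreement of the images and kernels forces $e_i|_{x\times A_K^n(I)} = e_i'|_{x\times A_K^n(I)}$ for all $x$, and since $X$ is reduced, Lemma \ref{zero module} applied to $e_i-e_i'\in\End(P) \cong P^\vee\otimes P$ (a coherent locally free module) gives $e_i=e_i'$. Hence the two decompositions coincide.

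The one point requiring care is the reduction to the free case: we must check that the local freeness neighborhoods supplied by Lemma \ref{localQS} can be chosen compatibly for $P$ and all the $P_i$ simultaneously, but since $P_i$ is a direct summand of $P$ it is automatically coherent locally free, and a common refinement of the finitely many neighborhoods works; moreover on an overlap the two (necessarily equal) local decompositions glue, so no gluing obstruction arises. I expect the main obstacle to be citing the fiberwise uniqueness in exactly the right generality — namely, for $\nabla$-modules over $A^n_{\HH(x)}(I)$ where $\HH(x)$ is an arbitrary complete nonarchimedean field of the required type, not just $K$ — but this is precisely the setting of \cite{w1} Lemma 3.9, so it should go through verbatim. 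Everything else is formal: the passage from equality of subsheaves to equality of idempotents, and the use of reducedness via Lemma \ref{zero module} to detect vanishing of a section of a locally free module fiberwise.
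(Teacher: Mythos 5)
Your proposal is correct and follows the same overall skeleton as the paper's proof: reduce to a fiberwise statement over each $x\times A_K^n(I)$, and then upgrade fiberwise equality to actual equality using reducedness via Lemma \ref{zero module}. The one place where you diverge is the fiberwise step itself. The paper does not invoke the absolute uniqueness theorem as a black box; instead it takes the cross-term map $f\colon P'_1\hookrightarrow P\twoheadrightarrow P''_2$ between summands of the two decompositions, restricts to a fiber, and shows $\Im(f_x)=0$ by choosing exponents $C_1,C_2,C_3$ for $\ker(f_x),\Im(f_x),\coker(f_x)$ and observing via Lemma \ref{exact_sequence} and Proposition \ref{weak_equivalence} that $C_2$ would have to sit in both halves of a Liouville partition, forcing $C_2=\emptyset$. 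Your route — citing the uniqueness of the decomposition of $P|_{x\times A_K^n(I)}$ (Theorem \ref{decom_all} / \cite{w1} Lemma 3.9) and then comparing the horizontal idempotents $e_i,e_i'$ in $\End(P)$ — is a legitimate repackaging, and your concluding step (fiberwise vanishing of $e_i-e_i'$ in the locally free module $\End(P)$, then Lemma \ref{zero module}) is exactly the paper's use of reducedness. Two minor remarks: the reduction to free modules via Lemma \ref{localQS} is harmless but unnecessary, since the fiberwise-plus-Lemma-\ref{zero module} argument works for locally free modules directly (the paper does no such reduction); and when you cite the fiberwise uniqueness you should note that it is needed over $\HH(x)$ for a general polysegment $I$, which is consistent with how the paper itself applies Theorem \ref{decom_all} to fibers elsewhere, so no real gap arises.
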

\begin{proof}
    We may reduce to the case $k=2$ and it suffices to prove the assertion for Liouville partition on every direction $r$ for $1\leq r\leq n$.
    Suppose that $P$ decomposes as $P'_1\oplus P'_2$ and $P''_1\oplus P''_2$ with $P'_i,\ P''_i$ having exponents $\AA_i$  (here we implicitly use Proposition \ref{wkexp}). Let $f$ be the composition of the inclusion $P'_1\to P$ with the projection $P\to P''_2$. We prove that $f$ is the zero map. Let $f_x$ be the restriction of $f$ on $x\times A_K^n(I)$. Then we have exact sequences:
    $$0\to\ker (f_x)\to P'_1|_{x\times A_K^n(I)}\to \Im(f_x)\to 0,$$ 
    $$0\to \Im(f_x)\to P''_2|_{x\times A_K^n(I)}\to \coker(f_x)\to 0.$$
    Choose exponents $C_1, C_2, C_3$ for $\ker(f_x)$, $\Im(f_x)$ and $\coker(f_x)$, respectively. Then, by Lemma \ref{exact_sequence}, $\AA_1$ is weakly equivalent to $C_1\cup C_2$ and $\AA_2$ is weakly equivalent to $C_2\cup C_3$. Since $\AA_1\cup \AA_2$ is a Liouville partition in the $r$-th direction, $(C_1\cup C_2)\cup(C_2\cup C_3)$ is a Liouville partition in the $r$-th direction by Proposition \ref{weak_equivalence}. Thus $C_2=\emptyset$, and  $f_x=0$ for every $x\in X$. This implies that for every $v\in P_1$, we have $f(v)|_{x\times A_K^n(I)}=0$. By Lemma \ref{zero module}, $f(v)=0$ and so $f$ is the zero map. This shows that $P'_1\hookrightarrow P''_1$. By symmetry, we have $P''_1\hookrightarrow P'_1$ and so $P'_1=P''_1$. Replacing the index, we also have $P'_2=P''_2$.
\end{proof}
Properties of pushforward of decomposition of relative $\nabla$-module by finite \'etale morphisms is given as follows:
\begin{prop}\label{relative pushforward of decomposition}
    Suppose $f:X\to Y$ is a finite \'etale morphism of connected reduced rigid or dagger spaces of degree $d$ and $I\subset \RR_{>0}^n$ is a polysegment. Let $P$ be an object in $\Robb{X}$, with an exponent $A$ having Liouville partition $\AA_1\cup\cdots\cup\AA_k$ and let $f_I:X\times A_K^n(I)\to Y\times A_K^n(I)$ be the induced finite \'etale morphism on the relative polyannuli. If the pushforward $f_{I*}P$ of $P$ to $Y\times A_K^n(I)$ (which is an object in $\Robb{Y}$ by Lemma \ref{exponent_of_pushforward}) admits a decomposition (which is unique by Lemma \ref{uniqueness}) $P_1\oplus\cdots\oplus P_k$ with respect to the Liouville partition $f_{I*}A=f_{I*}\AA_1\cup\cdots\cup f_{I*}\AA_k$ with $P_i$ having exponent $f_{I*}\AA_i$. Then this is also a decomposition of $P$ in the category $\Robb{X}$ where $P_i$ admits $\AA_i$ as an exponent.
\end{prop}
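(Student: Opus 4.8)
The plan is to use that $f_I$ is finite \'etale to regard $P$ as the datum of $f_{I*}P\in\Robb{Y}$ together with a compatible action of the sheaf of algebras $\mathcal{B}:=f_{I*}\OO_{X\times A_K^n(I)}$, and then to show that the given decomposition of $f_{I*}P$ is stable under this extra structure. First I would record three structural facts. (i) Since $f_I$ is finite \'etale, $\mathcal{B}$ is a finite locally free $\OO_{Y\times A_K^n(I)}$-algebra. (ii) As $f$, hence $f_I$, is \'etale, one has $\Omega^1_{X\times A_K^n(I)/X}\cong f_I^*\Omega^1_{Y\times A_K^n(I)/Y}$, so by the projection formula the relative connection of $P$ over $X$ transports under $f_{I*}$ to the relative connection of $f_{I*}P$ over $Y$; under this identification a direct sum decomposition of $P$ in $\Robb{X}$ is the same thing as a direct sum decomposition of $f_{I*}P$ in $\Robb{Y}$ whose summands are $\OO_{X\times A_K^n(I)}$-submodules. (iii) Locally, where $f$ corresponds to a finite \'etale ring extension $A\to B$, the base-change identity $B_I=B\otimes_A A_I$ shows that $\mathcal{B}$ is generated over $\OO_{Y\times A_K^n(I)}$ by the image of $B$, all of whose local sections are horizontal for the relative connection over $Y$ (they are killed by $\partial_{t_1},\dots,\partial_{t_n}$). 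Combining (i)--(iii), it suffices to prove that multiplication by any horizontal local section of $\mathcal{B}$ preserves each $P_i$; equivalently, that every endomorphism of $f_{I*}P$ in $\Robb{Y}$ preserves the decomposition $f_{I*}P=P_1\oplus\cdots\oplus P_k$.

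For this I would run the mechanism in the proof of Lemma \ref{uniqueness}. Fix $i\neq j$; the composite $P_i\hookrightarrow f_{I*}P\twoheadrightarrow P_j$ is a morphism in $\Robb{Y}$, and to see that it vanishes it is enough, since $Y$ is reduced, to check this on each fibre $y\times A_K^n(I)$ (Lemma \ref{zero module}). On the fibre the image is a $\nabla$-subquotient common to $P_i|_{y\times A_K^n(I)}$ (exponent $f_{I*}\AA_i$) and $P_j|_{y\times A_K^n(I)}$ (exponent $f_{I*}\AA_j$), so by Lemma \ref{exact_sequence} its exponent is weakly equivalent to a common sub-multiset of $f_{I*}\AA_i$ and $f_{I*}\AA_j$; since $f_{I*}\AA_1\cup\cdots\cup f_{I*}\AA_k$ is a Liouville partition of $f_{I*}A$ (as observed just after Proposition \ref{exponent_of_pushforward}), Proposition \ref{weak_equivalence} forces this common sub-multiset, hence the image, to be zero. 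Thus $\mathrm{Hom}_{\Robb{Y}}(P_i,P_j)=0$ for $i\neq j$, so $\mathrm{End}_{\Robb{Y}}(f_{I*}P)\cong\prod_i\mathrm{End}_{\Robb{Y}}(P_i)$ and the projectors $e_1,\dots,e_k$ of the decomposition are central there; in particular they commute with multiplication by every horizontal section of $\mathcal{B}$ and with $\OO_{Y\times A_K^n(I)}$, hence with all of $\mathcal{B}$. Therefore each $e_i$ is $\OO_{X\times A_K^n(I)}$-linear, $P_i=e_i(P)$ is a direct summand of $P$ as an $\OO_{X\times A_K^n(I)}$-module (hence coherent locally free), the relative connection over $X$ restricts to it, and $P_i\in\Robb{X}$ since it is a subobject of $P\in\Robb{X}$ (Lemma \ref{relative_exact_sequence}). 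This produces the decomposition $P=P_1\oplus\cdots\oplus P_k$ in $\Robb{X}$.

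It remains to compute exponents. Let $B_i$ denote the exponent of $P_i$ as an object of $\Robb{X}$, a well-defined weak equivalence class of multisubsets of $\ZZ_p^n$ because $X$ is connected. Since $f_{I*}P_i$ is exactly the summand $P_i$ of $f_{I*}P$, Proposition \ref{exponent_of_pushforward} gives that $P_i$ has exponent $f_{I*}B_i$, whereas by hypothesis it has exponent $f_{I*}\AA_i$; uniqueness of exponents up to weak equivalence (Proposition \ref{weak equivalence}) then yields $f_{I*}B_i\weq f_{I*}\AA_i$, i.e. the $d$-fold multiset unions of $B_i$ and of $\AA_i$ are weakly equivalent (in particular $|B_i|=|\AA_i|$). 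The proof then finishes with the elementary observation that for multisubsets $C,C'$ of $\ZZ_p^n$ of equal cardinality, weak equivalence of their $d$-fold unions forces $C\weq C'$: since the matching permutation in the definition of $\weq$ is allowed to depend on the precision parameter $h$, for each $h$ one extracts a bijection between $C$ and $C'$ of defect $\le ch$ by applying Hall's marriage theorem to the bipartite graph joining $x\in C$ to $y\in C'$ whenever $\langle x^r-y^r\rangle_h\le ch$ for every $r$ --- Hall's condition holds because the given matching of the $d$-fold unions sends the $d|S|$ copies arising from any $S\subseteq C'$ onto copies of at least $|S|$ distinct elements of $C$. Hence $B_i\weq\AA_i$, so $P_i$ admits $\AA_i$ as an exponent, as required.

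The step I expect to be the main obstacle is the descent bookkeeping behind the first two paragraphs --- verifying cleanly that $f_{I*}$ carries the relative connection over $X$ to the relative connection over $Y$ and that $\mathcal{B}$ is generated by connection-horizontal sections, so that the $\nabla$-endomorphism argument applies --- together with the purely combinatorial lemma that passing to $d$-fold multiset unions reflects weak equivalence; once these are in place, the rest is the proof of Lemma \ref{uniqueness} carried out with endomorphisms in place of projectors.
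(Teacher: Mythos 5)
Your proof is correct, but it takes a genuinely different route from the paper's in two places. For the $\OO_{X\times A_K^n(I)}$-stability of the summands, the paper does not argue via central idempotents: it constructs, for each $x\in X$ and $y\in Y$, the fibre decompositions $P|_{x\times A_K^n(I)}=\bigoplus_i P_{x,i}$ and $(f_{I*}P)|_{y\times A_K^n(I)}=\bigoplus_i P_{y,i}$ furnished by Theorem \ref{decom_all}, identifies $P_{y,i}$ with $f_{I*}\bigl(\bigoplus_{x\in f^{-1}(y)}P_{x,i}\bigr)$ and with $P_i|_{y\times A_K^n(I)}$ by uniqueness on the fibre, and then checks directly that for $v\in P_i$ and $g\in\OO_{X\times A_K^n(I)}$ the component of $gv$ along $P_j$ ($j\neq i$) vanishes on every fibre, hence vanishes by Lemma \ref{zero module}. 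Your version of this step --- $\mathrm{Hom}_{\Robb{Y}}(P_i,P_j)=0$ for $i\neq j$ by the mechanism of Lemma \ref{uniqueness}, so the projectors commute with multiplication by the horizontal generators of $f_{I*}\OO_{X\times A_K^n(I)}$ --- runs on the same engine but is packaged more structurally, and it is valid (note that $f_{I*}\OO_{X\times A_K^n(I)}$ is locally $B\otimes_A A_I$ with $B$ finite over $A$, so it is generated over $\OO_{Y\times A_K^n(I)}$ by finitely many horizontal sections and no continuity issue arises). The substantive divergence is the exponent computation: the paper reads off that $P_i|_{x\times A_K^n(I)}=P_{x,i}$ has exponent $\AA_i$ directly from the fibrewise construction, whereas you only obtain $f_{I*}B_i\weq f_{I*}\AA_i$ and must ``divide by $d$''. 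Your Hall's marriage argument for this is correct --- for each $h$ the matching of the $d$-fold unions forces $|N(S)|\geq|S|$ in the bipartite graph with edges given by $\la C^r_j-C'^r_{j'}\ra_h\leq ch$ for all $r$, so a perfect matching $\tau_h$ exists with the same constant $c$ --- and it is a clean combinatorial lemma the paper never needs; the price of your more abstract first step is precisely that this extra lemma becomes necessary, while the paper's heavier bookkeeping makes the exponent of $P_i$ on $X$-fibres visible from the outset.
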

\begin{proof}
    Firstly we deal with the case where $X=\Sp(R)$, $Y=\Sp(S)$ are affinoid and $I$ is closed. Without loss of generality we may assume $k=2$.
    For each $y\in Y$, by Proposition \ref{exponent_of_pushforward}, $f_{I*}P|_{y\times A_K^n(I)}$ is a $\nabla$-module over $y\times A_K^n(I)$ satisfying the Robba condition with exponent $f_{I*}A$ having Liouville partition $f_{I*}\AA_1\cup f_{I*}\AA_2$. So by Theorem \ref{decom_all}, there is a unique decomposition of $f_{I*}P|_{y\times A_K^n(I)}=P_{y,1}\oplus P_{y,2}$ as objects in $\Robb{y}$ with $f_{I*}\AA_i$ being an exponent of $P_{y,i}$ for $i=1,2$. Moreover, for each $x\in X$, again by Theorem \ref{decom_all}, there exists a unique decomposition of $P|_{x\times A_K^n(I)}= P_{x,1}\oplus P_{x,2}$ as objects in $\Robb{x}$ with $\AA_i$ being an exponent of $P_{x,i}$ for $i=1,2$.

    Since $f$ is finite \'etale, by Remark \ref{property of finite etale}, $f$ is surjective and $f^{-1}(y)$ is a finite discrete set for every $y\in Y$. Set $P_{f^{-1}(y),i}:=\bigoplus_{x\in f^{-1}(y)} P_{x,i}$ for $i=1,2$. This is a $\nabla$-module over $f^{-1}(y)\times A_K^n(I)$. By observing that $P_{f^{-1}(y),i}|_{x\times A_K^n(I)}=P_{x,i}$ for each $x\in f^{-1}(y)$, we obtain that it satisfies the Robba condition and $\AA_i$ is an exponent of $P_{f^{-1}(y),i}$. Thus $f_{I*}(P_{f^{-1}(y),i})$ is an object in $\Robb{y}$ having exponent $f_{I*}\AA_i$ for $i=1,2$, by Proposition \ref{exponent_of_pushforward}. 
    Since $f_{I*}P=P_1\oplus P_2$ as objects in $\Robb{Y}$ with $P_i$ having exponent $f_{I*}\AA_i$ for $i=1,2$, for every $y\in Y$, we have 
    $$ (f_{I*}P)|_{y\times A_K^n(I)}=P_1|_{y\times A_K^n(I)}\oplus P_2|_{y\times A_K^n(I)}. $$
    Moreover, since
    \begin{align*}
        (f_{I*}P)|_{y\times A_K^n(I)} &= f_{I*}(P|_{f^{-1}(y)\times A_K^n(I)})\\
        &=\bigoplus_{x\in f^{-1}(y)} f_{I*}(P|_{x\times A_K^n(I)})\\
        &= \bigoplus_{x\in f^{-1}(y)} f_{I*}(P_{x,1}\oplus P_{x,2})\\
        &= f_{I*}\left(\bigoplus_{x\in f^{-1}(y)} P_{x,1}\right)\oplus f_{I*}\left(\bigoplus_{x\in f^{-1}(y)} P_{x,2}\right)\\
        &=f_{I*}(P_{f^{-1}(y),1})\oplus f_{I*}(P_{f^{-1}(y),2}).
    \end{align*}
    By uniqueness assertion in Theorem \ref{decom_all} applied to $(f_{I*}P)|_{y\times A_K^n(I)}$, we have $P_{y,i}=f_{I*}(P_{f^{-1}(y),i})=P_i|_{y\times A_K^n(I)}$. 
    
    Now we prove that $P_i$ is also an object in $\Robb{X}$ for $i=1,2$. Firstly we prove that $P_i$ is a coherent module over $\ran{I}$. For $v\in P_1$ and $g\in R_I$, regarding $v$ as an element in $P$, we see that $gv\in P$. Now regard $gv$ as an element in $f_{I*}P$. Then we can write $gv=v_1+v_2$ with $v_i\in P_i$ for $i=1,2$ in a unique way. To prove that $gv\in P_1$, it suffices to show that $v_2=0$. Consider $v\otimes 1$ as an element in $P_1|_{y\times A_K^n(I)}$. Then $v\otimes 1\in P_{f^{-1}(y),1}$ and so $gv\otimes 1\in P_{f^{-1}(y),1}$. Since we can also write $gv\otimes 1=v_1\otimes 1+v_2\otimes 1$ with $v_i\otimes 1\in P_i|_{y\times A_K^n(I)}=P_{f^{-1}(y),i}$ for $i=1,2$. By uniqueness of expression of $gv\otimes 1$, we have $v_2\otimes 1=0$ in $ P_{f^{-1}(y),2}\subset P|_{f^{-1}(y)\times A_K^n(I)}$. This implies that $v_2\otimes 1=0$ in $P_{x,2}\subset P|_{x\times A_K^n(I)}$ for every $x\in f^{-1}(y)$ and so $v_2\otimes 1=0$ in $P|_{x\times A_K^n(I)} $ for every $x\in X$.
    By Lemma \ref{zero module}, $v_2=0$.

    By the same argument we can prove that $P_2$ is also a coherent module over $\ran{I}$. We have $\Omega^1_{\ran{I}/X}=\Omega^1_{Y\times A_K^n(I)/Y}\otimes_{\OO_Y\times A_K^n(I)} \OO_{\ran{I}}$ and this implies that     
    \begin{align*}
        P_i\otimes_{\OO_{\ran{I}}}\Omega^1_{\ran{I}/X}&=P_i\otimes_{\OO_{\ran{I}}}\Omega^1_{Y\times A_K^n(I)/Y}\otimes_{\OO_{Y\times A_K^n(I)}}\OO_{\ran{I}}\\
        &= P_i\otimes_{\OO_{Y\times A_K^n(I)}}\Omega^1_{Y\times A_K^n(I)/Y}.
    \end{align*}
    Thus the relative $\nabla$-module structure of $P_i$ on $\ran{I}$ is naturally given by that on $Y\times A_K^n(I)$ via 
    $$ \nabla:P_i\to P_i\otimes_{\OO_{Y\times A_K^n(I)}}\Omega^1_{Y\times A_K^n/Y}= P_i\otimes_{\OO_{\ran{I}}}\Omega^1_{\ran{I}/X}. $$
    So we have proven that $P_i$ is also an object in $\Robb{X}$ for $i=1,2$.

    When $X,Y$ are not necessarily affinoid and $I$ is not necessarily closed, find an affinoid covering $\{V_\lambda\}_{\lambda\in\Lambda}$ of $Y$ and a covering $\{I_\delta\}_{\delta\in\Delta}$ of $I$ by closed polysegments. Put $U_\lambda=f^{-1}(V_\lambda)$, then $\{U_\lambda\}_{\lambda\in \Lambda}$ is an affinoid covering of $X$. By applying previous argument to $f_I|_{U_\lambda\times A_K^n(I_\delta)}:U_\lambda\times A_K^n(I_\delta)\to V_\lambda\times A_K^n(I_\delta)$, we have the decomposition $P|_{U_\lambda\times A_K^n(I_\delta)}=\bigoplus_{i=1}^k P_{(\lambda,\delta),i}$ of $\nabla$-module over $U_\lambda\times A_K^n(I_\delta)$ relative to $U_\lambda$, with $\AA_i$ an exponent of $P_{(\lambda,\delta),i}$. By uniqueness assertion in Lemma \ref{uniqueness}, we have the canonical equality
    $$ P_{(\lambda,\delta),i}|_{(U_\lambda\cap U_{\lambda'})\times (A_K^n(I_\delta)\cap A_K^n(I_{\delta'}))}=P_{(\lambda',\delta'),i}|_{(U_\lambda\cap U_{\lambda'})\times (A_K^n(I_\delta)\cap A_K^n(I_{\delta'}))}, $$
    for $(\lambda,\delta),(\lambda',\delta')\in \Lambda\times \Delta$. Then since we have the exact sequence of sheaves
    $$ 0\to P\to \prod_{(\lambda,\delta)\in\Lambda\times \Delta}P|_{U_\lambda\times A_K^n(I_\delta)}\rightrightarrows \prod_{(\lambda,\delta),(\lambda',\delta')\in\Lambda\times\Delta}P|_{(U_\lambda\cap U_{\lambda'})\times (A_K^n(I_\delta)\cap A_K^n(I_{\delta'}))},  $$
    if we define 
    $$P_i=\ker\left( \prod_{(\lambda,\delta)\in\Lambda\times \Delta}P_{(\lambda,\delta),i}\rightrightarrows \prod_{(\lambda,\delta),(\lambda',\delta')\in\Lambda\times\Delta}P_{(\lambda,\delta),i}|_{(U_\lambda\cap U_{\lambda'})\times (A_K^n(I_\delta)\cap A_K^n(I_{\delta'}))} \right),$$
    we obtain the desired decomposition $P=P_1\oplus\cdots\oplus P_k$. The uniqueness follows from Lemma \ref{uniqueness}.
\end{proof}

\subsection{Pushforward of absolute $\nabla$-modules by finite \'etale morphisms}
Firstly, we introduce the notion of absolute $\nabla$-modules.
\begin{defn}
    Suppose $X$ is a smooth $K$-rigid or $K$-dagger space, and $I\subset\RR_{>0}^n$ is a polysegment. A(n absolute) $\nabla$-module over $X\times A_K^n(I)$ is a coherent module $P$ over $X\times A_K^n(I)$ endowed with an integrable connection
    $$\nabla:P\to P\otimes_{\OO_{X\times A_K^n(I)}}\Omega^1_{X\times A_K^n(I)/K}.$$
\end{defn}
The category of absolute $\nabla$-modules over $X\times A_K^n(I)$ is an abelian category. Also, an absolute $\nabla$-module over $\ran{I}$ is known to be automatically locally free.
If we denote the category of $\nabla$-modules over $\ran{I}$ satisfying the Robba condition by $\Rob{X}$, we have the following:
\begin{lem}\label{abcat}
    $\Rob{X}$ is a rigid Abelian tensor category.
\end{lem}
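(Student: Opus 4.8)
The plan is to present $\Rob{X}$ as a full subcategory of the (already Abelian) category of absolute $\nabla$-modules over $\ran{I}$, whose objects are all coherent locally free, and to check the three defining properties of a rigid Abelian tensor category one at a time by transporting each to the underlying relative $\nabla$-module, obtained by composing the connection with the canonical surjection $\Omega^1_{\ran{I}/K}\twoheadrightarrow\Omega^1_{\ran{I}/X}$. Since the Robba condition is a pointwise condition on the fibres over $X$, it is a condition on this underlying relative module, so every stability statement we need follows from Lemma~\ref{relative_exact_sequence} (whose Robba-condition assertions are verified fibrewise over $X$ and hence need no connectedness hypothesis on $X$; if one prefers, one may also restrict to the connected components of $X$, a product of rigid Abelian tensor categories being again one).

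For the Abelian structure, recall that a full subcategory of an Abelian category closed under finite direct sums, kernels and cokernels is itself Abelian. Kernels and cokernels of morphisms of absolute $\nabla$-modules are again absolute $\nabla$-modules, hence coherent locally free, so it suffices to see that $\Rob{X}$ is closed under subobjects, quotients and extensions. For a short exact sequence $0\to P_1\to P\to P_2\to 0$ of absolute $\nabla$-modules, forgetting the $X$-part of the connection gives a short exact sequence of relative $\nabla$-modules with the same underlying $\OO_{\ran{I}}$-modules, and Lemma~\ref{relative_exact_sequence} shows that $P$ satisfies the Robba condition if and only if $P_1$ and $P_2$ do. Applying this to $0\to\ker f\to P\to\Im f\to 0$ and $0\to\Im f\to Q\to\coker f\to 0$ for a morphism $f\colon P\to Q$ yields the desired closure, so $\Rob{X}$ is Abelian.

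For the tensor structure, equip $\Rob{X}$ with the tensor product of $\nabla$-modules, $\nabla(\partial)(x_1\otimes x_2)=\nabla_1(\partial)(x_1)\otimes x_2+x_1\otimes\nabla_2(\partial)(x_2)$, with unit object $(\OO_{\ran{I}},d)$ and with the associativity, commutativity and unit constraints of $\OO$-modules. The unit lies in $\Rob{X}$ because the trivial connection has intrinsic radius $1$ at every point, and for $P_1,P_2\in\Rob{X}$ the underlying relative module of $P_1\otimes P_2$ is the tensor product of the underlying relative modules of $P_1$ and $P_2$, hence an object of $\Robb{X}$ by Lemma~\ref{relative_exact_sequence}(1); so $P_1\otimes P_2\in\Rob{X}$. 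Every object is locally free, hence flat, so $\otimes$ is exact in each variable and $\Rob{X}$ is a $K$-linear tensor category. For rigidity, to $P\in\Rob{X}$ attach its dual $P^\vee$ with the connection $\nabla^\vee$ recalled just before the lemma; it is coherent locally free, and its underlying relative module is the dual of the underlying relative module of $P$, hence an object of $\Robb{X}$ by Lemma~\ref{relative_exact_sequence}(2), so $P^\vee\in\Rob{X}$. The evaluation $P\otimes P^\vee\to\OO_{\ran{I}}$ and coevaluation $\OO_{\ran{I}}\to P^\vee\otimes P$ are morphisms of $\nabla$-modules by the Leibniz rule and satisfy the triangle identities because these already hold for the underlying locally free $\OO_{\ran{I}}$-modules; hence every object is dualizable, $\Rob{X}$ is rigid, and $P^\vee\otimes Q$ serves as internal Hom. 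I expect no genuine obstacle here: the only non-formal input, stability of the Robba condition under subquotients, extensions, tensor products and duals, was already reduced point by point over $X$ and $I$ to \cite{Ked1} Lemma~6.2.8 and recorded in Lemmas~\ref{exact_sequence} and~\ref{relative_exact_sequence}; the one point needing a little care is the compatibility of ``forget the $X$-connection'' with kernels, cokernels, $\otimes$ and $(-)^\vee$, which is immediate since all of these are computed on underlying $\OO_{\ran{I}}$-modules and the relative connection is recovered from the absolute one via the surjection $\Omega^1_{\ran{I}/K}\twoheadrightarrow\Omega^1_{\ran{I}/X}$.
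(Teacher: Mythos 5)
Your proposal is correct and follows essentially the same route as the paper: the paper likewise constructs tensor products and duals exactly as in $\Robb{X}$ and reduces the Abelian structure to the two exact sequences $0\to\ker f\to P\to\Im f\to 0$ and $0\to\Im f\to Q\to\coker f\to 0$ together with Lemma \ref{relative_exact_sequence}. The extra details you supply (evaluation, coevaluation, triangle identities, and the remark on connectedness) are points the paper leaves implicit, but they do not constitute a different argument.
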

\begin{proof}
    Tensor product and dual in $\Rob{X}$ can be constructed in the same way as those in $\Robb{X}$.
    It remains to prove the Robba condition. For the kernel and the cokernel of a morphism $f:P\to Q$ in $\Rob{X}$, we have following exact sequences:
    \begin{align*}
        0\lra\ker f\lra P\to \Im f\lra 0\\
        0\lra \Im f\lra Q\lra \coker f\lra 0.
    \end{align*}
    Then the Robba condition for the kernel and the cokernel  follows from Lemma \ref{relative_exact_sequence}.
\end{proof}

\begin{lem}[cf. \cite{kedlaya2017corrigendum},\cite{w1} Lemma 3.9]\label{nabla uniqueness}
    Suppose $X$ is a smooth connected affinoid or dagger space and $I\subset \RR_{>0}^n$ is a polysegment. Let $P$ be an object in $\Rob{X}$ with exponent $A$ having Liouville partition $A=\AA_1\cup\cdots\cup\AA_k$. The decomposition of $P=P_1\oplus\cdots\oplus P_k$ such that $P_i$ admits $A_i$ as an exponent, if exists, is unique.
\end{lem}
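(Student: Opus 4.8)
The plan is to reduce this to the already-established relative case, Lemma~\ref{uniqueness}. The point is that every object of $\Rob{X}$ carries an underlying object of $\Robb{X}$: composing the connection $\nabla\colon P\to P\otimes_{\OO_{\ran{I}}}\Omega^1_{\ran{I}/K}$ with the canonical surjection $\Omega^1_{\ran{I}/K}\twoheadrightarrow\Omega^1_{\ran{I}/X}$ produces a relative connection, and since the Robba condition is tested fibrewise on $P|_{x\times A_K^n(I)}$ for $x\in X$ --- where one cannot distinguish whether the restricted connection came from the absolute or the relative structure --- the resulting relative $\nabla$-module again satisfies the Robba condition. This yields a functor $\Rob{X}\to\Robb{X}$ which is the identity on the underlying $\OO_{\ran{I}}$-module, is faithful and exact, and preserves exponents (the exponent of an object being, in both settings, the one attached to its fibres).

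Granting this, suppose $P=P_1\oplus\dots\oplus P_k$ and $P=P'_1\oplus\dots\oplus P'_k$ are two decompositions in $\Rob{X}$, indexed so that $P_i$ and $P'_i$ both admit $\AA_i$ as an exponent (we use Propositions~\ref{weak equivalence} and~\ref{wkexp} to pass freely between ``weakly equivalent to $\AA_i$'' and ``being the exponent $\AA_i$''). Applying the forgetful functor turns these into two decompositions of one and the same object of $\Robb{X}$ with $P_i$ and $P'_i$ of exponent $\AA_i$. Since $X$ is smooth it is in particular reduced, so all hypotheses of Lemma~\ref{uniqueness} hold, and we conclude $P_i=P'_i$ as $\OO_{\ran{I}}$-submodules of $P$ for every $i$. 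As the absolute connection on each summand is by definition the restriction of $\nabla$, the identification $P_i=P'_i$ automatically respects the absolute $\nabla$-module structure, which is the asserted uniqueness.

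I do not expect a genuine obstacle here; the two things worth checking with care are (i) that the forgetful functor $\Rob{X}\to\Robb{X}$ is well defined, i.e.\ that passing from the absolute to the relative connection preserves the Robba condition --- immediate, as noted, because the condition is fibrewise and in the fibre directions the two connections agree --- and (ii) that ``the exponent'' of $P$ and of the summands is unambiguous, which is secured by the well-definedness of exponents over connected rigid or dagger spaces together with the identification of an exponent with its weak equivalence class. One could instead copy the fibrewise argument of Lemma~\ref{uniqueness} verbatim --- form the composite $P_1\hookrightarrow P\twoheadrightarrow P'_2$, restrict to each $x\times A_K^n(I)$, and use that a Liouville partition admits no nontrivial refinement --- but the reduction above avoids repeating that computation.
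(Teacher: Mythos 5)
Your proposal is correct and matches the paper's argument: the paper also proves this by reducing immediately to Lemma~\ref{uniqueness}, the uniqueness statement for relative $\nabla$-modules (noting that smooth implies reduced), and your write-up simply makes explicit the forgetful passage from the absolute to the relative connection that the paper leaves implicit.
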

\begin{proof}
   This follows immediately from Lemma \ref{uniqueness}.
\end{proof}

With a similar argument as in Proposition \ref{relative pushforward of decomposition}, properties of pushforward of decomposition of $\nabla$-module by finite \'etale morphisms is given as follows:
\begin{cor}\label{pushforward of decomposition}
    Suppose $f:X\to Y$ is a finite \'etale morphism of connected smooth rigid or dagger spaces of degree $d$ and $I\subset \RR_{>0}^n$ is a polysegment. Let $P$ be an object in $\Rob{X}$, with an exponent $A$ having Liouville partition $\AA_1\cup\cdots\cup\AA_k$ and let $f_I:X\times A_K^n(I)\to Y\times A_K^n(I)$ be the induced finite \'etale morphism on the relative polyannuli. Suppose that the pushforward $f_{I*}P$ of $P$ to $Y\times A_K^n(I)$ (which is an object in $\Rob{Y}$ by Lemma \ref{exponent_of_pushforward}) admits a decomposition (which is unique by Lemma \ref{nabla uniqueness}) $P_1\oplus\cdots\oplus P_k$ with respect to the Liouville partition $f_{I*}A=f_{I*}\AA_1\cup\cdots\cup f_{I*}\AA_k$ with $P_i$ having exponent $f_{I*}\AA_i$. Then this is also a decomposition of $P$ in the category $\Rob{X}$ where $P_i$ admits $\AA_i$ as an exponent.
\end{cor}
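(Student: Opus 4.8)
The plan is to reduce Corollary \ref{pushforward of decomposition} to the relative case already established in Proposition \ref{relative pushforward of decomposition}, since the latter provides almost everything we need. First I would observe that an absolute $\nabla$-module over $\ran{I}$ in $\Rob{X}$ has, by restriction of the connection along $\Omega^1_{\ran{I}/K}\to\Omega^1_{\ran{I}/X}$, an underlying relative $\nabla$-module in $\Robb{X}$; the Robba condition is checked fibrewise and depends only on the action of $\nabla(\partial_{t_i})$, so it is preserved under this forgetful functor, and the notion of exponent is defined purely in terms of the $\mupi^n$-action coming from the $D_i=\nabla(\partial_{t_i})$, hence is also unchanged. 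Likewise, pushforward $f_{I*}$ along the finite \'etale $f_I$ commutes with this forgetful functor, because $f_I$ is \'etale so $\Omega^1_{\ran{I}/K}$ pulls back from $\Omega^1_{Y\times A_K^n(I)/K}$ and similarly for the relative differentials over $X$, $Y$.

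Given that, I would apply Proposition \ref{relative pushforward of decomposition} to the underlying relative $\nabla$-modules: the decomposition $f_{I*}P=P_1\oplus\cdots\oplus P_k$ in $\Rob{Y}$ is in particular a decomposition in $\Robb{Y}$ with $P_i$ having exponent $f_{I*}\AA_i$ (the forgetful functor identifies exponents), so Proposition \ref{relative pushforward of decomposition} yields a decomposition $P=P_1\oplus\cdots\oplus P_k$ of the underlying relative $\nabla$-module in $\Robb{X}$, with $\AA_i$ an exponent of $P_i$. The only remaining point is to upgrade this to a decomposition in $\Rob{X}$, i.e. to check that each $P_i$, which we already know is a direct summand of $P$ as an $\OO_{\ran{I}}$-module, is stable under the \emph{absolute} connection $\nabla$ of $P$ and not merely under the relative one. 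For this I would use that the $P_i$ are characterised intrinsically inside $f_{I*}P$ as the isotypic pieces for exponents weakly equivalent to $f_{I*}\AA_i$ — more concretely, $P_i$ is the image of a projector on $f_{I*}P$ that is constructed from the $\mupi^n$-action via averaging sums $p^{-nk}\sum_{\zeta\in\mupk^n}E(\zeta)\zeta^{-A}$, cf. the proof of Lemma \ref{contraction}. Since the absolute connection commutes with the $\mupi^n$-action (the action is built from the $D_i$, which commute with every $\nabla(\partial)$ for $\partial$ a derivation on the base by integrability of $\nabla$), this projector is horizontal for the absolute $\nabla$, hence its image $P_i$ is an absolute $\nabla$-submodule. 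Because $P\in\Rob{X}$ and $\Rob{X}$ is abelian with the Robba condition stable under subquotients (Lemma \ref{abcat}), each $P_i\in\Rob{X}$. Uniqueness is Lemma \ref{nabla uniqueness}.

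The main obstacle I expect is precisely this last step: making rigorous that the decomposition produced by Proposition \ref{relative pushforward of decomposition} — which a priori only respects the $\OO$-module structure and the relative connection — is automatically compatible with the absolute connection. There are two natural ways to do this, and I would pick whichever is cleanest to write: (i) the horizontal-projector argument sketched above, exploiting that the $\mupi^n$-action on $P\otimes_K K(\mupi)$ is by absolute-$\nabla$-horizontal automorphisms, so the Christol--Dwork decomposition along $\ZZ^n$-cosets of the exponent is a decomposition in $\Rob{X}$; or (ii) running the proof of Proposition \ref{relative pushforward of decomposition} verbatim but in $\Rob{\cdot}$ throughout — the only place the argument touched the connection was to transport the relative $\nabla$-structure through $\Omega^1_{\ran{I}/X}=\Omega^1_{Y\times A_K^n(I)/Y}\otimes\OO_{\ran{I}}$, and since $f$ is \'etale the analogous identity $\Omega^1_{\ran{I}/K}=\Omega^1_{Y\times A_K^n(I)/K}\otimes\OO_{\ran{I}}$ holds, so each $P_i$ inherits an absolute connection from the absolute connection on the summand $P_i$ of $f_{I*}P\in\Rob{Y}$. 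Either route is routine once set up; the content is entirely in Proposition \ref{relative pushforward of decomposition}, and the corollary is a formal consequence together with \'etaleness of $f$.
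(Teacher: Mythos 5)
Your proposal is correct and, in its route (ii), coincides with the paper's own proof: the paper likewise reruns the module-theoretic part of Proposition \ref{relative pushforward of decomposition} to see that each $P_i$ is an $\OO_{\ran{I}}$-module, and then transports the absolute connection through the \'etale identification $\Omega^1_{\ran{I}/K}=\Omega^1_{Y\times A_K^n(I)/K}\otimes_{\OO_{Y\times A_K^n(I)}}\OO_{\ran{I}}$. The horizontal-projector alternative (i) is not needed and is not what the paper does.
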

\begin{proof}
    As in the proof of Proposition \ref{relative pushforward of decomposition}, we may assume $k=2$ and firstly deal with the case where $X=\Sp(R)$, $Y=\Sp(S)$ are affinoid and $I$ is closed.
    We can argue in the same way as in the proof of Proposition \ref{relative pushforward of decomposition} to conclude that both $P_1,P_2$ are also modules over $\ran{I}$. Since $f_I$ is finite \'etale, we have $\Omega^1_{\ran{I}/K}=\Omega^1_{Y\times A_K^n(I)/K}\otimes_{\OO_{Y\times A_K^n(I)}}\OO_{\ran{I}}$ which implies that
    \begin{align*}
        P_i\otimes_{\OO_{\ran{I}}}\Omega^1_{\ran{I}/K}&=P_i\otimes_{\OO_{\ran{I}}}\Omega^1_{Y\times A_K^n(I)/K}\otimes_{\OO_{Y\times A_K^n(I)}}\OO_{\ran{I}}\\
        &= P_i\otimes_{\OO_{Y\times A_K^n(I)}}\Omega^1_{Y\times A_K^n(I)/K}.
    \end{align*}
    Thus the $\nabla$-module structure of $P_i$ on $\ran{I}$ is naturally given by that on $Y\times A_K^n(I)$ via 
    $$ \nabla:P_i\to P_i\otimes_{\OO_{Y\times A_K^n}}\Omega^1_{Y\times A_K^n/K}= P_i\otimes_{\OO_{\ran{I}}}\Omega^1_{\ran{I}/K}. $$
    So we prove that $P_i$ is also an object in $\Rob{X}$ for $i=1,2$.

    When $X,Y$ are not necessarily affinoid, by the same gluing argument as in last paragraph in proof of Proposition \ref{relative pushforward of decomposition}, we can obtain a global decomposition.
\end{proof}

\section{An application}
As an application of Proposition \ref{pushforward of decomposition}, we prove a generalization of \cite{shiho2010logarithmic} Proposition 2.4 (called 
``generization'') in this section. Firstly we introduce notions of log-$\nabla$-modules and $\Sigma$-unipotence defined in \cite{shiho2010logarithmic}. Next we explain the relation between Shiho's ``generization'' proposition and $p$-adic Fuchs theorem for log-$\nabla$-modules over relative polyannuli. Finally we prove $p$-adic Fuchs theorem for log-$\nabla$-modules over relative polyannuli when its exponent has non-Liouville differences.

\subsection{Log-$\nabla$-modules and $\Sigma$-unipotence}
We introduce the notions defined in \cite{shiho2010logarithmic}.
Firstly, let us recall the notion of log-$\nabla$-modules.

    \begin{defn}[{\cite{shiho2010logarithmic}}, Definition 1.2]\label{lognabdef}
        Let $X$ be a rigid spaces over $K$ 
        and let $x_1, ..., x_l$ be elements in $\Gamma(X,\OO_X)$. 
        Then a 
        log-$\nabla$-module on $X$ with respect to $x_1, ..., x_l$ 
        is a locally free $\OO_X$-module 
        $E$ endowed with an integrable $K$-linear log connection 
        $\nabla: E \longrightarrow E \otimes_{\OO_X} \omega^1_{X}$, where $\omega^1_X$ denotes the sheaf of continuous log differentials with respect to $\dlog x_i\ (1\leq i\leq l)$, which is defined by
        $$\omega^1_{X} := (\Omega^1_{X/K} \oplus \bigoplus_{i=1}^l \OO_X \cdot 
        \mathrm{dlog} x_1)/N,$$
        where $N$ is the sheaf locally generated by $(dx_i,0)-(0,x_i\mathrm{dlog} x_i)$ $(1 \leq i \leq l)$.
    \end{defn}

Let $X, x_1,...,x_l,(E,\nabla)$ be as in Definition \ref{lognabdef} and put $D_i:=\{x_i=0\} \subseteq X$ for $1\leq i\leq l$, 
$M_i:={\rm im}(\Omega^1_{X} \oplus \bigoplus_{j\not=i}\OO_X \dlog x_j \lra 
\omega^1_{X})$. Then the composite map 
$$ 
E \overset{\nabla}{\lra} E \otimes_{\OO_X} \omega^1_{X} \lra 
E \otimes_{\OO_X} (\omega^1_{X}/M_i) \cong 
E \otimes_{\OO_X} \OO_{D_i} \dlog x_i \cong E \otimes_{\OO_X} \OO_{D_i}
$$ 
naturally induces an element $\res_i$ in 
$\End_{\OO_{D_i}}(E \otimes_{\OO_X} \OO_{D_i})$. By \cite{shiho2011cut} Proposition-Definition 1.24, if $X$ is smooth affinoid and zero loci of $x_i$ are smooth and meet transversally, then there exists a polynomial with coefficients in $K$ annihilating $\res_i$. We call the roots of the minimal polynomial $P_i(\lambda)\in K[\lambda]$ annihilating $\res_i$ the exponent of $(E,\nabla)$ along $D_i$ for $1\leq i\leq r$. For the case $X$ is not necessarily affinoid, exponent of $(E,\nabla)$ is defined as the union of the exponents of $(E,\nabla)|_U$ where $U$ is an affinoid subdomain of $X$.

For a smooth 
affinoid space $X$ 
endowed with $x_1, ...,x_l \in \Gamma(X,\OO_X)$ 
such that the zero loci of $x_i$ are smooth and 
meet transversally, we define the 
notion of a log-$\nabla$-module $(E,\nabla)$ 
on $X \times A_K^n[0,0]$ with respect to 
$x_1, ..., x_l$ as a 
log-$\nabla$-module $(E,\nabla)$ 
on $X$ with respect to 
$x_1, ..., x_l$ endowed with commuting $\OO_X$-linear endomorphisms $\partial_i := t_i 
\frac{\partial}{\partial t_i}$ of $(E,\nabla)$ 
$(1 \leq i \leq n)$. 

Next we recall the notion of $\Sigma$-unipotent log-$\nabla$-modules. For an aligned polysegment $I \subseteq \RR_{>0}^n$ or $I=[0,0]$, and 
$\xi := (\xi_1, ..., \xi_n) \in K^n$, we define the $\nabla$-module 
$(M_{\xi},\nabla_{M_{\xi}})$ on $A^n_K(I)$
as the $\nabla$-module 
$(\OO_{A_K^n(I)},d + \sum_{i=1}^n \xi_i \frac{dt_i}{t_i})$. 

\begin{defn}[\cite{shiho2010logarithmic} Definition 1.3]\label{unipdef}
    Let $X$ be a smooth rigid space endowed with 
    $x_1, \allowbreak ..., x_l \in \Gamma(X,\OO_X)$ whose zero loci are smooth and 
    meet transversally. Let $I \subseteq \RR^n_{> 0}$ 
    be an aligned polysegment or $I=[0,0]$, and fix $\Sigma := \Sigma'\times\prod_{i=1}^n\Sigma_i \subseteq 
    \overline{K}^{l}\times\ZZ_p^n$. 
    A log-$\nabla$-module $E$ on $X\times A_K^n(I)$ is called $\Sigma$-unipotent if it admits a filtration 
    $$ 0 = E_0 \subset E_1 \subset \cdots \subset E_k=E $$ 
    by sub log-$\nabla$-modules whose successive quotients have the from $\pi_1^*F \otimes \pi_2^*M_{\xi}$ where $F$ is a log-$\nabla$-module on $X$ with respect to $x_1,\dots, x_l$ with exponent belonging to $\Sigma'$, $\xi\in\prod_{i=1}^n\Sigma_i$, and 
    $\pi_1: X \times A_K^n(I) \lra X$, $\pi_2:X \times A_K^n(I) \lra A_K^n(I)$ 
    are the projections. We denote the category of $\Sigma$-unipotent log-$\nabla$-modules over $X\times A_K^n(I)$ with respect to $x_1,\dots,x_l$ by 
    $ \mathrm{ULNM}_{X\times A_K^n(I),\Sigma} $.
\end{defn}

\begin{prop}[\cite{shiho2010logarithmic} Corollary 1.15] \label{1.15}
    Let $X,\Sigma$ be as in Definition \ref{unipdef}, and $I\subset\RR^n_{>0}$ be an open polysegment. 
    Then the functor 
    $\mathcal{U}_I: \allowbreak \mathrm{ULNM}\allowbreak {}_{X\times A^n_K[0,0],\Sigma} \allowbreak \lra 
    \mathrm{ULNM}_{X\times A^n_K(I),\Sigma}$
    sending an object $E$ in $\mathrm{ULNM}_{X\times A^n_K[0,0],\Sigma}$ to 
    $\pi^*E$ $($here we regard $E$ as a log-$\nabla$-module over $X$ with respect to $x_1,\dots,x_l$ and $\pi:X \times A^n_K(I) \lra X$ denotes the projection$)$ 
    endowed with the connection 
    $$ {v} \mapsto \pi^*\nabla({v}) + \sum_{i=1}^n\pi^*(\partial_i)({{v}}) \frac{dt_i}{t_i} $$
    is an equivelence of categories.
    \end{prop}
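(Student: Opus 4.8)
The plan is to show that $\mathcal{U}_I$ is fully faithful and essentially surjective by reducing both properties to a single (log-)de Rham cohomology comparison. First observe that $\mathcal{U}_I$ is well defined: it is exact on underlying modules (being pullback along the flat projection $\pi$) and is manifestly compatible with the connection and with internal tensor products, duals and internal Homs; since it sends each building block $\pi_1^*F\otimes\pi_2^*M_\xi$ over $X\times A_K^n[0,0]$ to the corresponding building block $\pi_1^*F\otimes\pi_2^*M_\xi$ over $X\times A_K^n(I)$, it carries a $\Sigma$-unipotent filtration to a $\Sigma$-unipotent filtration, so it indeed lands in $\mathrm{ULNM}_{X\times A_K^n(I),\Sigma}$.

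Now the reduction. For objects $E_1,E_2$ over $X\times A_K^n[0,0]$, both $\mathrm{Hom}(E_1,E_2)$ and $\mathrm{Hom}(\mathcal{U}_IE_1,\mathcal{U}_IE_2)$ are the degree-zero (log-)de Rham cohomology (global horizontal sections) of the relevant internal Hom; moreover, because $\mathrm{ULNM}$ is stable under extensions inside the ambient category of log-$\nabla$-modules, the corresponding $\mathrm{Ext}^1$ groups are the degree-one (log-)de Rham cohomology of the internal Hom. Since $\mathcal{U}_I$ commutes with internal Hom, full faithfulness follows once we know that $H^0_{\mathrm{dR}}(E)\to H^0_{\mathrm{dR}}(\mathcal{U}_IE)$ is an isomorphism, and --- by induction on the length of the unipotent filtration of an arbitrary object of $\mathrm{ULNM}_{X\times A_K^n(I),\Sigma}$, with tautological base case and with inductive step identifying the extension class through the $\mathrm{Ext}^1$-comparison --- essential surjectivity follows once we know in addition that $H^1_{\mathrm{dR}}(E)\to H^1_{\mathrm{dR}}(\mathcal{U}_IE)$ is an isomorphism. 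Everything thus reduces to proving that $H^q_{\mathrm{dR}}(E)\to H^q_{\mathrm{dR}}(\mathcal{U}_IE)$ is an isomorphism for every $\Sigma$-unipotent $E$ over $X\times A_K^n[0,0]$ (allowing the exponent set in the annulus directions to be any set of differences of elements of the $\Sigma_i$, as produced by an internal Hom).

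To prove this comparison, d\'evissage along the unipotent filtration together with the five lemma reduces to a single building block $E=\pi_1^*F\otimes\pi_2^*M_\xi$. A K\"{u}nneth / projection-formula argument for $\pi\colon X\times A_K^n(I)\to X$ peels off the base (log-)de Rham cohomology of $F$, which $\mathcal{U}_I$ leaves unchanged, and reduces the problem to comparing the Koszul cohomology of the commuting scalars $(\xi_1,\dots,\xi_n)$ acting on $K$ with the log-de Rham cohomology of $(\OO_{A_K^n(I)},\,d+\sum_i\xi_i\,dt_i/t_i)$, that is, the Koszul cohomology of the operators $t_i\partial_{t_i}+\xi_i$ acting on $K_{I,n}$. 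Writing $K_{I,n}=\bigoplus_{j\in\ZZ^n}K\,t^j$, these operators act on the $t^j$-summand by the scalars $j_i+\xi_i$, so that summand is Koszul-acyclic unless $j_i+\xi_i=0$ for every $i$; the only such $\xi$ that lies in $\ZZ^n$ is $\xi=0$ --- this is exactly the point at which non-integrality of exponent differences in the annulus directions is used --- so only the $j=0$ summand survives, and there one reads off precisely the Koszul complex of $(\xi_1,\dots,\xi_n)$ on $K$, giving the isomorphism in all degrees. The hardest part will not be this Laurent-series bookkeeping but the infrastructure behind the reduction: checking that $\mathrm{Ext}^1$ computed inside $\mathrm{ULNM}$ coincides with $\mathrm{Ext}^1$ in the ambient category (hence with $H^1$ of the internal Hom), and running the K\"{u}nneth comparison uniformly over a general smooth base $X$ so that the base cohomology of $F$ genuinely factors out --- which requires care about the precise meaning of log-de Rham cohomology here and about the relevant finiteness and flatness.
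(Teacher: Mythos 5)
First, a remark on what you are being compared against: the paper does not prove this proposition at all --- it is imported verbatim as Corollary 1.15 of \cite{shiho2010logarithmic}, so your reconstruction can only be judged against Shiho's argument. Your overall architecture (d\'evissage along the unipotent filtration, identifying $\mathrm{Hom}$ and $\mathrm{Ext}^1$ with $H^0_{\mathrm{dR}}$ and $H^1_{\mathrm{dR}}$ of the internal Hom, peeling off the base and reducing to a Koszul computation in the annulus directions) is the right skeleton and is essentially how Shiho proceeds.

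There is, however, a genuine gap at the one place you dismiss as ``Laurent-series bookkeeping.'' The ring $K_{I,n}$ is \emph{not} the algebraic direct sum $\bigoplus_{j\in\ZZ^n}K\,t^j$; it is its completion, and the eigensummand decomposition does not pass to cohomology for free. Concretely, to show that the complex of $t_i\partial_{t_i}+\xi_i$ is acyclic when $\xi\notin -\ZZ^n$ you must solve $(j_i+\xi_i)g_j=f_j$ and then prove that $\sum_j g_jt^j$ still converges on the polyannulus, i.e.\ control the small denominators $|j_i+\xi_i|$ as $|j|\to\infty$. This is exactly where the $p$-adic non-Liouville condition enters: if $\xi_i$ is a Liouville number, $|j_i+\xi_i|$ decays super-geometrically along a subsequence, the inverse operator is unbounded, $H^1$ of the annulus does not vanish, and essential surjectivity of $\mathcal{U}_I$ fails (there are extension classes over $A_K^n(I)$ with no counterpart over $A_K^n[0,0]$). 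You invoke only non-integrality of exponent differences, which handles $H^0$ (full faithfulness) but says nothing about $H^1$. Note that the hypotheses $(\NID)$ and $(\NLD)$ on $\Sigma$, which are standing assumptions in Shiho's Corollary 1.15, are suppressed in the statement as quoted here --- without them the proposition is false, and with them the convergence estimate above is the central analytic content of the proof, not an afterthought.
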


    \begin{defn}[\cite{shiho2010logarithmic} Definition 1.8]
        A set $\Sigma\subset \overline{K}$ is called $(\NID)$ (resp. $(\NLD)$) if, for any $\alpha,\beta\in \Sigma$, $\alpha-\beta$ is not a non-zero integer (resp. $\alpha-\beta$ is non-Liouville). A set $\Sigma:=\prod_{i=1}^r\Sigma_i\subset\overline{K}^r$ is called $(\NID)$ (resp. $(\NLD)$) if so is $\Sigma_i$ for all $1\leq i\leq r$.
    \end{defn}

We have the following characterization of $\Sigma$-unipotence for $\nabla$-modules on polyannuli under a suitable condition on $\Sigma$:

\begin{lem}[cf. \cite{shiho2011cut} Proposition 1.20]\label{1.20}
    Let $L/K$ be an extension of complete nonarchimedean fields. Let $I\subset \RR^n_{>0}$ be an open polysegment and let $E$ be a $\nabla$-module on $A_L^n(I)$. Then for a subset $\Sigma\subset \ZZ_p^n$ which is $(\NLD)$, the following are equivalent:
    \begin{enumerate}
        \item $E$ is $\Sigma$-unipotent.
        \item $E$ satisfies the Robba condition and there exists an exponent $A$ of $E$ such that $A\subseteq \Sigma$.
    \end{enumerate}
\end{lem}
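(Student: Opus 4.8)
The plan is to prove the equivalence by reducing to one-dimensional statements on each coordinate direction, using the characterization of exponents via the $\mu_{p^\infty}^n$-action that underlies Definition~\ref{exp}. For the implication $(1)\Rightarrow(2)$: if $E$ is $\Sigma$-unipotent, it admits a filtration whose successive quotients are of the form $\pi_1^*F\otimes\pi_2^*M_\xi$ with $\xi\in\Sigma$. Since $X$ here is a point (we are over the field $L$), $F$ is trivial and each quotient is simply $M_\xi=(\OO_{A_L^n(I)}, d+\sum_i\xi_i\,dt_i/t_i)$. One computes directly that $M_\xi$ satisfies the Robba condition (the spectral radius of each $t_iD_i$ on $M_\xi\otimes F_\rho$ equals $|\partial_{t_i}|_{\sp}\rho_i$, because $\xi_i\in\ZZ_p$ and translation by a $p$-adic integer does not change the spectral norm) and has exponent $\{\xi\}$; indeed $S_{k}=1$ witnesses $\xi\in\ZZ_p^{1\times n}$ as an exponent, since $\zeta^*$ acts on the generator by $\zeta^\xi$. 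Then Lemma~\ref{exact_sequence} (applied inductively up the filtration) shows $E$ satisfies the Robba condition and admits $A:=\{\xi^{(1)},\dots,\xi^{(k)}\}\subseteq\Sigma$ as an exponent.

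For the harder direction $(2)\Rightarrow(1)$: suppose $E$ satisfies the Robba condition with an exponent $A\subseteq\Sigma$. Since $\Sigma$ is $(\NLD)$, the multiset $A$ has $p$-adic non-Liouville differences, so in particular any two distinct elements of $A$ (in any direction where they differ) give a Liouville partition refinement. The strategy is to first handle the case where $A$ consists of a single weak-equivalence class — equivalently, by Proposition~\ref{wkexp} in the appendix, where all elements of $A$ are pairwise congruent mod $\ZZ^n$ — and show $E$ is then an iterated extension of copies of some $M_\xi$ with $\xi\in\Sigma$; for the general case, decompose $A$ into its $\ZZ^n$-cosets, which (because differences are non-Liouville) is a Liouville partition, and apply Theorem~\ref{decom_all} to split $E$ accordingly, reducing to the single-coset case. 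Within a single coset, one uses the $\mu_{p^\infty}^n$-action: twisting $E$ by $M_{-\xi}$ for a chosen $\xi\in A$ produces a $\nabla$-module with exponent a multiset of \emph{integers}, which one then shows is unipotent (the residues/exponents are integers and non-Liouville, so one builds the filtration by the standard argument on $t_iD_i$-eigenspaces, peeling off horizontal sections direction by direction). This is essentially the content of \cite{shiho2011cut} Proposition~1.20 transported to our setup, and the non-Liouville hypothesis is exactly what makes the relevant $\nabla$-stable filtration steps converge.

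The main obstacle I expect is the passage within a single coset from "exponent is a multiset of non-Liouville integers" to "the module is unipotent," i.e.\ producing the $\nabla$-stable filtration whose graded pieces are trivial. In one variable this is the classical Christol–Dwork argument (find a cyclic vector, the indicial polynomial has integer roots with non-Liouville differences, so a Frobenius-type series solution converges), and in several variables one must iterate carefully, ensuring that at each stage the sub-$\nabla$-module extracted in the $i$-th direction is preserved by the remaining $t_jD_j$. One also needs to check that extracting a rank-one trivial sub-$\nabla$-module and its quotient both remain in the Robba class with exponents contained in $\Sigma$ — but this follows from Lemma~\ref{exact_sequence} together with the observation that exponents of sub- and quotient modules refine the exponent of $E$. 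The $(1)\Rightarrow(2)$ direction and the coset-decomposition step are routine given the machinery already in the excerpt; the real work is the convergence estimate in the single-coset unipotence argument, which is where the $(\NLD)$ assumption is indispensable.
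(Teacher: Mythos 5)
Your proposal is correct and follows essentially the same route as the paper. For $(1)\Rightarrow(2)$ the paper does exactly what you do: the rank-one computation for $M_\xi$ (that $\zeta^*$ acts by $\zeta^\xi$, so $S_k=1$ works, and the Robba condition holds since $|\xi(\xi-1)\cdots(\xi-k+1)|\leq|k!|$) is \cite{w1} Example 3.16, and the dévissage up the filtration is Lemma \ref{exact_sequence}. The only real difference is in $(2)\Rightarrow(1)$: the paper simply invokes the Th\'eor\`eme on p.~216 of \cite{Gac} (free case) and \cite{w1} Corollary 3.20 (general case), whereas you sketch the argument behind those citations — splitting $A$ into $\ZZ^n$-cosets (a Liouville partition under $(\NLD)$, so Theorem \ref{decom_all} applies), twisting by $M_{-\xi}$ to reduce to integer exponents, and then proving unipotence. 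That last convergence step, which you correctly identify as the crux, is precisely the content of the cited results and is not reproved in the paper either; as a self-contained proof your sketch would still owe that estimate, but relative to the paper's own proof there is no gap. One small caveat: your identification of ``single weak-equivalence class'' with ``pairwise congruent mod $\ZZ^n$'' is only valid here because $A$ is $(\NLD)$ (for a non-Liouville difference, $\langle a-b\rangle_h=O(h)$ forces $a-b\in\ZZ$); it is false for general multisets.
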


\begin{proof}
    (2)$\Longrightarrow$(1) follows from \cite{Gac} Th\'eor\`eme on page 216 when $E$ is free and \cite{w1} Corollary 3.20 for general $E$. In rank one case, (1)$\Longrightarrow$(2) follows from \cite{w1} Example 3.16 and the general case follows from the rank one case and Lemma \ref{exact_sequence}.
\end{proof}
\begin{defn}
    For a log-$\nabla$-module $P$ over $\ran{I}$ with respect to $x_1,\dots,x_r\in\Gamma(X,\OO_X)$, we say $P$ satisfies the Robba condition if $P$ satisfies the Robba condition when regarding as a $\nabla$-module over $\ran{I}$ relative to $X$. In this case, we say $A$ is an exponent of $P$ if $A$ is an exponent of $P$ when $P$ is regarded as an object in $\Robb{X}$.
\end{defn}
\begin{lem}[cf. \cite{kedlaya2017corrigendum} and \cite{w1} Lemma 3.9]\label{log uniqueness}
    Suppose $X$ is a connected smooth affinoid space with $x_1, ..., x_l\in\Gamma(X,\OO_X)$ and $I\subset \RR_{>0}^n$ is a closed polysegment. Let $P$ be a log-$\nabla$-module on $\ran{I}$, with exponent $A$ having Liouville partition $A=\AA_1\cup\cdots\cup\AA_k$. The decomposition of $P=P_1\oplus\cdots\oplus P_k$ such that $P_i$ is a log-$\nabla$-module on $\ran{I}$ admitting $\AA_i$ as an exponent, if exists, is unique.
\end{lem}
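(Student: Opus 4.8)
The statement is a direct transcription of Lemma~\ref{uniqueness} into the logarithmic setting, so the plan is to reduce the log case to the already-established non-logarithmic case. Concretely, a log-$\nabla$-module $P$ on $\ran{I}$ with respect to $x_1,\dots,x_l$ is in particular a coherent locally free module over $\ran{I}$ equipped with a connection $\nabla: P\to P\otimes\omega^1_{\ran{I}}$; restricting $\nabla$ along the natural surjection $\omega^1_{\ran{I}}\to \Omega^1_{\ran{I}/X}$ (which forgets the $\dlog x_i$ components, since those lie in the ``base'' direction) endows $P$ with the structure of an object of $\Robb{X}$, and this is precisely how we defined the Robba condition and exponents for log-$\nabla$-modules in the definition immediately preceding the statement. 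Thus the data $P$, the exponent $A$, and the Liouville partition $A=\AA_1\cup\cdots\cup\AA_k$ are exactly the data to which Lemma~\ref{uniqueness} applies once we forget the log structure.

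First I would observe that a decomposition $P=P_1\oplus\cdots\oplus P_k$ as log-$\nabla$-modules on $\ran{I}$ with $P_i$ having exponent $\AA_i$ is, after forgetting the $\dlog x_i$ part of the connection, a decomposition of $P$ in $\Robb{X}$ with the same exponents: each $P_i$ is a sub-object for the relative connection because it is a sub-object for the full log connection, and the exponents are unchanged by definition. Hence any two log decompositions $P=P'_1\oplus\cdots\oplus P'_k$ and $P=P''_1\oplus\cdots\oplus P''_k$ give two decompositions in $\Robb{X}$ with matching exponents $\AA_i$. Here one should note that $X$ being connected, smooth and affinoid is in particular connected and reduced, so the hypotheses of Lemma~\ref{uniqueness} are met. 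Applying Lemma~\ref{uniqueness} then yields $P'_i=P''_i$ as submodules of $P$ for each $i$, which is exactly the asserted uniqueness.

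The only point requiring a word of care — and the place where the ``main obstacle'', such as it is, resides — is verifying that the comparison with Lemma~\ref{uniqueness} is legitimate, i.e.\ that forgetting the log structure really does send a direct-sum decomposition of log-$\nabla$-modules to a direct-sum decomposition in $\Robb{X}$ preserving exponents. This is routine: the underlying $\OO_{\ran{I}}$-module decomposition is untouched, and compatibility with the relative connection follows from compatibility with the log connection together with functoriality of the surjection $\omega^1_{\ran{I}}\to\Omega^1_{\ran{I}/X}$; the claim about exponents is true by fiat, since an exponent of a log-$\nabla$-module is defined to be an exponent of its associated object in $\Robb{X}$. One should also recall, as in the remark following Theorem~\ref{decom_all}, that ``having exponent $\AA_i$'' and ``having exponent weakly equivalent to $\AA_i$'' coincide (by Proposition~\ref{weak equivalence} and Proposition~\ref{wkexp}), so there is no ambiguity in matching up the pieces of the two decompositions before invoking Lemma~\ref{uniqueness}. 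With these observations in place the proof is a one-line appeal: ``This follows immediately from Lemma~\ref{uniqueness}.''
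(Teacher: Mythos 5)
Your reduction is correct and matches the paper, whose entire proof of this lemma is the single line ``The proof is the same as that of Lemma~\ref{uniqueness}.'' Since the paper defines the Robba condition and exponents of a log-$\nabla$-module precisely via its underlying object of $\Robb{X}$, and a smooth connected affinoid is in particular connected and reduced, your forgetful-functor argument is a legitimate (and slightly cleaner) way of saying the same thing.
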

\begin{proof}
    The proof is the same as that of Lemma \ref{uniqueness}.
\end{proof}

As a generalization of Lemma \ref{1.20}, we have the following:
\begin{prop}\label{generalize 1.20}
    Suppose $I\subset \RR^n_{>0}$ be an open polysegment and $X$ is a smooth affinoid space with one-point Shilov boundary $\{\eta\}$ endowed with $x_1,\dots,x_l\in \Gamma(X,\OO_X)$ whose zero loci are smooth and meet transversally. Let $P$ be a log-$\nabla$-module over $X\times A_K^n(I)$ with respect to $x_1,\dots,x_l$. Then for a subset $\Sigma=\Sigma'\times\prod_{i=1}^n\Sigma_i\subset \overline{K}^l\times \ZZ_p^n$ which is $(\NID)$ and $(\NLD)$, the following are equivalent:
    \begin{enumerate}
        \item $P$ is $\Sigma$-unipotent.
        \item $P$ satisfies the Robba condition and there exists an exponent $A$ of $P$ such that $A\subseteq \Sigma$.
    \end{enumerate}
\end{prop}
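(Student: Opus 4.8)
The plan is to reduce the statement to Lemma~\ref{1.20} by combining Shiho's equivalence $\mathcal{U}_I$ from Proposition~\ref{1.15} with the fact that over an affinoid with one-point Shilov boundary, the Robba condition and exponents can be read off at the single point $\eta$ whose residue field $\HH(\eta)$ contains (a subfield isometric to) the base in a way that detects the log-connection data. First I would record that since the zero loci of the $x_i$ are smooth and meet transversally, the residue endomorphisms $\res_i$ are defined and the exponent of $P$ along the divisors $D_i$ makes sense; this is what the first factor $\Sigma'$ of $\Sigma$ controls, while the last $n$ factors $\prod_i \Sigma_i$ control the exponent of $P$ as a $\nabla$-module over the relative polyannulus (in the sense of Definition~\ref{exp}).

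For the direction (1)$\Rightarrow$(2): if $P$ is $\Sigma$-unipotent, it admits a filtration whose graded pieces are $\pi_1^*F\otimes\pi_2^*M_\xi$ with $F$ a log-$\nabla$-module on $X$ with exponent in $\Sigma'$ and $\xi\in\prod_i\Sigma_i$. Each such graded piece, restricted to $\eta\times A_K^n(I)$, is a successive extension-free piece whose underlying relative $\nabla$-module is $\HH(\eta)\otimes M_\xi$ — more precisely one should check that $\pi_1^*F|_{\eta\times A_K^n(I)}$ is the trivial relative $\nabla$-module (the connection along $X$ disappears upon restricting the base to a point), so the relative exponent of the graded piece at $\eta$ is $\{\xi\}\subseteq\prod_i\Sigma_i$. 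By Lemma~\ref{relative_exact_sequence} applied repeatedly along the filtration, $P|_{\eta\times A_K^n(I)}$ satisfies the Robba condition with an exponent $A=\bigcup_j\{\xi^{(j)}\}\subseteq\prod_i\Sigma_i$. Since $X$ has one-point Shilov boundary, $P$ satisfies the Robba condition (it is checked at all points, but by the maximum-modulus/Shilov argument the spectral radii at other points are dominated by that at $\eta$ — here I would invoke the continuity of $IR$ and the fact that $\eta$ is where Gauss norms are maximal, reducing the check to $\eta$); and $A$ is then an exponent of $P$ in the sense of the relative definition. The exponent of $P$ along the $D_i$ lies in $\Sigma'$ because on graded pieces it is the exponent of $F$, and the extension structure only enlarges the multiset within $\Sigma'$.

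For the direction (2)$\Rightarrow$(1): suppose $P$ satisfies the Robba condition with relative exponent $A\subseteq\prod_i\Sigma_i$, and (implicitly, from the definition of exponent of a log-$\nabla$-module and the hypothesis packaged into ``$A\subseteq\Sigma$'') the exponent of $P$ along each $D_i$ lies in $\Sigma'$. I would first treat $P$ as a log-$\nabla$-module over $X$ with the extra commuting operators $\partial_i=t_i\partial_{t_i}$ coming from the relative polyannulus structure; the goal is to produce the filtration of Definition~\ref{unipdef}. The key input is Lemma~\ref{1.20}: restrict to the point $\eta$, or rather work over $\HH(\eta)$, to get that $P|_{\eta\times A_K^n(I)}$, as a $\nabla$-module over $A_{\HH(\eta)}^n(I)$ with exponent in $\Sigma\cap\ZZ_p^n$-part, is $\prod_i\Sigma_i$-unipotent — i.e. a successive extension of rank-one pieces $M_\xi$ with $\xi\in\prod_i\Sigma_i$. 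The hard part will be descending this filtration from $\eta$ back to all of $X\times A_K^n(I)$: for this I would use Proposition~\ref{1.15}, which says every $\Sigma$-unipotent log-$\nabla$-module over the relative polyannulus is pulled back via $\mathcal{U}_I$ from a log-$\nabla$-module-with-$\partial_i$-action over $X\times A_K^n[0,0]$, so it suffices to produce the filtration ``at width zero.'' At width zero the polyannulus collapses and $P$ becomes a log-$\nabla$-module over $X$ equipped with commuting operators $\partial_i$; the exponents of the $\partial_i$-action are governed by the relative exponent $A$, which is $(\NLD)$ and $(\NID)$, so one can simultaneously diagonalize/triangularize the $\partial_i$ by their generalized eigenspace decomposition (the non-Liouville and non-integer hypotheses on $\Sigma$ guarantee the relevant Dwork-style convergence so that the eigenspace projectors exist and are horizontal), and the resulting graded pieces have $F$-part with exponent along $D_i$ in $\Sigma'$ by the hypothesis on the log-exponents. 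This produces the required filtration over $X\times A_K^n[0,0]$, and applying $\mathcal{U}_I$ gives the $\Sigma$-unipotent structure over $X\times A_K^n(I)$.

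The main obstacle I anticipate is the bookkeeping in (2)$\Rightarrow$(1) around separating the two types of exponents — making precise that ``$A\subseteq\Sigma$'' in the statement means both the relative (polyannulus-direction) exponent lies in $\prod_i\Sigma_i$ \emph{and} the log-exponents along the $D_i$ lie in $\Sigma'$ — and then genuinely constructing the filtration rather than merely the filtration on $\eta$-fibers: the descent from the single Shilov point to the whole base is exactly where the one-point Shilov boundary hypothesis is essential, and it is also where I would need to be careful that the eigenspace projectors for the $\partial_i$, a priori defined over $\HH(\eta)$, actually have entries in $\Gamma(X\times A_K^n[0,0],\OO)$ and not just in the completed residue field. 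I expect this to go through because a matrix over a ring with entries forced (by a polynomial identity with $K$-coefficients coming from the Robba condition and the $(\NLD)$/$(\NID)$ hypotheses) to satisfy the projector equation is determined over the ring, not just over a residue field — but this is the step requiring the most care.
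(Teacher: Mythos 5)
Your direction (1)$\Rightarrow$(2) is essentially workable, though you make it harder than necessary: rather than checking the Robba condition only at $\eta$ and then invoking a Shilov-domination argument for the intrinsic radius (which you would still have to justify), the paper simply restricts each rank-one graded piece $\pi_1^*F\otimes\pi_2^*M_\xi$ to \emph{every} fiber $x\times A_K^n(I)$, where it becomes a direct sum of copies of $M_\xi$ over $\HH(x)$ (handled by \cite{w1} Example 3.16), and then applies Lemma~\ref{relative_exact_sequence}. Since the relative Robba condition is by definition a condition at every $x\in X$, the pointwise argument is both shorter and avoids the unproved domination claim.

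The direction (2)$\Rightarrow$(1) contains a genuine gap. You correctly identify the descent from the Shilov point $\eta$ to all of $X\times A_K^n(I)$ as the hard part, but the mechanism you propose is circular: Proposition~\ref{1.15} is an equivalence of categories between $\Sigma$-unipotent objects over $X\times A_K^n[0,0]$ and $\Sigma$-unipotent objects over $X\times A_K^n(I)$, so you can only use it to pass to ``width zero'' for a module \emph{already known} to be $\Sigma$-unipotent over $X\times A_K^n(I)$ --- which is exactly what you are trying to prove. Your subsequent plan (simultaneously triangularizing the $\partial_i$ at width zero via horizontal eigenspace projectors, then worrying about whether the projectors descend from $\HH(\eta)$ to the ring) therefore never gets off the ground, because $P$ is not yet in the domain of $\mathcal{U}_I^{-1}$. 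The actual content of this descent is Shiho's ``generization'' proposition (\cite{shiho2010logarithmic} Proposition 2.4), which states precisely that for a base with one-point Shilov boundary, $\Sigma$-unipotence of $P|_{\eta\times A_K^n(I)}$ implies $\Sigma$-unipotence of $P$; its proof solves the relevant differential equations with estimates controlled by the supremum norm at $\eta$. The paper's proof of (2)$\Rightarrow$(1) is exactly: apply Lemma~\ref{1.20} at $\eta$ to get $\Sigma$-unipotence of the fiber, then cite Shiho's Proposition 2.4. Without invoking that result (or reproving its generization argument), your sketch does not close.
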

\begin{proof}
    (2)$\Longrightarrow$(1): In this case $P|_{\eta\times A_K^n(I)}$ satisfies the Robba condition admitting $A$ as an exponent. By Lemma \ref{1.20}, $P|_{\eta\times A_K^n(I)}$ is $\Sigma$-unipotent and so $P$ is $\Sigma$-unipotent by \cite{shiho2010logarithmic} Proposition 2.4. (1)$\Longrightarrow$(2): Rank $1$ case follows from applying \cite{w1} Example 3.16 on $P|_{x\times A_K^n(I)}$ for every $x\in X$ and the general case follows from the rank $1$ case and Lemma \ref{relative_exact_sequence}.
\end{proof}

A direct consequence of  \cite{shiho2010logarithmic} Proposition 2.4 to our setting is the following:
\begin{prop}[cf. \cite{shiho2010logarithmic} Proposition 2.4]\label{decomposition for one-point}
    Suppose $I\subset \RR^n_{> 0}$ is an open polysegment and $X$ is a smooth affinoid space with one-point Shilov boundary $\{\eta\}$ endowed with $x_1,\dots,x_l\in \Gamma(X,\OO_X)$ whose zero loci are smooth and meet transversally. Let $P$ be a log-$\nabla$-module over $X\times A_K^n(I)$ with respect to $x_1,\dots,x_l$ of rank $r$ satisfying the Robba condition with exponent $A$. If $A$ has non-Liouville differences and the set $\Sigma'\subset \overline{K}^l$ of exponent of $P$ along $D_i$ is $(\NID)$ and $(\NLD)$, then there exists a unique direct sum decomposition $P=\oplus_{\lambda\in(\ZZ_p/\ZZ)^n}P_\lambda$, where each $P_\lambda$ has exponent identically equal to $\lambda$.
\end{prop}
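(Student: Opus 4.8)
The plan is to reduce the statement to the one-point Shilov boundary criterion already established in Proposition \ref{generalize 1.20} (which itself rests on Shiho's generization \cite{shiho2010logarithmic} Proposition 2.4) together with Shiho's equivalence $\mathcal{U}_I$ of Proposition \ref{1.15}. First I would record the relevant exponent combinatorics. Because $A$ has non-Liouville differences, collecting the entries of $A$ according to their class in $(\ZZ_p/\ZZ)^n$ gives a partition $A=\bigcup_{\lambda}\AA_\lambda$ over the finitely many classes $\lambda$ that occur, and this is a Liouville partition: across two distinct pieces each coordinate difference is either a nonzero integer (in the coordinates where the classes agree) or a non-integer, which is non-Liouville by hypothesis, so the defining recursion applies coordinate by coordinate. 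Fix for each $\lambda$ a lift $\tilde\lambda\in\ZZ_p^n$, chosen so that the lifts agree in a given coordinate whenever the classes do, and let $\Sigma_i\subseteq\ZZ_p$ be the finite set of $i$-th coordinates of these lifts. Then $\Sigma:=\Sigma'\times\prod_{i=1}^n\Sigma_i\subseteq\overline{K}^l\times\ZZ_p^n$ is $(\NID)$ and $(\NLD)$: each $\Sigma_i$ consists of lifts of distinct classes, and being non-Liouville depends only on the class modulo $\ZZ$, so this passes from $A$ to the $\Sigma_i$, while $\Sigma'$ has both properties by hypothesis. Since $A$ is (strongly, hence weakly) equivalent to the multisubset $\bigcup_\lambda\{\tilde\lambda\}^{|\AA_\lambda|}$ of $\prod_i\Sigma_i$, and $P$ satisfies the Robba condition with exponents along the $D_i$ lying in $\Sigma'$, Proposition \ref{generalize 1.20} applies and shows that $P$ is $\Sigma$-unipotent.

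Next I would carry out the decomposition on the origin and transport it back. By Proposition \ref{1.15} the functor $\mathcal{U}_I$ is an equivalence, so $P\cong\mathcal{U}_I(E)$ for a $\Sigma$-unipotent object $E$ over $X\times A_K^n[0,0]$; unwinding Definition \ref{unipdef}, $E$ is a log-$\nabla$-module on $X$ with exponents in $\Sigma'$ equipped with commuting $\OO_X$-linear endomorphisms $\partial_1,\dots,\partial_n$ ($\OO_X$-linearity being automatic since $t_i\partial_{t_i}$ annihilates $\OO_X$), and the graded pieces of a $\Sigma$-unipotent filtration of $E$ exhibit each $\partial_i$ as satisfying a monic polynomial over $K$ whose roots all lie in $\Sigma_i$. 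For distinct classes $\lambda\neq\mu$ some $\tilde\lambda_i-\tilde\mu_i$ is a nonzero element of $\ZZ_p\subseteq K$, hence a unit of $\OO_X$, so these polynomials factor into pairwise coprime pieces indexed by the classes, and the associated joint generalized-eigenspace decomposition gives $E=\bigoplus_\lambda E_\lambda$ on which $\partial_i$ acts with sole eigenvalue $\tilde\lambda_i$ (as $\Sigma_i$ meets each class once). Since the $\partial_i$ commute with the log connection and with $\OO_X$, each $E_\lambda$ is a sub-log-$\nabla$-module, and its graded pieces — those of $E$ with $\xi=\tilde\lambda$ — have the form $\pi_1^*F\otimes\pi_2^*M_{\tilde\lambda}$ with $F$ of exponent in $\Sigma'$, so $E_\lambda$ is $(\Sigma'\times\{\tilde\lambda\})$-unipotent. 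Applying $\mathcal{U}_I$ and transporting along the isomorphism yields an internal direct sum decomposition $P=\bigoplus_\lambda P_\lambda$ of log-$\nabla$-modules on $X\times A_K^n(I)$, with $P_\lambda:=\mathcal{U}_I(E_\lambda)$ being $(\Sigma'\times\{\tilde\lambda\})$-unipotent.

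It remains to read off the exponents and to settle uniqueness. The singleton $\{\tilde\lambda\}$ is $(\NID)$ and $(\NLD)$, so Proposition \ref{generalize 1.20} applied to $P_\lambda$ shows that $P_\lambda$ satisfies the Robba condition and has exponent identically equal to $\lambda$. By Lemma \ref{relative_exact_sequence} an exponent of $P$ is the multiset union of exponents of the $P_\lambda$; comparing this with $A=\bigcup_\lambda\AA_\lambda$ — using that weak equivalence preserves multiplicities of classes, since the $\tilde\lambda$ are pairwise separated by non-Liouville non-integers — shows $\rk P_\lambda=|\AA_\lambda|$, so the nonzero summands are indexed precisely by the classes occurring in $A$. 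Finally, uniqueness of the decomposition follows from Lemma \ref{log uniqueness}, the partition $\{\AA_\lambda\}$ being Liouville.

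The step I expect to be the main obstacle is the middle one: justifying carefully that the generalized joint-eigenspace decomposition of $E$, performed over the possibly non-field base $\OO_X$, respects the log-$\nabla$-module structure (so that each $E_\lambda$ is genuinely a sub-log-$\nabla$-module, not merely an $\OO_X$-submodule), and that $\mathcal{U}_I$ turns it into an honest internal direct sum of log-$\nabla$-modules on $X\times A_K^n(I)$. This is exactly where the non-integrality packaged into the $(\NLD)$ hypothesis on $A$ is used — through the $(\NID)$-ness of the lift sets $\Sigma_i$ — and it is essentially the content of Shiho's analysis of $\Sigma$-unipotent modules over $X\times A_K^n[0,0]$.
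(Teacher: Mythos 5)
Your proof is correct and follows the same overall strategy as the paper's: establish $\Sigma$-unipotence using the one-point Shilov boundary, transport to the origin via Shiho's equivalence $\mathcal{U}$, decompose there by eigenvalues of the operators $t_i\partial_{t_i}$, and pull the decomposition back, with uniqueness from Lemma \ref{log uniqueness}. The execution differs in two respects. First, the paper argues by induction on $n$, peeling off one coordinate direction at a time: it writes $X\times A_K^n(I)$ as $(X\times A_K^{n-1}(I'))\times A_K^1(I_n)$, proves $\Sigma$-unipotence in the last direction only (via Lemma \ref{1.20} on the fiber over $\eta$ and the argument of \cite{shiho2010logarithmic} Proposition 2.4(1), rather than quoting Proposition \ref{generalize 1.20} wholesale as you do), applies $\mathcal{U}_{I_n}$ in that single direction, and invokes the induction hypothesis for the remaining $n-1$ directions. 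Your version treats all $n$ directions simultaneously, which is cleaner but puts more weight on the simultaneous eigenspace decomposition. Second, and more substantively, the step you correctly identify as the main obstacle --- that the generalized joint-eigenspace decomposition of $E$ over the base $\OO_X$ yields direct summands that are locally free sub-log-$\nabla$-modules, each $(\Sigma'\times\{\tilde\lambda\})$-unipotent --- is exactly where the paper does not improvise: it applies \cite{baldassarri1993formal} Corollary 1.8.3 to each single operator $t_n\partial_{t_n}$ (after checking that this operator preserves the decomposition supplied by the induction hypothesis), obtaining the decomposition into $\ker(t_n\partial_{t_n}-\lambda_n)^r$ together with its unipotence properties. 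Your CRT-style argument for commuting operators is plausible, but to be complete you would need to supply the local freeness and unipotence of the summands, which in effect means reproducing or iterating the Baldassarri--Chiarellotto result; so your route does not avoid that input, it only reorganizes around it. The exponent bookkeeping and the uniqueness argument at the end match the paper's.
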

\begin{proof}
    We proceed by induction on $n$. Let $\AA_1,\dots,\AA_k$ be the partition of $A$ into $\ZZ$-cosets in the $n$-th direction and let $A^n=\AA_1^n\cup\cdots\cup\AA_k^n $ denote the corresponding partition of the factors in the $n$-th direction. Regard $X\times A_K^n(I)$ as $(X\times A_K^{n-1}(I'))\times A_K^1(I_n)$ where $I=I'\times I_n$. Take $a_i\in \AA_i^n$ for each $1\leq i\leq k$ and set $\Sigma_n:=\{a_1,\dots,a_k\}$. Then $\Sigma=\Sigma'\times\Sigma_n$ is $(\NID)$ and $(\NLD)$. Let $P_\eta$ be the restriction of $P$ on $(\eta\times A^{n-1}_K(I'))\times A_K^1(I_n)$. Since $P_\eta$ satisfies the Robba condtion with exponent $A$, $P_\eta$ is $\Sigma$-unipotent by Lemma \ref{1.20}. Moreover, the supremum norm on $\Gamma(\eta\times A_K^{n-1}(I'),\OO_{\eta\times A_K^{n-1}(I')})$ restricts to the supremum norm on $\Gamma(X\times A_K^{n-1}(I'),\OO_{X\times A_K^{n-1}(I')})$. Thus, with the same proof as that of \cite{shiho2010logarithmic} Proposition 2.4 (1), we can show that $P$ is $\Sigma$-unipotent. By Proposition \ref{1.15}, there exists $P_0\in\mathrm{ULNM}_{(X\times A_K^{n-1}(I'))\times A_K[0,0],\Sigma}$ such that $\mathcal{U}_{I_n}(P_0)=P$. As a log-$\nabla$-module on $X\times A_K^{n-1}(I')$, $P_0$ has an exponent $A'$ which is the multisubset of $\ZZ_p^{n-1}$ obtained from $A$ by removing its factors in the $n$-th direction. Since $A$ has non-Liouville differences, so does $A'$ and by induction hypothesis, we have
    \begin{equation*}
        P_0=\bigoplus_{\lambda'=(\lambda'_1,\dots,\lambda'_{n-1})\in(\ZZ_p/\ZZ)^{n-1}}P_{0,\lambda'}\tag{*},
    \end{equation*}
    where $P_{0,\lambda'}$ has exponent identically equal to $\lambda'$. Since the endomorphism $t_n\partial_{t_n}$ preserves the above decomposition, (*) is a decomposition in $\mathrm{ULNM}_{(X\times A_K^{n-1}(I'))\times A_K[0,0],\Sigma}$. By \cite{baldassarri1993formal} Corollary 1.8.3, we have the following decomposition of $P_{0,\lambda'}$ for each $\lambda'$:
    $$ P_{0,\lambda'}=\bigoplus_{\lambda_n\in\Sigma_n} P_{0,\lambda',\lambda_n}, $$
    where $P_{0,\lambda',\lambda_n}=\ker(t_n\partial_{t_n}-\lambda_n)^r$. This implies that $P_{0,\lambda',\lambda_n}$ is $\Sigma'\times \{\lambda_n\}$-unipotent. Thus,
    $$ P=\mathcal{U}_{I_n}(P_0)=\bigoplus_{(\lambda',\lambda_n)\in\ZZ_p^n}\mathcal{U}_{I_n}(P_{0,\lambda',\lambda_n}). $$
    Now we prove that this decomposition corresponds to the correct exponent. For $1\leq r\leq n-1$, the exponent of $\mathcal{U}_{I_n}(P_{0,\lambda',\lambda_n})$ in the $r$-th direction is $\lambda'_r$ since it is so for $P_{0,\lambda',\lambda_n}$. For thr $n$-th direction, choose a point $x'$ of $X\times A_K^{n-1}(I')$, then $\bigoplus_{(\lambda',\lambda_n)\in\ZZ_p^n}\mathcal{U}_{I_n}(P_{0,\lambda',\lambda_n})|_{x'\times A_K^1(I_n)}$ is a decomposition of $P|_{ x'\times A_K^1(I_n)}$. Since $\mathcal{U}_{I_n}(P_{0,\lambda',\lambda_n})|_{x'\times A_K^1(I_n)}$ is also $\Sigma'\times\{\lambda_n\}$-unipotent, by Lemma \ref{1.20}, $\mathcal{U}_{I_n}(P_{0,\lambda',\lambda_n})|_{x'\times A_K^1(I_n)}$ and hence $\mathcal{U}_{I_n}(P_{0,\lambda',\lambda_n})$ has the correct exponent in the $n$-th direction. Uniqueness follows from Lemma \ref{log uniqueness}.
\end{proof}

\subsection{Local existence of finite \'etale cover to the unit polydisc}
In this subsection, we prove that any smooth rigid space or a smooth dagger space is locally a finite \'etale cover of the unit polydisc.
    
    \begin{prop}[cf. \cite{achinger2017wild} Proposition 6.6.1]\label{finite etale}
    Suppose that $K$ is algebraically closed and $R$ is a reduced $K$-affinoid algebra. Let $g:K\la s_1,\dots, s_m\ra \to R$ be an \'etale morphism. Then there exists a finite \'etale morphism $f:K\la s_1,\dots, s_m \ra \to R$.
    \end{prop}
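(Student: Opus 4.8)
The plan is to transport the problem to the reduction of $R$, where the residue characteristic is positive, solve it there by a theorem of Kedlaya, and lift the solution back. Since $g$ is \'etale, $\Sp(R)\to\BBB^m_K$ is \'etale, so $R$ is reduced, smooth over $K$ and equidimensional of dimension $m$; moreover $K$ algebraically closed forces $|K^\times|$ divisible, so $R$ is strictly $K$-affinoid and reduction theory applies, while $\kappa_K$ is algebraically closed of characteristic $p$. Write $s=(s_1,\dots,s_m)$.

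First I would choose a good formal model. Invoking the theory of formal models of \'etale morphisms (Bosch--L\"utkebohmert, Elkik), the \'etale map $g$ can be refined, after an admissible formal blow-up, to an \'etale morphism of admissible formal $\OO_K$-models $\OO_K\la s\ra\to\mathfrak{R}$ with $\mathfrak{R}[1/\pi]=R$ for a pseudo-uniformizer $\pi$. The reduction $\wt{R}:=\mathfrak{R}\otimes_{\OO_K}\kappa_K$ is then \'etale over $\kappa_K[\bar s_1,\dots,\bar s_m]$, hence is the coordinate ring of a (possibly disconnected) smooth affine $\kappa_K$-variety of pure dimension $m$. I expect this to be the main obstacle: it is exactly here that the algebraic closedness of $K$ and the \'etaleness of $g$ are both used, and controlling the model along the ``boundary'' of $\Sp(R)$ (so that the reduction stays smooth rather than merely reduced) is the delicate technical point.

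Next I would apply the positive-characteristic input. By Kedlaya's theorem on finite \'etale covers of affine space, a smooth affine variety of dimension $m$ over the algebraically closed field $\kappa_K$ of characteristic $p$ admits a finite \'etale morphism to $\mathbb{A}^m_{\kappa_K}$; applying this to each connected component yields a finite \'etale $\kappa_K$-algebra homomorphism $\wt{f}:\kappa_K[\bar s_1,\dots,\bar s_m]\to\wt{R}$. Crucially, $\wt f$ is constructed independently of the reduction of $g$, using the Artin--Schreier/Frobenius flexibility available only in characteristic $p$; this is precisely the ingredient that has no analogue over $K$ itself (where, e.g., an annulus cannot finite-\'etale cover a disc), and it explains why passing to the reduction is necessary.

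Finally I would lift and verify. Choosing lifts $f(s_i)\in\mathfrak{R}$ of $\wt{f}(\bar s_i)$ defines $f:\OO_K\la s\ra\to\mathfrak{R}$, hence $f:K\la s_1,\dots,s_m\ra\to R$. Finiteness passes upward by completeness: since $\wt{R}$ is finite over $\kappa_K[\bar s]$, topological Nakayama shows $\mathfrak{R}$ is finite over $\OO_K\la s\ra$, so $R$ is finite over $K\la s\ra$. For unramifiedness, $\Omega^1_{\mathfrak{R}/\OO_K\la s\ra}$ is a finite $\mathfrak{R}$-module whose reduction is $\Omega^1_{\wt{R}/\kappa_K[\bar s]}=0$, so it vanishes by Nakayama and $\Omega^1_{R/K\la s\ra}=0$. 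Flatness of $f$ follows from the fiberwise criterion of flatness over $\OO_K$, since $\mathfrak{R}$ is $\OO_K$-flat with flat closed fibre $\wt R$ over $\kappa_K[\bar s]$. Hence $f$ is finite, flat and unramified, i.e.\ a finite \'etale morphism $K\la s_1,\dots,s_m\ra\to R$, as required.
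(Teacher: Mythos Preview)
Your overall strategy---reduce modulo $\mathfrak{m}_K$, solve in characteristic $p$, lift---is reasonable in spirit, but the argument has a genuine gap exactly where you yourself flag it. You need a formal model $\mathfrak{R}$ of $R$ whose special fibre is smooth over $\kappa_K$ (or even \'etale over $\kappa_K[\bar s]$), and you do not produce one. Invoking ``Bosch--L\"utkebohmert, Elkik'' does not give this: admissible blow-ups destroy the source $\Spf\,\OO_K\langle s\rangle$, and in any case a smooth affinoid need not admit a smooth formal model globally. The rest of your argument (Kedlaya/Achinger in characteristic $p$, lifting, Nakayama for $\hat\Omega$, flatness) is contingent on this step, so as written the proof does not close. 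A minor point: the characteristic-$p$ input you need for $m>1$ is due to Achinger, not Kedlaya.

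The paper's proof avoids this obstacle entirely by \emph{not} seeking a smooth reduction. It works with the canonical model $R^\circ$ (whose reduction $\wt R$ is merely reduced) and exploits the given \'etale map $g$ directly: since $g$ is \'etale, $\hat\Omega_{R^\circ/\OO_K\langle s\rangle}$ is $p^k$-torsion for some $k$. One then invokes a result of Achinger which says that \emph{any} map $\kappa_K[s]\to\wt R$ can be made finite by adding elements lying in the subring generated by $p^{k+1}$-th powers. Lifting these additions to $R^\circ$ as $\ZZ$-polynomials in $p^{k+1}$-th powers produces $f$: its reduction is finite (hence $f$ is finite by BGR 6.3.5/1), and because $d(y^{p^{k+1}})\in p^{k+1}\hat\Omega_{R^\circ/\OO_K}$, the modules $\hat\Omega_f$ and $\hat\Omega_g$ agree modulo $p^{k+1}$. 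Thus $p^k\hat\Omega_f=p^{k+1}\hat\Omega_f$, and Nakayama kills $p^k\hat\Omega_f$, yielding \'etaleness. The key idea you are missing is this $p^{k+1}$-th-power perturbation, which transfers \'etaleness from $g$ to $f$ without ever needing the special fibre to be smooth.
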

    
    \begin{proof}
    This proof is essentially that of \cite{achinger2017wild} Proposition 6.6.1.
    Let $x_1,\dots,x_m$ be the image of $s_1,\dots,s_m$ in $R^\circ$. Since $K$ is algebraically closed, by \cite{BGR} Theorem 6.4.3/1 and Corollary 6.4.3/6, $R^\circ$ is topologically of finite type over $\OO_K$. Thus we can choose $x_{m+1},\dots,x_{r}\in R^\circ$ such that $x_1,\dots,x_r$ form a set of topological generators of $R^\circ$ over $\OO_K$. This gives rise to a presentation 
    $$ R^\circ\cong \OO_K\langle s_1,\dots s_m,s_{m+1},\dots,s_r \rangle/I. $$
    Let $\Omega_g:=\widehat{\Omega}_{R^\circ/\OO_K\langle s_1,\dots,s_m \rangle}$ which is a finite $R^\circ$ module. Since $g$ is \'etale, $\Omega_g$ is killed by some power of $p$, say $p^k$. By \cite{achinger2015} Proposition 5.4 for $\wt{g^*}:\kappa_{K}[s_1,\dots,s_m]\to \wt{R}$ and $N=p^{k+1}$, there exist elements $\overline{y_1},\dots,\overline{y_m}$ in the subring of $\wt{R}$ generated by $N$-th power of elements such that the map
    \begin{align*}
    \overline{f^*}:\kappa_{K}[s_1,\dots,s_m]& \longrightarrow \wt{R}\\
    s_i & \mapsto \overline{x_i}+\overline{y_i}
    \end{align*}
    is finite. Let $y_1,\dots,y_m$ be any lift of $\overline{y_1},\dots,\overline{y_m}$ to $R$ which are polynomials with $\ZZ$-coefficients in $p^{k+1}$-powers of elements in $R^\circ$. Then the map 
    \begin{align*}
    f:K\langle s_1,\dots,s_m \rangle & \longrightarrow R \\
    s_i & \mapsto x_i+y_i
    \end{align*}
    is also finite since its reduction $\overline{f}$ is finite, by \cite{BGR} Theorem 6.3.5/1. Since $f$ is a finite morphism between equidimensional smooth rigid spaces of the same dimension, flatness of $f$ follows from the ``miracle flatness'' theorem (\cite{stacksproject} Lemma 00R4).
    
    Define $\Omega_f$ in the same way as $\Omega_g$ for the map $f$ and consider following presentations:
    \begin{align*}
        (R^\circ)^m\overset{dg}{\longrightarrow}\Omega_{R^\circ/\OO_K}\longrightarrow\Omega_g\longrightarrow 0,\\
        (R^\circ)^m\overset{df}{\longrightarrow}\Omega_{R^\circ/\OO_K}\longrightarrow\Omega_f\longrightarrow 0.
    \end{align*}
    After tensoring with $\OO_K/p^{k+1}$, we have the following exact sequences:
    \begin{align*}
        (R^\circ/p^{k+1})^m\overset{dg}{\longrightarrow}\Omega_{R^\circ/p^{k+1}\big{/}\OO_K/p^{k+1}}\longrightarrow\Omega_g/p^{k+1}\Omega_g\longrightarrow 0,\\
        (R^\circ/p^{k+1})^m\overset{df}{\longrightarrow}\Omega_{R^\circ/p^{k+1}\big{/}\OO_K/p^{k+1}}\longrightarrow\Omega_f/p^{k+1}\Omega_f\longrightarrow 0.
    \end{align*}
    By the assumption on each $y_i$, we have $dy_i\in p^{k+1}\Omega_{R^\circ/\OO_K}$ and so $dg,df$ in the above diagram are the same. Hence $\Omega_f/p^{k+1}\Omega_f=\Omega_g/p^{k+1}\Omega_g$. Since $p^k\Omega_g=0$, we have
    $$ p^k(\Omega_f/p^{k+1}\Omega_f)=p^k(\Omega_g/p^{k+1}\Omega_g)=0. $$
    This implies that $p^k\Omega_f=p^{k+1}\Omega_f$. Since $f$ is finite, $f^\circ$ is also finite by \cite{BGR} Corollary 6.4.1/5 and so $p^k\Omega_f$ is a finite $R^\circ$-module. By \cite{BGR} Lemma 1.2.4/6, we have $p^k\Omega_f=0$ and so $f$ is \'etale.
    \end{proof}

    \begin{rmk}
        Proposition \ref{finite etale} also holds when $K$ is a discrete valuation field by \cite{achinger2017wild} Proposition 6.6.1. But we are not able to prove the claim when $K$ is neither discrete nor algebraically closed. Because in these cases, $R^\circ$ may not be topologically of finite type over $\OO_K$, which leads the above proof to fail.
        \end{rmk}
    
\begin{cor}\label{existence_of_finite_etale_morphism}
    Let $X$ be a smooth rigid space or a dagger space over an algebraically closed field $K$. Then, for every point $x\in X$ there exists an affinoid neighborhood $U$ of $x$ and a finite \'etale morphism $U\to \BBB^m$.
\end{cor}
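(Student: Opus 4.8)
The plan is to reduce the corollary to Proposition \ref{finite etale}: the only thing that proposition does not already provide is the existence, locally around $x$, of \emph{some} étale morphism to the unit polydisc, so I first produce such a morphism by hand and then feed it into Proposition \ref{finite etale}.

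First I would treat the case that $X$ is a rigid space. Since the statement is local, I may replace $X$ by an affinoid neighborhood $\Sp(R)$ of $x$; smoothness forces $R$ to be reduced and $\Omega^1_{R/K}$ to be locally free of rank $m$ (the dimension, by our standing convention on $m$). I would pick $f_1,\dots,f_m\in R$ whose differentials form a basis of $\Omega^1_{R/K}$ near $x$, rescale each by a scalar in $K^\times$ — possible because $|K^\times|$ is divisible, $K$ being algebraically closed — so that $f_i\in R^\circ$, and then shrink $\Sp(R)$ so that $df_1,\dots,df_m$ generate $\Omega^1_{R/K}$ everywhere (an open condition). The tuple $(f_1,\dots,f_m)$ then defines a morphism $g:\Sp(R)\to\BBB^m_K$, i.e. a homomorphism $K\langle s_1,\dots,s_m\rangle\to R$ sending $s_i\mapsto f_i$; surjectivity of $R^m\to\Omega^1_{R/K}$, $e_i\mapsto df_i$, makes $\Omega^1_{R/K\langle s\rangle}$ vanish, so $g$ is unramified, hence quasi-finite, and therefore — being a quasi-finite morphism between regular rigid spaces of the same dimension — flat by miracle flatness (\cite{stacksproject} Lemma 00R4), hence étale. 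Proposition \ref{finite etale} then upgrades $g$ to a finite étale morphism $\Sp(R)\to\BBB^m_K$.

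For $X$ a dagger space I would take an affinoid dagger neighborhood $\Sp(A)$ of $x$ with affinoid completion $\Sp(A')$; here $A'$ is a reduced smooth $K$-affinoid algebra. Applying the rigid case to $\Sp(A')$ (shrinking if necessary, which is harmless since $\Sp(A)$ and $\Sp(A')$ have the same underlying topological space and affinoid subdomains of the latter give affinoid dagger subdomains of the former) yields a finite étale morphism $\Sp(A')\to\BBB^m_K$. I would then descend this along the completion functor: finite étale covers of $\Sp(A)$ correspond to finite étale covers of $\Sp(A')$ (\cite{Gro}), so the morphism is the completion of a finite étale morphism from $\Sp(A)$ to the overconvergent unit polydisc.

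The rescaling, the openness of the locus where the $df_i$ form a basis, and the passage through the right-exact cotangent sequence are all routine. The two substantive inputs are the Jacobian/miracle-flatness argument producing the étale coordinate morphism to $\BBB^m_K$, and, in the dagger case, the equivalence of finite étale sites of a dagger space and its completion; I expect the latter to be the point that most needs care, since everything else is either standard rigid geometry or a direct appeal to Proposition \ref{finite etale}.
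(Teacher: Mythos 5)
Your proof is correct, and it follows the paper's overall strategy (produce a local \'etale morphism to $\BBB^m$, upgrade it to a finite \'etale one via Proposition \ref{finite etale}, then treat the dagger case through the completion functor), but it differs in one substantive step: where the paper simply cites \cite{ayoub2015motifs} Corollaire 1.1.51 for the local existence of an \'etale morphism $U\to\BBB^m$, you construct it by hand, choosing $f_1,\dots,f_m$ whose differentials trivialize $\Omega^1$ near $x$, rescaling into $R^\circ$, and deducing \'etaleness from the right-exact cotangent sequence together with miracle flatness. That argument is sound (it is essentially the content of Ayoub's result) and makes the proof more self-contained, at the cost of re-deriving a standard fact; note only that you should check the Jacobian condition at the prime $\ker|\cdot|_x$ rather than at a residue field of a maximal ideal, since $x$ may be a non-classical point, and that the nonvanishing locus of $df_1\wedge\cdots\wedge df_m$ then contains an affinoid (Laurent) neighborhood of $x$. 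In the dagger case your descent is the same as the paper's, but the correspondence you actually need is between finite \'etale covers of $\BBB^{m\dagger}$ and of $\BBB^m$ combined with the uniqueness of the dagger structure on the smooth affinoid $\Sp(A')$ — i.e.\ precisely \cite{vezzani2018monsky} Proposition 2.15, which the paper invokes — rather than the correspondence for covers of $\Sp(A)$ itself; as you anticipated, this is the step requiring the most care.
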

\begin{proof}
    We firstly deal with the case where $X$ is a rigid space. By \cite{ayoub2015motifs} Corollaire 1.1.51, for any $x\in X$, there exists an affinoid neighborhood $U$ of $x$ and an \'etale morphism $U\to \BBB^m$. Then the existence of finite \'etale morphism $U\to \BBB^m$ follows from Proposition \ref{finite etale}. When $X$ is a dagger space, we may reduce to the case where $X$ is affinoid dagger. Since the completion functor from the category of dagger spaces to the category of rigid spaces is faithful by \cite{Gro} Theorem 2.19, the conclusion follows from \cite{vezzani2018monsky} Proposition 2.15 and the rigid case.
\end{proof}

\begin{rmk}\label{extension of finite etale morphisms}
    In the proof of \cite{vezzani2018monsky} Proposition 2.15, the author actually proved a stronger conclusion that any finite \'etale morphism between completion of dagger spaces can be extended to their fringe spaces.
\end{rmk}

\subsection{Galois descent}
Let $R$ be a $K$-affinoid or dagger algebra. In this subsection, we firstly prove that the $K$-subalgebra of $R\cotimes_K \overline{K}$ fixed by the Galois group of $K$ is exactly $R$, thanks to the existence of suitable Schauder basis for $R$ as a $K$-vector space. After that, we prove a Galois descent argument for (generalized) $p$-adic Fuchs theorem.
\begin{defn}
    Let $I$ be an index set of at most countable cardinality and $V$ be a normed $K$ vector space. A system $\{v_i\}_{i\in I}$ is called a topological generating system of $V$ if every $v\in V$ can be written as a convergent series $v=\sum_{i\in I} c_iv_i$ for $c_i\in K$. It is called a Schauder basis of $V$ if coefficients $c_i$ are uniquely determined by $v$.
    \end{defn}
    
    \begin{defn}
    We say a normed $K$-vector space $V$ is of countable type, if $V$ contains a dense $K$-linear subspace at most countable dimension.
    \end{defn}
    
    In fact, every $K$-affinoid or $K$-dagger algebra is a normed $K$-vector space of countable type. For example, suppose $R$ is a $K$-affinoid algebra. Then there is a surjective map $\pi:K\langle s \rangle\twoheadrightarrow R$. If we denote $\pi(s_i)$ by $f_i$ (this is called an affinoid generating system of $R$ over $K$) for $1\leq i\leq m$, then $K[f_1,\dots,f_m]$ is a dense $K$-subspace of $R$ of at most countable dimension. The same argument holds for $K$-dagger algebras.
    
    \begin{lem}\label{fixed elements}
    Suppose that $R$ is a $K$-affinoid algebra (resp. a $K$-dagger algebra) and $G=\Gal(\overline{K}/K)$ is the absolute Galois group of $K$. Let $\wh{R}=R\widehat{\otimes}_K \overline{K}$ (resp. $\wh{R}=R\dagotimes_K \overline{K}$). Then $G$ acts on $\wh{R}$ and the subring $\wh{R}^G$ of $\wh{R}$ fixed by $G$ is exactly $R$.
    \end{lem}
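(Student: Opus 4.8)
The plan is to exhibit a Schauder basis of $R$ over $K$ that remains a Schauder basis of $\wh{R}$ over $\calg{K}$, thereby reducing the descent for $\wh{R}$ to the descent for the coefficients, i.e. to the equality $\calg{K}^G=K$. First I set up the action: every $\sigma\in G$ is a $K$-automorphism of $\overline{K}$, hence an isometry (the extension of the valuation to $\overline{K}$ being unique), so it extends uniquely to an isometric automorphism of $\calg{K}$; since $R\otimes_K\overline{K}$ is dense in $R\cotimes_K\calg{K}$ we have $R\widehat{\otimes}_K\overline{K}=R\cotimes_K\calg{K}$, and $\mathrm{id}_R\otimes\sigma$ defines an isometric $K$-algebra automorphism of $\wh{R}$. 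This is a $G$-action fixing $R=R\otimes 1$, and the whole point is the reverse inclusion $\wh{R}^G\subseteq R$.

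As recorded above, $R$ is a normed $K$-vector space of countable type (a $K$-Banach space in the affinoid case), so by nonarchimedean functional analysis (see \cite{BGR}, Section 2) it admits, for a fixed $\varepsilon\in(0,1)$, an $\varepsilon$-orthogonal Schauder basis $\{v_i\}_{i\in\mathbb{N}}$ consisting of elements of $R$. Writing $s_i:=|v_i|$, the assignment $(c_i)_i\mapsto\sum_i c_iv_i$ gives a bounded $K$-linear isomorphism with bounded inverse between the weighted coefficient space $c_0\big((s_i)_i;K\big):=\{(c_i)_i:c_i\in K,\ |c_i|s_i\to 0\}$ and $R$. Applying $-\cotimes_K\calg{K}$, under which $c_0\big((s_i)_i;K\big)$ becomes $c_0\big((s_i)_i;\calg{K}\big)$ and a bounded $K$-linear isomorphism with bounded inverse becomes a bounded $\calg{K}$-linear isomorphism with bounded inverse, I conclude that $\{v_i\}_{i\in\mathbb{N}}$ is a Schauder basis of $\wh{R}$ over $\calg{K}$: every $f\in\wh{R}$ has a unique expansion $f=\sum_i c_iv_i$ with $c_i\in\calg{K}$ and $|c_i|s_i\to 0$. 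Since each $v_i\in R$ is $G$-fixed, $\sigma(f)=\sum_i\sigma(c_i)v_i$ for all $\sigma\in G$.

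If $f=\sum_i c_iv_i\in\wh{R}^G$, uniqueness of the expansion forces $\sigma(c_i)=c_i$ for all $i$ and all $\sigma\in G$, i.e. $c_i\in\calg{K}^G$; by the Ax--Sen--Tate theorem $\calg{K}^G=K$, so $c_i\in K$ and hence $f=\sum_i c_iv_i$ already converges in $R$ (the convergence condition is unchanged), giving $f\in R$. For a $K$-dagger algebra $R$ one reduces to the affinoid case via the affinoid completion $R'$: the $G$-actions on $\wh{R}\subseteq R'\cotimes_K\calg{K}$ are compatible, so $\wh{R}^G=\wh{R}\cap\big(R'\cotimes_K\calg{K}\big)^G=\wh{R}\cap R'=R$, the middle equality being the affinoid case applied to $R'$ and the last one the overconvergence of $R$ inside $R'$.

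The step I expect to be the real obstacle is the scalar statement $\calg{K}^G=K$ (Ax--Sen--Tate): once $\wh{R}$ is the completed rather than the algebraic tensor product, a $G$-fixed element need not lie in any finite-level subalgebra $R\cotimes_K L$ with $L/K$ finite, so elementary finite Galois descent is insufficient, and the function of the Schauder basis is precisely to reduce the descent for $\wh{R}$ to this one-dimensional statement. A secondary technical point, and the reason the argument is phrased with $\varepsilon$-orthogonal rather than orthonormal bases, is that affinoid algebras need not be orthonormalizable when $K$ is not spherically complete, since their supremum or residue norms may take values outside $|K^\times|$.
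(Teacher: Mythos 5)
Your proof is correct and follows essentially the same strategy as the paper's: both produce a $\calg{K}$-Schauder basis of $\wh{R}$ consisting of $G$-fixed elements of $R$ (the paper via an $\alpha$-cartesian basis of the dense subalgebra $K[f_1,\dots,f_r]$ and \cite{BGR} 2.7.2/3, you via an $\varepsilon$-orthogonal basis of $R$ and the weighted $c_0$-identification), let $G$ act coefficientwise, and reduce to the Ax--Sen--Tate statement $\calg{K}^G=K$ (the paper cites \cite{robba} Theorem 1.1 for this). The only cosmetic difference is that you treat the dagger case by descent to the affinoid completion, whereas the paper simply runs the same basis argument there.
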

    \begin{proof}
    Choose an affinoid generating system $f_1,\dots,f_r\in R$. Then $K[f_1,\dots,f_r]$ is dense in $R$ and so $\calg{K}[f_1,\dots,f_r]$ is dense in $\wh{R}$. Now choose an $\alpha$-cartesian $K$-basis (see \cite{BGR} Definition 2.6.1/3 for definition) $\{w_i\}_{i\in I}$ of $K[f_1,\dots,f_r]$ (existence of such a basis is guaranteed by \cite{BGR} Proposition 2.3.3/4 and \cite{BGR} Theorem 2.6.2/4). Since by \cite{Ked1} Lemma 1.3.11, $\calg{K}\otimes_K R\to \wh{R}$ is injective, the upper horizontal arrow $i$ of the following commutative diagram is injective:
    \begin{equation*}
        \begin{tikzcd}
            \calg{K}\otimes_K K[f_1,\dots,f_r] \arrow{r}{i} \arrow[swap]{d}{} & \calg{K}[f_1,\dots,f_r] \arrow{d}{} \\%
            \calg{K}\otimes_K R \arrow{r}{}& \wh{R}.
        \end{tikzcd} 
    \end{equation*}
    Thus $i$ is bijective and $\{w_i\}_{i\in I}$ also form an $\alpha$-cartesian $\calg{K}$-basis of $\calg{K}[f_1,\dots,f_r]$. Then $\{w_i\}_{i\in I}$ is a $\calg{K}$-Schauder basis of $\wh{R}$ by \cite{BGR} Proposition 2.7.2/3. Hence any $x\in \wh{R}$ can be uniquely written as a convergent power series $x=\sum_{i\in I}c_iw_i$ with $c_i\in\calg{K}$ and $\sigma\in G $ act on $x$ by 
    $$\sigma(x):=\sum_{i\in I}\sigma(c_i)w_i.$$
    So $x$ is fixed by $G$ if and only if $c_i\in \calg{K}$ is fixed by $G$, which implies that $c_i\in K$ for all $i\in I$ by \cite{robba} Theorem 1.1.
    \end{proof}
Now we prove a Galois descent argument to relax the ``algebraically closed'' condition put on $K$ when we prove the (generalized) $p$-adic Fuchs theorem.

\begin{prop}\label{Galois descent}
Suppose $I\subset \RR_{>0}^n$ is a polysegment and $X$ is a connected $K$-rigid space (resp. dagger space). Let $P$ be one of the following:
\begin{enumerate}
    \item an object in $\Robb{X}$.
    \item an object in $\Rob{X}$.
    \item a log-$\nabla$-module over $\ran{I}$ with respect to some $x_1,\dots,x_l\in\Gamma(X,\OO_X)$ satisfying the Robba condition, where $X$ is smooth, zero loci of $x_1,\dots,x_l$ are smooth and they meet transversally.
\end{enumerate}
Let $A$ be an exponent of $P$ with Liouville partition $\AA_1\cup\cdots\cup\AA_k$. If the direct sum decomposition of $\wh{P}:=P\cotimes_K \overline{K}$ (resp. $\wh{P}:=P\otimes^\dagger_K \overline{K}$) into $Q_1\oplus\cdots\oplus Q_k$ such that $\AA_i$ is an  exponent of $Q_i$ for $1\leq i\leq k$ exists, 
then there exists a unique direct sum decomposition $P=P_1\oplus\dots\oplus P_k$ such that $\AA_i$ is an exponent of $P_i$ for $1\leq i\leq k$.
\end{prop}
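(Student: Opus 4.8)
The plan is to realize the sought decomposition of $P$ by descending, along the action of $G=\Gal(\overline K/K)$, the idempotent endomorphisms cut out by the given decomposition of $\wh P$; the tool for the descent is Lemma \ref{fixed elements}. First I would reduce to the case where $X=\Sp(R)$ is a connected reduced affinoid (resp. affinoid dagger) space and $I$ is closed: cover $X$ by connected affinoid (resp. affinoid dagger) subdomains and $I$ by closed polysegments, prove the statement in that case, and glue the resulting local decompositions --- which agree on overlaps by the uniqueness assertions of Lemma \ref{uniqueness}, resp. Lemma \ref{nabla uniqueness}, Lemma \ref{log uniqueness} --- exactly as in the last paragraph of the proof of Proposition \ref{relative pushforward of decomposition}. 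So assume $X=\Sp(R)$ affinoid and $I$ closed; write $\wh R=R\cotimes_K\overline K$ (resp. $R\dagotimes_K\overline K$), $\wh X=\Sp(\wh R)$, and observe that $\wh X$ has finitely many connected components, on each of which the uniqueness lemmas over $\overline K$ apply and the exponent of the restriction of $Q_i$ is still $\AA_i$.

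\textbf{Galois invariance of the projectors.} Write the given decomposition as $\wh P=\bigoplus_{i=1}^k Q_i$ with $Q_i=e_i(\wh P)$, where $e_1,\dots,e_k\in\End(\wh P)$ are pairwise orthogonal idempotents summing to the identity and commuting with the connection (and, in case (3), with the log connection and the residue endomorphisms). The group $G$ acts semilinearly on $\wh R_I$, hence on $\wh P=P\cotimes_K\overline K$ compatibly with all this structure, hence on $\End(\wh P)$, and it permutes the connected components of $\wh X$. For $\sigma\in G$, the twisted decomposition $\wh P=\bigoplus_i\sigma(Q_i)$ again has each $\sigma(Q_i)$ satisfying the Robba condition and admitting $\AA_i$ as an exponent: $\sigma$ is an isometry for every Gauss norm and fixes $\AA_i$ entrywise (its entries lie in $\ZZ_p\subseteq K$), so it takes a sequence of matrices witnessing $\AA_i$ as an exponent of $Q_i$ to one witnessing $\AA_i$ as an exponent of $\sigma(Q_i)$. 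Since $\AA_1\cup\cdots\cup\AA_k$ is a Liouville partition, the uniqueness of such a decomposition over $\overline K$ on each connected component of $\wh X$ (Lemma \ref{uniqueness}, resp. \ref{nabla uniqueness}, \ref{log uniqueness}) forces $\sigma(Q_i)=Q_i$, hence $\sigma(e_i)=e_i$, for all $i$ and all $\sigma\in G$.

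\textbf{Descent and conclusion.} Thus $e_i\in\End(\wh P)^G$ for every $i$. As $P$ is coherent locally free, it is a direct summand of a finite free module over $R_I$, so $\End_{R_I}(P)$ is a direct summand of some $\Mat_N(R_I)$ and $\End(\wh P)=\End_{R_I}(P)\cotimes_K\overline K$ is the corresponding direct summand of $\Mat_N(\wh R_I)$; taking $G$-invariants and applying Lemma \ref{fixed elements} entrywise (so that $\Mat_N(\wh R_I)^G=\Mat_N(R_I)$) yields $\End(\wh P)^G=\End(P)$. Hence each $e_i$ descends to an idempotent $\bar e_i\in\End(P)$, and since $(-)\cotimes_K\overline K$ is injective on these modules, the $\bar e_i$ are pairwise orthogonal, sum to the identity, and commute with the connection (and the log structure in case (3)). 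Setting $P_i:=\bar e_i(P)$ gives $P=P_1\oplus\cdots\oplus P_k$ in the appropriate category with $P_i\cotimes_K\overline K=Q_i$; in particular $P_i$ satisfies the Robba condition, being a direct summand of $P$ (Lemma \ref{relative_exact_sequence}, resp. Lemma \ref{abcat}), and since the exponent is unchanged under extension of the base field, $\AA_i$ is an exponent of $P_i$. Uniqueness of the decomposition is Lemma \ref{uniqueness} (resp. \ref{nabla uniqueness}, \ref{log uniqueness}), and the passage back from the affinoid, closed case to general $X$ and $I$ is the gluing argument used in the reduction.

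\textbf{Main obstacle.} The only step that is not purely formal is the identification $\End(\wh P)^G=\End(P)$: one must check that completed base change along $K\to\overline K$ commutes with forming endomorphism algebras of coherent locally free modules over $R_I$ (resp. $R^\dagger_I$), and that $R_I\cotimes_K\overline K$ (resp. $R^\dagger_I\dagotimes_K\overline K$) is exactly the ring to which Lemma \ref{fixed elements} applies, i.e. the global sections ring on $\wh X\times A^n_{\overline K}(I)$. Once that is in hand, the whole argument is a formal consequence of the uniqueness of Liouville decompositions established earlier.
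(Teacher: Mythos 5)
Your proposal is correct and follows essentially the same route as the paper: both arguments first force $\sigma(Q_i)=Q_i$ for all $\sigma\in\Gal(\overline K/K)$ from the uniqueness of the Liouville decomposition over $\overline K$, and then descend via Lemma \ref{fixed elements}. The only (cosmetic) difference is that you descend the idempotent projectors through $\End(\wh P)^G=\End(P)$, whereas the paper takes $G$-invariants of the summands directly, setting $P_i=q_i(P)$ and showing $P_i=Q_i^{G}$ by expanding elements in a basis and applying Lemma \ref{fixed elements} to the coefficients in $\wh R_I$ --- the same key input your matrix-entry argument uses.
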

\begin{proof}
We prove the assertion for rigid spaces, and the proof for dagger spaces is the same.
Firstly we work with the case where $X=\Sp(R)$ is affinoid and $I$ is closed. In this case we fix a basis $e_1,\dots,e_r$ of $P$.
Take any $\sigma\in\Gal(\overline{K}/K)$. Then $\sigma$ acts on $\wh{P}$ and it commutes with derivations. So $\sigma(Q_i)$ is also a (log-)$\nabla$-module. It satisfies the Robba condition because $\sigma:Q_i\to\sigma(Q_i)$ is an isometric isomorphism. To verify that $\AA_i$ is an exponent of $\sigma(Q_i)$, we can assume it is free by Lemma 
\ref{localQS}. Let $e'_1,\dots,e'_l$ be a basis of $Q_i$ and $\{S_{k,\AA_i}\}_{k=0}^\infty$ in $\Mat_{l\times l}(\wh{R}_I)$ be the sequence of matrices defining the exponent $\AA_i$. For each $k\geq 0$, choose $\zeta\in\mupk^n$, then we have
\begin{align*}
    \zeta^*[(\sigma(e'_1),\dots,\sigma(e'_l))\sigma(S_{k,\AA_i})]
    &= \left(\sigma(\sigma^{-1}(\zeta))\right)^*[(\sigma(e'_1),\dots,\sigma(e'_l))\sigma(S_{k,\AA_i})]\\
    &=\sigma\left(\sigma^{-1}(\zeta)^*[(e'_1,\dots,e'_l)S_{k,\AA_i}]\right)\\
    &=\sigma\left([(e'_1,\dots,e'_l)S_{k,\AA_i}](\sigma^{-1}(\zeta))^{\AA_i}\right)\\
    &=\left(\sigma(e'_1),\dots,\sigma(e'_l)\right)\sigma(S_{k,\AA_i})\zeta^{\AA_i}
\end{align*}
and so $\{\sigma(S_{k,\AA_i})\}_{k=0}^\infty$ defines $\AA_i$ as an exponent of $\sigma(Q_i)$. Note that the uniqueness of the decomposition of $\wh{P}$ with respect to the Liouville partition of an exponent follows from Lemma \ref{uniqueness} for the case of relative $\nabla$-modules, from Lemma \ref{nabla uniqueness} for the case of absolute $\nabla$-modules and from Lemma \ref{log uniqueness} for log-$\nabla$-modules. Thus, since we have
$$ \wh{P}=\sigma(\wh{P})=\sigma(Q_1)\oplus\cdots\oplus\sigma(Q_k), $$
we obtain $\sigma(Q_i)=Q_i$ for all $1\leq i\leq k$.

Let $q_i:\wh{P}\to Q_i$ be the natural projection. 
\begin{claim}
    $\sigma\circ q_i=q_i\circ\sigma$.
\end{claim}
We prove the claim. Choose $v\in \wh{P}$, $v$ can be uniquely written as $v_1+\cdots+v_k$ with $v_i\in Q_i$. Then
\begin{align*}
    q_i(\sigma(v))=q_i\left(\sigma\left(\sum_{i=1}^kv_i\right)\right)=q_i\left(\sum_{i=1}^k\sigma(v_i)\right)=\sigma(v_i)=\sigma(q_i(v)).
\end{align*}
The third equality holds because by the above argument $\sigma(v_i)\in Q_i$. So the claim is proved.
Now set $P_i:=q_i(P)$. For $v\in P_i$, choose $w\in P$ such that $v=q_i(w)$, then 
$$\sigma(v)=\sigma(q_i(w))=q_i(\sigma(w))=q_i(w)=v.$$
Thus $P_i$ is fixed by $\Gal(\overline{K}/K)$ and so $P_i\subseteq Q_i^{\Gal(\overline{K}/K)}$.
Conversely, if an element $v\in Q_i$ is fixed by $\Gal(\overline{K}/K)$, as an element of $\wh{P}$, write $v=\sum_{i=1}^r l_ie_i$ with $l_i\in \wh{R}_I$. Then each $l_i$ should be fixed by $\Gal(\overline{K}/K)$ and so $l_i\in R_I$ by Lemma \ref{fixed elements}. Consequently, $v\in P\cap Q_i$ and in particular, $v=q_i(v)\in P_i$. This implies that $Q_i^{\Gal(\overline{K}/K)}\subseteq P_i$. Hence $P_i=Q_i^{\Gal(\overline{K}/K)}$ and so $P=\wh{P}^{\Gal(\overline{K}/K)}=\bigoplus_{i=1}^k Q_i^{\Gal(\overline{K}/K)}=\bigoplus_{i=1}^k P_i$. Moreover, if we put $\wh{P}_i=P_i\cotimes_K \overline{K}$, $0=\wh{P}/\wh{P}=\bigoplus_{i=1}^k Q_i/\wh{P}_i$ and so we have $\wh{P}_i=Q_i$. Since exponent does not change after extension of constant field, $\AA_i$ is an exponent of $P_i$. Uniqueness follows from Lemma \ref{uniqueness} for the case of relative $\nabla$-modules, from Lemma \ref{nabla uniqueness} for the case of $\nabla$-modules and from Lemma \ref{log uniqueness} for the case of log-$\nabla$-modules. When $X$ is not necessarily affinoid and $I$ is not necessarily closed, we firstly obtain the decomposition locally. Then, by the same gluing argument as in the last paragraph of the proof of Proposition \ref{relative pushforward of decomposition}, we glue them to obtain a global decomposition.
\end{proof}

\subsection{The $p$-adic Fuchs theorem}
As a generalization of Proposition \ref{decomposition for one-point}, we have the following theorem, in which we no longer assume that $X$ has one-point Shilov boundary.
\begin{thm}[cf. \cite{shiho2010logarithmic} Proposition 2.4]
    Suppose $I\subset \RR^n_{> 0}$ is an open polysegment.
    Let $X$ be a connected smooth rigid space. Endow $X$ with $x_1,\dots,x_l\in \Gamma(X,\OO_X)$ whose zero loci are smooth and meet transversally. Let $P$ be a log-$\nabla$-module over $X\times A_K^n(I)$ with respect to $x_1,\dots, x_l$ satisfying the Robba condition with exponent $A$. If $A$ has non-Liouville differences, then there exists a unique direct sum decomposition of $P=\oplus_{\lambda\in(\ZZ_p/\ZZ)^n}P_\lambda$ where each $P_\lambda$ has exponent identically equal to $\lambda$.
\end{thm}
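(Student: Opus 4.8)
Here is a plan. The idea is to reduce, via localization and Galois descent, to the case where $X$ is a connected smooth affinoid carrying a finite \'etale map to a closed unit polydisc over an algebraically closed complete $K$, to decompose the pushforward of $P$ there using that the polydisc has one-point Shilov boundary, and then to descend the decomposition. Since the statement is local on $X$ and local decompositions glue by the uniqueness in Lemma~\ref{log uniqueness}, and since the case of open $I$ follows from the closed case by exhausting $I$ by aligned closed subpolysegments, we may assume $X=\Sp(R)$ is a connected smooth affinoid and $I$ is closed. By Proposition~\ref{Galois descent}(3) it suffices to produce the decomposition after base change to $\overline{K}$, so we may assume $K$ algebraically closed, and, replacing $K$ by its completion (which affects neither the Robba condition nor the exponents nor the existence of a decomposition), also complete. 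Then Corollary~\ref{existence_of_finite_etale_morphism}, applied near a chosen point after shrinking $X$, provides a finite \'etale morphism $f\colon X\to\BBB^m$; here $\BBB^m$ is connected, reduced, and has one-point Shilov boundary because its reduction $\kappa_K[s_1,\dots,s_m]$ is a domain (Remark~\ref{reduction}(3)). Write $\AA_\lambda$ for the part of $A$ lying in the coset $\lambda\in(\ZZ_p/\ZZ)^n$; the hypothesis that $A$ has non-Liouville differences says exactly that $\bigcup_\lambda\AA_\lambda$ is a Liouville partition of $A$.

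Regard $P$ as an object of $\Robb{X}$. By Proposition~\ref{exponent_of_pushforward} the pushforward $f_{I*}P$ along the induced map $f_I\colon X\times A^n_K(I)\to\BBB^m\times A^n_K(I)$ is an object of $\Robb{\BBB^m}$ with exponent $f_{I*}A$, which again has non-Liouville differences and whose partition into $\ZZ$-cosets is $\bigcup_\lambda f_{I*}\AA_\lambda$. The decisive step is to decompose $f_{I*}P$ over $\BBB^m$ using the one-point Shilov boundary $\{\eta_0\}$: restricting to $\eta_0$ and applying the absolute generalized $p$-adic Fuchs theorem over the complete field $\HH(\eta_0)$ (Theorem~\ref{decom_all}) decomposes $f_{I*}P|_{\eta_0\times A^n_K(I)}$ into pieces of exponent $\equiv\lambda$, and running the generization argument of \cite{shiho2010logarithmic} Proposition~2.4 (as in the proof of Proposition~\ref{decomposition for one-point}) at the level of relative $\nabla$-modules --- where the equality $|{\cdot}|_{\BBB^m\times A^n_K(I)}=|{\cdot}|_{\eta_0\times A^n_K(I)}$ forced by the one-point Shilov boundary is what makes the argument run --- yields a decomposition $f_{I*}P=\bigoplus_\lambda Q_\lambda$ in $\Robb{\BBB^m}$ with $Q_\lambda$ of exponent $f_{I*}\AA_\lambda\equiv\lambda$. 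Only the $t$-directions are involved, so the log structure on the base is irrelevant and the conditions on the exponents along the log divisors in Proposition~\ref{decomposition for one-point} are not needed here. Proposition~\ref{relative pushforward of decomposition}, applied to $f$ and the Liouville partition $\bigcup_\lambda\AA_\lambda$ of $A$, then shows that $f_{I*}P=\bigoplus_\lambda Q_\lambda$ descends to a decomposition $P=\bigoplus_\lambda P_\lambda$ in $\Robb{X}$ with $P_\lambda$ of exponent $\AA_\lambda\equiv\lambda$.

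It remains to upgrade this to a decomposition of log-$\nabla$-modules. Let $\pi_\lambda$ be the projectors attached to $P=\bigoplus_\lambda P_\lambda$, which are horizontal sections of $P^\vee\otimes P=\bigoplus_{\mu,\nu}P_\mu^\vee\otimes P_\nu$ for the relative connection, and let $\partial$ be a log-derivation on $X$ with respect to $x_1,\dots,x_l$. By integrability of the full log connection, $\nabla(\partial)$ commutes with every $\nabla(\partial_{t_k})$, hence sends relative-horizontal sections of $P^\vee\otimes P$ to relative-horizontal sections; and for $\mu\neq\nu$ the object $P_\mu^\vee\otimes P_\nu$ has exponent whose class in $(\ZZ_p/\ZZ)^n$ is $\nu-\mu\neq 0$, with non-Liouville differences from $0$ since $A$ has non-Liouville differences, so a nonzero relative-horizontal global section would --- via the trivial sub-$\nabla$-module it generates together with Lemma~\ref{relative_exact_sequence} --- place the class of $0$ into that exponent, a contradiction. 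Thus $\nabla(\partial)(\pi_\lambda)$ lies in $\bigoplus_\mu P_\mu^\vee\otimes P_\mu$, and differentiating $\pi_\lambda^2=\pi_\lambda$ forces $\nabla(\partial)(\pi_\lambda)=0$. Hence each $\pi_\lambda$ is horizontal for the full log connection and each $P_\lambda$ is a log-$\nabla$-submodule; it has exponent $\equiv\lambda$ by construction, and the decomposition is unique by Lemma~\ref{log uniqueness}, which also lets the local decompositions be glued back to $X$.

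The step I expect to be the main obstacle is the decomposition of $f_{I*}P$ over $\BBB^m$: one must establish the relative analogue of Shiho's generization over a base with one-point Shilov boundary under only the hypothesis that the $t$-direction exponent has non-Liouville differences, with no condition on the exponents along the log divisors --- which is precisely why the argument has to be carried out for relative $\nabla$-modules rather than quoted from Proposition~\ref{decomposition for one-point}. By contrast, the descent along the finite \'etale pushforward is already available (Proposition~\ref{relative pushforward of decomposition}), and the passage from the relative decomposition to one of log-$\nabla$-modules is routine exponent bookkeeping.
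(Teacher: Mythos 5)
Your skeleton (localize, Galois descent via Proposition~\ref{Galois descent}, finite \'etale cover of $\BBB^m$ from Corollary~\ref{existence_of_finite_etale_morphism}, decompose the pushforward over the one-point Shilov boundary, descend by the pushforward proposition) matches the paper's, and your closing projector argument is a plausible substitute for the paper's last step. But the step you yourself flag as the main obstacle is a genuine gap, not a routine adaptation. You propose to decompose $f_{I*}P$ as an object of $\Robb{\BBB^m}$ --- i.e.\ using only the $t$-direction connection --- by ``running the generization argument at the level of relative $\nabla$-modules,'' on the grounds that the one-point Shilov boundary makes it work. Shiho's generization is not a statement about sup-norms alone: its proof constructs the $\Sigma$-unipotent filtration using the \emph{absolute} (log-)connection on the base (the $\partial_s$-operators, the functor $\mathcal{U}_I$ of Proposition~\ref{1.15}, and the residue calculus), and none of that is available once you forget the base-direction derivations. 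The purely relative statement over a base with one-point Shilov boundary is precisely the result the paper attributes to Christol as announced but unproven (see the remark closing Section~4), and the paper itself only establishes relative analogues in special situations (Lemma~\ref{generization} for $\Sigma$-semi-constant modules over a one-dimensional disc, via Birkhoff factorization). Having fiberwise decompositions from Theorem~\ref{decom_all} at every point does not by itself give an analytic decomposition over $\BBB^m$; bridging that is exactly what generization does, and you have removed its engine.

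What the paper does instead, and what your proposal is missing, is the reduction that makes the \emph{absolute} machinery applicable without any hypothesis on the exponents along the log divisors: delete the divisors. On $X_0=X-\bigcup_i\{x_i=0\}$ the module $P$ is an honest absolute $\nabla$-module, so its pushforward to $\BBB^m$ is one too, Proposition~\ref{decomposition for one-point} applies with $l=0$ (the condition on $\Sigma'$ is vacuous), and Corollary~\ref{pushforward of decomposition} descends the decomposition to $X_0\times A_K^n(I)$. The decomposition is then extended across the divisors as sub-log-$\nabla$-modules by \cite{shiho2010logarithmic} Proposition~1.18, and shown to exhaust $P$ by the rank comparison of \cite{shiho2010logarithmic} Proposition~1.11. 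Your projector computation would handle the horizontality in the $s$-directions if you already had a relative decomposition over all of $X$, but it cannot produce that decomposition over the divisor locus, which is where the extension results are doing the work.
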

\begin{proof}
Firstly we consider the case where $K$ is algebraically closed.
Let $D_i:=\{x_i=0\}$ for $1\leq i\leq l$ and $D=\bigcup_{i=1}^l D_i$. Set $X_0=X-D$, then $P|_{X_0\times A^n_K(I)}$ is a $\nabla$-module over $X_0\times A^n_K(I)$. By Corollary \ref{existence_of_finite_etale_morphism}, for each $x\in X_0$, there exists an affinoid neighborhood $U$ of $x$ with a finite \'etale morphism $f:U\to \BBB^m$. This induces a finite \'etale morphism $f_I:U\times A^n_K(I)\to \BBB^m\times A^n_K(I)$. Since $\BBB^m$ admits a one-point Shilov boundary, by Proposition \ref{decomposition for one-point}, $f_{I*}(P|_{U\times A^n_K(I)})$ admits a decomposition $\bigoplus_{\lambda\in (\ZZ_p/\ZZ)^n}P_{U,\lambda}$, where $P_{U,\lambda}$ has exponent identically equal to $\lambda$. By Proposition \ref{pushforward of decomposition}, this is also a decomposition of $P|_{U\times A^n_K(I)}$. By the same gluing argument as the last paragraph in the proof of Proposition \ref{pushforward of decomposition}, we obtain the decomposition
$$P|_{X_0\times A^n_K(I)}=\bigoplus_{\lambda\in (\ZZ_p/\ZZ)^n}P_{X_0,\lambda},$$
where $P_{X_0,\lambda}$ has exponent identically equal to $\lambda$. By \cite{shiho2010logarithmic} Proposition 1.18, $P_{X_0,\lambda}$ extends to a sub log-$\nabla$-module $P_\lambda$ of $P$. By \cite{shiho2010logarithmic} Proposition 1.11 and comparing rank, $P/\bigoplus_{\lambda\in (\ZZ_p/\ZZ)^n}P_{\lambda}$ is locally free of rank $0$, and hence a zero module. Thus $P=\bigoplus_{\lambda\in (\ZZ_p/\ZZ)^n}P_{\lambda}$ is the decomposition of $P$. Since exponent of $P_\lambda$ can be verified by restricting to $x\times A_K^n(I)$ for some $x\in X_0\subset X$, it confirms that each $P_\lambda$ has the correct exponent. The uniqueness follows from Lemma \ref{log uniqueness}.

For the case where $K$ is not algebraically closed, we firstly base change to its completed algebraic closure. Then we do obtain the decomposition by the argument above and applying Proposition \ref{Galois descent}.
\end{proof}
\begin{rmk}
    In a recent talk by Christol, he announced that the $p$-adic Fuchs theorem for free $\nabla$-modules over $X\times A_K^1(I)$ relative to $X$ is true when Shilov boundary of $X$ is the singleton. He suggested briefly only the hint of proof for zero monodromy case. This result is stronger than ours in the sense that it does not require the base to carry a differential structure. However, we can work with a more general base (whose Shilov boudary need not be the singleton), polyannulus, and deal with locally free (not necessarily free) $\nabla$-modules. We expect that, if we assume this result of Christol, it could be generalized by the ``pushforward'' technique developed in this paper to the case where the base is smooth and its Shilov boundary is not necessarily the singleton.
\end{rmk}

\section{Two generalized $p$-adic Fuchs theorems for modules with (relative) connections}

In the begining of this section, we prove four lemmas and recall the $p$-adic Birkhoff theorem by Christol, developed in \cite{christol1979decomposition} and generalized in \cite{christollecturenotes}.  We will replace some of his notations to maintain consistency of the notations used in this paper. Throughout this section, we assume that $m=1$, which denotes the dimension of the base.

\begin{lem}\label{X=0}
    Let $A\in\mathrm{Mat}_{r\times r}(K)$ and $n\in\ZZ-\{0\}$. Assume that the set of eigenvalues of $A$ is $(\NID)$, then the matrix equation $AX+nX=XA$ has the unique solution $X=O$.
\end{lem}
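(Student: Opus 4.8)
The plan is to reduce to a Sylvester-equation argument by passing to the algebraic closure $\overline{K}$ and triangularizing $A$. Write $L(X) = XA - AX - nX$; this is a $K$-linear endomorphism of $\mathrm{Mat}_{r\times r}(\overline{K})$, and solving $AX + nX = XA$ over $K$ is the same as computing $\ker L$ over $\overline{K}$ (since a $K$-linear map injective after base change to $\overline{K}$ is injective, and $L$ commutes with the Galois action, it suffices to show $\ker L = 0$ over $\overline{K}$). So I would first base change and assume $K = \overline{K}$.

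Next I would use the standard fact that the eigenvalues of the operator $X \mapsto XA - AX$ on $\mathrm{Mat}_{r\times r}(\overline{K})$ are exactly the differences $\mu_j - \mu_i$, where $\mu_1,\dots,\mu_r$ are the eigenvalues of $A$ (with multiplicity). One clean way to see this: put $A$ in upper-triangular form $A = D + N$ via a change of basis $A \mapsto PAP^{-1}$ (which conjugates $L$ and does not change its kernel dimension), with $D = \mathrm{diag}(\mu_1,\dots,\mu_r)$. Then the map $X \mapsto XD - DX$ is already diagonal in the standard basis $\{E_{ij}\}$ with eigenvalue $\mu_j - \mu_i$ on $E_{ij}$, and the nilpotent correction coming from $N$ is strictly upper/lower triangular with respect to a suitable ordering of the $E_{ij}$, so the full operator $X \mapsto XA - AX$ is triangular with diagonal entries $\mu_j - \mu_i$. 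Hence $L = (X \mapsto XA - AX) - n\cdot\mathrm{id}$ is triangular with diagonal entries $\mu_j - \mu_i - n$.

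Therefore $L$ is invertible provided $\mu_j - \mu_i - n \neq 0$ for all $i,j$, i.e.\ provided $\mu_j - \mu_i \neq n$ for all $i,j$. Since $n \in \ZZ - \{0\}$ and the eigenvalue set of $A$ is $(\NID)$, no difference $\mu_j - \mu_i$ equals a nonzero integer; in particular $\mu_j - \mu_i \neq n$. (For $i = j$ the difference is $0 \neq n$.) Thus every diagonal entry of the triangular operator $L$ is nonzero, so $L$ is invertible and $\ker L = 0$, giving $X = O$ as the unique solution. The only mildly delicate point is bookkeeping the triangularity of $X \mapsto XN - NX$ with respect to the correct total order on the index pairs $(i,j)$; everything else is routine linear algebra, so I expect no serious obstacle here.
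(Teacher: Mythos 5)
Your proposal is correct and follows essentially the same route as the paper: both reduce the problem to the fact that the eigenvalues of $X\mapsto XA-AX-nX$ are the differences $\mu_j-\mu_i-n$, which are all nonzero under the $(\NID)$ hypothesis since $n$ is a nonzero integer. The only difference is that the paper simply cites \cite{Ked1} Lemma 7.3.5 for the eigenvalue computation, whereas you prove it directly by triangularizing $A$ over $\overline{K}$ and ordering the basis $E_{ij}$ by the level $j-i$; your bookkeeping there is sound.
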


\begin{proof}
    Let $\lambda_1,\dots,\lambda_r\in \overline{K}$ be the eigenvalues of $A$ and view the map $X\mapsto XA-AX-nX$ as a linear transformation on the space  $\mathrm{Mat}_{r\times r}(K)$. By \cite{Ked1} Lemma 7.3.5, eigenvalues of this map are the differences $\lambda_i-\lambda_j-n$ with $1\leq i,j\leq r$. Since the set of eigenvalues of $A$ is $(\NLD)$, this map is invertible and so $X=O$.
\end{proof}
\begin{lem}\label{C=0}
    Suppose $R$ is a $K$-algebra and $\lambda_1,\dots,\lambda_k\in K$ are distinct numbers. Let $A,B,C\in\Mat_{r\times r}(R)$ be of the following form 
    \begin{equation*}
        A=
        \begin{pmatrix*}
          \lambda_1 I_{n_1\times n_1}& & \\
          &\ddots & \\
          & & \lambda_k I_{n_k\times n_k}  
        \end{pmatrix*}, C=
        \begin{pmatrix*}
            C_{1,n_1\times n_1}& & \\
            &\ddots&\\
            & & C_{k,n_k\times n_k}\\
        \end{pmatrix*}.
    \end{equation*}
    If $AB+C=BA$, then $C=O$ and $B$ has the form 
    $$\begin{pmatrix*}
        B_{1,n_1\times n_1}& & \\
        &\ddots&\\
        & & B_{k,n_k\times n_k}\\
    \end{pmatrix*}.$$
\end{lem}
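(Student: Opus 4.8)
The plan is to pass to block form with respect to the decomposition $r = n_1 + \dots + n_k$ underlying the shapes of $A$ and $C$, and then simply compare blocks in the equation $AB + C = BA$. Write $B = (B_{pq})_{1 \le p,q \le k}$ with $B_{pq} \in \Mat_{n_p \times n_q}(R)$, and recall that $C$ has $(p,q)$-block equal to $C_p$ when $p = q$ and to $O$ otherwise. Since $A$ acts as multiplication by the scalar $\lambda_p$ on the $p$-th block of rows (and by $\lambda_q$ on the $q$-th block of columns), the $(p,q)$-block of $AB$ is $\lambda_p B_{pq}$ and that of $BA$ is $\lambda_q B_{pq}$.

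First I would read off the diagonal blocks $p = q$: the identity $\lambda_p B_{pp} + C_p = \lambda_p B_{pp}$ forces $C_p = O$ for every $p$, hence $C = O$. Next I would read off the off-diagonal blocks $p \neq q$: there the equation becomes $\lambda_p B_{pq} = \lambda_q B_{pq}$, i.e. $(\lambda_p - \lambda_q) B_{pq} = O$. Because the $\lambda_i$ are pairwise distinct elements of the field $K$ and $R$ is a $K$-algebra, $\lambda_p - \lambda_q$ is a nonzero scalar and therefore a unit in $R$; multiplying by its inverse yields $B_{pq} = O$. Thus all off-diagonal blocks of $B$ vanish, which is precisely the asserted block-diagonal form of $B$.

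There is no real obstacle here: the lemma is a routine blockwise computation, and the only point worth flagging is that the cancellation $(\lambda_p - \lambda_q) B_{pq} = O \Rightarrow B_{pq} = O$ requires nothing about $R$ beyond its being a $K$-algebra, since a nonzero element of $K$ is automatically invertible in any $K$-algebra. (This is in contrast to Lemma \ref{X=0}, where the noninteger-difference hypothesis on the eigenvalues was genuinely needed; here distinctness of the $\lambda_i$ in $K$ already suffices.)
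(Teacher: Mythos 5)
Your proof is correct and follows essentially the same route as the paper's: write $B$ in block form, read off the diagonal blocks to get $C_i=O$, and read off the off-diagonal blocks to get $(\lambda_p-\lambda_q)B_{pq}=O$, hence $B_{pq}=O$. Your extra remark that a nonzero element of $K$ is a unit in any $K$-algebra makes explicit the cancellation step that the paper leaves implicit, but the argument is the same.
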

\begin{proof}
    Write 
    $$B=\begin{pmatrix*}
       B_{11}&\cdots&B_{1k}\\
       \vdots & & \vdots\\
       B_{k1}&\cdots&B_{kk}
    \end{pmatrix*},
    $$
    where $B_{ij}$ has size $n_i\times n_j$. By considering the equality $AB+C=BA$ blockwise, we obtain
    $\lambda_i B_{ii}+C_i=\lambda_i B_{ii}$ and $\lambda_i B_{ij}=\lambda_j B_{ij}$ for $i\neq j$. Thus $C_i=O$ for $1\leq i\leq k$ and $B_{ij}=O$ for $i\neq j$.
\end{proof}

\begin{defn}
    Define the ring of analytic elements $K[[s/\beta]]_{\an}$ on open disc $|s|<\beta$ as the completion with respect to $\beta$-Gauss norm of the subring of $K(s)$ consisting of rational functions with no poles in the disc $|s|<\beta$. Define the ring of bounded elements $K[[s/\beta]]_0$ on the open disc $|s|<\beta$ as follows:
    $$ K[[s/\beta]]_0=\left\{\sum_{i=0}^\infty c_is^i\in K[[s]]:\sup_{i}\{|c_i|\beta^i\}<\infty \right\}. $$
\end{defn}
It worth noticing that when $\beta\notin \sqrt{|K^\times|}$, $K[[s/\beta]]_{\an}=K[[s/\beta]]_0=K\la s/\beta\ra$ and that for any $0<\beta'<\beta$, $K\la s/\beta\ra\subseteq K[[s/\beta]]_{\an}\subseteq K[[s/\beta]]_0\subset K\la s/\beta'\ra$.

\begin{lem}\label{intersection}
    Let $I=[\alpha,\beta]\subset \RR_{>0}^n$ be a polysegment, $\rho\in\RR_{>0}$ and $\rho'\in I$. As subalgebras of $K(s,t)_{(\rho,\rho')}$,
    $$ K[[s/\rho]]_\an\la \rho'/t,t/\rho'\ra\cap K(s)_\rho\la \alpha/t,t/\beta\ra=K[[s/\rho]]_\an\la\alpha/t,t/\beta\ra. $$
\end{lem}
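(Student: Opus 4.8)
The plan is to reduce the asserted equality to a routine comparison of Laurent coefficients inside a single ambient ring in which $t$-expansions are unambiguous. Set $\mathcal{L}:=K(s)_\rho\cotimes_K K_{[\rho',\rho'],n}$, the ring of Laurent series $\sum_{i\in\ZZ^n}c_it^i$ with $c_i\in K(s)_\rho$ and $|c_i|_\rho(\rho')^i\to 0$ as $|i|\to\infty$. First I would record three embeddings, all isometric for the relevant Gauss norms and hence injective. (i) $\mathcal{L}\hookrightarrow K(s,t)_{(\rho,\rho')}$, because the $(\rho,\rho')$-Gauss norm on $K(s,t)$ is multiplicative and restricts, on Laurent polynomials and hence on $\mathcal{L}$, to the norm $\sum c_it^i\mapsto\max_i|c_i|_\rho(\rho')^i$. (ii) $K[[s/\rho]]_\an\la\rho'/t,t/\rho'\ra\hookrightarrow\mathcal{L}$, obtained by applying the isometric inclusion $K[[s/\rho]]_\an\hookrightarrow K(s)_\rho$ coefficientwise. (iii) Since $\rho'\in[\alpha,\beta]$, the coefficientwise restriction maps $K(s)_\rho\la\alpha/t,t/\beta\ra\to\mathcal{L}$ and $K[[s/\rho]]_\an\la\alpha/t,t/\beta\ra\to\mathcal{L}$ (the latter landing in fact inside $K[[s/\rho]]_\an\la\rho'/t,t/\rho'\ra$) are injective, there being no cancellation between distinct powers of $t$ for the $\rho'$-Gauss norm. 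These embeddings are compatible with the embeddings of the three rings into $K(s,t)_{(\rho,\rho')}$ used to form the intersection — indeed the latter are by construction the composites through $\mathcal{L}$ — so the intersection in the statement may be computed inside $\mathcal{L}$, where every element has a unique coefficient expansion $\sum_i c_i t^i$ with $c_i\in K(s)_\rho$.

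Granting this, the inclusion $\supseteq$ is immediate: since $\rho'\in[\alpha,\beta]$, convergence on the closed polyannulus $[\alpha,\beta]$ forces convergence at the polyradius $\rho'$, so $K[[s/\rho]]_\an\la\alpha/t,t/\beta\ra\subseteq K[[s/\rho]]_\an\la\rho'/t,t/\rho'\ra$; and since $K[[s/\rho]]_\an\subseteq K(s)_\rho$ we also get $K[[s/\rho]]_\an\la\alpha/t,t/\beta\ra\subseteq K(s)_\rho\la\alpha/t,t/\beta\ra$.

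For $\subseteq$, take $h=\sum_{i\in\ZZ^n}c_it^i\in\mathcal{L}$ lying in both rings. Membership in $K[[s/\rho]]_\an\la\rho'/t,t/\rho'\ra$ forces every coefficient $c_i$ to lie in $K[[s/\rho]]_\an$; membership in $K(s)_\rho\la\alpha/t,t/\beta\ra$ forces $|c_i|_\rho\,\sigma^i\to 0$ for every $\sigma\in[\alpha,\beta]$. These are precisely the two defining conditions for $h$ to lie in $K[[s/\rho]]_\an\la\alpha/t,t/\beta\ra$, which finishes the proof.

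The only point requiring care — rather than any real difficulty — is the compatibility of the embeddings in the first paragraph, i.e. making sure that the $t^i$-coefficient of $h$ does not depend on which of the three rings $h$ is regarded as belonging to. This is handled by the observation that all the structure maps in question are isometric for the appropriate Gauss norms (the $\rho$-Gauss norm on coefficients and the $(\rho,\rho')$- or $\rho'$-Gauss norm on series), hence injective, so the coefficient expansion of $h$ inside $\mathcal{L}$ is determined by its image in the field $K(s,t)_{(\rho,\rho')}$; everything else is bookkeeping.
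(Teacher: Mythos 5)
Your proof is correct and follows essentially the same route as the paper's: both reduce the intersection to a coefficientwise comparison of the Laurent expansion in $t$ inside the common ambient field, using that $\rho'\in[\alpha,\beta]$ for the easy inclusion and that the $t^i$-coefficient map is bounded by the $(\rho,\rho')$-Gauss norm for the hard one. The only presentational difference is that where you invoke the definition of $K[[s/\rho]]_\an\la\rho'/t,t/\rho'\ra$ plus uniqueness of expansions to conclude $c_i\in K[[s/\rho]]_\an$, the paper makes the same point explicit by approximating $f$ with Laurent polynomials over the ring of rational functions without poles in $|s|<\rho$ and passing to the limit coefficientwise; these are two phrasings of the same completeness argument.
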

\begin{proof}
    It is obvious that the right hand side is contained in the left hand side. Conversely, let $f$ be an element of the left hand side and write $f=\sum_{i\in\ZZ^n}f_it^i$ where $f_i\in K(s)_\rho$. Let $R$ denote the subring of $K(s)$ consisting of rational funtions with no poles in the disc $|s|<\rho$. Since $R[t^{-1},t]$ is dense in $K[[s/\rho]]_\an\la \rho'/t,t/\rho'\ra$, there exists a sequence $\{g_k\}_{k=1}^\infty\subset R[t^{-1},t]$ such that $\lim_{k\to \infty}|f-g_k|_{(\rho,\rho')}=0$. Write $g_k=\sum_{i\in\ZZ^n}g_{ki}t^i$ with $g_{ki}\in R$, then for every $l\in\ZZ^n$, we have
    $$ |f_l-g_{kl}|_\rho\rho'^l\leq  \max_{i\in\ZZ^n}|f_i-g_{ki}|_\rho\rho'^i=|f-g_k|_{(\rho,\rho')}\to 0\text{ , as } k\to\infty.$$
    This implies $|f_l-g_{kl}|_\rho\to 0$ as $k\to\infty$ and so $f_l\in K[[s/\rho]]_\an$.
\end{proof}

\begin{lem}\label{intersection is a field}
    Let $F/K$ be an extension of complete nonarchimedean fields and $\rho\in\RR_{>0}$. Then as subalgebras of $F(t)_\rho$,
    $$F\la \rho/s \ra \cap F[[s/\rho]]_{\an}=F.$$
\end{lem}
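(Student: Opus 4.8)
The reverse inclusion $F\subseteq F\langle\rho/s\rangle\cap F[[s/\rho]]_{\an}$ is clear since constants lie in both rings, so the whole content is the inclusion $\subseteq$. The difficulty is that an element $f$ of the intersection carries two a priori almost unrelated descriptions: as an element of $F\langle\rho/s\rangle$ it is a convergent series $f=\sum_{i\ge 0}c_i s^{-i}$ with $c_i\in F$ and $|c_i|\rho^{-i}\to 0$ (an ``analytic function on $\{|s|\ge\rho\}$''), while as an element of $F[[s/\rho]]_{\an}$ it is a $\rho$-Gauss limit of rational functions with no poles in $\{|s|<\rho\}$; these two descriptions only meet inside $F(s)_\rho$. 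The plan is to pass to the reduction of $F(s)_\rho$, where the incompatibility of the two descriptions becomes a contradiction about the order of a pole at the origin. Since the constant term $c_0$ lies in $F$, the element $f-c_0$ is again in the intersection and has zero constant term, so it is enough to show that it vanishes; thus we may assume $c_0=0$, i.e.\ $f=\sum_{i\ge 1}c_i s^{-i}$, and we suppose $f\ne 0$ in order to reach a contradiction.

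First, when $\rho\notin\sqrt{|F^\times|}$ the statement is immediate: there $F[[s/\rho]]_{\an}=F\langle s/\rho\rangle$ and $F(s)_\rho=F_{[\rho,\rho]}$, so $f$ lies in both the subring of power series in $s$ and the subring of power series in $s^{-1}$ of the Laurent-series field $F_{[\rho,\rho]}$, forcing $f\in F$ and then $f=f(\infty)=0$. For $\rho\in\sqrt{|F^\times|}$, after a harmless finite scalar extension $F'/F$ making $\rho\in|F'^\times|$ (the three ambient rings embed compatibly and $F$ is algebraically closed in $F(s)_\rho$, so it suffices to treat $F'$) and a rescaling of the variable, we may assume $\rho=1$, and after scaling $f$ we may assume $|f|_1=1$. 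Reduce modulo the maximal ideal of $F(s)_1$, whose residue field is the rational function field $\widetilde F(\bar s)$; then $\bar f\in\widetilde F(\bar s)$ is nonzero. On one hand, from $f=\sum_{i\ge 1}c_i s^{-i}$ we get $\bar f=\sum_{i\ge 1,\,|c_i|=1}\bar c_i\bar s^{-i}$, a nonzero polynomial in $\bar s^{-1}$ with vanishing constant term; as a rational function of $\bar s$ this has a genuine pole at $\bar s=0$, of order $d\ge 1$ equal to the largest surviving index. On the other hand, writing $f=\lim_k g_k$ with $g_k=p_k/q_k$ rational and $q_k$ without zeros in $\{|s|<1\}$, we have $\bar f=\bar g_k=\bar p_k/\bar q_k$ for large $k$; the key elementary computation is that a nonzero polynomial with no zeros in $\{|s|<1\}$ has $1$-Gauss norm equal to the norm of its constant coefficient (since all its roots have absolute value $\ge 1$), whence $\bar q_k$ has nonzero constant term and $\bar f$ has no pole at $\bar s=0$. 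This contradiction shows $f=0$, so the original element equals $c_0\in F$.

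The main obstacle is exactly the reconciliation of the two descriptions of $f$, i.e.\ having enough control of $F(s)_\rho$ to run the pole-order dichotomy above; concretely this is the normalization bookkeeping (reduction to $\rho=1$) together with the computation of the $\rho$-Gauss norm of a polynomial without zeros in the open disc. One can also avoid the case split and the scalar extension altogether by working uniformly with the graded reduction $\mathrm{gr}(F(s)_\rho)$, which is the graded field of homogeneous fractions of the polynomial ring $\mathrm{gr}(F)[\bar s]$ (with $\bar s$ of degree $\rho$, hence transcendental over $\mathrm{gr}(F)$): the two computations then produce, for the principal symbol of $f-c_0$, a negative versus a non-negative ``order at $\bar s=0$'', which is again the desired contradiction.
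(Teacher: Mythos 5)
Your proof is correct, but it takes a genuinely different route from the paper's. The paper's argument is purely algebraic: it observes that $\mathfrak{a}=sF[[s/\rho]]_{\an}$ is a maximal ideal of $F[[s/\rho]]_{\an}$ with residue field $F$, so the intersection maps $F$-linearly into $F$, and then concludes by asserting that $\mathfrak{a}\cap F\la\rho/s\ra=(0)$ inside $F(s)_{\rho}$ --- a claim which is really the analytic heart of the statement and which the paper leaves unjustified. You instead normalize the constant term away, reduce to $\rho=1$ (after the harmless case split on $\rho\in\sqrt{|F^\times|}$ and a finite scalar extension), and pass to the residue field $\wt{F}(\bar{s})$ of $F(s)_1$, where the two presentations of $f$ force contradictory statements about the order of $\bar{f}$ at $\bar{s}=0$: a genuine pole from the $F\la\rho/s\ra$-expansion versus regularity from the approximating rational functions, using the multiplicativity of the Gauss norm to see that a denominator without zeros in the open disc has unit constant term after reduction. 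What your approach buys is precisely a proof of the step the paper glosses over (in effect, that $s^{-1}F\la\rho/s\ra\cap F[[s/\rho]]_{\an}=0$), at the cost of the normalization bookkeeping; your closing observation that the graded reduction of $F(s)_\rho$ runs the same pole-order dichotomy uniformly in $\rho$ is a clean way to remove both the case split and the field extension. The only points worth tightening are the justification that you may pass from $F$ to $F'$ (uniqueness of the coefficients of an element of $F'\la\rho/s\ra$ gives this more directly than appealing to $F$ being algebraically closed in $F(s)_\rho$) and the implicit convention that ``no poles in the disc'' refers to geometric points, so that all roots of the denominators have absolute value at least $\rho$.
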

\begin{proof}
    It is obvious that $F\subseteq F\la \rho/s \ra \cap F[[s/\rho]]_{\an}$. Let $\mathfrak{a}\subset F[[s/\rho]]_{\an}$ be the ideal generated by $s$. This is a maximum ideal of $F[[s/\rho]]_{\an}$ and $F[[s/\rho]]_{\an}/\mathfrak{a}=F$. Thus
    $$ (F\la \rho/s \ra\cap F[[s/\rho]]_{\an})/(\mathfrak{a}\cap F\la \rho/s \ra\cap F[[s/\rho]]_{\an})\hookrightarrow F .$$
    Since $\mathfrak{a}\cap F\la \rho/s \ra \cap F[[s/\rho]]_{\an}\subseteq \mathfrak{a}\cap F\la \rho/s \ra=(0)$, $F\la \rho/s \ra \cap F[[s/\rho]]_{\an}\subseteq F$.
\end{proof}

The following theorem is essentially \cite{christollecturenotes} Th\'eor\`eme 19.18.
\begin{thm}[cf. \cite{christollecturenotes}, Th\'eor\`eme 19.18]\label{decomposition of matrix}
   Let $\rho\in\RR_{>0}$ and $\Omega/K$ an extension of complete nonarchimedean fields. For any matrix $H\in \GL_r(\Omega(s)_\rho)$ there exists a decomposition $H=s^NLM$ satisfying following properties:
   \begin{enumerate}    
    \item $N$ is a diagonal matrix with entries in $\ZZ$.
    \item $L\in \GL_r(\Omega\la\rho/s \ra)$ and $L(\infty)=I$ where $I$ denotes the identity matrix and $L(\infty)$ denotes the constant term of $L$.
    \item $M\in\GL_r(\Omega[[ s/\rho ]]_\an)$.
   \end{enumerate}
\end{thm}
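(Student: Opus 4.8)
The plan is to follow Christol's matrix Birkhoff-factorization argument, which is an iterative approximation scheme. The key point is that $\Omega(s)_\rho$ sits inside both $\Omega\langle\rho/s\rangle$ (functions converging for $|s|\geq\rho$, vanishing at $\infty$ after subtracting the constant) and $\Omega[[s/\rho]]_{\mathrm{an}}$ (analytic elements on $|s|<\rho$), and that these two rings are ``almost complementary'': by an analogue of Lemma \ref{intersection is a field} their intersection is $\Omega$. I would first reduce to the case where $\rho\in\sqrt{|K^\times|}$ is immaterial, or rather handle all $\rho$ uniformly, and set up the two projection operators. Concretely, any $f\in\Omega(s)_\rho$ (or in a suitable completion) decomposes as $f=f^- + f^0 + f^+$ where $f^-\in s^{-1}\Omega\langle\rho/s\rangle$ (strictly negative part), $f^0\in\Omega$ (constant), $f^+\in s\,\Omega[[s/\rho]]_{\mathrm{an}}$; write $\pi^-, \pi^0, \pi^+$ for the corresponding projections, all of operator norm $\leq 1$ with respect to the $\rho$-Gauss norm.

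The core is an additive-to-multiplicative bootstrapping. First I would establish the \emph{additive} statement: given $A\in\mathrm{Mat}_r(\Omega(s)_\rho)$ with small $\rho$-Gauss norm, one can write $A = A^{\mathrm{neg}} + A^{\mathrm{pos}}$ with $A^{\mathrm{neg}}$ having entries in $\Omega\langle\rho/s\rangle$ and vanishing constant term, and $A^{\mathrm{pos}}$ having entries in $\Omega[[s/\rho]]_{\mathrm{an}}$ — this is immediate from the projections. Then, to factor $H = LM$ with $L\in\GL_r(\Omega\langle\rho/s\rangle)$, $L(\infty)=I$, $M\in\GL_r(\Omega[[s/\rho]]_{\mathrm{an}})$, write $H = I + E$ after normalizing (the $s^N$ factor is extracted first, see below), assume $|E|_\rho < 1$, and solve $L^{-1} H M^{-1} = I$ by Newton-type iteration: set $L_0 = M_0 = I$, and given $L_k, M_k$ with $L_k^{-1}HM_k^{-1} = I + E_k$, $|E_k|_\rho$ small, put $L_{k+1} = L_k(I + \pi^{\mathrm{neg}}E_k)$, $M_{k+1} = (I + \pi^{\mathrm{pos}}E_k)M_k$, where $\pi^{\mathrm{neg}} = \pi^- $ and $\pi^{\mathrm{pos}} = \pi^0 + \pi^+$ (the constant going to the $M$-side so that $L(\infty)=I$). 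A short computation shows $|E_{k+1}|_\rho \leq |E_k|_\rho^2$, so the products $\prod(I+\pi^{\mathrm{neg}}E_k)$ and $\prod(I+\pi^{\mathrm{pos}}E_k)$ converge in $\GL_r(\Omega\langle\rho/s\rangle)$ and $\GL_r(\Omega[[s/\rho]]_{\mathrm{an}})$ respectively (both rings are complete for the $\rho$-Gauss norm, and a matrix close to $I$ is invertible there), giving the desired $L$ and $M$ with $H = LM$.

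The remaining issue is to reduce a \emph{general} $H\in\GL_r(\Omega(s)_\rho)$ to one of the form $I + (\text{small})$, and this is where the diagonal factor $s^N$ enters; I expect this to be the main obstacle. Over the (non-complete, but one can pass to completions) ring $\Omega(s)_\rho$, one wants a ``Smith-normal-form up to units'' statement: there exist $U\in\GL_r(\Omega\langle\rho/s\rangle)$ and $V\in\GL_r(\Omega[[s/\rho]]_{\mathrm{an}})$ and integers $N_1\geq\cdots\geq N_r$ such that $UHV = \mathrm{diag}(s^{N_1},\dots,s^{N_r})$, or at least $UHV$ is congruent to $s^N$ modulo something small. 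The standard approach: the ring $\Omega[[s/\rho]]_{\mathrm{an}}$ is a (Bézout, indeed PID-like) domain with maximal ideal $(s)$, so by elementary divisors one can bring $H$ to diagonal form $\mathrm{diag}(s^{n_1}u_1,\dots,s^{n_r}u_r)$ with $u_i$ units in $\Omega[[s/\rho]]_{\mathrm{an}}$ after left/right multiplication by $\GL_r(\Omega[[s/\rho]]_{\mathrm{an}})$; the units $u_i$ can be absorbed. One then has to further clear the $s^{n_i}$ by elements of $\GL_r(\Omega\langle\rho/s\rangle)$, at the cost of reordering and of introducing a genuinely nontrivial $s^N$ that cannot be removed — this is the Birkhoff phenomenon. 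I would either cite \cite{christollecturenotes} Th\'eor\`eme 19.18 directly for this reduction step, or reprove it by induction on $r$: split off one column, use the PID structure of $\Omega[[s/\rho]]_{\mathrm{an}}$ to extract the minimal power of $s$, and apply the rank-one case of the factorization (where $\Omega(s)_\rho^\times \ni h = s^N \cdot \ell \cdot m$ is just the observation that a rational function with no zeros or poles at finite distance away from $0$ and $\infty$, divided by its leading power of $s$, factors as a product of its ``polar-at-infinity'' part times its ``analytic-near-$0$'' part). Finally, uniqueness is not asserted, so I need not address it. Throughout, completeness of the two rings for the $\rho$-Gauss norm and Lemma \ref{intersection is a field} (to see the factors are essentially pinned down and to control the iteration) are the technical inputs.
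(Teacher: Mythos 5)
Your Newton-type iteration for the near-identity case is the standard core of the nonarchimedean Birkhoff factorization and is correct as far as it goes, but it is not what the paper does: the paper's proof consists entirely of approximating $H$ by rational matrices $H_k$, invoking the \emph{proof} of \cite{christollecturenotes} Th\'eor\`eme 19.18 to produce factorizations $H_k=s^{N_k}L_kM_k$ together with the convergence of the three sequences $N_k,L_k,M_k$, and passing to the limit; the only new observation is that dropping the hypothesis $H\in\GL_r(\Omega_I)$ costs nothing beyond $M$ landing in the larger ring $\Omega[[s/\rho]]_\an$. So if you exercise your stated option of citing Christol for the reduction step, your argument essentially collapses to the paper's; if you do not, the reduction step is where your proof has a genuine gap, exactly where you anticipate trouble.

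Concretely: first, the Smith-normal-form argument over $\Omega[[s/\rho]]_\an$ cannot be applied to $H$ as given, because the entries of $H$ lie in the completion $\Omega(s)_\rho$, and a general element of this completion is not a ratio of analytic elements on the open disc, so there is no way to clear denominators and land in $\Mat_{r\times r}(\Omega[[s/\rho]]_\an)$. Second, the natural fix --- approximate $H$ by a rational matrix $H_0$, factor $H_0=s^{N_0}L_0M_0$ by partial fractions/elementary divisors (which does work for rational matrices, as in your rank-one discussion), and feed the remaining error into the Newton step --- runs into a circularity: writing $H=H_0(I+E)$ one must move $I+E$ past $M_0$ (or past $L_0$ and $s^{N_0}$ if the error is put on the left), and conjugation multiplies $|E|_\rho$ by $|M_0|_\rho|M_0^{-1}|_\rho$ (respectively by powers of $\rho^{N_i-N_j}$), which can destroy the hypothesis $|E|_\rho<1$; how small $E$ must be taken depends on $M_0$, which in turn depends on the chosen approximation. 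Resolving this requires precisely the uniform control of the approximating factorizations that Christol's proof supplies. A smaller unproved assertion: the isometric splitting $\Omega(s)_\rho=s^{-1}\Omega\la\rho/s\ra\oplus\Omega[[s/\rho]]_\an$ with projections of operator norm $\leq 1$ is a genuine (if standard) input from the theory of analytic elements; Lemma \ref{intersection is a field} only yields that the sum is direct, not the norm statement your iteration relies on.
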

\begin{proof}
    By the proof of \cite{christollecturenotes} Th\'eor\`eme 19.18, we can choose the following sequences of matrices:
    \begin{enumerate}
        \item[\textbullet] $\{N_k\}_{k=1}^\infty$ which is a sequence of $r\times r$ diagonal matrices with entries in $\ZZ$ and converges to some $N$.
        \item[\textbullet] $\{M_k\}_{k=1}^\infty\subset \GL_r(\Omega(s)\cap\Omega[[s/\rho]]_0)$ which converges to some invertible matrix $M$.
        \item[\textbullet] $\{L_k\}_{k=1}^\infty\subset\GL_r(\Omega(s)\cap\Omega\la \rho/s\ra)$ which converges to some invertible matrix $L$ and satisfies $L_k(\infty)=I$.
        \item[\textbullet] $\{H_k\}_{k=1}^\infty\subset \GL_r(\Omega(s))$ which converges to $H$ and satisfies $H_k=s^{N_k}L_kM_k$.
    \end{enumerate}
    It is easy to see that $N,L,M$ satisfy the desired properties in the statement of the theorem. Note that the difference from the proof of \cite{christollecturenotes} Th\'eor\`eme 19.18 to the proof here is that we do not have the assumption $H\in \GL_r(\Omega_I)$ for some interval $I$ containing $\rho$. As a consequence, the conclusion for $M$ is weaker than that in the statement of \cite{christollecturenotes} Th\'eor\`eme 19.18.
\end{proof}
From now on we prove two generalized $p$-adic Fuchs type theorem based on the theorem above.

\subsection{Relative $\Sigma$-semi-constant $\nabla$-modules}
For a polysegment $I\subset \RR_{>0}^n$ and $\xi=(\xi_1,\dots,\xi_n)\in K^n$, we define the $\nabla$-module $(M_\xi,\nabla_{M_\xi})$ on $A_K^n(I)$ as the $\nabla$-module $(K_I,d+\sum_{i=1}^n\xi_i \frac{dt_i}{t_i})$. Now we introduce the notion of (semi-)$\Sigma$-constancy for relative $\nabla$-modules:

\begin{defn}\label{sigma semiconstancy}
    Let $X$ be a connected smooth rigid space or dagger space, $I\subset \RR_{>0}^n$ be a polysegment and $P$ is a $\nabla$-module over $\ran{I}$ relative to $X$. For $\Sigma=\prod_{i=1}^n\Sigma_i\subset {K}^n$,
     \begin{enumerate}
        \item we say $P$ is $\Sigma$-constant if it has the form $\pi_1^*(P_0)\otimes\pi_2^*(M_\xi,\nabla_{M_\xi})$ for some coherent locally free module $P_0$ over $X$ and $\xi\in\Sigma$, where $\pi_1:\ran{I}\to X,\pi_2:\ran{I}\to A_K^n(I)$ denote the projections.
        \item we  say $P$ is $\Sigma$-semi-constant if it is a direct sum of $\Sigma$-constant relative $\nabla$-modules.
     \end{enumerate}
\end{defn}

\begin{lem}\label{characterization of sigma semiconstancy}
    Suppose $X, I,\Sigma$ are as in Definition \ref{sigma semiconstancy}. Let $P$ be a $\nabla$-module over $\ran{I}$ relative to $X$ of rank $r$. Then, the followings are equivalent:
    \begin{enumerate}
        \item $P$ is $\Sigma$-semi-constant.
        \item There exists an affinoid (resp. affinoid dagger) covering $\{U_\lambda\}_{\lambda\in\Lambda}$ of $X$ such that $P|_{U_\lambda\times A_K^n(I)}$ is free and such that there exists a basis of $P|_{U_\lambda\times A_K^n(I)}$ on which $\tpa$ acts via a diagonal matrix $G_{t_i}$ (independent of choice of $\lambda$) with entries in $\Sigma_i $.
    \end{enumerate}
\end{lem}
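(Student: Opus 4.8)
The plan is to prove the two implications separately, with the direction (1)$\Rightarrow$(2) being essentially a bookkeeping argument and (2)$\Rightarrow$(1) requiring a gluing step via Theorem \ref{decomposition of matrix}.

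For (1)$\Rightarrow$(2): Suppose $P=\bigoplus_{j} Q_j$ with each $Q_j$ being $\Sigma$-constant, say $Q_j=\pi_1^*(P_{0,j})\otimes\pi_2^*(M_{\xi^{(j)}})$ with $\xi^{(j)}=(\xi^{(j)}_1,\dots,\xi^{(j)}_n)\in\Sigma$. Since each $P_{0,j}$ is a coherent locally free module over $X$, I can choose an affinoid (resp. affinoid dagger) covering $\{U_\lambda\}$ of $X$ trivializing every $P_{0,j}$ simultaneously. On $U_\lambda$, picking a basis of each $P_{0,j}$ and multiplying by the canonical generator of $M_{\xi^{(j)}}$ gives a basis of $P|_{U_\lambda\times A_K^n(I)}$; with respect to this basis, $\tpa=t_i\partial_{t_i}$ acts on the block coming from $Q_j$ as the scalar matrix $\xi^{(j)}_i I$, because $\pi_1^*(P_{0,j})$ is horizontal in the fiber directions and $\nabla_{M_{\xi^{(j)}}}(t_i\partial_{t_i})$ is multiplication by $\xi^{(j)}_i$. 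Assembling these blocks yields a diagonal matrix $G_{t_i}$ with entries in $\Sigma_i$, and this matrix is the same for all $\lambda$ by construction, as required.

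For (2)$\Rightarrow$(1): On each $U_\lambda$ we are given a basis of $P|_{U_\lambda\times A_K^n(I)}$ with $\tpa$ acting via a fixed diagonal matrix $G_{t_i}$ with entries in $\Sigma_i$, independent of $\lambda$. I would first group the simultaneous eigenvalues: the diagonal entries of $(G_{t_1},\dots,G_{t_n})$ partition $\{1,\dots,r\}$ into blocks indexed by the distinct values $\xi\in\prod_i\Sigma_i$ occurring, so that after a constant permutation $P|_{U_\lambda\times A_K^n(I)}=\bigoplus_\xi P_{\lambda,\xi}$, where on $P_{\lambda,\xi}$ each $\tpa$ acts as the scalar $\xi_i$. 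This decomposition is intrinsic (it is the generalized eigenspace decomposition for the commuting operators $\tpa$), hence independent of the chosen basis, so it is compatible on overlaps and glues to a global decomposition $P=\bigoplus_\xi P_\xi$ of $\nabla$-modules relative to $X$. It remains to show each $P_\xi$ is $\Sigma$-constant, i.e. of the form $\pi_1^*(P_{0,\xi})\otimes\pi_2^*(M_\xi)$. Twisting by $\pi_2^*(M_{-\xi})$ (an isomorphism), I may assume $\xi=0$, so that all $\tpa$ act as zero on $P_\xi$; I must then show $P_\xi$ descends to a coherent locally free module on $X$ via $\pi_1$. This is where Theorem \ref{decomposition of matrix} and Lemmas \ref{intersection}, \ref{intersection is a field} enter: locally the connection matrix of $\nabla$ in the base direction, after the prescribed change of basis, can be conjugated — using the $H=s^NLM$ factorization — into a matrix with entries in $\Gamma(U_\lambda,\OO_X)$, that is, independent of $t$; the vanishing of the $\tpa$-action forces $N=0$ and the Laurent-in-$t$ part to drop out (this is exactly what Lemma \ref{intersection} and Lemma \ref{X=0}/\ref{C=0} are designed to control). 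The horizontal sections for the $t$-directions then furnish the module $P_{0,\xi}$ over $U_\lambda$, and these glue.

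The main obstacle I expect is the last step of (2)$\Rightarrow$(1): showing that $\tpa$-horizontality actually forces the base connection matrix to be $t$-independent (equivalently, that $P_\xi$ is pulled back from $X$), rather than merely block-diagonal. One has to combine the integrability condition relating $\nabla(\partial_s)$ and $\nabla(t_i\partial_{t_i})$ with the factorization of Theorem \ref{decomposition of matrix} and the intersection computations of Lemmas \ref{intersection}--\ref{intersection is a field} to kill all nonconstant Fourier modes in $t$; the $(\NID)$ hypothesis on the relevant eigenvalue sets (via Lemmas \ref{X=0} and \ref{C=0}) is what guarantees that the only solution to the resulting matrix equations is the trivial one. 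Everything else is routine linear algebra and standard gluing of coherent sheaves.
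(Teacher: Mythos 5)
Your direction (1)$\Rightarrow$(2) is fine and matches the paper: trivialize the factors $P_{0,j}$ on a common covering and take the product bases. The problem is your direction (2)$\Rightarrow$(1). You plan to establish that each eigen-summand $P_\xi$ descends to $X$ by analyzing ``the connection matrix of $\nabla$ in the base direction'' via the factorization $H=s^NLM$ of Theorem \ref{decomposition of matrix} together with ``the integrability condition relating $\nabla(\partial_s)$ and $\nabla(t_i\partial_{t_i})$.'' This cannot work as stated: $P$ is a $\nabla$-module \emph{relative to} $X$, so its connection takes values in $\Omega^1_{X\times A_K^n(I)/X}$ and there is no operator $\nabla(\partial_s)$, no base connection matrix, and no integrability constraint involving a base derivation. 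Moreover, Theorem \ref{decomposition of matrix} lives over a one-dimensional coordinatized base with the rings $\Omega\langle\rho/s\rangle$ and $\Omega[[s/\rho]]_{\an}$, whereas here $X$ is an arbitrary connected smooth rigid or dagger space; that theorem (with Lemmas \ref{intersection}, \ref{intersection is a field}, \ref{X=0}, \ref{C=0}) is the engine of Lemma \ref{generization} and Proposition \ref{C on base}, which are different statements about propagating (semi-)constancy from the Shilov boundary fiber, not of the present characterization.

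The step you flag as the ``main obstacle'' is in fact elementary and needs none of that machinery. On an overlap $U_\lambda\cap U_{\lambda'}$, let $H$ be the change-of-basis matrix between the two families of $\xi$-eigenvectors extracted from the given bases. Applying $\tpa$ to the relation $(\overline{e'_{\xi,\lambda',\bullet}})=(\overline{e'_{\xi,\lambda,\bullet}})H$ and using that $\tpa$ acts by the scalar $\xi_i$ on both sides yields $H\xi_i=\xi_iH+\tpa(H)$, hence $\tpa(H)=0$ for all $i$; writing $H=\sum_{j\in\ZZ^n}H_jt^j$ this kills every $H_j$ with $j\neq 0$, so $H$ has entries in $\Gamma(U_\lambda\cap U_{\lambda'},\OO_X)$. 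The $\Gamma(U_\lambda,\OO_X)$-spans of the eigenvectors therefore glue to a coherent locally free $\OO_X$-module $P_{0,\xi}$ with $P=\bigoplus_\xi\pi_1^*(P_{0,\xi})\otimes\pi_2^*(M_\xi)$; no twisting to $\xi=0$, no Birkhoff factorization, and no descent argument beyond this one computation is required. As written, your proposal does not supply a working argument for this implication.
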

\begin{proof}
    1 $\Longrightarrow$ 2: If $P$ is $\Sigma$-semi-constant, then there exists $\xi_1,\dots,\xi_k\in{K}^n$ and coherent locally free modules $P_{01},\dots,P_{0k}$ over $X$ such that
    \begin{equation*}
         P=\bigoplus_{i=1}^k \pi_1^*(P_{0i})\otimes \pi_2^*(M_{\xi_i}). \tag{*}
    \end{equation*}
    Then we can find an affinoid covering $\{U_\lambda\}_{\lambda\in\Lambda}$ of $X$ such that $P_{0i}|_{U_\lambda}$ is free for $1\leq i\leq k$ and $\lambda \in \Lambda$. Since it suffices to prove 2 for every direct summand in (*), we can reduce to the case where $P$ is $\Sigma$-constant, namely, $k=1$. Set $e_1,\dots, e_r$ be a basis of $P_{01}|_{U_\lambda}$ and set $v$ be a basis of $M_{\xi_1}$. Then $\pi^*(e_1)\otimes\pi_2^*(v),\dots,\pi^*(e_r)\otimes\pi_2^*(v)$ is the desired basis of $P|_{U_\lambda\times A_K^n(I)}$.

    2 $\Longrightarrow$ 1: 
    Set $\xi=(\xi_1,\dots,\xi_n)$. If such a covering and basis exist, let $e_{\lambda,1},\dots,e_{\lambda,r}$ be the basis of $P|_{U_\lambda\times A_K^n(I)}$ on which $\tpa$ acts via $G_{t_i}$ for $1\leq i\leq n$. We choose $\xi$-eigenvectors $e'_{\xi,\lambda,1},\dots,e'_{\xi,\lambda,d}$, namely $\tpa(e'_{\xi,\lambda,i})=\xi_i e'_{\xi,\lambda,i}$ for $1\leq i\leq d$, from $e_{\lambda,1},\dots,e_{\lambda,r}$.
    Let $U_\lambda,U_{\lambda'}$ be two affinoid subdomains such that $U_\lambda\cap U_{\lambda'}\neq\emptyset$ and use $\overline{e'_{\xi,\lambda,i}}, \overline{e'_{\xi,\lambda',i}}$ to denote the restrictions of $e'_{\xi,\lambda,i},e'_{\xi,\lambda',i}$ on $(U_\lambda\cap U_{\lambda'})\times A_K^n(I)$. Let $H\in\GL_d(\Gamma(U_\lambda\cap U_{\lambda'},\OO_X)_I)$ be the change-of-basis matrix such that 
    $$(\overline{e'_{\xi,\lambda',1}},\dots,\overline{e'_{\xi,\lambda',d}})=(\overline{e'_{\xi,\lambda,1}},\dots,\overline{e'_{\xi,\lambda,d}})H,  $$
    by applying $\tpa$ on both sides, we have $H\xi_i=\xi_iH+\tpa(H)$. Thus we conclude that $\tpa(H)=0$ for $1\leq i\leq n$, namely, $H\in\Gamma(U_\lambda\cap U_{\lambda'},\OO_X)$. If we set $P_{0,\xi,\lambda}=\mathrm{span}_{\Gamma(U_\lambda,\OO_X)}(e_{\xi,\lambda,1},\dots,e_{\xi,\lambda,d})$, we have
    $$ P_{0,\xi,\lambda}|_{U_\lambda\cap U_{\lambda'}}\cong P_{0,\xi,\lambda'}|_{U_\lambda\cap U_{\lambda'}}, $$
    via $H$. Thus if we set
    $$ P_{0,\xi}:=\ker(\prod_{\lambda\in\Lambda}P_{0,\xi,\lambda}\rightrightarrows \prod_{\lambda,\lambda'\in\Lambda}P_{0,\xi,\lambda}|_{U_\lambda\cap U_{\lambda'}}), $$
    we immediately see that $P=\bigoplus_{\xi\in{K}^n}\pi^*(P_{0,\xi})\otimes \pi_2^*(M_\xi)$. 
\end{proof}
\begin{rmk}
    Keep the notations as in Definition \ref{sigma semiconstancy}. Assume that $P$ is $\Sigma$-semi-constant and is an object in $\Robb{X}$. According to Lemma \ref{characterization of sigma semiconstancy}, we can find an affinoid (or an affinoid dagger) covering $\{U_\lambda\}_{\lambda\in\Lambda}$ such that $P|_{U_\lambda\times A_K^n(I)}$ is free admitting a basis on which $\tpa$ acts via a diagonal matrix $G_{t_i}=\diag(g_{i1},\dots,g_{ir})$ with $g_{ij}\in \Sigma_i$. In this case, the $n$-tuple of matrices $G=(G_{t_1},\dots,G_{t_n})$ defines an exponent $G$ of $P$ and for simplicity, we also say that $P$ is $G$-semi-constant. 
\end{rmk}

\begin{lem}\label{generization}
    Suppose $\rho\in\RR_{>0}$ and $I\subset \RR_{>0}^n$ is an open polysegment containing some $K$-free element $\rho'$. Let $\eta$ denote the Shilov boundary of $\BBB_{\rho}^1$ and $\Sigma=\prod_{i=1}^n \Sigma_i\subset {K}^n$ be a set which satisfies $(\NID)$. For a $\nabla$-module $P$ over $\BBB_{\rho}^1\times A_K^n(I)$ relative to $\BBB_{\rho}^1$, if $P|_{\eta\times A_K^n(I)}$ is $\Sigma$-semi-constant, then for any $\gamma<\rho$, $P|_{\BBB_{\gamma}^1\times A_K^n(I)}$ is free and $\Sigma$-semi-constant.
\end{lem}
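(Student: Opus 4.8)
The plan is to work with a basis of $P$ over $\eta \times A^n_K(I)$ on which the $t_i\partial_{t_i}$ act diagonally (which exists by the remark after Lemma \ref{characterization of sigma semiconstancy}, possibly after passing to a connected affinoid subdomain; since $\BBB^1_\rho$ has one-point Shilov boundary the relevant neighborhood is forced to contain $\eta$, hence we may take it to be all of $\BBB^1_\rho$), compare it with a basis of $P$ over $\BBB^1_\gamma \times A^n_K(I)$ (which is free by Theorem \ref{generalized QS}, after noting $\gamma \in \sqrt{|K^\times|}$ can be assumed or handled by field extension), and analyze the change-of-basis matrix $H$. The key point is that $H$ will lie in $\GL_r$ of a ring like $\Omega(s)_\rho\langle \alpha/t, t/\beta\rangle$ for $\Omega = \HH(\eta)$ (or rather over the analytic-elements ring associated to the open disc), and we want to factor $H$ so as to split off the part coming from $\eta$ from the part that extends over the whole disc $\BBB^1_\gamma$.

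First I would reduce to the affinoid and free situation: shrink to a closed subpolysegment $J \subset I$ with $\rho' \in J$ (using that the hypotheses are stable under restriction and that $\Sigma$-semi-constancy restricts), and fix a basis $e_1,\dots,e_r$ of $P|_{\BBB^1_\gamma\times A^n_K(J)}$. Over $\eta$, fix the diagonalizing basis $f_1,\dots,f_r$ with $t_i\partial_{t_i} f_j = g_{ij} f_j$, $g_{ij}\in\Sigma_i$. Both bases are bases of $P\otimes \HH(\eta)_J$, so there is $H \in \GL_r(\HH(\eta)_J)$ with $(f_1,\dots,f_r) = (e_1,\dots,e_r)H$. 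The ring $\HH(\eta)_J$ here is the completed tensor product of $\HH(\eta)$ with $K_J$; the subtlety is that $\HH(\eta)$ is the completion of $K(s)$ at the $\rho$-Gauss norm, so $H$ has entries in $K(s)_\rho \langle\alpha/t, t/\beta\rangle$ roughly, but to extend over $\BBB^1_\gamma$ we must land in $K[[s/\rho]]_{\an}$ (analytic elements on the open disc $|s| < \rho$, containing $K\langle\gamma^{-1}s\rangle$ for all $\gamma < \rho$). This is exactly the content of Lemma \ref{intersection} and Lemma \ref{intersection is a field}.

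Next I would apply Theorem \ref{decomposition of matrix} (the $p$-adic Birkhoff theorem) to factor, for each fixed $\rho' \in J$ (or working over the generic fiber), $H = s^N L M$ with $N$ diagonal with integer entries, $L \in \GL_r(\Omega\langle\rho/s\rangle)$, $L(\infty) = I$, and $M \in \GL_r(\Omega[[s/\rho]]_{\an})$, where $\Omega$ is the appropriate coefficient field over $t$ (the completion of $K(t)$ at the $\rho'$-Gauss norm, extended suitably). The crucial observation is that conjugating the diagonal operators $t_i\partial_{t_i}$ by $H$ must land back in matrices over $\HH(\eta)_J$; combined with the $(\NID)$ hypothesis on $\Sigma$ and Lemma \ref{X=0} and Lemma \ref{C=0}, this forces $N = 0$ and forces the $L$-part to be annihilated — more precisely, the equation governing the change of basis, $G_{t_i} H = H G'_{t_i} + t_i\partial_{t_i}(H)$ where $G'_{t_i}$ is the (a priori non-diagonal) matrix of $t_i\partial_{t_i}$ in the $e$-basis, together with the block structure coming from grouping equal eigenvalues, will show that the factor $L$ (which converges near $s = \infty$, i.e. away from the disc) must be trivial or can be absorbed, so that $H$ itself already has entries in $\Omega[[s/\rho]]_{\an}$, hence in $K[[s/\rho]]_{\an}\langle \alpha/t, t/\beta\rangle$ by Lemma \ref{intersection}. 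Then $H \in \GL_r\big(K\langle\gamma^{-1}s\rangle_J\big)$ for every $\gamma < \rho$, so the $f_j$ are genuine sections over $\BBB^1_\gamma \times A^n_K(J)$ forming a basis, and the operators $t_i\partial_{t_i}$ are diagonal with entries in $\Sigma_i$ on this basis; by Lemma \ref{characterization of sigma semiconstancy} this shows $P|_{\BBB^1_\gamma \times A^n_K(J)}$, hence (varying $J$ and regluing) $P|_{\BBB^1_\gamma \times A^n_K(I)}$, is $\Sigma$-semi-constant, and freeness is part of the same conclusion.

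The main obstacle I expect is the bookkeeping around the Birkhoff factorization: showing rigorously that the "$L$ near infinity" part must be trivial. One has to set up the differential equation for $H$ correctly over the two-variable ring, use that $t_i\partial_{t_i}$ commutes with the $s$-variable structure, group eigenvalues of the $G_{t_i}$ into blocks (invoking $(\NID)$ so that distinct eigenvalue-blocks do not interact, via Lemma \ref{C=0}), and within a block use Lemma \ref{X=0} to kill the off-diagonal/integer-shift contributions that $s^N L$ would contribute. A secondary technical point is the passage between $\HH(\eta)$, the ring of analytic elements, and the Tate algebras $K\langle\gamma^{-1}s\rangle$ — here Lemma \ref{intersection} and Lemma \ref{intersection is a field} do the work, but one must be careful that the intersection is taken inside the right ambient ring $K(s,t)_{(\rho,\rho')}$ and that the argument is uniform in $\rho' \in J$ so that it globalizes over the polyannulus.
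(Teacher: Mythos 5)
Your plan follows the paper's route closely (diagonalizing basis over $\eta$, change-of-basis matrix $H$, the Birkhoff factorization $H=s^NLM$ of Theorem \ref{decomposition of matrix}, the $(\NID)$ hypothesis fed into Lemma \ref{X=0}, and the intersection Lemmas \ref{intersection} and \ref{intersection is a field}), but it contains a genuine error at the decisive step. You assert that the factorization ``forces $N=0$ and forces the $L$-part to be annihilated,'' so that $H$ itself has entries in the analytic elements and the diagonalizing basis $f_1,\dots,f_r$ over $\eta$ extends to a basis over $\BBB^1_\gamma\times A_K^n(J)$. This is not provable and is false in general. Substituting $H=s^NLM$ into $G'_{t_i}=H^{-1}G_{t_i}H+H^{-1}\tpa(H)$ and using the two-sided containment only shows that $L^{-1}G_{t_i}L+L^{-1}\tpa(L)$ lies in $\Mat_{r\times r}(F_{\rho'})$ and equals $G_{t_i}$; expanding $L=\sum_{l\leq 0}L_ls^l$ and each $L_l$ in powers of $t_i$, Lemma \ref{X=0} with $(\NID)$ kills only the $t$-dependence of $L$, yielding $L\in\GL_r(K\la\rho/s\ra)$ with $L^{-1}G_{t_i}L=G_{t_i}$. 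It gives no control on the $s$-dependence: if all diagonal entries of $G_{t_i}$ coincide (which $(\NID)$ permits), every $L\in\GL_r(K\la\rho/s\ra)$ with $L(\infty)=I$ satisfies these constraints. Nor is $N$ forced to vanish: unlike in Proposition \ref{C on base}, where $\spa(s^N)=Ns^{N-1}$ produces the term $s^{-1}L^{-1}NL$ to which Lemma \ref{C=0} applies, here $\tpa(s^N)=0$, so no equation constraining $N$ ever appears and Lemma \ref{C=0} has nothing to act on. Since $L$ involves only nonpositive powers of $s$ and $N$ may have negative entries, $s^NL$ is in general not invertible (nor even defined) over $K\la\gamma^{-1}s\ra$, so the $\eta$-basis $f_j$ does \emph{not} extend over the disc.

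The repair is to absorb the uncontrolled factor into the basis rather than to kill it: the correct new basis is the disc basis multiplied by $M^{-1}$, which equals the $\eta$-basis multiplied by $s^NL$. It is defined over $\BBB^1_\gamma\times A_K^n(J)$ because $M=L^{-1}s^{-N}H\in\GL_r(K[[s/\rho]]_{\an}\la\alpha/t,t/\beta\ra)\subset\GL_r(K\la\gamma^{-1}s\ra\la\alpha/t,t/\beta\ra)$ by Lemma \ref{intersection}, and $\tpa$ still acts on it by $G_{t_i}$ because $\tpa(s^NL)=0$ and $s^NL$ commutes with $G_{t_i}$. With that correction your argument goes through for free $P$ and closed $J$. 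A secondary point: your final ``varying $J$ and regluing'' is not automatic for the freeness assertion over the open polysegment $I$; one needs to glue the eigenspaces $\bigcap_i\ker(\tpa-\xi_i)$ into a local system of free $K\la\gamma^{-1}s\ra$-modules over $I$ and use contractibility of $I$ to trivialize it.
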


\begin{proof}
    Firstly we assume that $P$ is free and $I$ is closed. Then there exists a basis $e_1,\dots,e_r$ of $P|_{\eta\times A_K^n(I)}$ such that 
    $$ \tpa(e_1,\dots,e_r)=(e_1,\dots,e_r)G_{t_i}, $$
    with $G_{t_i}$ being a diagonal matrix with eigenvalues in $\Sigma_i$.
    Let $e'_1,\dots,e'_r$ be a basis of $P$ and suppose that $\tpa$ acts on this basis via a matrix $G'_{t_i}\in\Mat_{r\times r}(K\la \rho^{-1}s\ra \la \alpha/t,t/\beta\ra)$ for $1\leq i\leq n$. Let $H\in\GL_r(K(s)_\rho\la \alpha/t,t/\beta\ra)$ be the change-of-basis matrix such that $(e_1,\dots,e_r)H=(e'_1,\dots,e'_r) $.  By a simple computation we have 
    \begin{equation}
        G'_{t_i}=H^{-1}G_{t_i}H+H^{-1}\tpa(H).\tag{*}
    \end{equation}
    Regard $H$ as an invertible matrix in $\GL_r(F_{\underline{\rho}})=\GL_r(F_{\rho'}(s)_\rho)$ where $\underline{\rho}=(\rho,\rho')$ and $F_{\rho'}=K(t)_{\rho'}=K\la \rho'/t,t/\rho'\ra$.
    By Theorem \ref{decomposition of matrix}, we have a factorization $H=s^NLM$ where $N=\diag(n_1,\dots,n_r)$ with $n_j\in\ZZ$ for $1\leq j\leq r$, $L\in\GL_r(F_{\rho'}\la \rho/s\ra)$ with $L(\infty)=I$ and $M=\GL_r(F_{\rho'}[[ s/\rho]]_\an)$.
    So (*) becomes
    \begin{align*}
        G'_{t_i} &= M^{-1}L^{-1}s^{-N}G_{t_i}s^NLM+M^{-1}L^{-1}s^{-N}\tpa(s^NLM)\\
        &=M^{-1}L^{-1}G_{t_i}LM+M^{-1}L^{-1}(\tpa(L)M+L\tpa(M))\\
        &=M^{-1}L^{-1}G_{t_i}LM+M^{-1}L^{-1}\tpa(L)M+M^{-1}\tpa(M),
    \end{align*}
    because $G_{t_i}$ is a diagonal matrix for $1\leq i\leq n$. This implies
    \begin{equation}
       MG'_{t_i}M^{-1}-\tpa(M)M^{-1}=L^{-1}G_{t_i}L+L^{-1}\tpa(L). \tag{**}
    \end{equation} 
    Regarding $G'_{t_i}$ as a matrix in $\Mat_{r\times r}(F_{\rho'}\la s/\rho\ra)$, left hand side of (**) is a matrix in $\Mat_{r\times r}(F_{\rho'}[[ s/\rho]]_\an)$. Meanwhile, right hand side of (**) is a matrix in $\Mat_{r\times r}(F_{\rho'}\la\rho/s \ra)$, and so both sides of (**) should be matrices in $\Mat_{r\times r}(F_{\rho'})=\Mat_{r\times r}(K\la \rho'/t,t/\rho'\ra)$ by Lemma \ref{intersection is a field}. Write $L^{-1}G_{t_i}L+L^{-1}\tpa(L)=A_i$ and then $G_{t_i}L+\tpa(L)=LA_i$. By the property of $L$, we can write $L=\sum_{l=-\infty}^0L_l s^l$ with $L_l\in \Mat_{r\times r}(F_{\rho'})$ for $l \leq 0$ and $L_0=I$. By considering coefficients of $s^l$, we have 
    $ G_{t_i}L_l+\tpa(L_l)=L_l A_i. $
    By putting $l=0$, we can easily see that $A_i=G_{t_i}$. This implies that for $l\leq 0$,
    $$ G_{t_i}L_l+\tpa(L_l)=L_l G_{t_i}. $$
    Use $\wh{t}_i$ to denote the $(n-1)$-tuple $(t_1,\dots,t_{i-1},t_{i+1},t_n)$. Now write $L_l=\sum_{k_i=-\infty}^\infty L_{lk_i}t^{k_i}$ with $L_{lk_i}\in \Mat_{r\times r}(K\la\wh{\rho}'_i/\wh{t}_i,\wh{t}_i/\wh{\rho}'_i \ra)$. Then we have 
    $$ \sum_{k_i=-\infty}^\infty(G_{t_i}L_{lk_i}+k_iL_{lk_i})t_i^{k_i}=\sum_{k_i=-\infty}^\infty L_{lk_i}G_{t_i} t_i^{k_i}, $$
    which implies that $G_{t_i}L_{lk_i}+k_iL_{lk_i}=L_{lk_i}G_{t_i}$. Since the differences of eigenvalues of $G_{t_i}$ is not an integer, we have $L_{lk_i}=0$ if $k_i\neq 0$. This implies that $L_l=L_{l0}$ and $L\in GL_r(K\la \rho/s \ra\la \wh{\rho}'_i/\wh{t}_i,\wh{t}_i/\wh{\rho}'_i\ra)$. By applying the above procedure for every $1\leq i\leq n$, we conclude that $L\in\GL_r(K\la\rho/s \ra)$. Set $(e''_1,\dots, e''_r)=(e'_1,\dots,e'_r)M^{-1}$. Since we can write $M=L^{-1}s^{-N}H$, by Lemma \ref{intersection} we have 
    \begin{align*}
        M &\in\GL_r(K[[ s/\rho]]_\an\la\rho'/t,t/\rho' \ra)\cap \GL_r(K(s)_\rho\la\alpha/t,t/\beta \ra)\\
    &=\GL_r(K[[ s/\rho]]_\an\la\alpha/t,t/\beta \ra)\subset \GL_r(K\la s/\gamma\ra\la\alpha/t,t/\beta \ra).
    \end{align*}
    Thus $(e''_1,\dots,e''_r)$ defines a basis of $P|_{\BBB_{\gamma}^1\times A_K^n(I)}$. Moreover, since we proved that $L^{-1}G_{t_i}L+L^{-1}\tpa(L)=G_{t_i}$ and $L\in\GL_r(K\la \rho/s \ra)$, we have $L^{-1}G_{t_i}L=G_{t_i}$ and so
    \begin{align*}
        \tpa(e''_1,\dots,e''_r)&=\tpa(e_1,\dots,e_r)s^NL= (e_1,\dots,e_r)G_{t_i}s^NL\\
        &= (e_1,\dots,e_r)s^NL(L^{-1}G_{t_i} L)=(e''_1,\dots,e''_r)G_{t_i}.
    \end{align*}
    This shows that $P|_{\BBB_{\gamma}^1\times A_K^n(I)}$ is also $\Sigma$-semi-constant. 

    Now we prove the case when $P$ is locally free and $I$ is open. For a polysegment $J\subset I$ and $\xi=(\xi_1,\dots,\xi_n)\in{K}^n$, we define $V_{J,\xi}\subset P|_{\BBB^1_\gamma\times A_K^n(J)}$ by $V_{J,\xi}:=\bigcap_{i=1}^n\ker(\tpa-\xi_i)$.
    For an $n$-tuple of $r\times r$ diagonal matrices $G=(G_1,\dots,G_n)$ with $G_i=\diag(g_{i1},\dots,g_{ir})$ and $g_{ij}\in {K}$ for $1\leq i\leq n,1\leq j\leq r$, set 
    $$S(G):=\{\xi\in{K}^n:\xi=(g_{1j},\dots, g_{nj}) \text{ for some }j=1,\dots, r\}$$ 
    and define $V_{J,G}\subset P|_{\BBB^1_\gamma\times A_K^n(J)}$ by $V_{J,G}=\bigoplus_{\xi\in S(G)}V_{J,\xi}$. For $\delta\in I$, there exists an open subpolysegment $I_\delta\subset I$ containing $\delta$ such that $P|_{\BBB^1_\rho\times A_K^n(I_\delta)}$ is free. Then, by above conclusion for the case $P$ is free, for any closed subpolysegment $J\subset I_\delta$, $P|_{\BBB^1_\gamma\times A_K^n(J)}$ admits a basis $e_1,\dots,e_r$ on which each $\tpa$ acts via a diagonal matrix $G_{t_i}$ with entries in $\Sigma_i$. Set $G=(G_{t_1},\dots,G_{t_n})$ and set $V_J=V_{J,G}$. For each $x\in V_J$, it has a unique expression $x=\sum_{\xi\in S(G)} v_\xi$ with $v_\xi\in V_{J,\xi}$. Choose $e'_1,\dots,e'_{r'}$ from $e_1,\dots,e_r$ which form a basis of $V_{J,\xi}$, write $v_\xi=f_1 e'_1+\cdots +f_{r'}e'_{r'}$ and put $f=(f_1,\dots,f_{r'})^T$, we have 
    $$0=(\tpa-\xi_i)((e'_1,\dots,e'_{r'})f)=(e'_1,\dots,e'_{r'})\tpa(f)$$
    for any $1\leq i\leq n$, and this forces $f_1,\dots,f_{r'}$ to be elements in $K\la \gamma^{-1}s\ra$. So we see that $V_J$ is the free $K\la \gamma^{-1}s\ra$-module of rank $r$ generated by $e_1,\dots,e_r$. Then, since $V_{I_\delta}=\varprojlim_{J\subset I_\delta} V_J$, $V_{I_\delta}$ is also a free $K\la \gamma^{-1}s\ra$-module of rank $r$. For $I_\delta$ chosen above and an open subpolysegment $I'\subset I_\delta$, the restriction map $V_{I_\delta}\to V_{I'}$ is an isomorphism. By definition,
    $$ V_I=\ker\left( \prod_{\delta\in I}V_{I_\delta}\rightrightarrows \prod_{\delta,\delta'\in I} V_{I_\delta\cap I_{\delta'}} \right). $$

    On the one hand, let $\mathscr{V}$ be the local system of free $K\la \gamma^{-1}s\ra$-modules on the topological space $I\subset \RR^n_{>0}$ such that $\mathscr{V}_{I_\delta}$ is the constant local system $V_{I_\delta}$ and that the gluing is given by 
$$ (\mathscr{V}|_{I_\delta})|_{I_\delta\cap I_{\delta'}}= V_{I_\delta}|_{I_\delta\cap I_{\delta'}}\cong V_{I_\delta\cap I_{\delta'}}\cong V_{I_{\delta'}}|_{I_\delta\cap I_{\delta'}}= (\mathscr{V}|_{I_{\delta'}})|_{I_\delta\cap I_{\delta'}}. $$
On the other hand, since $I$ is contractible, $\mathscr{V}$ is necessarily isomorphic to the constant local system $K\la\gamma^{-1}s\ra^{\oplus r}$ and so $V_I=\mathrm{H}^0(I,\mathscr{V})\cong\mathrm{H}^0(I,K\la\gamma^{-1}s\ra^{\oplus r})=K\la\gamma^{-1}s\ra^{\oplus r}$ is a free $K\la\gamma^{-1}s\ra$-module of rank $r$ such that the restriction maps $V_I\to V_{I_\rho}$ are isomorphisms. So a $K\la\gamma^{-1}s\ra$-basis of $V_I$ gives rise to a basis of $P|_{\BBB^1_{\gamma}\times A_K^n(I_\delta)}$ for any $\delta$, and so it gives a basis of $P$. Thus $P|_{\BBB^1_\gamma\times A_K^n(I)}$ is free and $\Sigma$-semi-constant.
\end{proof}
\begin{lem}\label{descent to smaller fields}
    Assume $K/\QQ_p$ is an extension of complete nonarchimedean fields. Let $I$ be a polysegment, $\Sigma=\prod_{i=1}^n\Sigma_i\subset{K}^n$ and $P$ be a free $\nabla$-module on $\BBB^1_{K,\rho}\times A_K^n(I)$ relative to $\BBB^1_{K,\rho}$ such that $P|_{\eta\times A_K^n(I)}$ is $\Sigma$-semi-constant where $\eta$ is the Shilov boundary of $\BBB^1_{K,\rho}$. Then there exists a subextension of complete nonarchimedean fields $K_0/\QQ_p$ of $K/\QQ_p$ such that the $\QQ$-vector space $\log\sqrt{|K_0^\times|}$ is of at most countable dimension and a free $\nabla$-module $P_0$ on $\BBB^1_{K_0,\rho}\times A_{K_0}^n(I)$ relative to $\BBB^1_{K_0,\rho}$ such that $P=P_0\otimes_{K_0\la\rho^{-1}s\ra_I}K\la\rho^{-1}s\ra_I$ and $P_0|_{\eta_0\times A_{K_0}^n(I)}$ is $\Sigma$-semi-constant, where $\eta_0$ is the Shilov boundary of $\BBB^1_{K_0,\rho}$.
\end{lem}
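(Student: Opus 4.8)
The plan is to take $K_0$ to be the closure in $K$ of the subfield generated over $\QP$ by a suitable countable set of "parameters", and then to descend both $P$ and the semi-constant structure of $P|_{\eta\times A_K^n(I)}$ along $K_0\hookrightarrow K$ explicitly.

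First I would fix a basis $e_1,\dots,e_r$ of $P$ over $K\la\rho^{-1}s\ra_I$ and encode the relative connection by the matrices $G_{t_i}\in\Mat_{r\times r}(K\la\rho^{-1}s\ra_I)$ with $\tpa(e_1,\dots,e_r)=(e_1,\dots,e_r)G_{t_i}$; expanding the entries of the $G_{t_i}$ as convergent series $\sum_{a\in\ZZ_{\ge0},\,j\in\ZZ^n}c_{a,j}s^at^j$ uses only countably many elements of $K$. Applying Lemma \ref{characterization of sigma semiconstancy} over the one-point base $\eta$, the hypothesis furnishes a basis $f_1,\dots,f_r$ of $P|_{\eta\times A_K^n(I)}=(\HH(\eta)_I)^{\oplus r}$ on which $\tpa$ acts by $D_i=\diag(\xi_{i1},\dots,\xi_{ir})$ with $\xi_{ij}\in\Sigma_i$; let $H\in\GL_r(\HH(\eta)_I)$ satisfy $(f_1,\dots,f_r)=(e_1,\dots,e_r)H$. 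Since $K(s)$ is dense in $\HH(\eta)$ (the completion of $K(s)$ for the $\rho$-Gauss norm), the subalgebra $K(s)\otimes_K K[t,t^{-1}]$ is dense in $\HH(\eta)_I$; I would choose, for each entry of $H$ and of $H^{-1}$, an approximating sequence in this subalgebra, and let $C$ be the union of all the $K$-coefficients occurring there, of all the $c_{a,j}$, and of the finitely many $\xi_{ij}$. Then $C$ is countable, and I put $K_0:=\overline{\QP(C)}\subseteq K$, a complete subextension of $K/\QP$.

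For the size constraint: $\QP(C)$ has transcendence degree at most $\aleph_0$ over $\QP$, so by the Abhyankar inequality $|\QP(C)^\times|/|\QP^\times|$ has rational rank at most $\aleph_0$; since $|\QP^\times|$ has rational rank $1$ and completion leaves the value group unchanged, $\dim_\QQ\log\sqrt{|K_0^\times|}\le\aleph_0$. To descend $P$, note that every $c_{a,j}$ lies in $K_0$, so the same series define $G_{t_i}\in\Mat_{r\times r}(K_0\la\rho^{-1}s\ra_I)$ (the convergence conditions only involve absolute values, which are unchanged), and I let $P_0$ be the free $K_0\la\rho^{-1}s\ra_I$-module on $e_1^0,\dots,e_r^0$ with $\tpa(e_1^0,\dots,e_r^0)=(e_1^0,\dots,e_r^0)G_{t_i}$. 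Integrability persists because it is expressed by identities polynomial in the entries of the $G_{t_i}$ and their images under $t_i\partial_{t_i}$, all living in $K_0\la\rho^{-1}s\ra_I\subseteq K\la\rho^{-1}s\ra_I$; and identifying $e_i^0\otimes1$ with $e_i$ gives a connection-compatible isomorphism $P_0\otimes_{K_0\la\rho^{-1}s\ra_I}K\la\rho^{-1}s\ra_I\cong P$.

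Finally I would descend the semi-constant structure. The natural map $\HH(\eta_0)_I\hookrightarrow\HH(\eta)_I$ is an isometric embedding of a complete ring, hence has closed image; as the entries of $H$ and $H^{-1}$ are limits of elements of $K_0(s)\otimes_{K_0}K_0[t,t^{-1}]\subseteq\HH(\eta_0)_I$, they lie in $\HH(\eta_0)_I$, so $H\in\GL_r(\HH(\eta_0)_I)$. With $(f_1^0,\dots,f_r^0):=(e_1^0,\dots,e_r^0)H$, the identity $\tpa(f_1^0,\dots,f_r^0)=(f_1^0,\dots,f_r^0)D_i$ has both sides over $\HH(\eta_0)_I$ and holds after the isometric embedding into $\HH(\eta)_I$, hence holds over $\HH(\eta_0)_I$; since $\xi_{ij}\in\Sigma_i\cap K_0$, Lemma \ref{characterization of sigma semiconstancy} shows $P_0|_{\eta_0\times A_{K_0}^n(I)}$ is $\Sigma$-semi-constant. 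I expect the main obstacle to be the balancing act in the middle steps: keeping $C$ countable while making it rich enough that $H$ and $H^{-1}$ descend — which works only because, although $\HH(\eta)$ need not be of countable type over $K$, each of the countably many coefficients in play can be approximated using countably many elements of $K$ — and then deducing from the countability of $C$ that $K_0$ has countable-dimensional value group, via the Abhyankar inequality together with invariance of value groups under completion.
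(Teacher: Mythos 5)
Your proposal is correct and takes essentially the same route as the paper's own proof: fix a basis of $P$ with connection matrices $G_{t_i}$, take the diagonalizing basis of $P|_{\eta\times A_K^n(I)}$ from Lemma \ref{characterization of sigma semiconstancy} together with its change-of-basis matrix $H$, approximate the entries of $H$ by elements of $K(s)[t,t^{-1}]$, adjoin the countably many resulting coefficients to $\QQ_p$, complete to get $K_0$, and let $P_0$ be the $K_0\la\rho^{-1}s\ra_I$-span of the original basis. Your additional care in tracking $H^{-1}$ and in justifying the countable dimension of $\log\sqrt{|K_0^\times|}$ only fills in details the paper leaves implicit.
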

\begin{proof}
    For $G=(g_{\mu\nu})\in\Mat_{r\times r}(K\la\rho^{-1}s\ra_I)$, write 
    $$g_{\mu\nu}=\sum_{i\in\ZZ_{\geq 0},{j\in\ZZ^n}}g_{\mu\nu ij}s^it^j$$
    and denote the smallest subfield of $K$ containing $g_{\mu\nu ij}$ for $i\in\ZZ_{\geq 0},j\in\ZZ^n,1\leq \mu,\nu\leq r$ by $\QQ_p(G)$. For $G=(g_{\mu\nu})\in\Mat_{r\times r}(K(s)) $, denote the smallest subfield of $K$ containing all coefficients (in $K$) of the rational function $g_{\mu\nu}$ for every $1\leq \mu,\nu\leq r$ also by $\QQ_p(G)$.
    Let $e_1,\dots,e_r$ be a basis of $P$ and let $G_{t_i}$ be the representation matrix of $\tpa$ with respect to this basis. By Lemma \ref{characterization of sigma semiconstancy}, we can find a basis $e'_1,\dots,e'_r$ of $P|_{\eta\times A_K^n(I)}$ on which $\tpa$ acts via a diagonal matrix $G'_{t_i}$ with entries in $\Sigma_i$. Put $\overline{e_i}$ to be the restriction of $e_i$ on $P|_{\eta\times A_K^n(I)}$ and set $H\in\GL_r(K(s)_{\rho,I})$ be the change-of-basis matrix such that
    $$ (\overline{e_1},\dots,\overline{e_r})H=(e'_1,\dots,e'_r). $$
    If we write $H=\sum_{j\in\ZZ^n}H_jt^j$ with $H_j\in\Mat_{r\times r}(K(s)_\rho)$ and write $H_j=\lim_{i\to\infty} H_{ij}$ with $H_{ij}\in\Mat_{r\times r}(K(s))$. Then, by setting
    \begin{align*}
        K_0&:=\QQ_p(G_{t_1},\dots,G_{t_n},\{H_{ij}\}_{i\geq 1,j\in\ZZ^n})^{\wedge} \text{ and} \\
        P_0&:=\mathrm{span}_{K_0\la\rho^{-1}s\ra_I}(e_1,\dots,e_r)
    \end{align*}
    with $\tpa$ acts on $P_0$ via $G_{t_i}$, one easily see that the conditions in the statement of the lemma are satisfied.
\end{proof}

\begin{rmk}
    Keep the notations as in Lemma \ref{generization}. The assumption that $I$ contains some $K$-free element $\rho'$ in Lemma \ref{generization} is superfluous. If $\log \sqrt{|K^\times|}$ is at most countable dimension as a $\QQ$-vector space, then such $\rho'$ always exists. If $\log \sqrt{|K^\times|}$ is of uncountable dimension, we firstly assume that $P$ is free. Then by Lemma \ref{descent to smaller fields}, we can reduce to the case where $\log \sqrt{|K^\times|}$ is at most countable dimension. For general $P$ which is not necessarily free, we can first work with free ones locally, and glue local pieces in the same way as in the proof of Lemma \ref{generization}.
\end{rmk}

Let $X$ be an affinoid dagger space of dimension $1$ and $I\subset\RR_{>0}^n$ be a polysegment. Suppose there exists a finite \'etale morphism $f:X\to\BBB^{1\dagger}$. According to Remark \ref{extension of finite etale morphisms}, for a $\nabla$-module $P$ over $\ran{I}$ relative to $X$, we can always find a presentation $(P',X',J)$ of $P$ and some $\rho>1$ such that $f$ extends to some finite \'etale morphism $X'\to \BBB^{1}_{\rho}$. We call it a presentation compatible with $f$.

\begin{lem}\label{union of weak equivalence multisets}
    Let $A$ be a multisubset of $\ZZ_p^n$ with Liouville partition $\AA_1\cup\cdots\cup \AA_k$. Suppose $B_1,\dots,B_l$ are multisubsets of $\ZZ_p^n$ weakly equivalent to $A$. Then, there exists a Liouville partition $\BB_{i1}\cup\cdots\cup\BB_{ik}$ of $B_i$ for each $1\leq i\leq l$ such that $\BB_{ij}$ is weakly equivalent to $\AA_j$ for $1\leq j\leq k$. Moreover, 
    $$\left( \bigcup_{i=1}^l \BB_{i1} \right)\cup\cdots\cup\left( \bigcup_{i=1}^l \BB_{ik} \right)$$
    is a Liouville partition of $\bigcup_{i=1}^l B_i$ with $\bigcup_{i=1}^l \BB_{ij}$ weakly equivalent to $\underbrace{\AA_j\cup\cdots\cup \AA_j}_{l\text{ times}}$ for $1\leq j\leq k$.
\end{lem}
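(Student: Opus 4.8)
The plan is to reduce everything to the one-direction case and then invoke Proposition~\ref{weak_equivalence} repeatedly. First I would observe that the statement has two parts: (a) for each fixed $i$, constructing a Liouville partition $\BB_{i1}\cup\cdots\cup\BB_{ik}$ of $B_i$ refining the weak equivalence $B_i\weq A$; and (b) checking that the ``diagonal'' union $\bigcup_i\BB_{ij}$ assembles into a Liouville partition of $\bigcup_i B_i$ with the stated weak equivalence. Part (a) is \emph{exactly} Proposition~\ref{weak_equivalence}(2): since $B_i$ is weakly equivalent to $A$ and $\AA_1,\dots,\AA_k$ is a Liouville partition of $A$, that proposition (applied direction by direction, unwinding the inductive definition of a Liouville partition) yields a Liouville partition $\BB_{i1},\dots,\BB_{ik}$ of $B_i$ with $\BB_{ij}\weq\AA_j$ for all $j$. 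So part (a) is essentially immediate.

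For part (b) I would argue by induction on the structure of the Liouville partition $\AA_1,\dots,\AA_k$ of $A$, mirroring the two-clause inductive definition. In the base case $k=1$ there is nothing to prove beyond noting that a finite multiset union of copies of weakly equivalent multisets is weakly equivalent to the corresponding union of copies of $A$ (choose the permutations $\sigma_h$ blockwise, one block per index $i\in\{1,\dots,l\}$, and take $c=\max_i c_i$ where $c_i$ witnesses $B_i\weq A$). For the inductive step, fix the partition $\{1,\dots,k\}=\bigsqcup_{p=1}^{d}I_p$ and a direction $r$ such that $\bigcup_{j\in I_1}\AA_j,\dots,\bigcup_{j\in I_d}\AA_j$ is a Liouville partition of $A$ in the $r$-th direction, and such that $\{\AA_j\}_{j\in I_p}$ is a Liouville partition of $\bigcup_{j\in I_p}\AA_j$ for each $p$. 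Set $C_p=\bigcup_{j\in I_p}\AA_j$ and, for each $i$, set $D_{ip}=\bigcup_{j\in I_p}\BB_{ij}$; then $D_{ip}\weq C_p$ by the base-case remark applied within a single $i$. Applying Proposition~\ref{weak_equivalence}(1) in the $r$-th direction to the $ld$ multisets $\{D_{ip}\}_{i,p}$ (each weakly equivalent to some $C_p$), one sees that $\bigcup_{i,p} D_{ip}=\bigcup_i B_i$ has the $r$-th-direction Liouville partition given by grouping $D_{ip}$ over $i$ for fixed $p$, i.e. $\{\bigcup_i D_{ip}\}_{p=1}^d$ is a Liouville partition of $\bigcup_i B_i$ in the $r$-th direction. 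Meanwhile, for each fixed $p$, the induction hypothesis applied to the Liouville partition $\{\AA_j\}_{j\in I_p}$ of $C_p$ and the $l$ multisets $\{D_{ip}\}_{i=1}^l$ (each weakly equivalent to $C_p$) gives that $\{\bigcup_i\BB_{ij}\}_{j\in I_p}$ is a Liouville partition of $\bigcup_i D_{ip}$ with $\bigcup_i\BB_{ij}\weq\AA_j\cup\cdots\cup\AA_j$ ($l$ times). Combining these two facts through the inductive clause of the definition of a Liouville partition yields that $(\bigcup_i\BB_{i1})\cup\cdots\cup(\bigcup_i\BB_{ik})$ is a Liouville partition of $\bigcup_i B_i$, and the weak equivalence statement is exactly what the induction delivered.

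The step I expect to be most delicate is bookkeeping in the inductive step: one must be careful that the ``group over $i$, fix $p$'' direction and the ``within fixed $p$, recurse on $j\in I_p$'' direction really do fit together as an instance of clause~(2) of the definition of a Liouville partition for the multiset $\bigcup_i B_i$. Concretely, the partition of the index set $\{(i,j): 1\le i\le l,\ 1\le j\le k\}$ — or rather of its image after taking unions over $i$, which is just $\{1,\dots,k\}$ — into the blocks $I_1,\dots,I_d$ must be checked to satisfy both sub-conditions simultaneously, and one must confirm that the weak-equivalence constants and permutation sequences produced by the two invocations of Proposition~\ref{weak_equivalence} (part (1) for the cross-$p$ direction, the induction hypothesis for the within-$p$ refinement) are genuinely compatible. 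This is routine but requires writing the permutations $\sigma_h$ as block-diagonal (or block-permutation) matrices indexed by $i$, with each block coming from the corresponding witness, so that a single constant $c$ (the max over all blocks and both invocations) works uniformly in $h$.
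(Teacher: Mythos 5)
Your proposal is correct and follows essentially the same route as the paper: part (a) is Proposition~\ref{weak_equivalence}(2), and part (b) reduces to the single-direction case (the paper further reduces to $k=l=2$) where the only thing to check is that the cross terms $\BB_{ij}$ versus $\BB_{i'j'}$ with $j\neq j'$ have the required non-Liouville, non-integer differences, which is Proposition~\ref{weak_equivalence}(1). Your version merely makes explicit the unwinding of the inductive definition of a Liouville partition that the paper compresses into the phrase ``it suffices to prove the assertion for Liouville partition in every direction $r$.''
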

\begin{proof}
    It suffices to prove the assertion for Liouville partition in every direction $r$. Then, the existence of such Liouville partitions of $B_1,\dots,B_l$ follows from Proposition \ref{weak_equivalence}. To prove the second assertion, we can reduce to the case where $k=l=2$. In this case, it suffices to show that both $\BB_{11}\cup\BB_{22}$ and $\BB_{12}\cup\BB_{21}$ are Liouville partitions in the $r$-th direction, which follows from Proposition \ref{weak_equivalence}.
\end{proof}

\begin{prop}\label{decomposition of semiconstant}
    Suppose $f:X\to \BBB_K^{1\dagger}$ is a finite \'etale morphism of degree $d$ of connected dagger spaces and $I\subset \RR_{>0}^n$ is a closed polysegment. Let $P$ be a free object in $\Robb{X}$ with exponent $A$ having Liouville partition $\AA_1\cup\cdots\cup \AA_k$ and let $(P',X',J)$ be a presentation of $P$ compatible with $f$. If there exists an element $A_\xi$ in $\ZZ_p^{r\times n}$ such that $P'|_{\xi\times A_K^n(J)}$ is $A_\xi$-semi-constant for every $\xi\in\mathcal{S}(X')$, then $P$ admits a decomposition $P=P_1\oplus\cdots\oplus P_k$ with $\AA_i$ being an exponent of $P_i$ for $1\leq i\leq k$. In particular, if $A$ is $(\NLD)$, then $P$ is $A$-semi-constant.
\end{prop}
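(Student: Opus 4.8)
The plan is to follow the template of the absolute $p$-adic Fuchs theorem: push $P$ forward to the unit disc along $f$, decompose the pushforward there by means of the generization Lemma \ref{generization}, and descend the decomposition by Proposition \ref{relative pushforward of decomposition}. Choose the presentation $(P',X',J)$ compatible with $f$ so that, after shrinking, $P'$ is free, $X'$ is connected, and $(P',X',J)$ satisfies the Robba condition; let $g':X'\to\BBB^1_\rho$ (with $\rho>1$) be the finite \'etale morphism extending $f$ (it is surjective, and $X'$, being finite \'etale over the reduced $\BBB^1_\rho$, is reduced), and let $\eta$ be the Gauss point of $\BBB^1_\rho$, which is its one-point Shilov boundary. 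Set $\wt P:=g'_{J*}P'$; by Proposition \ref{exponent_of_pushforward} (applied over $J$) it is a relative $\nabla$-module over $\BBB^1_\rho\times A^n_K(J)$ relative to $\BBB^1_\rho$, satisfying the Robba condition, of rank $rd$ and with exponent $g'_{J*}A:=\underbrace{A\cup\cdots\cup A}_{d}$, which has Liouville partition $g'_{J*}\AA_1\cup\cdots\cup g'_{J*}\AA_k$ by the observation following Proposition \ref{exponent_of_pushforward}.

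The first key point is that $\wt P|_{\eta\times A^n_K(J)}$ is semi-constant. Since pushforward commutes with restriction to a point of the base, $\wt P|_{\eta\times A^n_K(J)}=\bigoplus_{\xi\in g'^{-1}(\eta)}(\iota_\xi)_*\bigl(P'|_{\xi\times A^n_K(J)}\bigr)$, where $\iota_\xi$ is induced by the finite extension $\HH(\eta)\hookrightarrow\HH(\xi)$. A finite surjective morphism of equidimensional affinoid spaces pulls the Shilov boundary back to the Shilov boundary, so $g'^{-1}(\eta)=\cS(X')$ and hence each $P'|_{\xi\times A^n_K(J)}$ is $A_\xi$-semi-constant by hypothesis; moreover the pushforward of a $\Sigma$-constant module along a finite extension of complete nonarchimedean fields is again $\Sigma$-constant (write $\HH(\xi)_J=\HH(\xi)\cotimes_{\HH(\eta)}\HH(\eta)_J$ and use $\Omega^1_{\HH(\xi)/\HH(\eta)}=0$). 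Thus $\wt P|_{\eta\times A^n_K(J)}$ is semi-constant; write it in eigenspace normal form, its distinct eigenvalues being pairwise incongruent modulo $\ZZ^n$. Replacing each eigenvalue by a congruent representative, aligned coordinatewise (within each direction $i$, make all $i$-th eigenvalue coordinates that are mutually congruent mod $\ZZ$ literally equal) produces a finite set $\Sigma=\prod_{i=1}^n\Sigma_i\subset\ZP^n$ of eigenvalues that is $(\NID)$ — and, crucially, this requires no hypothesis on $A$. So $\wt P|_{\eta\times A^n_K(J)}$ is $\Sigma$-semi-constant with $\Sigma$ satisfying $(\NID)$.

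Now pass to an open polysegment $J^\circ$ with $I\subset J^\circ\subset J$ and $I$ in the interior of $J^\circ$, and apply Lemma \ref{generization} (its hypothesis that $J^\circ$ contain a $K$-free element being superfluous by the remark after it): for any $\gamma$ with $1\le\gamma<\rho$, $\wt P|_{\BBB^1_\gamma\times A^n_K(J^\circ)}$ is free and $\Sigma$-semi-constant. Decomposing it into its isotypic components gives $\wt P|_{\BBB^1_\gamma\times A^n_K(J^\circ)}=\bigoplus_{\xi}Q_\xi$ with $Q_\xi$ being $\{\xi\}$-constant, hence of exponent equal to $\xi$ with some multiplicity. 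Let $B$ be the eigenvalue multiset of $\wt P|_{\BBB^1_\gamma\times A^n_K(J^\circ)}$, a representative of its exponent class $g'_{J*}A$, so that $B\weq g'_{J*}A$; by Lemma \ref{union of weak equivalence multisets}, $B$ admits a Liouville partition $B=\BB_1\cup\cdots\cup\BB_k$ with $\BB_j$ weakly equivalent to $g'_{J*}\AA_j$. A single eigenvalue, taken with multiplicity, cannot be split between two parts of a Liouville partition (the difference $0$ is an integer), so this partition groups the $Q_\xi$ into $k$ summands $\wt P|_{\BBB^1_\gamma\times A^n_K(J^\circ)}=\wt P_1\oplus\cdots\oplus\wt P_k$ with $\wt P_j$ of exponent $\BB_j$, i.e.\ of exponent $g'_{J*}\AA_j$ (up to weak equivalence).

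Finally, set $X'_\gamma:=g'^{-1}(\BBB^1_\gamma)$, a connected reduced space finite \'etale over $\BBB^1_\gamma$, with induced morphism $h:X'_\gamma\times A^n_K(J^\circ)\to\BBB^1_\gamma\times A^n_K(J^\circ)$, so that $\wt P|_{\BBB^1_\gamma\times A^n_K(J^\circ)}=h_*\bigl(P'|_{X'_\gamma\times A^n_K(J^\circ)}\bigr)$. By Proposition \ref{relative pushforward of decomposition} the decomposition $\wt P_1\oplus\cdots\oplus\wt P_k$ descends to $P'|_{X'_\gamma\times A^n_K(J^\circ)}=P'_1\oplus\cdots\oplus P'_k$ with $P'_j$ of exponent $\AA_j$; restricting along $X^\dagger\times A^n_K(I)\subseteq X'_\gamma\times A^n_K(J^\circ)$ (note $X^\dagger=g'^{-1}(\BBB^{1\dagger})\subseteq X'_\gamma$ as $\gamma\ge 1$) gives $P=P_1\oplus\cdots\oplus P_k$ with $\AA_j$ an exponent of $P_j$. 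For the last assertion, $P'|_{X'_\gamma\times A^n_K(J^\circ)}$ is a direct summand of $h^*h_*\bigl(P'|_{X'_\gamma\times A^n_K(J^\circ)}\bigr)=h^*\bigl(\wt P|_{\BBB^1_\gamma\times A^n_K(J^\circ)}\bigr)$ (the diagonal is a connected component of $X'_\gamma\times_{\BBB^1_\gamma}X'_\gamma$, since $h$ is finite \'etale), and this pullback is $\Sigma$-semi-constant; a direct summand of a semi-constant module is semi-constant because each $\tpa$ acts semisimply, so $P$ is semi-constant. When $A$ is $(\NLD)$, the partition of $A$ into $\ZZ^n$-cosets is a Liouville partition, so the decomposition just constructed exhibits $P$ as $A$-semi-constant. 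The crux of the argument is the second paragraph — recognizing that the fibre $\wt P|_{\eta\times A^n_K(J)}$, a priori semi-constant with respect to an unrestricted eigenvalue set, can always be presented as $\Sigma$-semi-constant for some $(\NID)$ set $\Sigma$, which is exactly what lets Lemma \ref{generization} be applied with no $(\NID)$ hypothesis on $A$ — together with the weak-equivalence bookkeeping of the third paragraph needed to match the isotypic decomposition over $\BBB^1_\gamma$ with the prescribed Liouville partition of the exponent.
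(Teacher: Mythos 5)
Your proposal is correct and follows essentially the same route as the paper: push $P'$ forward along the extension $f'$ of $f$ to $\BBB^1_\rho$, observe via $f'^{-1}(\eta)=\cS(X')$ that the fibre of the pushforward over the Gauss point $\eta$ is semi-constant, apply Lemma \ref{generization} to propagate semi-constancy over a smaller disc, match the isotypic decomposition with the Liouville partition via Lemma \ref{union of weak equivalence multisets}, and descend through Proposition \ref{relative pushforward of decomposition}. Two local differences are worth recording: first, your explicit realignment of eigenvalues within $\ZZ^n$-cosets (using $M_\xi\cong M_{\xi+m}$) to manufacture an $(\NID)$ set $\Sigma$ before invoking Lemma \ref{generization} is a point the paper's proof passes over silently, and making it explicit is an improvement since the lemma genuinely requires $(\NID)$. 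Second, for the final assertion that $P$ is $A$-semi-constant when $A$ is $(\NLD)$, you argue via the split injection $P'\hookrightarrow h^*h_*P'$ (the diagonal component of $X'_\gamma\times_{\BBB^1_\gamma}X'_\gamma$) together with semisimplicity of the $\tpa$-actions, whereas the paper instead forms $V=\bigcap_i\ker(\tpa-b_i)$, identifies it simultaneously as an $\OO_{\BBB^1_\rho}$- and an $\OO_{X'}$-module, and concludes by faithfully flat descent; both work, and yours is arguably cleaner, though you should spell out that a $\nabla$-stable direct summand of a $\{\xi\}$-constant module is again $\{\xi\}$-constant (the projector commutes with all $\tpa$, hence has entries in $\OO_{X'}$), which is the same computation as in Lemma \ref{characterization of sigma semiconstancy}.
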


\begin{proof}
    Denote the extension of $f$ to $X'$ by $f':X'\to \BBB_{\rho}^1$ and denote the Shilov boundary of $\BBB_{\rho}^1$ by $\eta$. Then $(f'_{J*}P')|_{\eta\times A_K^n(J)}=\bigoplus_{\xi\in \cS(X')}f'_{J*}(P'|_{\xi\times A_K^n(J)})$. For each $\xi\in\cS(X')$, since $P'|_{\xi\times A_K^n(J)}$ is $A_\xi$-semi-constant, we can choose a basis $e_{\xi,1},\dots,e_{\xi,r}$ of it such that $\tpa$ acts on this basis via the diagonal matrix
    $G_{\xi,t_i}=\diag(A^i_{\xi,1},\dots,A^i_{\xi,r})$. Since $\HH(\xi)/\HH(\eta)$ is a finite separable extension, we denote its degree by $d_\xi$ and we can find $h_\xi\in\HH(\xi)$ such that $\HH(\xi)=\HH(\eta)[h_\xi]$. Then $\{h_\xi^\mu e_{\xi,\nu}\}_{\overset{1\leq \mu\leq d_\xi}{1\leq \nu\leq r}}$ is a basis of $f'_{J*}(P'|_{\xi\times A_K^n(J)})$ with $\tpa$ acting on this basis via 
    $$
    \begin{pmatrix*}
        G_{\xi,t_i}&\cdots& O\\
        \vdots &\ddots &\vdots \\
        O&\cdots&G_{\xi,t_i}
    \end{pmatrix*}.$$
    Write $\cS(X')=\{\xi_1,\dots,\xi_l\}$, set $_iG=(G_{{\xi_1},t_i},\dots,G_{{\xi_1},t_i}\dots,G_{{\xi_l},t_i}\dots,G_{{\xi_l},t_i})$ with $G_{{\xi_j},t_i}$ appear $d_{\xi_j}$ times and set $G=(_1G,\dots,{_nG})$. Then, $f'_{J*}P'|_{\eta\times A_K^n(J)}$ is $G$-semi-constant, $G$ represents the exponent of $f'_{J*}P'|_{\eta\times A_K^n(J)}$ (which is also the exponent of $f'_{J*}P'$) and hence weakly equivalent to $f'_{J*}A$. Thus we can obtain the corresponding Liouville partition $G_1\cup\cdots\cup G_k$ of $G$ from the Liouville partition $\AA_1,\dots,\AA_k$ of $A$ by Lemma \ref{union of weak equivalence multisets}.
    By Lemma \ref{generization}, $f'_{J*}P'$ is $G$-semi-constant and hence naturally admit a decomposition $f'_{J*}P'=P'_1\oplus\cdots\oplus P'_k$ with $P'_j$ having exponent $f'_{J*}\AA_j$. By Proposition \ref{relative pushforward of decomposition}, this is also a decomposition of $P'$ and hence restricts to a decomposition of $P$ in $\Robb{X}$. 
    
    Moreover, if $A$ is $(\NLD)$, the partition $\AA_1\cup\dots\cup\AA_k$ of $A$ into $\ZZ^n$-cosets is a Liouville partition. In this case, each $P'|_{\xi\times A_K^n(J)}$ is not only $A_\xi$-semi-constant but also $A$-semi-constant because $A_\xi$ and $A$ belong to the same $\ZZ^n$-coset. Then each $P'_i$ is $f'_{J*}\AA_i$-constant as an object in $\mathrm{Rob}_{\BBB^1_{\rho}\times A_K^n(J)/\BBB^1_\rho}$. Let $b=(b_1,\dots,b_n)\in\ZZ_p^n$ be an element representing $\AA_i$. If we write $V=\bigcap_{i=1}^n\ker(\tpa-b_i)$, this is simultaneously an $\OO_{\BBB^1_\rho}$-module and an $\OO_{X'}$-module. Since we have the following commutative diagram:
    \begin{equation*}
        \begin{tikzcd}
            V\otimes_{\OO_{X'}}\OO_{X'\times A_K^n(J)} \arrow{r}{i} \arrow[swap]{d}{\cong} & P \arrow{d}{=} \\%
            V\otimes_{\OO_{\BBB^1_\rho}}\OO_{\BBB^1_\rho\times A_K^n(J)}  \arrow{r}{\cong}& f_{J*}P,
        \end{tikzcd} 
    \end{equation*}
    we conclude that $i$ is an isomorphism. Take some Tate point $x$ of $A_K^n(I)$. Then $V\otimes_{\OO_{X'}}\OO_{X'\times x}\cong P|_{X'\times x}$ is locally free, from which we conclude that $V$ is locally free by faithfully flat descent. This implies that $P'_i$ is also $\AA_i$-constant as an object in $\mathrm{Rob}_{X'\times A_K^n(J)/X'}$. Hence $P$ is $A$-semi-constant.
\end{proof}
By working locally, the following is a consequence of above proposition:

\begin{cor}
    Suppose $X$ is a connected smooth affinoid dagger curve and $I\subset\RR_{>0}^n$ is a closed polysegment. Let $P$ be an object in $\Robb{X}$ with exponent $A$ having Liouville partition $\AA_1\cup\cdots\cup \AA_k$ and $(P',X',J)$ be a presentation of $P$. If for every point $x\in X'$, there exists $A_x\in\ZZ_p^{r\times n}$ such that $P|_{x\times A_K^n(J)}$ is $A_x$-semi-constant, then $P$ admits a unique decomposition $P_1\oplus\cdots\oplus P_k$ with $\AA_i$ being an exponent of $P_i$ for $1\leq i\leq k$.
\end{cor}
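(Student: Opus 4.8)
The plan is to deduce this from Proposition~\ref{decomposition of semiconstant} by localizing on $X$, after first reducing to the case of an algebraically closed base field. \emph{Step 1: reduction to $K=\calg K$.} I would base change along $K\to\calg K$ and set $\wh{P}:=P\dagotimes_K\calg K$, a relative $\nabla$-module over $X_{\calg K}$ satisfying the Robba condition whose exponent is still $A$ with the same Liouville partition $\AA_1\cup\cdots\cup\AA_k$, since exponents are insensitive to extension of the constant field. If $(P',X',J)$ is the given presentation, then $(P'\dagotimes_K\calg K,\,X'_{\calg K},\,J)$ presents $\wh P$, and for every $y\in X'_{\calg K}$ with image $x\in X'$ the fibre $\wh{P}|_{y\times A_{\calg K}^n(J)}$ is $A_x$-semi-constant, because a basis diagonalizing the $t_i\partial_{t_i}$ (with the same integral eigenvalue matrices) survives constant field extension and restriction to a sub-polysegment. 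Thus $\wh P$, restricted to any connected component of $X_{\calg K}$ (a connected smooth affinoid dagger curve over $\calg K$), again satisfies the hypotheses of the corollary; granting the case $K=\calg K$ we get a decomposition $\wh P=\bigoplus_{i=1}^k Q_i$ with $\AA_i$ an exponent of $Q_i$, and Proposition~\ref{Galois descent} descends it to the required decomposition of $P$, unique by Lemma~\ref{uniqueness}. So from now on I may assume $K=\calg K$.

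\emph{Step 2: local decompositions over $\calg K$.} Using Corollary~\ref{existence_of_finite_etale_morphism} (with $m=1$) together with the relative Quillen--Suslin Lemma~\ref{localQS}, I would choose a finite covering $\{J_\delta\}$ of $I$ by closed subpolysegments and a finite affinoid dagger covering $\{U_\lambda\}$ of $X$ by connected opens such that $P$ is free on each $U_\lambda\times A_K^n(J_\delta)$ and each $U_\lambda$ carries a finite \'etale morphism $f_\lambda\colon U_\lambda\to\BBB^{1\dagger}$. Here the order matters: one first shrinks to gain freeness from Lemma~\ref{localQS} and only afterwards shrinks again to recover a finite \'etale map to $\BBB^{1\dagger}$ from Corollary~\ref{existence_of_finite_etale_morphism}, using that freeness is preserved under further restriction; alternatively one observes that the decomposition part of Proposition~\ref{decomposition of semiconstant} uses only local freeness of $P$, so the freeness step can be skipped and $\{J_\delta\}$ replaced by $\{I\}$. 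By Remark~\ref{extension of finite etale morphisms}, $f_\lambda$ extends over a fringe space of $U_\lambda$, which yields a presentation $(P'_{\lambda\delta},U'_\lambda,J_\delta)$ of $P|_{U_\lambda\times A_K^n(J_\delta)}$ compatible with $f_\lambda$. The points of $\cS(U'_\lambda)$ are points of the topological space $X'$, so the hypothesis furnishes at each such $\xi$ an $A_\xi\in\ZZ_p^{r\times n}$ with $P'_{\lambda\delta}|_{\xi\times A_K^n(J_\delta)}$ being $A_\xi$-semi-constant; and since $U_\lambda$ is connected, $P|_{U_\lambda\times A_K^n(J_\delta)}$ has exponent $A$ with Liouville partition $\AA_1\cup\cdots\cup\AA_k$. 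Hence Proposition~\ref{decomposition of semiconstant} applies and gives $P|_{U_\lambda\times A_K^n(J_\delta)}=\bigoplus_{i=1}^k P_{(\lambda,\delta),i}$ with $\AA_i$ an exponent of $P_{(\lambda,\delta),i}$.

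\emph{Step 3: gluing, and the main difficulty.} On each overlap $(U_\lambda\cap U_{\lambda'})\times A_K^n(J_\delta\cap J_{\delta'})$ the two induced decompositions both realize $\AA_i$ as the exponent of their $i$-th summand, hence coincide by Lemma~\ref{uniqueness}; then, exactly as in the last paragraph of the proof of Proposition~\ref{relative pushforward of decomposition}, the summands $P_{(\lambda,\delta),i}$ glue through the descent exact sequence of sheaves to a global decomposition $P=\bigoplus_{i=1}^k P_i$ with $\AA_i$ an exponent of $P_i$, and uniqueness of this global decomposition is once more Lemma~\ref{uniqueness}. Since the substantive input is Proposition~\ref{decomposition of semiconstant}, I expect no serious conceptual obstacle; the care is purely organizational, namely simultaneously arranging on each member of the cover a finite \'etale map to $\BBB^{1\dagger}$, a compatible presentation, and (local) freeness, in the correct order of shrinkings, and verifying that the Robba condition, the exponent with its Liouville partition, and the pointwise semi-constancy hypothesis are all stable under the base change to $\calg K$, under restriction of the base, and under shrinking of the polysegment.
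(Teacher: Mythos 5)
Your argument follows the paper's proof essentially verbatim in Steps 2 and 3: cover $X\times A_K^n(I)$ using Lemma~\ref{localQS} and Corollary~\ref{existence_of_finite_etale_morphism}, apply Proposition~\ref{decomposition of semiconstant} on each piece, and glue via the uniqueness assertion of Lemma~\ref{uniqueness} and the sheaf exact sequence. Your Step 1 (base change to $\calg{K}$ followed by Proposition~\ref{Galois descent}) is an addition the paper does not make explicit, and it is in fact warranted since Corollary~\ref{existence_of_finite_etale_morphism} is only stated over an algebraically closed field; this makes your write-up slightly more careful than the paper's own proof.
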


\begin{proof}
    By Lemma \ref{localQS} and Corollary \ref{existence_of_finite_etale_morphism}, we can find an affinoid dagger covering $\{U_\lambda\}_{\lambda\in\Lambda}$ and a closed covering $\{J_\delta\}_{\delta\in\Delta}$ of $I$ with nonempty interior such that $P|_{U_\lambda\times A_K^n(J_\delta)}$ is free and that there exists a finite \'etale morphism $f:U_\lambda\to\BBB^{1\dagger}$ for every $\lambda\in\Lambda, \delta\in\Delta$. Then, by Proposition \ref{decomposition of semiconstant}, we have the decomposition $P|_{U_\lambda\times A_K^n(J_\delta)}=\bigoplus_{i=1}^k P_{(\lambda,\delta),i}$ of $\nabla$-module over $U_\lambda\times A_K^n(J_\delta)$ relative to $U_\lambda$. By the uniqueness assertion in Lemma \ref{uniqueness}, we have the canonical equality
    $$ P_{(\lambda,\delta),i}|_{(U_\lambda\cap U_{\lambda'})\times (A_K^n(J_\delta)\cap A_K^n(J_{\delta'}))}=P_{(\lambda',\delta'),i}|_{(U_\lambda\cap U_{\lambda'})\times (A_K^n(J_\delta)\cap A_K^n(J_{\delta'}))}, $$
    for $(\lambda,\delta),(\lambda',\delta')\in \Lambda\times\Delta$. Then, since we have the exact sequence of sheaves
    $$ 0\lra P\lra \prod_{(\lambda,\delta)\in \Lambda\times\Delta}P|_{U_\lambda\times A_K^n(J_\delta)}\rightrightarrows \prod_{(\lambda,\delta),(\lambda',\delta')\in \Lambda\times\Delta}P|_{(U_\lambda\cap U_{\lambda'})\times (A_K^n(J_\delta)\cap A_K^n(J_{\delta'}))},  $$
    if we define 
    $$P_i=\ker\left( \prod_{(\lambda,\delta)\in\Lambda\times\Delta}P_{(\lambda,\delta),i}\rightrightarrows \prod_{(\lambda,\delta),(\lambda',\delta')\in \Lambda\times\Delta}P_{\lambda,i}|_{(U_\lambda\cap U_{\lambda'})\times (A_K^n(J_\delta)\cap A_K^n(J_{\delta'}))} \right),$$
    we obtain the desired decomposition $P=P_1\oplus\cdots\oplus P_k$. The uniqueness follows from Lemma \ref{uniqueness}.
\end{proof}

\subsection{Generalized $p$-adic Fuchs theorem for special $\nabla$-modules}
In this subsection, We prove the generalized $p$-adic Fuchs theorem for objects in $\Rob{\BBB^{1,\dagger}}$, on which derivations from the base acts ``constantly''.

Let $X$ be $\BBB_{\rho}^1$, $\BBB_{\rho}^{1,\dagger}$ or $\Sp(F_\rho)$. For a polysegment $I\subset \RR_{>0}^n$ and $\xi\in K$, we define the $\nabla$-module $(Q_\xi,\nabla_{Q_\xi})$ on $X$ as the $\nabla$-module $(\OO_X,d+\xi  ds)$ and we call $\xi$ its eigenvalue. 

Now we introduce the notion of constancy on the base for $\nabla$-modules over $\ran{I}$:

\begin{defn}\label{semiconstant on the base}
    Suppose $I\subset \RR_{>0}^n$ is an open polysegment. Let $X$ be either $\BBB_{\rho}^1$, $\BBB_{\rho}^{1,\dagger}$ or $\Sp(F_\rho)$  and $P$ be a $\nabla$-module over $X\times A_K^n(I)$. 
    We say $P$ is $\xi$-constant on the base if it has the form $\pi_1^*(Q_\xi,\nabla_{Q_\xi})\otimes \pi_2^*(P_0,\nabla_{P_0})$ for some  $\xi\in {K}$ and $\nabla$-module $P_0$ on $A_K^n(I)$, where $\pi_1:X\times A_K^n(I)\to X$, $\pi_2:\ran{I}\to A_K^n(I)$ denote the projections.
\end{defn}

\begin{lem}\label{equvalence condition of constancy}
    Let $X,I,\xi$ be as in Definition \ref{semiconstant on the base}. Let $P$ be a $\nabla$-module over $\ran{I}$. Then the followings are equivalent:
    \begin{enumerate}
        \item $P$ is $\xi$-constant on the base.
        \item There exists a covering $\{I_\lambda\}_{\lambda\in\Lambda}$ of $I$ by closed subpolysegments such that $P|_{X\times A_K^n(I_\lambda)}$ is free, and such that there exists a basis of $P|_{X\times A_K^n(I_\lambda)}$ on which $\partial_s$ acts via multiplication by $\xi$.
    \end{enumerate}
\end{lem}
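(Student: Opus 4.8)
The plan is to prove the two implications separately, in close analogy with the proof of Lemma \ref{characterization of sigma semiconstancy}. The one structural input I will rely on is the elementary fact that for $X$ equal to $\BBB_\rho^1$, $\BBB_\rho^{1,\dagger}$ or $\Sp(F_\rho)$ the kernel of $\partial_s$ acting on $\Gamma(X,\OO_X)$ is exactly $K$ (immediate from the explicit description of these rings together with $\mathrm{char}\,K=0$). Since for any closed subpolysegment $J\subseteq I$ the derivation $\partial_s$ acts on $\Gamma(\ran{J},\OO)$ only through the $X$-factor, its kernel there is $K_{J,n}$, and the kernel of $\partial_s$ on $\mathrm{Mat}_{r\times r}(\Gamma(\ran{J},\OO))$ consists precisely of the matrices with entries in $K_{J,n}$.

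For $1\Rightarrow 2$: write $P=\pi_1^*(Q_\xi,\nabla_{Q_\xi})\otimes\pi_2^*(P_0,\nabla_{P_0})$. By Proposition \ref{QS}, every point in the interior of $I$ lies in the interior of a closed subpolysegment $J$ with $P_0|_{A_K^n(J)}$ free, and these $J$ cover $I$. Fixing a basis $f_1,\dots,f_r$ of $P_0|_{A_K^n(J)}$ and the standard generator $w$ of $Q_\xi$ (so $\partial_s(w)=\xi w$ because $\nabla_{Q_\xi}=d+\xi\,ds$), the elements $\pi_1^*(w)\otimes\pi_2^*(f_i)$ form a basis of $P|_{\ran{J}}$, and since $\partial_s$ annihilates each $\pi_2^*(f_i)$, the Leibniz rule shows $\partial_s$ acts on this basis by multiplication by $\xi$.

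For $2\Rightarrow 1$: given the covering $\{I_\lambda\}_{\lambda\in\Lambda}$ and bases $e_{\lambda,1},\dots,e_{\lambda,r}$ of $P|_{\ran{I_\lambda}}$ with $\partial_s(e_{\lambda,i})=\xi e_{\lambda,i}$, set $V_{I_\lambda}:=\ker(\partial_s-\xi:P|_{\ran{I_\lambda}}\to P|_{\ran{I_\lambda}})$. For a general section $\sum_i g_ie_{\lambda,i}$ one computes $\partial_s(\sum_i g_ie_{\lambda,i})=\sum_i(\partial_s(g_i)+\xi g_i)e_{\lambda,i}$, so by the input above $V_{I_\lambda}$ is exactly the free $K_{I_\lambda,n}$-module on $e_{\lambda,1},\dots,e_{\lambda,r}$. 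Since $\partial_s$ commutes with each $\partial_{t_i}$ (integrability of the absolute connection) and $\xi\in K$ is constant, every $\nabla(\partial_{t_i})$ preserves $V_{I_\lambda}$, so $V_{I_\lambda}$ carries an induced $\nabla$-module structure over $A_K^n(I_\lambda)$. On an overlap, applying $\partial_s$ to the change-of-basis identity $(e_{\lambda',i})=(e_{\lambda,i})H$ forces $\partial_s(H)=0$, hence $H$ has entries in $K_{I_\lambda\cap I_{\lambda'},n}$; the $V_{I_\lambda}$ therefore glue, compatibly with their connections, to a coherent locally free $\nabla$-module $P_0$ over $A_K^n(I)$. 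The natural $\OO_{\ran{I}}$-linear map $\pi_1^*(Q_\xi)\otimes\pi_2^*(P_0)\to P$ sending $w\otimes\pi_2^*(e_{\lambda,i})$ to $e_{\lambda,i}$ over each $\ran{I_\lambda}$ is an isomorphism of free modules locally, hence globally, and it intertwines the connections by construction; thus $P$ is $\xi$-constant on the base.

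The main obstacle is the gluing step in $2\Rightarrow 1$: one must check that the locally defined kernels $V_{I_\lambda}$ patch consistently as $\nabla$-modules — this is exactly where the $s$-independence of the transition matrices $H$ is used — and that the comparison map $\pi_1^*(Q_\xi)\otimes\pi_2^*(P_0)\to P$ is globally well defined and bijective; everything else is routine.
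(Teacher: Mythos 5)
Your proof is correct and follows essentially the same route as the paper: the forward direction is the same pullback-of-bases argument, and the reverse direction rests on the same two computations (the representation matrices of the $t_i\partial_{t_i}$ and the transition matrices $H$ are killed by $\partial_s$, hence have entries in $K_{I_\lambda}$ resp.\ $K_{I_\lambda\cap I_{\lambda'}}$), followed by the same gluing of the locally defined $K_{I_\lambda}$-lattices into $P_0$. Your packaging of $P_{0,\lambda}$ as $\ker(\nabla(\partial_s)-\xi)$ rather than as the span of the chosen basis is only a cosmetic variation, and the structural input you isolate (constants of $\Gamma(X,\OO_X)$ equal $K$) is exactly what the paper uses implicitly in the step $\partial_s(H)=0\Rightarrow H\in\GL_r(K_{I_\lambda\cap I_{\lambda'}})$.
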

\begin{proof}
    1 $\Longrightarrow $ 2:
        If $P$ is $\xi$-constant on the base, by definition, there exists a $\nabla$-module $P_0$ on $A_K^n(I)$ such that $P=\pi_1^*(Q_\xi)\otimes \pi_2^*(P_0)$. We can find a covering $\{I_\lambda\}_{\lambda\in\Lambda}$ of $I$ by closed subpolysegments such that $P_0|_{A_K^n(I_\lambda)}$ is free for all $\lambda\in\Lambda$ by Proposition \ref{QS}. It is obvious that $P|_{X\times A_K^n(I_\lambda)}$ is free. Choose a basis $e_1,\dots,e_r$ of $P_0|_{A_K^n(I_\lambda)}$ and let $v$ be a basis of $Q_\xi$, then $\pi^*_1(v)\otimes\pi^*_2(e_1),\dots,\pi^*_1(v)\otimes\pi^*_2(e_r)$ is the desired basis of $P|_{X\times A_K^n(I_\lambda)}$.

    2 $\Longrightarrow$ 1:
    If such a covering exists, for $\lambda\in\Lambda$, let $e_{\lambda,1},\dots,e_{\lambda,r}$ be the basis of $P|_{X\times A_K^n(I_\lambda)}$ described in 2 and let $G_{t_i}$ denote the representation matrix of $\tpa$ with respect to the basis $e_{\lambda,1},\dots,e_{\lambda,r}$. Then we have
    \begin{align*}
        (e_{\lambda,1},\dots,e_{\lambda,r})[\xi G_{t_i}+\spa(G_{t_i})]&=\spa( t_i\partial_{t_i}(e_{\lambda,1},\dots,e_{\lambda,r}))\\
        & =t_i\partial_{t_i}(\spa (e_{\lambda,1},\dots,e_{\lambda,r}))\\
        & = (e_{\lambda,1},\dots,e_{\lambda,r})\xi G_{t_i}.
    \end{align*}
    Thus we have $\spa(G_{t_i})=0$, which implies that $G_{t_j}\in\Mat_{r\times r}(K_{I_\lambda})$. For $\lambda,\lambda'\in\Lambda$ such that $I_\lambda\cap I_{\lambda'}\neq\emptyset$. Let $H\in\GL_r(\Gamma(X,\OO_X)_{I_\lambda\cap I_{\lambda'}})$ be the change-of-basis matrix such that 
    \begin{equation*}
        (\overline{e_{\lambda',1}},\dots,\overline{e_{\lambda',r}})=(\overline{e_{\lambda,1}},\dots,\overline{e_{\lambda,r}})H, \tag{*}
    \end{equation*}
    where $\overline{e_{\lambda,i}},\overline{e_{\lambda',i}}$ denote the restriction of  $e_{\lambda,i},e_{\lambda',i}$ on $X\times A_K^n(I_{\lambda}\cap I_{\lambda'})$. Then, by action of $\partial_s$ on both sides of (*), we compute that $\partial_s(H)=0$, that is, $H\in\GL_r(K_{I_\lambda\cap I_{\lambda'}})$. If we set $P_{0,\lambda}=\mathrm{span}_{K_{I_\lambda}}(e_{\lambda,1},\dots,e_{\lambda,r})$, we have 
    $$P_{0,\lambda}|_{A_K^n(I_\lambda)\cap A_K^n(I_{\lambda'})}\cong P_{0,\lambda'}|_{A_K^n(I_\lambda)\cap A_K^n(I_{\lambda'})},$$
    where the isomorphim is given via $H$. If we define 
    $$ P_0:=\ker\left( \prod_{\lambda\in\Lambda}P_{0,\lambda}\rightrightarrows\prod_{\lambda,\lambda'\in\Lambda}P_{0,\lambda}|_{A_K^n(I_\lambda)\cap A_K^n(I_{\lambda'})}\right), $$
    we immediately see that  $P=\pi^*_1(Q_\xi)\otimes \pi_2^*(P_0)$. 
\end{proof}

\begin{prop}\label{C on base}
    Suppose that $I=[\alpha,\beta]\subset \RR_{>0}^n$ is a closed polysegment, and $\gamma<\rho\in\RR_{>0}$. Let $P$ be a free $\nabla$-module over $\BBB^1_{\rho}\times A_K^n(I)$ and $\eta$ be the Shilov boundary of $\BBB^1_{\rho}$. If there exists a basis $e_1,\dots,e_r$ of $P|_{\eta\times A_K^n(I)}$ on which $\spa$ acts via a diagonal matrix $G_s\in\Mat_{r\times r}(K)$, then $e_1,\dots,e_r$ is also a basis of $P|_{\BBB^1_{\gamma}\times A_K^n(I)}$.
\end{prop}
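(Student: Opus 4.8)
The plan is to adapt the proof of Lemma~\ref{generization}: I will express the given basis in terms of a global basis of $P$, apply the $p$-adic Birkhoff theorem (Theorem~\ref{decomposition of matrix}) to the transition matrix, and then exploit the fact that $G_s$ is a \emph{constant} diagonal matrix — hence commutes with every power of $s$ — to force the resulting factorization to be trivial.

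First I would choose a basis $e'_1,\dots,e'_r$ of $P$ over the sections of $\BBB^1_{\rho}\times A_K^n(I)$, let $G'_s$ be the matrix through which $\spa$ acts on $e'_1,\dots,e'_r$, and let $H\in\GL_r(\HH(\eta)_I)$ be the transition matrix with $(e_1,\dots,e_r)H=(e'_1,\dots,e'_r)$; differentiating gives $G'_s=H^{-1}G_sH+H^{-1}\spa(H)$. Reducing, by an argument analogous to the remark following Lemma~\ref{generization} (descent to a subfield $K_0\subseteq K$ with $\log\sqrt{|K_0^\times|}$ of at most countable dimension, in the style of Lemma~\ref{descent to smaller fields}), to the case where $I$ contains a $K$-free element $\rho'$, I regard $H$ as a matrix over the field $F_{\underline{\rho}}=F_{\rho'}(s)_\rho$, where $\underline{\rho}=(\rho,\rho')$ and $F_{\rho'}=K\la\rho'/t,t/\rho'\ra$, and apply Theorem~\ref{decomposition of matrix} to obtain $H=s^NLM$ with $N=\diag(n_1,\dots,n_r)$ integral, $L\in\GL_r(F_{\rho'}\la\rho/s\ra)$ with $L(\infty)=I$, and $M\in\GL_r(F_{\rho'}[[s/\rho]]_\an)$. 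Since $G_s$ is diagonal it commutes with $s^{\pm N}$, so substituting $H=s^NLM$ into the relation above and rearranging yields
\[
MG'_sM^{-1}-\spa(M)M^{-1}=L^{-1}G_sL+s^{-1}L^{-1}NL+L^{-1}\spa(L),
\]
whose left-hand side lies in $\Mat_{r\times r}(F_{\rho'}[[s/\rho]]_\an)$ and whose right-hand side lies in $\Mat_{r\times r}(F_{\rho'}\la\rho/s\ra)$; by Lemma~\ref{intersection is a field} both sides are equal to a matrix over $F_{\rho'}$, which by comparison of constant terms in $s$ must be $G_s$. Hence $G_sL+s^{-1}NL+\spa(L)=LG_s$.

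Now I would expand $L=\sum_{l\le 0}L_ls^l$ with $L_0=I$. The coefficient of $s^{-1}$ in the last identity reads $[L_{-1},G_s]=N$; since any commutator with the diagonal matrix $G_s$ has vanishing diagonal while $N$ is diagonal, this forces $N=0$. With $N=0$ the coefficient of $s^m$ for $m\le -1$ becomes $[L_m,G_s]=(m+1)L_{m+1}$. Because $[X,G_s]_{ij}=(g_j-g_i)X_{ij}$ for any $X$ (with $g_1,\dots,g_r$ the diagonal of $G_s$), the kernel and the image of the map $X\mapsto[X,G_s]$ meet only in $0$; the relations above show that each $L_{-k}$ ($k\ge 1$) both commutes with $G_s$ (using $L_{-(k-1)}=0$ in the degree $-k$ equation, the base case $[L_{-1},G_s]=0$ being automatic) and lies in the image of $X\mapsto[X,G_s]$ (from the degree $-(k+1)$ equation), so $L_{-k}=0$ by induction. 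Thus $L=I$ and $H=M$, so $(e_1,\dots,e_r)=(e'_1,\dots,e'_r)M^{-1}$; and exactly as in the proof of Lemma~\ref{generization} (via Lemma~\ref{intersection}), $M$ has entries in $K[[s/\rho]]_\an\la\alpha/t,t/\beta\ra\subseteq K\la\gamma^{-1}s\ra\la\alpha/t,t/\beta\ra$ and is invertible there, whence $e_1,\dots,e_r$ is a basis of $P|_{\BBB^1_{\gamma}\times A_K^n(I)}$.

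The main obstacle is the step passing from Theorem~\ref{decomposition of matrix} to the triviality of the factorization: one must extract, from the hypothesis that $G_s$ is a constant diagonal matrix, first that $N=0$ via the degree $-1$ commutation relation, and then, by the degreewise induction, that $L=I$. The remaining ingredients — computing the transformed connection matrix, splitting the resulting identity through Lemma~\ref{intersection is a field}, and descending $M$ to $K$-coefficients — are routine and proceed just as in Lemma~\ref{generization}.
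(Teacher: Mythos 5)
Your proposal is correct and follows essentially the same route as the paper's proof: express the transition matrix $H$ over $F_{\rho'}(s)_\rho$ after descending to a subfield with countable-dimensional value group, factor it via Theorem \ref{decomposition of matrix}, use Lemma \ref{intersection is a field} to identify both sides of the rearranged connection identity with the constant matrix $G_s$, kill $N$ and $L$ degree by degree, and conclude via Lemma \ref{intersection}. The only (cosmetic) difference is that where the paper invokes Lemma \ref{C=0} to deduce $N=0$ and $L_k=0$, you argue directly that the kernel and image of $X\mapsto[X,G_s]$ intersect trivially — an equivalent piece of linear algebra.
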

\begin{proof}
   
   Let $e'_1,\dots,e'_r$ be a basis of $P$ and suppose that $\spa$ act on this basis via a matrix $G'_{s}\in\mathrm{Mat}_{r\times r}(K\langle\rho^{-1}s\rangle\la\alpha/t,t/\beta\ra)$. Let $H\in\GL_{r}(K(s)_{\rho}\la\alpha/t,t/\beta\ra)$ be the change-of-basis matrix such that 
    $$(e_1,\dots,e_r)H=(e'_1,\dots,e'_r).$$
    Firstly we assume that $I$ contains some $K$-free tuple $\rho'$. Then by an easy computation we have 
    \begin{equation}
        G'_{s}=H^{-1}G_{s}H+H^{-1}\spa(H).\tag{*}
    \end{equation}
    Set $F=K(t)_{\rho'}$. Then we can view $H$ as an invertible matrix with entries in $F(s)_{{\rho}}$.
    By Theorem \ref{decomposition of matrix}, there exists a triple $(L,M,N)$ with $L\in \GL_r(F\la\rho/s\ra)$, $M\in\GL_r(F[[s/\rho]]_\an) $ and $N$ a diagonal matrix with entries in $\ZZ$ such that 
    $$H=s^NLM, \text{ and } L(\infty)=I_r.$$
    Then (*) becomes
    \begin{align*}
        G'_{s} &= M^{-1}L^{-1}s^{-N}G_{s}s^NL^{}M^{}+M^{-1}L^{-1}s^{-N}\spa(s^NLM)\\
        & = M^{-1}L^{-1}G_{s}L^{}M^{}+s^{-1}M^{-1}L^{-1}NLM+M^{-1}L^{-1}\spa(L)M+M^{-1}\spa(M),
    \end{align*}
    because $G_s$ is a diagonal matrix. This implies that
    \begin{equation}
        L^{-1}G_{s}L+L^{-1}\spa(L)+s^{-1}L^{-1}NL=MG'_{s}M^{-1}-\spa(M)M^{-1}. \tag{**}
    \end{equation} 
    Since $L\in \GL_r(F\la\rho/s\ra)$ and $G_{s}$ is constant, left-hand-side of (**) is a matrix with coefficients in $F\la\rho/s\ra$. Moreover, since $M\in\GL_r(F[[s/\rho]]_\an)$ and $G'_{s}\in\mathrm{Mat}_{r\times r}(K\langle \rho^{-1}s\rangle\la \alpha/t,t/\beta\ra)\subset \Mat_{r\times r}(F\la s/\rho\ra)$, right hand side of (**) is a matrix with coefficients in $F[[s/\rho]]_\an$ and so both sides of (**), in particular $ L^{-1}G_{s}L+L^{-1}\spa(L)+s^{-1}L^{-1}NL$, is a matrix with entries in $F$. Namely, we obtain the following equality:
    $$ L^{-1}G_{s}L+s^{-1}L^{-1}NL+L^{-1}\spa(L)=A$$
    where $A\in\Mat_{r\times r}(F)$. Write $L=\sum_{k=-\infty}^0L_k s^k$.
    Since $L_0=I$, we conclude that $A=G_{s}$. By comparing coefficients of $s^k$, we obtain equalities $G_{s}L_k+(N+(k+1)I)L_{k+1}=L_kG_{s}$ for $k\leq -1$.
    When $k=-1$, we have $G_{s}L_{-1}+N=L_{-1}G_{s}$. By Lemma \ref{C=0}, we have $N=0$. Thus above equalities becomes 
    $$ G_{s}L_k+(k+1)L_{k+1}=L_kG_{s},\ \ k\leq -1 .$$
    Again by Lemma \ref{C=0}, we conclude that $L_k=0$ for $k\leq -1$ and so $L=L_0=I$. Thus $H=M\in \GL_r(F[[s/\rho]]_\an)\cap \GL_{r}(K(s)_{\rho}\la\alpha/t,t/\beta\ra)=\GL_{r}(K[[s/\rho]]_{\an}\la\alpha/t,t/\beta\ra)\subset \GL_r(K\la \gamma^{-1}s\ra\la\alpha/t,t/\beta\ra)$ by Lemma \ref{intersection} and hence the conclusion. 
    
    In general, we argue as in the proof of Lemma \ref{descent to smaller fields}. Let $G'_{t_i}\in\break\Mat_{r\times r}(K\la\rho^{-1}s\ra\la\alpha/t,t/\beta\ra)$ be the representation matrix of $\tpa$ with respect to the basis $e'_1,\dots,e'_r$. 
    If we write $H=\sum_{j\in\ZZ^n}H_jt^j$ with $H_j\in\Mat_{r\times r}(K(s)_\rho)$ and write $H_j=\lim_{i\to\infty} H_{ij}$ with $H_{ij}\in\Mat_{r\times r}(K(s))$. Then, by setting
    \begin{align*}
        K_0&:=\QQ_p(G'_s,G'_{t_1},\dots,G'_{t_n},\{H_{ij}\}_{i\geq 1,j\in\ZZ^n})^{\wedge} \text{ and} \\
        P_0&:=\mathrm{span}_{K_0\la\rho^{-1}s\ra\la\alpha/t,t/\beta\ra}(e'_1,\dots,e'_r)
    \end{align*}
    with $\partial_s$ acts on $P_0$ via $G'_s$ and $\tpa$ acts on $P_0$ via $G'_{t_i}$, we reduce to the case where such a $K_0$-free element exists since the $\QQ$-vector space $\log \sqrt{|K_0^\times|}$ has at most countable dimension.
\end{proof}

\begin{cor}\label{constancy}
    Let $I\subset \RR_{>0}^n$ be a polysegment and $P$ be a $\nabla$-module over $\BBB^{1\dagger}\times A_K^n(I)$. Suppose that there exists a presentation $(P',\BBB^{1}_{\rho},J)$ of $P$ such that $P'|_{\eta\times A_K^n(J)}$ is $\xi$-constant on the base for some $\xi\in {K}$, where $\eta$ is the Shilov boundary of $\BBB^1_\rho$. Then $P$ is $\xi$-constant on the base.
\end{cor}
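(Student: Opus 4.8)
\textbf{Proof plan for Corollary \ref{constancy}.}

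The plan is to reduce the statement about the dagger space $\BBB^{1\dagger}$ to the affinoid statement already proved in Proposition \ref{C on base}, and then to repackage the conclusion using the characterization of $\xi$-constancy on the base given by Lemma \ref{equvalence condition of constancy}. Since $P'|_{\eta\times A_K^n(J)}$ is $\xi$-constant on the base, by Lemma \ref{equvalence condition of constancy} there is a covering of $J$ by closed subpolysegments $\{J_\lambda\}$ so that on each $\eta\times A_K^n(J_\lambda)$ the module $P'|_{\eta\times A_K^n(J_\lambda)}$ is free with a basis on which $\partial_s$ acts via multiplication by $\xi$; in particular, taking $G_s=\xi I_r$, the diagonal-matrix hypothesis of Proposition \ref{C on base} is satisfied.

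First I would fix any $\gamma<\rho$ with $\gamma>1$ (this is possible since $\rho>1$), so that $\BBB^{1}_\gamma$ is a fringe space sitting between $\BBB^{1\dagger}$ and $\BBB^1_\rho$, and $P'|_{\BBB^1_\gamma\times A_K^n(J)}$ is again a presentation of $P$. For each $\lambda$, shrinking $J$ to $J_\lambda$ and applying Proposition \ref{C on base} (with $I=J_\lambda$, radius $\gamma$, matrix $G_s=\xi I_r$), the basis $e_1,\dots,e_r$ of $P'|_{\eta\times A_K^n(J_\lambda)}$ on which $\partial_s$ acts by $\xi$ extends to a basis of $P'|_{\BBB^1_\gamma\times A_K^n(J_\lambda)}$. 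On this extended basis $\partial_s$ still acts by $\xi I_r$: indeed the representation matrix of $\partial_s$ on $\BBB^1_\gamma\times A_K^n(J_\lambda)$ restricts to $\xi I_r$ on $\eta\times A_K^n(J_\lambda)$, and by the description of the Shilov boundary of $\BBB^1_\gamma$ (a single point) together with the fact that an element of $K\langle\gamma^{-1}s\rangle\langle\alpha/t,t/\beta\rangle$ vanishing at the Gauss point of $\BBB^1_\gamma$ (after scalar extension, along the lines of Lemma \ref{zero module}) is forced to be zero once one knows it vanishes identically on $\eta\times A_K^n(J_\lambda)$; more directly, the matrix identity defining the action extends by continuity/density since $K\langle\gamma^{-1}s\rangle$ injects into $\mathscr{H}(\eta)$. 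Hence condition (2) of Lemma \ref{equvalence condition of constancy} holds for $P'|_{\BBB^1_\gamma\times A_K^n(J)}$ with the covering $\{J_\lambda\}$, so $P'|_{\BBB^1_\gamma\times A_K^n(J)}$ is $\xi$-constant on the base; restricting along $\BBB^{1\dagger}\times A_K^n(I)\hookrightarrow\BBB^1_\gamma\times A_K^n(J)$ and noting that $\xi$-constancy on the base is preserved by this restriction (the pullback of $\pi_1^*(Q_\xi)\otimes\pi_2^*(P_0)$ is again of this form), we conclude that $P$ is $\xi$-constant on the base.

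The main obstacle I anticipate is the bookkeeping at the gluing stage: Proposition \ref{C on base} is stated only for \emph{closed} $I$ and \emph{free} $P$, so one must first localize over the covering $\{J_\lambda\}$ of $J$ before invoking it, then check that the locally produced bases are compatible on overlaps (which follows because the change-of-basis matrices commute with $\partial_s$ and hence, by the argument in the proof of Lemma \ref{equvalence condition of constancy}, have entries independent of $s$), and finally glue via the kernel-of-a-pair-of-arrows presentation exactly as in Lemma \ref{equvalence condition of constancy}. A secondary, more cosmetic point is to confirm that the presentation $(P',\BBB^1_\rho,J)$ can be replaced by $(P'|_{\BBB^1_\gamma\times A_K^n(J)},\BBB^1_\gamma,J)$ without loss of generality; this is immediate from the definition of a presentation and the fact that $1<\gamma<\rho$.
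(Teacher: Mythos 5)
Your proposal is correct and follows essentially the same route as the paper: apply Lemma \ref{equvalence condition of constancy} at the Shilov point $\eta$ to get local free bases on which $\partial_s$ acts by $\xi$, extend these bases over a smaller disc via Proposition \ref{C on base} (with $G_s=\xi I_r$), and apply Lemma \ref{equvalence condition of constancy} again for $X=\BBB^{1\dagger}$ to conclude. The extra bookkeeping you flag (refining the covering so that $P'$ is free over each $\BBB^1_\rho\times A_K^n(J_\lambda)$ before invoking Proposition \ref{C on base}, and letting the lemma's gluing argument handle the overlaps) is exactly what the paper's terse proof implicitly relies on.
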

\begin{proof}
    Since $P'|_{\eta\times A_K^n(J)}$ is $\xi$-constant on the base, we firstly apply Lemma \ref{equvalence condition of constancy} for $X=\{\eta\}$. Next apply Proposition \ref{C on base} and then again apply Lemma \ref{equvalence condition of constancy} for the case $X=\BBB^{1\dagger}$, we obtain the desired conclusion.
\end{proof}
\begin{cor}
    Let $I\subset \RR_{>0}^n$ be a polysegment and $P$ be a $\nabla$-module over $\BBB^{1,\dagger}\times A_K^n(I)$ satisfying the Robba condition having exponent $A$ with Liouville partition $A=\AA_1\cup\dots\cup \AA_k$. Suppose that there exists a presentation $(P',\BBB_{\rho}^1,J)$ of $P$ such that $P'|_{\eta\times A_K^n(J)}$ is $\xi$-constant on the base for some $\xi\in\overline{K}$, where $\eta$ is the Shilov boundary of $\BBB_{\rho}^1$. Then $P$ admits a unique direct sum decomposition $P=P_1\oplus\cdots\oplus P_k$ with $\AA_i$ being an exponent of $P_i$ for $1\leq i\leq k$.
\end{cor}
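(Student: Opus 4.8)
The plan is to reduce the statement, via Corollary~\ref{constancy}, to the generalized $p$-adic Fuchs theorem over polyannuli without a base (Theorem~\ref{decom_all}), and then to transport the resulting decomposition back up along the structural tensor decomposition of $P$.

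First I would dispose of the field of definition of $\xi$. If $\xi\notin K$, put $L:=K(\xi)$, a finite extension of $K$, and base change: $P_L:=P\dagotimes_K L$ is a $\nabla$-module over $\BBB_L^{1,\dagger}\times A_L^n(I)$ which still satisfies the Robba condition and still has exponent $A$ with Liouville partition $\AA_1\cup\cdots\cup\AA_k$, since neither the intrinsic radius of convergence nor the exponent changes under extension of the constant field; moreover the presentation $(P',\BBB_\rho^1,J)$ base changes to a presentation $(P'_L,\BBB_{L,\rho}^1,J)$ of $P_L$ whose restriction to $\eta_L\times A_L^n(J)$ is $\xi$-constant on the base, now with $\xi\in L$. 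Once the decomposition of $P_L$ is produced over $L$, base changing it to $\overline{L}=\overline{K}$ and invoking the Galois descent of Proposition~\ref{Galois descent} (in its form for absolute $\nabla$-modules) descends it to $P$. Hence I may assume $\xi\in K$.

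By Corollary~\ref{constancy}, $P$ is then $\xi$-constant on the base: $P=\pi_1^*(Q_\xi,\nabla_{Q_\xi})\otimes\pi_2^*(P_0,\nabla_{P_0})$ for a $\nabla$-module $P_0$ over $A_K^n(I)$, where $\pi_1:\BBB^{1,\dagger}\times A_K^n(I)\to\BBB^{1,\dagger}$ and $\pi_2:\BBB^{1,\dagger}\times A_K^n(I)\to A_K^n(I)$ are the projections. Since $\pi_1^*(Q_\xi)$ carries the trivial relative connection (each of $t_1\partial_{t_1},\dots,t_n\partial_{t_n}$ acts by zero on it), for every $x\in\BBB^{1,\dagger}$ the restriction $P|_{x\times A_K^n(I)}$ is isomorphic, as a $\nabla$-module over $A_K^n(I)$ over $\HH(x)$, to $(P_0)_{\HH(x)}:=P_0\otimes_{K_I}\HH(x)_I$. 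As the Robba condition and the exponent are insensitive to extension of the constant field, $P_0$ therefore satisfies the Robba condition and has exponent $A$, hence admits the Liouville partition $\AA_1\cup\cdots\cup\AA_k$ of $A$. Theorem~\ref{decom_all} now gives a decomposition $P_0=P_{0,1}\oplus\cdots\oplus P_{0,k}$ with $\AA_i$ an exponent of $P_{0,i}$. Putting $P_i:=\pi_1^*(Q_\xi)\otimes\pi_2^*(P_{0,i})$ yields a direct sum decomposition $P=P_1\oplus\cdots\oplus P_k$; each $P_i$ is again an object of $\Rob{\BBB^{1,\dagger}}$ (its pointwise restrictions are the $(P_{0,i})_{\HH(x)}$, which satisfy the Robba condition), and these same restrictions show that $\AA_i$ is an exponent of $P_i$.

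Uniqueness of this decomposition is immediate from Lemma~\ref{nabla uniqueness} applied to the smooth connected dagger space $\BBB^{1,\dagger}$. The points that require care are purely bookkeeping: one must check that the tensor factor $\pi_1^*(Q_\xi)$, which is where the entire base-direction connection lives, is genuinely invisible to the $\mupi^n$-action and hence to the exponent, both for $P$ and for each summand $P_i$; and, should $I$ fail to be open, one passes to the interior of $I$ before applying Theorem~\ref{decom_all} and glues the pieces back over $I$ using the uniqueness of Lemma~\ref{nabla uniqueness}, exactly as in the last paragraph of the proof of Proposition~\ref{relative pushforward of decomposition}. I do not anticipate any real obstacle beyond these, since the substantive content is already packaged into Corollary~\ref{constancy} and Theorem~\ref{decom_all}.
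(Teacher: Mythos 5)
Your proposal is correct and follows essentially the same route as the paper: apply Corollary~\ref{constancy} to write $P=\pi_1^*Q_\xi\otimes\pi_2^*P_0$, restrict to a fiber to see that $P_0$ satisfies the Robba condition with exponent $A$, decompose $P_0$ by Theorem~\ref{decom_all}, pull the decomposition back, and get uniqueness from Lemma~\ref{nabla uniqueness}. Your preliminary reduction to $\xi\in K$ via the finite extension $K(\xi)$ and Galois descent is a sensible extra step (the paper's Definition~\ref{semiconstant on the base} only defines $\xi$-constancy for $\xi\in K$ while the corollary allows $\xi\in\overline K$), but it does not change the substance of the argument.
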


\begin{proof}
    By Corollary \ref{constancy}, $P$ is $\xi$-constant on the base. Thus there exists a $\nabla$-module $P_0$ over $A_K^n(I)$ such that $P=\pi_1^*Q_\xi\otimes \pi_2^* P_0$ where $\pi_1,\pi_2$ are as in Definition \ref{semiconstant on the base}. By restricting $P$ to the fiber $x\times A_K^n(I)$ at a $K$-valued point, we see that $P_0$ satisfies the Robba condition and has exponent $A$. By Theorem \ref{decom_all}, $P_0$ admits a unique decomposition $P_{01}\oplus\cdots\oplus P_{0k}$ with $\AA_i$ being an exponent of $P_{0i}$. Thus $P=\bigoplus_{i=1}^k \pi_1^*Q_\xi\otimes \pi_2^* P_{0i} $ give rise to the desired decomposition. Moreover, uniqueness follows from Lemma \ref{nabla uniqueness}.
\end{proof}

\begin{appendix}
    \renewcommand{\thesection}{\Alph{section}} 
    \makeatletter
    \def\@seccntformat#1{\@ifundefined{#1@cntformat}%
       {\csname the#1\endcsname.\hspace{0.5em}}
       {\csname #1@cntformat\endcsname}}
    \newcommand\section@cntformat{\appendixname\ \thesection.\hspace{0.5em}}
    \makeatother
\section{Exponents of $\nabla$-modules over $p$-adic (poly)annuli}
The $p$-adic Fuchs theorem was first established by Christol and Mebkhout in \cite{CM2}. Christol and Mebkhout's original proof was complicated due to application of Frobenius antecedent developed in \cite{christol1994modules}. However, the theorem was later simplified by Dwork in \cite{dwork1997exponents}, who provided a more accessible proof that retained the theorem's essential insights. The proof that is mostly adopted nowadays and in this paper is that of Dwork's. It is suggested by Kedlaya in \cite{Ked1} Notes of Chapter 13 that there are still two questions remain unsettled, both of which are related to exponents. We will verify them in the following two subsections.

\subsection{Exponents form a weak equivalence class}
Let $I$ be an open polysegment, $P$ be a $\nabla$-module over $A^n_K(I)$ satisfying the Robba condition in the sense of Definition \ref{absoluteRobba} and $A$ be an exponent of $P$.
It is well-known and written in a lot of literatures that if $B$ is another exponent of $P$ then $A$ is weakly equivalent to $B$. Its converse is also true when $A$ has non-Liouville differences (see \cite{Ked1} Proposition 13.4.5 or \cite{dwork1997exponents} Theorem 5.2), however, it is not known whether the following is also true in general (cf. \cite{dwork1997exponents} Remark 4.5):
\begin{conj}\label{weak_equivalence_is_exponent}
With the teminology as in Definition \ref{exp}, if $B$ is a multisubset of $\ZZ_p^n$ weakly equivalent to $A$, then $B$ is also an exponent of $P$.
\end{conj}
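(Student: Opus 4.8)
The plan is to turn a sequence of matrices witnessing that $A$ is an exponent of $P$ directly into one witnessing that $B$ is an exponent, by right multiplication by explicit \emph{monomial matrices} read off from the weak equivalence $A\weq B$; no reduction to rank one or to a single modified entry, and no hypothesis on the arithmetic of $A$, will be needed. First I would fix the data. Since $I$ is open and $A$ is an exponent of $P$, there is a closed polysegment $J=[\alpha,\beta]\subseteq I$ with $P|_{A_K^n(J)}$ free, a basis $\ee$ of it, and a sequence $\{S_{k,A}\}_{k\geq1}\subseteq\Mat_{r\times r}(K_J)$ satisfying conditions 1--3 of Definition \ref{exp}; it suffices to produce $\{S_{k,B}\}_{k\geq1}\subseteq\Mat_{r\times r}(K_J)$ satisfying the same three conditions for $B$, since then $B$ is an exponent of $P|_{A_K^n(J)}$, hence of $P$. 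By Definition \ref{equivalence} there are $c>0$ and permutations $\sigma_k$ of $\{1,\dots,r\}$ with $\la A^i_{\sigma_k(j)}-B^i_j\ra_k\leq ck$ for all $1\leq i\leq n$, $1\leq j\leq r$. From this I extract, for each $k,i,j$, an integer $m^i_{k,j}$ with $|m^i_{k,j}|\leq ck$ and $m^i_{k,j}\equiv B^i_j-A^i_{\sigma_k(j)}\pmod{p^k}$ (taking $m^i_{k,j}=0$ if the difference already lies in $p^k\ZP$); this is exactly what $\la\cdot\ra_k$ encodes. Writing $C_i:=\max(\alpha_i^{-1},\beta_i)$, I also choose $c_{k,j}\in K^\times$ with $\prod_iC_i^{|m^i_{k,j}|}\leq|c_{k,j}|\leq|p|_K^{-1}\prod_iC_i^{|m^i_{k,j}|}$, which is possible because $|K^\times|\supseteq|p|_K^{\ZZ}$.

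Next I would set $M_k\in\GL_r(K_J)$ to be the monomial matrix with $(M_k)_{lj}=\delta_{l,\sigma_k(j)}\cdot c_{k,j}\prod_it_i^{m^i_{k,j}}$, and define $S_{k,B}:=S_{k,A}M_k$, then verify the three conditions. Condition 1 holds exactly: since $\zeta^*$ fixes $K$ and sends $t_i\mapsto\zeta_it_i$ (this action and the convergence of the defining series rely on the Robba condition of Definition \ref{absoluteRobba}), the $j$-th entry of the tuple $(\ee)S_{k,B}$ equals $v_{k,A,\sigma_k(j)}\cdot c_{k,j}\prod_it_i^{m^i_{k,j}}$ and is therefore an eigenvector of every $\zeta^*$ with $\zeta\in\mupk^n$, with eigenvalue $\prod_i\zeta_i^{A^i_{\sigma_k(j)}+m^i_{k,j}}=\prod_i\zeta_i^{B^i_j}$, the last equality because $A^i_{\sigma_k(j)}+m^i_{k,j}\equiv B^i_j\pmod{p^k}$ and $\zeta_i\in\mupk$; hence $\zeta^*((\ee)S_{k,B})=(\ee)S_{k,B}\,\zeta^B$. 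Condition 2 follows from $|t_i^{m^i_{k,j}}|_J\leq C_i^{|m^i_{k,j}|}$, which forces $|M_k|_J$ to grow at most exponentially in $k$ (using $|m^i_{k,j}|\leq ck$); combined with $|S_{k,A}|_J\leq p^{lk}$ and submultiplicativity of the $\rho$-Gauss norm on matrices, this gives $|S_{k,B}|_J\leq p^{l'k}$ for a uniform $l'$.

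The only place the scalar factors $c_{k,j}$ are needed — and, I expect, the reason this statement was flagged as delicate — is condition 3. One has $\det S_{k,B}=(\det S_{k,A})(\det M_k)$ with $\det M_k=\pm\prod_j\bigl(c_{k,j}\prod_it_i^{m^i_{k,j}}\bigr)$, and a bare monomial $\prod_it_i^{m^i_{k,j}}$ can have $\rho$-Gauss norm far below $1$; but on $J$ one has $\rho_i^{m^i_{k,j}}\geq C_i^{-|m^i_{k,j}|}$, so the lower bound $|c_{k,j}|\geq\prod_iC_i^{|m^i_{k,j}|}$ makes every factor of $\det M_k$ have $\rho$-Gauss norm $\geq1$, whence $|\det M_k|_\rho\geq1$ and, together with $|\det S_{k,A}|_\rho\geq1$, also $|\det S_{k,B}|_\rho\geq1$ for all $\rho\in J$. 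The conceptual content of the whole argument is that the Laurent monomials $t^m$ $(m\in\ZZ^n)$ are exactly the rank-one objects of $K_J$ carrying the characters of $\mupi^n$, and weak equivalence says precisely that $B$ matches a permutation of $A$ modulo $p^k$ up to archimedean size $O(k)$, i.e.\ up to characters whose monomials grow only exponentially in $k$ — exactly the slack permitted by condition 2 — while the residual determinant discrepancy is repaired using only that $|K^\times|$ is dense enough. In particular the generalized Fuchs theorem (Theorem \ref{decom_all}) plays no role, and the construction treats all entries and all $n$ directions at once. The only bookkeeping point to keep straight is that, for open $I$, one must work throughout on a single closed $J$ on which $P$ becomes free, so that ``exponent of $P$'' unambiguously means ``exponent of $P|_{A_K^n(J)}$''.
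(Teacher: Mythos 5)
Your proposal is correct and is essentially the paper's own argument: both permute the columns of $S_{k,A}$ by $\sigma_k$ and twist by Laurent monomials $t^{m}$ with $m\equiv B^i_j-A^i_{\sigma_k(j)}\pmod{p^k}$ and $|m|\leq ck$, so that condition 1 holds on the nose and the monomial growth $O(C^{ck})$ is absorbed into the $p^{lk}$ slack of condition 2. The only (cosmetic) divergence is in handling condition 3: the paper first replaces the determinant bound by the equivalent requirement $|S_{k,A}^{-1}|_\rho\leq p^{lk}$ (its Lemma on invertible reformulations) and bounds both $S_{k,B}$ and $S_{k,B}^{-1}$, whereas you keep the original $|\det|\geq 1$ condition and repair it directly with the scalar normalizers $c_{k,j}$ — both work.
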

In this subsection, we give a positive answer to the conjecture above.

\begin{lem}\label{eqimat}
    Suppose $I$ is an open polysegment and $P$ is a free $\nabla$-module of rank $r$ over $A^n_K(I)$ satisfying the Robba condition. For a multisubset $A$ of $\ZZ_p^n$ , $A$ is an exponent of $P$ if and only if there exists a sequence of invertible matrices $\{S'_{k,A}\}_{k\geq 0}\subset \mathrm{Mat}_{r\times r}(K_I)$ such that $\{S'_{k,A}\}_{k\geq 0}$ satisfy condition 1 of Definition \ref{exp} and that there exists $l'>0$ with $|S'_{k,A}|_\rho,|S'^{-1}_{k,A}|_\rho\leq p^{l'k}$ for all $\rho\in I$ and $k\geq 0$.
\end{lem}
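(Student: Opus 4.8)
The plan is to show the two characterizations of "being an exponent" are equivalent by producing the required sequences from one another. Throughout, recall that by the third property in the proof of Lemma \ref{contraction} (i.e. of \cite{w1} Theorem 3.2), the matrices $S_{k,A}$ defined by averaging the $\zeta^*$-action already satisfy conditions 1 and 2 of Definition \ref{exp} automatically, and the entire content of "$A$ is an exponent" is the lower bound on $|\det S_{k,A}|_\rho$ in condition 3. So the real comparison is between condition 3 of Definition \ref{exp} and the two-sided bound $|S'_{k,A}|_\rho, |S'^{-1}_{k,A}|_\rho\le p^{l'k}$ together with invertibility.

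First I would prove the "if" direction. Suppose invertible matrices $\{S'_{k,A}\}$ exist with condition 1 and the stated two-sided bounds. The bound $|S'_{k,A}|_\rho\le p^{l'k}$ gives condition 2 of Definition \ref{exp} with $l=l'$. For condition 3, note that $\det S'_{k,A}\cdot\det S'^{-1}_{k,A}=1$, so for each $\rho\in I$, $|\det S'_{k,A}|_\rho\cdot|\det S'^{-1}_{k,A}|_\rho\ge 1$ (the $\rho$-Gauss norm is multiplicative on $K_I$, or at least submultiplicative, so $|\det S'_{k,A}|_\rho|\det S'^{-1}_{k,A}|_\rho\ge |\det S'_{k,A}\det S'^{-1}_{k,A}|_\rho=1$); hence it is not automatic that $|\det S'_{k,A}|_\rho\ge 1$. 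This is the place where I must be careful. The fix is to observe that we are free to normalize: since $S'_{k,A}$ satisfies condition 1, so does $c_k S'_{k,A}$ for any nonzero scalar $c_k\in K$, and the two-sided bound degrades only to $p^{l'k}\cdot|c_k|$ and $p^{l'k}/|c_k|$, which is harmless after enlarging $l'$ provided $|c_k|\le p^{l'k}$. But actually the cleaner route is to invoke Proposition \ref{weak equivalence}: $A$ being an exponent of $P$ is already known once we know $P$ admits \emph{some} exponent weakly equivalent to $A$ — and here I would instead directly run the argument of \cite{w1} Theorem 3.2 (reproduced in Lemma \ref{contraction}) which, given the existence of $\{S'_{k,A}\}$ with the two-sided bounds, lets one extract from the recursion $\det S_{k,A}=\sum\det(S_{k+1,A+p^k b})$ a choice of $A'\equiv A$ making the canonical $\{S_{k,A'}\}$ satisfy condition 3; combined with the uniqueness-up-to-weak-equivalence (Proposition \ref{weak equivalence}) and Proposition \ref{wkexp} of the appendix, $A$ itself is an exponent.

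Second, the "only if" direction: given that $A$ is an exponent with canonical matrices $\{S_{k,A}\}$ satisfying conditions 1--3, I would set $S'_{k,A}:=S_{k,A}$ when these happen to be invertible; in general $S_{k,A}$ need not be invertible, so I would instead take $S'_{k,A}$ to be the matrix expressing the basis $v_{k,A,1},\dots,v_{k,A,r}$ in terms of $\ee$ after a preliminary base change making them a genuine basis — but the honest approach is: condition 3 says $|\det S_{k,A}|_\rho\ge 1$ for all $\rho$, and condition 2 says $|S_{k,A}|_I\le p^{lk}$; by Cramer's rule $S_{k,A}^{-1}=(\det S_{k,A})^{-1}\mathrm{adj}(S_{k,A})$, and $|\mathrm{adj}(S_{k,A})|_\rho\le |S_{k,A}|_\rho^{r-1}\le p^{l(r-1)k}$ while $|(\det S_{k,A})^{-1}|_\rho\le 1$ by condition 3. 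Hence $S_{k,A}$ is invertible over $K_I$ (its determinant is a unit? — no, only that its $\rho$-norm is $\ge 1$ for all $\rho$; but a nonzero element of $K_I$ whose $\rho$-Gauss norm is bounded below on all of $I$ need not be invertible). Here is the genuine obstacle: I must either argue that one can replace $\{S_{k,A}\}$ by invertible matrices with comparable norms, or weaken the claim. I expect the resolution to be that $\det S_{k,A}$, having $\rho$-Gauss norm $\ge 1$ for every $\rho\in I$, together with the structure of $K_I$ (a principal-ideal-like ring where such elements are products of a unit with something controllable), can be multiplied by a suitable rational function to become a unit without damaging the bounds — or, more simply, that one works on a closed sub-polysegment where such elements are units up to bounded factors. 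I would carry this out by taking, for each $k$, an invertible matrix $S'_{k,A}$ with $|S'_{k,A}-S_{k,A}|_I$ small enough (dense approximation of $\mathrm{Mat}_{r\times r}(K_I)$ by invertibles, using that the invertible locus is open and $S_{k,A}$ has determinant of norm $\ge1$), so that $S'_{k,A}$ still satisfies condition 1 — wait, approximation breaks condition 1 exactly. So the correct final step: keep $S'_{k,A}=S_{k,A}$, prove invertibility of $S_{k,A}$ over $K_I$ directly from conditions 1--3 by a spectral/Robba-condition argument showing $\det S_{k,A}\in K_I^\times$ (this is where the Robba condition on $P$ and the group-action property of $\mupi^n$ must enter), and then the Cramer bound above gives $|S'^{-1}_{k,A}|_\rho\le p^{l(r-1)k}=:p^{l'k}$ with $l'=l(r-1)$, finishing the proof. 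The hardest part, as indicated, is establishing that the determinant of the canonical matrix is actually invertible in $K_I$ rather than merely having Gauss norm bounded below; I expect to handle it using that $v_{k,A,\bullet}$ span a saturated $\nabla$-stable sublattice of $P\otimes K(\mupk)$ of full rank, forcing $\det S_{k,A}$ to have no zeros on $A^n_K(I)$.
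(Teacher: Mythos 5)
Your proposal contains the right idea for the ``if'' direction but does not carry it out, and the route you ultimately commit to is circular. The normalization $S_{k,A}:=c_kS'_{k,A}$ is exactly how the paper argues: choosing $|c_k|=p^{l'k}$ one gets $|S_{k,A}|_\rho\le p^{2l'k}$ (condition 2 of Definition \ref{exp} with $l=2l'$) and, since the bound $|S'^{-1}_{k,A}|_\rho\le p^{l'k}$ forces $|\det S'_{k,A}|_\rho\ge p^{-l'rk}$, also $|\det S_{k,A}|_\rho=|c_k|^r|\det S'_{k,A}|_\rho\ge 1$, which is condition 3. You state the rescaling observation but never perform this determinant computation, and then discard the idea in favour of rerunning the recursion of \cite{w1} Theorem 3.2 and invoking Proposition \ref{wkexp}. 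That is not admissible here: Proposition \ref{wkexp} is proved in the appendix using Lemma \ref{eqimat}, so quoting it inside the proof of Lemma \ref{eqimat} is circular; moreover the recursion only produces some exponent $A'$, and without Proposition \ref{wkexp} you have no way to pass from $A'$ back to the given $A$.

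For the ``only if'' direction the crux, as you correctly sense, is the invertibility of the canonical matrices $S_{k,A}$ in $\Mat_{r\times r}(K_I)$; condition 3 only bounds $|\det S_{k,A}|_\rho$ from below for each $\rho$, which does not make the determinant a unit of $K_I$. You leave this point as an expectation (that the $v_{k,A,\bullet}$ span a full-rank saturated sublattice), which is essentially a restatement of what has to be proved, so the argument is not complete. The paper resolves it by citing \cite{Gac}, p.~188, Lemma 4, which guarantees invertibility of $S_{k,A}$ for all $k>k_0$ for some $k_0$, and then sets $S'_{k,A}=S_{k_0,A}$ for $k\le k_0$; this padding is legitimate because condition 1 at level $k_0$ implies condition 1 at every level $k\le k_0$ (as $\mupk^n\subseteq\mu_{p^{k_0}}^n$). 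Your Cramer's-rule estimate $|S_{k,A}^{-1}|_\rho\le p^{l(r-1)k}$ is correct once invertibility is known and matches the paper's choice $l'=\max\{r-1,1\}l$, but without that input, and without handling the finitely many small $k$ where invertibility may fail, this direction remains open.
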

\begin{proof}
    Suppose that $A$ is an exponent of $P$. By \cite{Gac}, page $188$, Lemma 4, for matrices $S_{k,A}$ defined as in Definition \ref{exp}, there exists $k_0>0$ such that $S_{k,A}$ is invertible for $k>k_0$. Then, from the description of the inverse matrix using cofactors, we have $|S_{k,A}^{-1}|_{\rho}\leq p^{(r-1)kl}$ for all $\rho\in(\alpha,\beta)$. Now we set $l'=\max\{r-1,1\}l$, then
    $$
    S'_{k,A}=\left\{
    \begin{aligned}
        & S_{k,A} \text{ if } k>k_0\\
        & S_{k_0,A} \text{ if } k\leq k_0
    \end{aligned}
    \right.
    $$
    give rise to the desired sequence of matrices.

    Next, suppose that there exists a sequence of matrices $\{S'_{k,A}\}_{k\geq 0}$ as in the statement of the lemma. For each $k$, choose $c_k\in K$ such that $|c_k|=p^{l'k}$, set $l=2l'$ and $S_{k,A}=c_kS'_{k,A}$. Then $|S_{k,A}|_\rho=|c_k||S'_{k,A}|_\rho\leq p^{2l'k}=p^{kl}$. From the definition of determinant, $|\det S'^{-1}_{k,A}|_\rho\leq p^{l'rk}$. So $|\det S'_{k,A}|_\rho\geq p^{-l'rk}$ and
    $$ |\det S_{k,A}|_\rho=|\det c_kS'_{k,A}|_\rho=|c_k|^r|\det S'_{k,A}|_\rho\geq p^{l'rk}p^{-l'rk}=1. $$
\end{proof}

\begin{prop}\label{wkexp}
    Assume $I=(\alpha,\beta)\subset \RR^n_{>0}$. Let $P$ be a free $\nabla$-module of rank $r$ over $A^n_K(I)$ satisfying the Robba condition. Let $A$ be an exponent of $P$ and $B$ be a multisubset of $\ZZ_p^n$ which is weakly equivalent to $A$. Then $B$ is also an exponent of $P$.
\end{prop}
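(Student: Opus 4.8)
The plan is to reduce to the matrix characterization of exponents provided by Lemma \ref{eqimat} and then transport an invertible ``rescaling sequence'' for $A$ to one for $B$ by composing with a permutation and with multiplication by Laurent monomials of controlled degree. Fix the basis $\ee$ of $P$ and, by Lemma \ref{eqimat}, a sequence of invertible matrices $\{S'_{k,A}\}_{k\ge0}\subset\Mat_{r\times r}(K_I)$ satisfying condition $1$ of Definition \ref{exp} with respect to $\ee$, with $|S'_{k,A}|_\rho,|S'^{-1}_{k,A}|_\rho\le p^{l'k}$ for all $\rho\in I$ and $k\ge0$, for some $l'>0$. Write $A=(A_1,\dots,A_r)$ and $B=(B_1,\dots,B_r)$ with $A_l,B_l\in\ZZ_p^n$. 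Enlarging the constant $c$ in $A\weq B$ so that $c\ge1$, for each $k\ge1$ we obtain a permutation $\sigma_k$ of $\{1,\dots,r\}$ with $\la A^i_{\sigma_k(j)}-B^i_j\ra_k\le ck$ for all $1\le i\le n$, $1\le j\le r$; hence for each such $i,j$ there is an integer $\mu^i_{k,j}$ with $|\mu^i_{k,j}|\le ck$ and $B^i_j\equiv A^i_{\sigma_k(j)}+\mu^i_{k,j}\pmod{p^k}$. Put $\mu_{k,j}=(\mu^1_{k,j},\dots,\mu^n_{k,j})\in\ZZ^n$, let $\Pi_k$ be the permutation matrix of $\sigma_k$, let $M_k=\diag(t^{\mu_{k,1}},\dots,t^{\mu_{k,r}})$, and set $S'_{k,B}=S'_{k,A}\Pi_kM_k$ for $k\ge1$ and $S'_{0,B}=S'_{0,A}$. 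These lie in $\Mat_{r\times r}(K_I)$ and are invertible, being products of invertible matrices over $K_I$.

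Next I would check that $\{S'_{k,B}\}$ fulfils the hypotheses of Lemma \ref{eqimat}. Writing $(\ee)S'_{k,A}=(v_{k,A,1},\dots,v_{k,A,r})$ and $(\ee)S'_{k,B}=(w_{k,B,1},\dots,w_{k,B,r})$, one computes $w_{k,B,j}=t^{\mu_{k,j}}v_{k,A,\sigma_k(j)}$. Condition $1$ of Definition \ref{exp} for $A$ gives, componentwise, $\zeta^*(v_{k,A,l})=\zeta^{A_l}v_{k,A,l}$ for $\zeta\in\mupk^n$ (where $\zeta^{A_l}=\prod_i\zeta_i^{A^i_l}$). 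Combining this with the monomial semilinearity $\zeta^*(t^{\mu_{k,j}}x)=\zeta^{\mu_{k,j}}t^{\mu_{k,j}}\zeta^*(x)$ of the $\mupi^n$-action — which follows from the binomial Leibniz identity $\binom{t_iD_i}{\alpha}(t_i^mx)=\sum_\beta\binom{m}{\alpha-\beta}t_i^m\binom{t_iD_i}{\beta}(x)$ together with $\sum_{\gamma\ge0}(\zeta_i-1)^\gamma\binom{m}{\gamma}=\zeta_i^m$, and is implicit in the construction of the $\mupi^n$-action recalled before Proposition \ref{QS} (see \cite{w1} Section $2$) — we get $\zeta^*(w_{k,B,j})=\zeta^{\mu_{k,j}}\zeta^{A_{\sigma_k(j)}}w_{k,B,j}=\zeta^{B_j}w_{k,B,j}$, using $\zeta_i^{p^k}=1$ and $B^i_j\equiv A^i_{\sigma_k(j)}+\mu^i_{k,j}\pmod{p^k}$; this is exactly condition $1$ for $B$. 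For the norm bounds, $\Pi_k$ is an isometry for every $\rho$-Gauss norm, while $|t^{\pm\mu_{k,j}}|_\rho=\prod_{i=1}^n\rho_i^{\pm\mu^i_{k,j}}\le C^{ck}$ for all $\rho\in I=(\alpha,\beta)$, where $C:=\prod_{i=1}^n\max\{\alpha_i,\beta_i,\alpha_i^{-1},\beta_i^{-1}\}\ge1$. Submultiplicativity of the matrix $\rho$-Gauss norm then yields $|S'_{k,B}|_\rho\le p^{l'k}C^{ck}$ and $|S'^{-1}_{k,B}|_\rho=|M_k^{-1}\Pi_k^{-1}S'^{-1}_{k,A}|_\rho\le C^{ck}p^{l'k}$, so both are $\le p^{l''k}$ with $l'':=l'+c\log_pC>0$. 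By Lemma \ref{eqimat}, $B$ is an exponent of $P$.

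The argument is short precisely because Lemma \ref{eqimat} has already replaced the awkward condition $3$ of Definition \ref{exp} by a two-sided growth bound that behaves well under permutations and under multiplication by monomials. The only genuinely delicate points are that the monomial degrees $|\mu^i_{k,j}|$ grow at most linearly in $k$ — which is exactly the rate permitted in the definition of an exponent, so the perturbation does not spoil the bounds — and that the estimate $|t^{\pm\mu_{k,j}}|_\rho\le C^{ck}$ is uniform over the whole open polysegment $I$, which is where boundedness of $I$ is used. I expect the monomial-semilinearity of $\zeta^*$ to be the one computational input, and to be routine (or citable directly from \cite{w1}).
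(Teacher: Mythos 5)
Your proposal is correct and follows essentially the same route as the paper's own proof: both reduce to the two-sided bound characterization of Lemma \ref{eqimat}, permute the columns of $S_{k,A}$ by $\sigma_k$, and multiply on the right by a diagonal matrix of Laurent monomials whose integer exponents are $O(k)$ (your $\mu^i_{k,j}$ is the paper's $({_k\mathrm{sgn}^i_j})({_kN^i_j})$ up to sign), then verify condition 1 via the semilinearity of $\zeta^*$ and the norm bounds via the boundedness of $I$. The only differences are notational (permutation matrices and a single signed integer versus explicitly permuted columns and a separate sign/magnitude bookkeeping).
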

\begin{proof}
    Since $A$ and $B$ are weakly equivalent, there exists $c>0$ and a sequence of permutations $\{\sigma_k\}_{k\geq 0}$ of $\{1,\dots,r\}$ such that
    $$ \la A^i_{\sigma_k(j)}-B^i_j \ra_k\leq ck\ \ \text{for all } k\geq 0,1\leq i\leq n, 1\leq j\leq r. $$
    Use $_{k}A$ (resp. $_{k}B$) denote the unique $n$-tuple of $r\times r$ diagonal matrices with entries in $\ZZ\cap [0,p^k)$ such that 
    $_{k}A\equiv A\ (\mathrm{mod}\ p^k) $ (resp. $_{k}B\equiv B\ (\mathrm{mod}\ p^k) $). Moreover, put 
    $$ _{k}N^i_j=\la A^i_{\sigma_k(j)}-B^i_j \ra_k. $$
    Then either $_{k}N^i_j\equiv A^i_{\sigma_k(j)}-B^i_j\ (\mathrm{mod}\ p^k)$ or $_{k}N^i_j\equiv B^i_j-A^i_{\sigma_k(j)}\ (\mathrm{mod}\ p^k)$. Now define
    $$_{k}{\mathrm{sgn}^i_j}=\left\{
    \begin{aligned}
        1\ \ & \text{if }_{k}N^i_j\equiv A^i_{\sigma_k(j)}-B^i_j\ (\mathrm{mod}\ p^k),\\
        -1\ \ &\text{if } _{k}N^i_j\equiv B^i_j-A^i_{\sigma_k(j)}\ (\mathrm{mod}\ p^k) \text{ and }_{k}N^i_j \not\equiv A^i_{\sigma_k(j)}-B^i_j\ (\mathrm{mod}\ p^k),
    \end{aligned}
    \right.
    $$
    put 
    $$_{k}\mathrm{sgn}^i:=
    \begin{pmatrix*}
        _{k}\mathrm{sgn}^i_1& & \\
         & \ddots  & \\
         & & _{k}\mathrm{sgn}^i_r
    \end{pmatrix*}
    ,\ \ _{k}\mathrm{sgn}=(_{k}\mathrm{sgn}^1,\dots, {_{k}}\mathrm{sgn}^n),
    $$
    and put
    $$_{k}N^i:=
    \begin{pmatrix*}
        _{k}N^i_1& & \\
         & \ddots  & \\
         & & _{k}N^i_r
    \end{pmatrix*}
    ,\ \ _{k}N=(_{k}N^1,\dots,{_{k}}N^n).
    $$
    Fix a basis $e_1,\dots,e_r$ of $P$ and
    suppose that $\{S_{k,A}\}_{k\geq 0}$ is the corresponding sequence of invertible matrices defining $A$ as an exponent of $P$ with respect to this basis, in the sense of Lemma \ref{eqimat}. Let $l>0$ be such that $|S_{k,A}|_\rho,|S_{k,A}^{-1}|_\rho\leq p^{lk}$ for all $\rho \in I$ and $k\geq 0$. Fix $R\in\RR_{>0}$ such that $\max_{1\leq i\leq n}\left\{\max\{1/\alpha_i,\beta_i\}\right\}\leq R$ and choose $a>0$ such that $R^{cn}\leq p^a$. Put $l'=l+a$.

    If we write 
    $$ S_{k,A}=(S_{k,A,1},\dots,S_{k,A,r}) $$ as its column vectors, set
    $$ S_{k,A}^{\sigma_k}:=(S_{k,A,\sigma_k(1)},\dots,S_{k,A,\sigma_k(r)}). $$
    Moreover, for each $A^i=\diag(A^i_1,\dots,A^i_r)$, put $A^i_{\sigma_k}:=\diag(A^i_{\sigma_k(1)},\dots,A^i_{\sigma_k(r)})$.
    By permuting columns of following equality by $\sigma_k$ for each $\zeta\in\mupk^n$
    $$\zeta^*[(e_1,\dots,e_r)S_{k,A}]=(e_1,\dots,e_r)S_{k,A}\zeta^{A}, $$
    we obtain the following: 
    $$ \zeta^*[(e_1,\dots,e_r)S_{k,A}^{\sigma_k}]=(e_1,\dots,e_r)S_{k,A}^{\sigma_k}\zeta^{A_{\sigma_k}}.$$
    Now set $S_{k,B}=S_{k,A}^{\sigma_k}t^{-_{k}\mathrm{sgn}_{k}N}$. We firstly verify the bound of $S_{k,B}$ and $S_{k,B}^{-1}$. For $1\leq i\leq n$, $1\leq j\leq r$, we have 
    $$|t_i^{_{k}N^i_j}|_{\rho_i}=\rho_i^{_{k}N^i_j}\leq R^{ck},\text{ and}$$
    $$ |t_i^{-_{k}N^i_j}|_{\rho_i}=\rho_i^{-_{k}N^i_j} \leq R^{ck},$$
    and so
    $$ |t^{-_{k}\mathrm{sgn}_{k}N}|_{\rho}=\prod_{i=1}^n|t_i^{-(_{k}\mathrm{sgn}^i)( _{k}N^i)}|_{\rho_i}\leq R^{cnk}. $$
    Obviously $|S_{k,A}^{\sigma_k}|_\rho=|S_{k,A}|_\rho$ and $|(S_{k,A}^{\sigma_k})^{-1}|_\rho=|S_{k,A}^{-1}|_\rho$. This implies that for all $\rho \in I$ and $k\geq 0$
    $$ |S_{k,B}|_\rho=|S_{k,A}^{\sigma_k}|_\rho|t^{-_{k}\mathrm{sgn}_{k}N}|_\rho\leq p^{lk}R^{cnk}\leq p^{lk}p^{ak}=p^{l'k} \text{ and} $$
    $$ |(S_{k,B})^{-1}|_\rho=|(S_{k,A}^{\sigma_k})^{-1}|_\rho|t^{_{k}\mathrm{sgn}_{k}N}|_\rho\leq p^{lk}R^{cnk}\leq p^{lk}p^{ak}=p^{l'k}. $$

    Then we verify that $\{S_{k,B}\}_{k\geq 0}$ defines $B$ as an exponent of $P$.
    For $\zeta\in\mupk^n$, we have
    \begin{align*}
        \zeta^*[(e_1,\dots,e_r)S_{k,B}]&=\zeta^*[(e_1,\dots,e_r)S_{k,A}^{\sigma_k}t^{-_{k}\mathrm{sgn}_{k}N}]\\
        &=(e_1,\dots,e_r)S_{k,A}^{\sigma_k}\zeta^{A_{\sigma_k}}\zeta^*(t^{-_{k}\mathrm{sgn}_{k}N})\\
        &= (e_1,\dots,e_r)S_{k,A}^{\sigma_k}t^{-_{k}\mathrm{sgn}_{k}N}\zeta^{A_{\sigma_k}-( _{k}\mathrm{sgn})( _{k}N)}\\
        &=(e_1,\dots,e_r)S_{k,B}\zeta^{A_{\sigma_k}-( _{k}\mathrm{sgn})( _{k}N)}
    \end{align*}
    Note that 
    $$ \zeta^{A_{\sigma_k}-( _{k}\mathrm{sgn})( _{k}N)}=\prod_{i=1}^n\zeta_i^{A^i_{\sigma_k}-( _{k}\mathrm{sgn}^i)( _{k}N^i)}, $$
    so it suffices to verify that for all $1\leq i\leq n, 1\leq j\leq r$
    $$ _{k}A^i_{\sigma_k(j)}-( _{k}\mathrm{sgn}^i_j)( _{k}N^i_j)\equiv  {_{k}B^i_j}\ (\mathrm{mod}\ p^k), $$
    which is 
    $$ _{k}A^i_{\sigma_k(j)}- {_{k}B^i_j}\equiv (_{k}\mathrm{sgn}^i_j)( _{k}N^i_j)\ (\mathrm{mod}\ p^k).$$
    It is true due to the definition of $_{k}\mathrm{sgn}^i_j$.
\end{proof}

\subsection{Equivalence of construction of exponents}
There are two definitions of $p$-adic exponent. We will refer the exponent defined in \cite{CM2} D\'efinition 5.3-6 as CM-exponent and the exponent defined in \cite{dwork1997exponents} Section 4 by Dw-exponent in the following. Instead of the complex nature of Frobenius antecedent in \cite{CM2} that is used to define $p$-adic exponents, Dwork's definition is rather computationally simple. 
In this subsection, we show the equivalence of definition of CM-exponent and Dw-exponent in the case of one dimensional annuli.

Use $|\cdot|_\infty$ to denote the usual absolute value on $\RR$.
For $x\in \RR$, let $\langle x \rangle=\min_{s\in\ZZ}|x-s|_\infty$ denote the distance of $x$ to $\ZZ$.
For a sequence $a=\{a_h\}_{h\geq 0}$ in $\RR$, set 
$$\mathcal{N}:=\left\{ \{a_h\}_{h\geq 0}\in\RR^{\ZZ_{\geq 0}}:p^h\langle a_h\rangle = O(h) \right\}\text{ and}$$
$$ \mathcal{E}:= \left\{ a:=\{a_h\}_{h\geq 0}\in\RR^{\ZZ_{\geq 0}}:p^h\la pa_{h+1}-a_h \ra=O(1) \right\}.$$

\begin{defn}[\cite{CM2} D\'efinition 4.2-1]
    For $\alpha\in\ZZ_p$, and $h\geq 0$, let $\alpha_h\in\ZZ$ be an integer such that $\alpha_h\equiv \alpha\ (\mathrm{mod}\ p^h)$.
    The sequence $\{p^{-h}\alpha_h\}_{h\geq 0}$ in $ \RR$ is called an approximation sequence of $\alpha$. 
\end{defn}
It is an easy observation that if $\{p^{-h}\alpha_h\}$ is an approximation sequence of $\alpha$ then $\{-p^{-h}\alpha_h\}$ is an approximation sequence of $-\alpha$.
\begin{convention}
    For  $\textbf{a}=({\textbf{a}} ^1,\dots,\textbf{a}^r)$ a sequence of vectors in $\RR^r$, namely, each $\textbf{a}^i$ is a sequence in $\RR$. The $h$-th term of the sequence $\textbf{a}^i$ is denoted by $\textbf{a}^i_h$ and denote $\textbf{a}_h=(\textbf{a}_h^1,\dots,\textbf{a}_h^r)\in\RR^r$. For a permutation $\sigma$, set $\sigma(\textbf{a}_h)=(\textbf{a}_h^{\sigma(1)},\dots,\textbf{a}_h^{\sigma(r)})$ and $\sigma(\textbf{a})=(\textbf{a}^{\sigma(1)},\dots,\textbf{a}^{\sigma(r)})$.
\end{convention}
\begin{defn}[\cite{CM2} D\'efinition 4.4-1]
    Let $\textbf{a},\textbf{b}$ be sequences of vectors in $\RR^r$. We say $\textbf{a}$ is weakly equivalent to $\textbf{b}$ if there exists a sequence of permutations $\{\sigma_h\}_{h\geq 0}$ such that the sequence $\{\textbf{a}_h-\sigma_h(\textbf{b}_h)\}_{h\geq 0}$ in $\RR^r$ belongs to $\mathcal{N}^r$.
\end{defn}
\begin{thm}[\cite{CM2} Th\'eor\`em 4.2-3]\label{isom}
    The map 
    \begin{align*}
        \iota : \ZZ_p/\ZZ&\to \mathcal{E}/(\mathcal{N}\cap\mathcal{E})\\
        \alpha&\mapsto \{p^{-h}\alpha_h\}_{h\geq 0}
    \end{align*}
    is a well-defined isomorphism of groups.
\end{thm}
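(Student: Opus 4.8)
The plan is to verify in turn that $\iota$ is well-defined, is a homomorphism, is injective, and is surjective; only the last point requires genuine work. Throughout I would use the elementary inequalities $\langle x+y\rangle\le\langle x\rangle+\langle y\rangle$, $\langle -x\rangle=\langle x\rangle$ and $\langle px\rangle\le p\langle x\rangle$ for $x,y\in\RR$, which show that $\mathcal{N}$ and $\mathcal{E}$ are subgroups of $\RR^{\ZZ_{\geq 0}}$ (the latter because its defining condition is linear in the sequence); hence $\mathcal{N}\cap\mathcal{E}$ is a subgroup of $\mathcal{E}$ and the target group makes sense.

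\emph{Well-definedness and homomorphism.} For $\alpha\in\ZZ_p$ and an approximation sequence $\{\alpha_h\}$ one has $\alpha_{h+1}\equiv\alpha_h\ (\mathrm{mod}\ p^h)$, so $p\cdot p^{-(h+1)}\alpha_{h+1}-p^{-h}\alpha_h=p^{-h}(\alpha_{h+1}-\alpha_h)\in\ZZ$ and therefore $\{p^{-h}\alpha_h\}\in\mathcal{E}$. Two approximation sequences of the same $\alpha$ differ termwise by elements of $p^h\ZZ$, so the two sequences $\{p^{-h}\alpha_h\}$ differ termwise by integers, i.e.\ by an element of $\mathcal{N}\cap\mathcal{E}$; hence the class $\iota(\alpha)$ is independent of all choices. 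Taking the constant approximation sequence $\alpha_h=\alpha$ for $\alpha\in\ZZ$ gives $p^h\langle p^{-h}\alpha\rangle=|\alpha|$ for $h$ large, so $\{p^{-h}\alpha\}\in\mathcal{N}\cap\mathcal{E}$; thus $\iota$ descends to $\ZZ_p/\ZZ$. Additivity is immediate, since $\{\alpha_h+\beta_h\}$ is an approximation sequence of $\alpha+\beta$.

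\emph{Injectivity.} If $\iota(\alpha)=0$ then $\{p^{-h}\alpha_h\}\in\mathcal{N}$, so there is $C>0$ with $\langle p^{-h}\alpha_h\rangle\le Chp^{-h}$ for all $h$. Choosing integers $n_h$ with $|p^{-h}\alpha_h-n_h|\le Chp^{-h}$ and setting $s_h:=\alpha_h-p^hn_h\in\ZZ$, one gets $s_h\equiv\alpha\ (\mathrm{mod}\ p^h)$ and $|s_h|\le Ch$. Then $s_{h+1}-s_h\in p^h\ZZ$ while $|s_{h+1}-s_h|\le C(2h+1)<p^h$ for $h$ large, so $(s_h)$ is eventually equal to a fixed integer $s$; hence $\alpha\equiv s\ (\mathrm{mod}\ p^h)$ for all large $h$, i.e.\ $\alpha=s\in\ZZ$. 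So $\iota$ is injective.

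\emph{Surjectivity, and the main obstacle.} Given $\{a_h\}\in\mathcal{E}$, set $b_h:=p^ha_h$ and let $\gamma_h\in\ZZ$ be nearest to $b_h$, so $|\gamma_h-b_h|\le\tfrac12$. Since $pa_{h+1}-a_h=p^{-h}(b_{h+1}-b_h)$, the condition $\{a_h\}\in\mathcal{E}$ says $b_{h+1}-b_h=p^hN_h+\theta_h$ with $N_h\in\ZZ$ and $|\theta_h|\le C'$ for some constant $C'$. Then $r_h:=\gamma_{h+1}-\gamma_h-p^hN_h$ is an integer, and the identity $r_h=(\gamma_{h+1}-b_{h+1})-(\gamma_h-b_h)+\theta_h$ gives $|r_h|\le 1+C'$. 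Setting $\beta_h:=\gamma_h-\sum_{i=0}^{h-1}r_i$, one has $\beta_{h+1}-\beta_h=p^hN_h\in p^h\ZZ$, so $(\beta_h)$ is coherent and defines some $\alpha\in\ZZ_p$ for which $(\beta_h)$ is an approximation sequence. Since $|\beta_h-b_h|\le\tfrac12+(1+C')h$, one finds $p^h\langle p^{-h}\beta_h-a_h\rangle\le\tfrac12+(1+C')h=O(h)$, so $\{p^{-h}\beta_h\}-\{a_h\}\in\mathcal{N}$; both sequences lie in $\mathcal{E}$, so the difference lies in $\mathcal{N}\cap\mathcal{E}$ and $\iota(\alpha)=[\{a_h\}]$. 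The crux of the proof is precisely this correction step: the naive choice $\gamma_h$ controls the distance to $\ZZ$ but need not be $p$-adically coherent, and it is exactly the $\mathcal{E}$-condition that forces the obstruction $r_h$ to be a bounded \emph{integer} rather than merely a bounded real, so that the accumulated correction $\sum_{i<h}r_i$ grows only linearly in $h$ and the corrected sequence still satisfies the $\mathcal{N}$-bound; one would also want to double-check that the implied constants are uniform and that small values of $h$ cause no difficulty.
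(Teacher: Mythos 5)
The paper does not prove this statement: it is quoted verbatim from Christol--Mebkhout (\cite{CM2} Th\'eor\`eme 4.2-3) and used as a black box, so there is no in-paper argument to compare yours against. On its own merits your proof is correct and complete. Well-definedness, additivity, and the fact that integer classes land in $\mathcal{N}\cap\mathcal{E}$ are all verified correctly (note that $\mathcal{E}$ being a subgroup rests on subadditivity of $\langle\cdot\rangle$ rather than literal linearity, but you state the needed inequalities at the outset, so this is only a matter of wording). The injectivity argument -- extracting integers $s_h\equiv\alpha\ (\mathrm{mod}\ p^h)$ with $|s_h|\le Ch$ and forcing eventual stabilization because $|s_{h+1}-s_h|<p^h$ while $s_{h+1}-s_h\in p^h\ZZ$ -- is the standard and correct way to see that the kernel is exactly $\ZZ$. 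The surjectivity step is the genuine content, and your correction mechanism is sound: the $\mathcal{E}$-condition makes the defect $r_h=\gamma_{h+1}-\gamma_h-p^hN_h$ a bounded integer, so subtracting the partial sums $\sum_{i<h}r_i$ produces a $p$-adically coherent sequence $(\beta_h)$ whose distance to $b_h$ grows only linearly, which is precisely the $\mathcal{N}$-bound needed. Since both the asymptotic conditions defining $\mathcal{N}$ and $\mathcal{E}$ only constrain large $h$, your closing caveat about small $h$ is harmless.
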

\begin{rmk}\label{weak equivalence of sequences}
    Let $A,B$ be elements in $\ZZ_p^r$ and let $\textbf{a},\textbf{b}$ be approximation sequences of $A,B$, respectively. It is easy to verify that if $A$ and $B$ are weakly equivalent, then $\textbf{a}$ and $\textbf{b}$ are weakly equivalent, with respect to the same sequence of permutations.
\end{rmk}
\begin{convention}
    In this subsection, suppose $I=(\alpha,\beta)\subset\RR$ is an open interval. Let $P$ be a $\nabla$-module of rank $r$ over $A^1_K(I)$ satisfying the Robba condition. Let $\varphi:t\mapsto t^p$ be the Frobenius homomorphism on $K_I$ over $K$. For $h\geq 0$, we use $P_h$ to denote the Frobenius antecedent of $P$ with respect to $\varphi^h$. That is, a $\nabla$-module over $A_K^1(I^{p^h})$ such that $\varphi^{h*}P:=P_h\otimes_{K_{I^{p^h}}}K_I=P$. Note that such a $P_h$ always exists uniquely and satisfies the Robba condition (cf. \cite{Ked1} Theorem 10.4.4). 
\end{convention}
\begin{defn}[\cite{CM2} D\'efinition 5.1-1]
    An $F$-basis $\mathcal{B}$ of $P$ is the data $\{\mathcal{B}_h\}_{h\geq 0}$ where each $\mathcal{B}_h$ is a basis of $P_h$.
\end{defn}
\begin{defn}[\cite{CM2} D\'efinition 5.1-2]\label{CMSma}
    Let $\textbf{a}=\{\textbf{a}_h\}_{h\geq 0}$ be a sequence of vectors in $\RR^r$. $\textbf{a}$ is said to be admissible for $P$ if there exists an $F$-basis $\mathcal{B}$ of $P$ for which the sequence of matrices
    $$K_h=x^{p^h\textbf{a}_h}\mathrm{M}(\varphi^{h*}(\mathcal{B}_h),\mathcal{B}_0)$$
    satisfies the following condition:
    there exists $l>0$ such that for all $\rho\in I$ and $h\geq 0$, $|K_h|_\rho\leq p^{hl}$ and $|K_h^{-1}|_\rho\leq p^{hl}$. Here $x^{p^h\textbf{a}_h}$ is the diagonal matrix
    $$
    \begin{pmatrix*}
       x^{{p^h}\textbf{a}^1_h} & & \\
       & \ddots &\\
       & & x^{{p^h}\textbf{a}^r_h}
    \end{pmatrix*}
    $$
    and $ \mathrm{M}(\varphi^{h*}(\mathcal{B}_h),\mathcal{B}_0)$ is the change-of-basis matrix from $\varphi^{h*}(\mathcal{B}_h)$ to $\mathcal{B}_0$.
    A vector $A$ of $\ZZ^r_p$ is said to be admissible if its approximation sequence is admissible.
\end{defn}
\begin{defn}[\cite{CM2} D\'efinition 5.1-7]
    A sequence $\textbf{a}=\{\textbf{a}_h\}_{h\geq 0}$ of vectors in $\RR^r$ is called well-ordered if $\textbf{a}\in \mathcal{E}^r$.    
\end{defn}
\begin{rmk}\label{remark on well-ordered}
    We summarize properties of admissible and well-ordered sequences:
    \begin{enumerate}
        \item  Suppose $\textbf{a}$ and $\textbf{b}$ are sequences of vectors in $\RR^r$ such that $\textbf{a}$ is admissible for $P$ and that $\textbf{b}$ is weakly equivalent to $\textbf{a}$. Then $\textbf{b}$ is also admissible for $P$ (cf. \cite{CM2} Corollaire 5.1-11).
        \item As a consequence of Theorem \ref{isom}, any approximation sequence induced by an element in $\ZZ_p^r$ is well-ordered. 
    \end{enumerate}
\end{rmk}
\begin{defn}[\cite{CM2} D\'efinition 5.3-6]
    An element $A$ of $\ZZ_p^r$ is called a CM-exponent of $P$ if one of its approximation sequences is well-ordered and admissible for $P$.
\end{defn}
An easy observation from Remark \ref{remark on well-ordered} and Remark \ref{weak equivalence of sequences} is that if $A$ is a CM-exponent of $P$, then any element of $\ZZ_p^r$ that is weakly equivalent to $A$ is also a CM-exponent of $P$. Now we verify the equivalence of definiton of $p$-adic exponents.

    \begin{prop}
        Let $A\in\ZZ_p^r$, then $A$ is a Dw-exponent of $P$ if and only if $-A$ is a CM-exponent of $P$.
    \end{prop}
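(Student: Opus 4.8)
The plan is to unwind both definitions and match them term by term. A Dw-exponent of $P$ is defined (Definition \ref{exp}, in the one-dimensional case $n=1$, and via Lemma \ref{eqimat} we may use invertible matrices $\{S_{k,A}\}$) through the action of $\mu_{p^k}$ via the series $\zeta^*$; a CM-exponent of $P$ (Definition 5.3-6 of \cite{CM2}) is defined through Frobenius antecedents $P_h$ and the change-of-basis matrices $\mathrm{M}(\varphi^{h*}(\mathcal{B}_h),\mathcal{B}_0)$. The bridge between the two is the classical fact (\cite{Ked1} Theorem 10.4.4 and the surrounding discussion) that, for a $\nabla$-module satisfying the Robba condition, the Frobenius antecedent $P_h$ is canonically identified with the $\mu_{p^h}$-invariant part of $P$ under the $\zeta^*$-action, and that a basis of $P_h$ pulls back to a $\mu_{p^h}$-eigenbasis-type datum on $P$. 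Concretely, given an $F$-basis $\mathcal{B}=\{\mathcal{B}_h\}$, the matrix $\mathrm{M}(\varphi^{h*}(\mathcal{B}_h),\mathcal{B}_0)$ plays exactly the role of $S_{h,A}$ up to the diagonal twist $x^{p^h\mathbf{a}_h}$, where $\mathbf{a}$ is an approximation sequence of $A$.

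The key steps, in order: First, recall the correspondence between $F$-bases of $P$ and sequences of matrices: starting from a CM-exponent $A$ with approximation sequence $\mathbf{a}=\{p^{-h}A_h\}$ (which is well-ordered by Remark \ref{remark on well-ordered}(2)) and an $F$-basis $\mathcal{B}$ realizing admissibility, set $K_h = x^{p^h\mathbf{a}_h}\mathrm{M}(\varphi^{h*}(\mathcal{B}_h),\mathcal{B}_0)$, which is bounded with bounded inverse by $p^{hl}$. Second, translate the $\zeta^*$-action: for $\zeta\in\mu_{p^h}$, a basis of $P_h$ gives vectors in $P$ fixed by $\zeta^*$, and the monomial $x^{p^h\mathbf{a}_h}$ transforms under $\zeta^*$ as $\zeta^*(x^{p^h A_h/p^h}) = x^{A_h}\zeta^{A_h} = x^{A_h}\zeta^{A}$ (using $A_h\equiv A \pmod{p^h}$), so the columns of $K_h$ (after moving to the basis $\mathcal{B}_0$ of $P$) transform by $\zeta^{-A}$ or $\zeta^{A}$ depending on the sign convention — this is where $-A$ versus $A$ enters. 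Third, verify that $S_{h,-A} := K_h$ (or its inverse, whichever matches the orientation of $\zeta^*$ in Definition \ref{exp}) satisfies conditions 1 and 2 of Definition \ref{exp} and the boundedness of Lemma \ref{eqimat}, hence $-A$ is a Dw-exponent. Fourth, run the argument in reverse: given a Dw-exponent $A$ realized by $\{S_{k,A}\}$ as in Lemma \ref{eqimat}, build an $F$-basis of $P$ by declaring $\mathcal{B}_h$ to be the basis of $P_h$ corresponding (via Theorem 10.4.4 of \cite{Ked1}) to the $\mu_{p^h}$-invariant module generated by the columns of $(e_1,\dots,e_r)S_{h,A}x^{-A_h}$ — here one must check these vectors actually descend to $P_h$, which follows from their $\zeta^*$-invariance — and then $x^{p^h\mathbf{a}_h}\mathrm{M}(\varphi^{h*}(\mathcal{B}_h),\mathcal{B}_0)$ recovers $S_{h,A}$ up to a harmless rearrangement, giving admissibility of the approximation sequence of $-A$; well-orderedness is automatic. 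Finally, note both directions only needed a single $F$-basis / single sequence, matching the "one of its approximation sequences" clause, and invoke the weak-equivalence stability of both notions (Proposition \ref{wkexp} for Dw, Remark \ref{remark on well-ordered}(1) for CM) to clean up any discrepancy between "an approximation sequence" and "the approximation sequence".

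The main obstacle I expect is making the identification "$F$-basis of $P$ $\leftrightarrow$ compatible system of $\mu_{p^h}$-invariant submodules" fully precise and checking that it is functorial enough to carry the boundedness estimates across. The delicate points are: (a) the monomial twist $x^{p^h\mathbf{a}_h}$ lives in $\mathrm{Mat}_{r\times r}$ with entries $x^{p^h\mathbf{a}^i_h} = x^{A^i_h}$ being genuine elements of $K_I$ (integer powers), so the bookkeeping of Gauss norms $|x^{A^i_h}|_\rho = \rho^{A^i_h}$ matches the $R^{ck}$-type bounds already used in the proof of Proposition \ref{wkexp}; (b) the Frobenius pullback $\varphi^{h*}$ multiplies the derivation and rescales the annulus $I\mapsto I^{p^h}$, so one must track that the spectral-radius/Robba condition is preserved (which it is, by \cite{Ked1} Theorem 10.4.4) and that the change-of-basis matrix $\mathrm{M}(\varphi^{h*}(\mathcal{B}_h),\mathcal{B}_0)$ is genuinely an element of $\mathrm{Mat}_{r\times r}(K_I)$, not just of some larger ring; and (c) pinning down the sign: the inversion relating $A$ to $-A$ comes from whether $\zeta^*$ acts on the "solution matrix" $S_{k,A}$ on the right by $\zeta^{A}$ (Dwork's convention in Definition \ref{exp}) versus how the Frobenius-antecedent basis transforms, and one should exhibit this explicitly on the rank-one module $M_\xi$ (or rather its $p$-adic analogue) as a sanity check, exactly as is done elsewhere in the paper when rank-one cases are singled out.
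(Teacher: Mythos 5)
Your proposal is correct and follows essentially the same route as the paper: twist the Dwork matrices by $t^{-A_h}$ to produce $\mu_{p^h}$-invariant bases that descend to the Frobenius antecedents $P_h$, and identify the resulting $K_h$ with $S_{h,A}^{-1}$ (note it is the inverse, not $S_{h,A}$ itself, that appears --- harmless since Lemma \ref{eqimat} bounds both). The sign you were hedging on is resolved exactly by the diagonal twist $x^{p^h\mathbf{a}_h}=t^{-A_h}$ coming from the approximation sequence of $-A$, which cancels the $t^{A_h}$ in $\mathrm{M}(\varphi^{h*}(\mathcal{B}_h),\mathcal{B}_0)$.
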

    \begin{proof}
    Let $A$ be a Dw-exponent of $P$. If we fix a basis $e_1,\dots, e_r$ of $P$, then there exist a sequence of matrices $\{S_{h,A}\}_{h\geq 0}$ satisfying conditions in Lemma \ref{eqimat}. For $h\geq 0$, set $\mathcal{B}_h=(e_1,\dots,e_r)S_{h,A}t^{-A_h}$. This is a basis of $P_h$ because for any $\zeta\in\mu_{p^h}$ 
    \begin{align*}
        \zeta^*(\mathcal{B}_h)&=\zeta^*((e_1,\dots,e_r)S_{h,A}t^{-A_h})\\
        & = \zeta^*((e_1,\dots,e_r)S_{h,A}) (\zeta t)^{-A_h}\\
        & = (e_1,\dots,e_r)S_{h,A}\zeta^{A_h}(\zeta t)^{-A_h}\\
        & = (e_1,\dots,e_r)S_{h,A}t^{-A_h}=\mathcal{B}_h.
    \end{align*}
    Also, this implies that $\mathrm{M}(\mathcal{B}_0,\varphi^{h*}(\mathcal{B}_h))=S_{h,A}t^{-A_h}$ and if we set 
    $$K_h=t^{-A_h}\mathrm{M}(\varphi^{h*}(\mathcal{B}_h),\mathcal{B}_0)=S_{h,A}^{-1},$$
    it satisfies the condition required in Definition \ref{CMSma}.
    Hence $-A$ is admissible and so it is a CM-exponent of $P$.

    Conversely, suppose that $-A$ is a CM-exponent of $P$. Set $\mathcal{B}_0=\{e_1,\dots,e_r\}$ and $S_{h,A}=K_h^{-1}$. Then we have the following computation:
    \begin{align*}
        \zeta^*((e_1,\dots,e_r)S_{h,A}) &= \zeta^*((e_1,\dots,e_r)\mathrm{M}(\mathcal{B}_0,\varphi^{h*}(\mathcal{B}_h))t^{A_h})\\
        & = \zeta^*(\varphi^{h*}(\mathcal{B}_h))(\zeta t)^{A_h}\\
        & = \varphi^{h*}(\mathcal{B}_h)t^{A_h}\zeta^{A_h}\\
        & = (e_1,\dots,e_r) \mathrm{M}(\mathcal{B}_0,\varphi^{h*}(\mathcal{B}_h))t^{A_h}\zeta^{A_h}\\
        & = (e_1,\dots,e_r) S_{h,A}\zeta^{A_h}.
    \end{align*}
    This implies that $A$ is a Dw-exponent of $P$. 
\end{proof}
\end{appendix}

\nocite{*}
\normalem
\bibliographystyle{alpha}

\bibliography{references}

\end{document}